\renewcommand{\geq}{\geqslant}
\renewcommand{\leq}{\leqslant}
\definecolor{ColorImplicit}{rgb}{0.7,0.1,0.1}
\definecolor{red}{rgb}{0.862745,0.137255,0}
\definecolor{myOrange}{RGB}{255, 169, 87 }
\definecolor{myGreen}{RGB}{180, 255, 162  }
\definecolor{myGrey}{RGB}{187, 187, 187  }
\definecolor{myDarkGrey}{RGB}{100, 100, 100  }
\newcommand{\R}{\mathbb R}
\newcommand{\N}{\mathbb N}
\newcommand{\cB}{\mathcal{B}}
\newcommand{\cH}{\mathcal{H}}
\newcommand{\cU}{\mathcal{U}}
\newcommand{\cP}{\mathcal{P}}
\newcommand{\cV}{\mathcal{V}}
\newcommand{\Hessian}{{D^2}}
\newcommand{\e}{\varepsilon}
\def\xC{{\rm C}}
\def\xLip{ {\rm Lip} }
\def\xL{{\rm L}}
\def\xdiv{{\rm div}}
\def\xdiam{{\rm diam}}
\newcommand{\V}{\|V\|}
\newcommand{\G}{G_{d,n}}
\newcommand{\proj}{\Pi}
\renewcommand{\emph}[1]{{\it{{#1}}}}
\DeclareMathOperator*{\supp}{spt}
\theoremstyle{definition}
\newtheorem{thm}{Theorem}[section]
\newtheorem{cor}[thm]{Corollary}
\newtheorem{lem}[thm]{Lemma}
\newtheorem{prop}[thm]{Proposition}
\newtheorem{defin}[thm]{Definition}
\newtheorem{rem}[thm]{Remark}
\newtheorem{exa}[thm]{Example}
\begin{document}

\baselineskip=17pt

\title{Mean curvature motion of point cloud varifolds}

\author{Blanche Buet\\
Universit{\'e} Paris-Saclay, CNRS, Laboratoire de math{\'e}matiques d'Orsay, \\
91405, Orsay, France\\
blanche.buet@math.u-psud.fr\\
Martin Rumpf\\
Institute for Numerical Simulation, University of Bonn, \\
Endenicher Allee 60, 53115 Bonn, Germany\\
martin.rumpf@uni-bonn.de}

\date{}

\maketitle

\begin{abstract}
This paper investigates a discretization scheme for mean curvature motion on point cloud varifolds with particular emphasis on singular evolutions.
To define the varifold a local covariance analysis is applied to compute an approximate tangent plane for the points in the cloud.
The core ingredient of the mean curvature motion model is the regularization of the first variation of the varifold via convolution with kernels with small stencil.
Consistency with the evolution velocity for a smooth surface is proven if a sufficiently small stencil and a regular sampling are taking into account. 
Furthermore, an implicit and a semiimplicit time discretization are derived. 
The implicit scheme comes with discrete barrier properties known for the smooth, continuous evolution, 
whereas the semiimplicit still ensures in all our numerical experiments very good approximation properties while being easy to implement. 
It is shown that the proposed method is robust with respect to noise and recovers the evolution of smooth curves as well as the formation of singularities such as triple points in 2D or minimal cones in 3D. 

\subjclass{Primary 49Q20; Secondary 35K55, 53A70, 53E10.}

\keywords{Point cloud varifolds, Mean curvature motion, Regularization, Singular evolution, Time discretization.}
\end{abstract}

	\maketitle

\renewcommand{\phi}{\varphi}
\renewcommand{\epsilon}{\varepsilon}
\newcommand{\one}{\mathds{1}}

\section{Introduction} \label{sec:intro}

In this paper we study the discretization of mean curvature motion for point cloud varifolds. 
Point clouds are the raw output data of 3D laser scanning devices and instead of applying a meshing algorithm which approximates the point cloud with a triangular surface
we aim for geometry processing methods directly on the raw data. Particular emphasis is on a proper treatment of possibly noisy point distributions and 
geometric singularities such as triple points or crease lines.

The direct processing of point clouds has intensively been studied in the literature.
Using the normal cycle from geometric measure theory Cohen-Steiner and Morvan \cite{CoMo06} were able to robustly 
compute the shape operator of a triangular mesh and they gave explicit error bounds.
Yang and Qian \cite{YaQi07} used a moving least square approach to compute curvature quantities on point cloud surfaces.
In \cite{ChCoLiTh09} Chazal et al. investigated the curvature estimation problem in particular for point cloud data.
They showed that different curvature measures can stably be computed for compact sets with positive $\mu$-reach using distance functions to the set and evaluating curvatures on them.
M{\'e}rigot et al. \cite{MeOvGu11} used a covariance analysis  based on the local Voronoi tesselation to compute a robust discrete curvature for a point cloud surface.
Yang et al. \cite{YaPo06} used barycenters of spherical neighbourhoods on multiple scales to derive formulas for the principle curvatures of a surface. 
This approach is based on the observation that the vector offset between barycenters for different radii of the balls or spheres depends on the surface curvature.
Chazal et al. \cite{ChCoLiTh09} followed the same approach and showed how this can be used to obtain approximations of generalized notions of curvature proposed by Federer.

In the processing of discrete surfaces the evolution by mean curvature motion is a fundamental tool and one of the basic fairing algorithms \cite{DeMeScBa99}.
For the numerical discretization of mean curvature motion for hypersurfaces there are three widespread approaches 
corresponding to the representation of the hypersurfaces
as a  triangular surface \cite{Dz91, DeDzEl05, BaGaNu08}, a level set \cite{MeBeOs94,EvSp91,Sm03}, 
or a diffusive phase field interface \cite{Feng2003, EvSoSo06, BrPe12}. 
For the representation of surfaces by point clouds a mean curvature motion scheme 
has been derived in \cite{ClRuTe04b} based on the reconstruction of a local triangulation. 
In \cite{Sharp:2020:LNT} a special type of surface covering is defined for point clouds, which enable the stable evaluation of surface Laplacians and associated evolution problems.

The varifold perspective has been used in the context of curve or surface 
matching in \cite{KaChCh17} based on earlier work in \cite{ChTr13}.
Here, a set of simplices and simplex normals is encoded as a measure on $\R^n\times S^{n-1}$ and equipped with the structure of a reproducing kernel Hilbert space with suitable kernels for the position in $\R^n$ and the orientation in $S^{n-1}$. This approach is then used for the registration of  curves and surfaces without point to point correspondence.
A recent overview of these tools in the context of diffeomorphic registration can be found in \cite{ChChGl20}.

In the context of this paper, we adopt a varifold perspective on point clouds and take advantage of the framework developped in \cite{BuetLeonardiMasnou} to estimate mean curvature with theoretical convergence guarantees. More precisely, we define a point cloud varifold by applying a local covariance analysis on the input set of points.
This allows us to assign an approximate tangent space as well as a weight to each point. This leads to a natural varifold structure where point clouds are encoded through a weightes sum of Dirac masses plus orientation.
We can subsequently consider the first variation of such a varifold and apply a suitable regularization via the convolution with a kernel with small stencil.
The resulting regularized first variation is considered as the motion field for the mean curvature flow and an implicit and a semiimplicit time discretization are derived.

A particularity when processing directly point clouds is that it allows topological changes, concentration and merging of points very naturally. As consequence, while the formation of a triple point is a singularity at the continuous level, it is not from the point of view of point cloud evolution. We take advantage of this feature to recover some well--known minimizing sets like Steiner trees connecting the vertices of a square in 2D and minimal cones over the edges of a tetrahedron, which is known to be one of the basic minimal cones in 3D together with the plane and the triple junction of half-planes \cite{Ta76}. Furthermore, we compute minimal area sets spanned by the edges of a cube .
However, while we can compare the limit set for time tending to infinity with sets that are known to be minimal (or at least with competitors with respect to surface area measures), the theoretical context for the evolution is not clear. As we are dealing with a mean curvature discretization derived thanks to varifolds tools, it is very natural to think of Brakke flow \cite{brakke} and the more recent approach by Kim and Tonegawa \cite{KiTo17}. Nevertheless, while the computation of the mean curvature is performed in the varifold setting, the motion is not entirely defined. Indeed, we only flow the positions according to mean curvature, and tangents and masses are implicitly defined given the new positions. Concretely, we compute the masses from the positions assumping a multiplicity $1$ everywhere. Assuming a unit multiplicity is reasonnable when starting from a compact hypersurface, indeed, it has been proven in \cite{EvSp95} that for almost all neighbouring level sets, there exists a unique Brakke flow, with unit multiplicity for almost all times. 

We emphasize that results on existence and uniqueness beyond the creation of singularities are mostly obtained under the asumption that the set evolving by mean curvature can be represented as the boundary of some open set (which is in general not the case for Brakke flow). 
We mention two major approaches: in \cite{ChGiGo91,EvSp91} the issue is tackled through so-called viscosity solutions while \cite{Il93} is based on the Allen-Cahn phase field model. In the case of planar networks, the flow has been studied in \cite{MaNoTo04} starting from a regular network, that is restricted to triple point singularities, and more recently the case of more general junctions could be handled in \cite{IlNeSc19}. Note that in our simulations, we observe that  four junctions in the initial data instantaneously split into two triple junctions, consistently with what is expected for planar networks.
On the numerical side, all parametric (mesh based) approaches work with a fixed topology an enable under this constraint 
very good approximation results, e.g. in \cite{BrSE92,PiPo93,ScWa19} with the drawback that there is an a priori choice or a required combinatorial optimization among all possible layouts of the smooth patches. 

The paper is organized as follows.
In Section \ref{sec:varifold} we recall classical facts concerning varifolds, focusing on the so-called first variation and generalized mean curvature.
Using a regularization via convolution, we define in Section \ref{sec:reg} the generalized mean curvature model for point clouds varifolds and establish its consistency in Proposition~\ref{prop:consistency}. Its stability is then investigated in Section~\ref{sec:stability}, resulting in Theorem~\ref{thm:stability} and convergence stated in Corollary~\ref{cor:CV}. Section~\ref{sec:timecont} describes how we define a point cloud varifold from a set of points i.e. how we compute tangent planes and weights for each point.
Furthermore, we define time continuous curvature motion of point clouds, study planar and spherical comparison principles, and 
derive implicit and semiimplicit time discretization schemes.
Finally in Section \ref{sec:results} we present numerical results and discuss properties of the derived scheme.

\section{Varifolds and generalized mean curvature} \label{sec:varifold}

In this section we will review the basic notions of $d$-varifolds in $\R^n$ and first variation of such varifolds.  
Of particular interest for this paper will be the case of generalized curves in $\R^2$ ($d=1$, $n=2$) and generalized surfaces in $\R^3$ ($d=2$, $n=3$). 
The $d$--dimensional Hausdorff measure in $\R^n$ is denoted by $\cH^d$, and 
the space of continuous compactly supported function between two metric spaces by $\xC_c (X, Y)$ and $\xC_c(X)$ if $Y = \R$. 
The $d$--Grassmannian
\[
\G = \{ d\text{--vector subspace of } \R^n  \} \: .
\]
is embedded into $n\times n$ matrices via the mapping that associates with the $d$--subspace $P \in \G$ the orthogonal projector $\Pi_P$ onto $P$. 
The operator norm on matrices consequently induces a distance on $\G$. 
With this notation at hand let us give the definition of a $d$-varifold.
\begin{defin}[Varifold]
A $d$--varifold in an open set $\Omega \subset \R^n$ is a Radon measure in $\Omega \times \G$.
\end{defin}
For detailed discussions on varifolds and underlying geometric measure theory tools we refer to  \cite{simon}, \cite{ambrosio}. We thereafter consider $s$--varifolds in the whole space $\Omega = \R^n$.
Such measures can be understood as a coupling of spatial (in $\R^n$) and directional (in $\G$) information.
Integrating on all possible directions, that is on the whole Grassmannian, allows to select the spatial information encoded in a $d$--varifold $V$: the resulting Radon measure denoted by $\V$ is called the \emph{mass} of $V$ and is defined in $\R^n$ as
\[
\V (A) =
V(A \times \G) \: .
\]
for all Borel sets $A \subset \R^n$.

In this paper, we will focus on two types of varifolds: varifolds associated with a smooth $d$--submanifold of $\R^n$ (referred to as \emph{smooth varifolds}, see Definition~\ref{dfn:smoothVarifold}) and varifolds associated with a finite set of points in $\R^n$, positive weights and tangent $d$--planes in $\G$ (referred to as \emph{point cloud varifolds}, see Definition~\ref{dfn:pointCloudVarifold}).

\begin{defin}[Smooth varifold] \label{dfn:smoothVarifold}
The $d$--varifold $V$ associated with a $d$--submanifold $M \subset \R^n$
is defined by
\begin{equation} \label{eq:smoothVarifold}
V (B) = \cH^d \left( \{ x \in M : (x, T_x M) \in B \} \right) \: ,
\end{equation}
for every Borel set $B \subset \R^n \times \G$. Here, $T_x M$ denotes the tangent plane at $x$. 
In this case, $\V = \cH^d_{| M}$.
We will use the notation $V = \cH^d_{| M} \otimes \delta_{T_x M}$ for the varifold defined in \eqref{eq:smoothVarifold}.
\end{defin}
Smooth varifolds are a particular case of \emph{rectifiable varifolds} (see \cite{simon}). 
\begin{rem} \label{rmk:viaTestFunctions}
As a $d$-varifold is a Radon measure, it can be equivalently defined by its action on continuous compactly supported functions: $V$ is the smooth varifold associated with $M$ according to Definition~\ref{dfn:smoothVarifold} if and only if 
for every $\phi \in \xC_c (\R^n \times \G)$,
\begin{align*}
\int_{(x,S) \in \R^n \times \G} \phi(x,S) \, dV(x,S) & = \int_{x \in \R^n \cap M} \int_{S \in \G} \phi (x,S) d \delta_{T_x M} (S) \, d \cH^d (x) \\
& = \int_M \phi(x,T_x M) \, d \cH^d (x) \: .
\end{align*}
\end{rem}

\begin{defin}[Point cloud varifold] \label{dfn:pointCloudVarifold}
Given a finite set of points $\{ x_i \}_{i=1}^N \subset \R^n$, masses (weights) $\{ m_i \}_{i=1}^N \subset \R_+^\ast$ and  space of directions $\{ P_i \}_{i =1}^N \subset \G$, we associate the $d$--varifold
\[
V = \sum_{i=1}^N m_i \delta_{(x_i,P_i)} \quad \text{and in this case} \quad \V = \sum_{i=1}^N m_i \delta_{x_i} \: .
\]
\end{defin}
Note that $P_i$ can be any  set of directions in $\G$, however if the points $\{ x_i \}_i$ sample some surface or submanifold $M$, then $\{ P_i \}_i$ can be thought as tangent planes $T_{x_i} M$.

As in the case of smooth varifolds, we can define a point cloud varifold through its action on compactly supported continuous functions. Indeed, we have for $\phi \in \xC_c (\R^n \times \G)$
\[
\int \phi \, dV = \sum_{i=1}^N  m_i \phi (x_i , P_i) \: .
\]

The set of $d$--varifolds is endowed with a weak notion of mean curvature which we eventually define in Definition~\ref{dfn:generalizedCurvature}. 
At first, let us introduce the \emph{first variation},
which is well--defined for any $d$--varifold. For this purpose we need the following differential operators:
let $P \in \G$, $\Pi_P$ be the orthogonal projection onto $P$ and $(\tau_1, \ldots, \tau_d)$ be an orthonormal basis of $P$, let $X = (X_1, \ldots , X_n) \in \xC^1 (\R^n , \R^n)$ be a vector field of class $\xC^1$, $\phi \in \xC^1 (\R^n)$ and $(e_1, \ldots, e_n)$ be the canonical basis of $\R^n$, then
\[
\nabla^P \phi = \Pi_P (\nabla \phi) \quad \text{and} \quad \xdiv_P X = \sum_{i=1}^n \Pi_P (\nabla X_i) \cdot e_i = \sum_{i=1}^d DX \tau_i  \cdot \tau_i \: .
\]
Now, with these differential operators at hand we can define the first variation:

\begin{defin}[First variation of a varifold, \cite{Allard72}] \label{dfn:firstVariation}
The first variation of a $d$--varifold $V$ in $\R^n$ is the \emph{distribution of order $1$}
\[ \begin{array}{lcccl}
\delta V & : & \xC_c^1 (\R^n , \R^n) & \rightarrow & \R \\
         &   &      X                & \mapsto  & \displaystyle \int_{\R^n \times \G} \xdiv_S X(x) \, dV(x,S)
\end{array} \]
\end{defin}
\begin{rem}
We could equivalently define the first variation based on scalar test functions. Indeed, with  
a slight misuse of notation define for $\phi \in \xC_c^1 (\R^n)$,
\[
\delta V(\phi) \coloneqq ( \delta V (\phi e_1), \ldots, \delta V (\phi e_n)) = \sum_{i=1}^n \delta V(\phi e_i) e_i 
\quad \text{so that} \quad \delta V (X) =   \sum_{i=1}^n \delta V (X \cdot e_i) \cdot e_i \: .\]
Moreover, $\displaystyle \xdiv_S (\phi e_i) = \nabla^S \phi \cdot e_i$ and finally $ \displaystyle
\delta V (\phi) = \int_{\R^n \times \G} \nabla^S \phi(x) \, dV(x, S)
$.
\end{rem}
Let $M \subset \R^n$ be a closed $\xC^2$  $d$--submanifold, if $V = \cH^d_{| M} \otimes \delta_{T_x M}$ is the smooth varifold naturally associated with $M$, then by definition of $V$ (see Remark~\ref{rmk:viaTestFunctions}),
\[
\int_{\R^n \times \G} \xdiv_S X(x) \, dV(x,S) = \int_M \xdiv_{T_x M} X(x) \, d \cH^d(x) \: ,
\] 
and thus, thanks to the divergence theorem we obtain for every $X \in \xC_c^1 (\R^n , \R^n)$,
\[
\delta V (X) = \int_M \xdiv_{T_x M} X(x) \, d \cH^d(x) = - \int_M H(x) \cdot X(x) \, d \cH^d(x) \: ,
\]
where $H$ is the mean curvature vector of $M$. In this case $\delta V$ is more regular than a distribution of order $1$, it is a distribution of order $0$ and can be identified thanks to Riesz theorem with the vector valued Radon measure
\begin{equation} \label{eq:firstVariationSmooth}
-H(x) \, \cH^d_{| M} (x) = -H(x) \, \V(x) \: .
\end{equation}
Actually, as soon as $V$ is a $d$--varifold (not necessarily associated with a smooth submanifold) whose first variation is a distribution of order $0$ ($V$ is then said to have \emph{locally bounded first variation}), then there is a weak counter-part to the divergence theorem. Indeed, for such a varifold, we can identify the distribution $\delta V$ with the associated Radon measure provided by Riesz theorem. Then, thanks to the Radon-Nikodym decomposition of $\delta V$ with respect to the mass $\V$, there exist a vector field denoted $\frac{\delta V}{\V} \in \xL^1_{loc} (\V)$ and a Radon measure $(\delta V)_s$ singular with respect to $\V$ (which might vanish) such that the decomposition 
\begin{equation} \label{eq:radonNikodym}
\delta V = \frac{\delta V}{\V}(x) \, \V + (\delta V)_s
\end{equation}
holds. Comparing \eqref{eq:firstVariationSmooth} and \eqref{eq:radonNikodym} 
the \emph{generalized mean curvature vector} naturally arises as the Radon-Nikodym derivative 
$ \displaystyle
H = - \frac{\delta V}{\V} \: .
$
Let us resume the previous observations in the following definition.

\begin{defin}[Generalized mean curvature, \cite{Allard72}] \label{dfn:generalizedCurvature}
Let $V$ be a $d$--varifold in $\R^n$. Assume that $V$ has \emph{locally bounded first variation},  i.e.  $\forall K \subset \R^n$ compact set, $\exists \: c_K > 0$ such that for every $X \in \xC_c^1 (\R^n, \R^n)$ supported in $K$,
\begin{equation} \label{eq:locallyBoundedFirstVariation}
| \delta V (X) | \leq c_K \sup_{K} |X| \: .
\end{equation}
Then $\delta V$ can be identified with a Radon measure and the \emph{generalized mean curvature vector} $H$ is defined as the Radon-Nikodym derivative of $\delta V$ with respect to $\V$, moreover,
for $\V$--a.e. $x$ and for $B(x,r)$ denoting the open ball of radius $r$ centred at $x$ we get
\[
H(x) = - \lim_{r \to 0_+} \frac{\delta V(B(x,r))}{\V(B(x,r))} \: .
\]
\end{defin}
See Section~2.4 in \cite{ambrosio} for more details on differentiation of Radon measures.  
Notice that both classical and the generalized mean curvature coincide in the case of a smooth varifold associated with a closed $\xC^2$ manifold as shown above \eqref{eq:firstVariationSmooth}.
Let us now consider an example involving a triple point singularity.
\begin{exa}[Junction of half-lines] \label{ex:junction}
Let $u_1, u_2, u_3$ be unit vectors of $\R^2$ and $D_i$ for $i \in \{1, 2, 3\}$ be the half-line $\{ t u_i \, : \, t \in \R_+ \}$ and $V_i = \cH^1_{| D_i} \otimes \delta_{\rm{span} (u_i)}$ be the smooth $1$--varifold associated with $D_i$. Then $V = V_1 + V_2 + V_3$ is a $1$--varifold spatially supported by the union of three half-lines meeting at $0$ and by linearity $\delta V = \delta V_1 + \delta V_2 + \delta V_3$. Now, we obtain $\delta V_i = -u_i \delta_0$ for $i \in \{1, 2, 3\}$. Indeed, for $X \in \xC_c^1 (\R^2, \R^2)$ and for $t \in \R_+$,\\
$\displaystyle \xdiv_{\rm{span}(u_i)} X(t u_i) = DX(tu_i)u_i \cdot u_i$ and $\displaystyle DX(tu_i)u_i = \frac{d}{dt} (X(tu_i))$, therefore:
\[
\delta V_i (X) = \int_{D_i} \xdiv_{\rm{span}(u_i)} X \, d \cH^1 = \int_{t=0}^{+ \infty} \frac{d}{dt} (X(tu_i)) \, dt \cdot u_i = - X(0) \cdot u_i = - \delta_0 (X) \cdot u_i \: .
\]
We eventually obtain $\delta V = - (u_1 + u_2 + u_3) \delta_0$ and in particular $\delta V = 0$ if and only if $0$ is a triple point with angles $\tfrac23 \pi$ formed by the half-lines. Otherwise, if $u_1 + u_2 + u_3 \neq 0$, the singularity in $0$ produces a non-zero singular curvature $(\delta V)_s = - (u_1 + u_2 + u_3) \delta_0$.
\end{exa}

We emphasize that the notion of first variation is well--defined for any $d$--varifold while the notion of generalized mean curvature requires some additional regularity of the varifold: it can be defined only if $V$ has locally bounded first variation  in the sense of \eqref{eq:locallyBoundedFirstVariation} that is equivalent to requiring that $\delta V$ identifies with a (vector--valued) Radon measure thanks to Riesz represnetation theorem.
In the previous example, though singular with respect to the mass measure $\V = \cH^1_{| D_1} + \cH^1_{| D_2} + \cH^1_{| D_3}$, the first variation $\delta V = - (u_1 + u_2 + u_3) \delta_0$ is a Dirac mass that is, in particular, a Radon measure. Unfortunately, the first variation of a point cloud varifold does not meet this asumption, as we now explain.
Let us consider a very simple point cloud $1$--varifold in $\R$ consisting of one single point $x_1 = 0$ weighted by mass $m_1 = 1$ and with $P_1 = \R$: $V = \delta_{(0,\R)}$. For $X \in \xC_c^1 (\R, \R)$ we obtain in this case
\[
\delta V(X) = \int_\R \xdiv_{\R} X \, d \delta_0 = \xdiv_\R X (0) = X^\prime (0) = \delta_0 (X^\prime) = - (\delta_0)^\prime (X) \: .
\]
It is well--known that the distributional derivative of a Dirac mass is not a Radon measure. In fact, this observation directly extends to any point cloud varifold 
$V = \sum_{i=1}^N m_i \delta_{(x_i, P_i)}$ and the first variation of $V$ is never locally bounded (take test functions whose support contains a single point $x_i$). This was the motivation for introducing in \cite{BuetLeonardiMasnou} approximate counterparts of first variation and generalized mean curvature via convolution. In this paper we will make extensive use of these notions. We hence summarize in the next section what is needed within the scope of this paper.

\section{Regularization via convolution} \label{sec:reg}

In the first part of this section we recall a regularization of the generalized mean curvature (see Definition~\ref{dfn:approxMeanCurvature}) based on the convolution of both first variation and mass with appropriately chosen kernels.
The original definition of this regularized mean curvature from \cite{BuetLeonardiMasnou} ensures consistency for a very large class of varifolds (rectifiable varfifolds with locally bounded first variation). In the second part of the section,
we will introduce
several variants of a regularized mean curvature vector which are all consistent with the mean curvature vector of a smooth hypersurface (see Proposition~\ref{prop:consistency}). The difference lays in the choice of a projection operator denoted by $\Pi$ hereafter.
As we will see in sections~\ref{sec:time} and \ref{sec:results} the concrete choice of the projection operator $\Pi$ matters on one hand in the presence of singularities and on the other hand for the numerical stability of the time discrete scheme.

From now on and for the sake of simplicity, we only consider varifolds with finite mass, that is $\V (\R^n) < \infty$ (as $\V$ is a Radon measure, $\V$ is finite when restricted to any bounded open set). We refer to \cite{BuetLeonardiMasnou} for additional details on steps leading to the notion of approximate mean curvature (see Definition~\ref{dfn:approxMeanCurvature}) that we now briefly sketch.

\noindent Let us consider fixed functions $\rho, \xi \in \xC^\infty(\R)$, which are nonnegative even and compactly supported in $[-1,1]$. We additionnaly assume that $\rho$ and $\xi$ are positive in $]0,1[$ and $\rho$ is nonincreasing in $[0,1]$.
Then we 
define associated mollifiers $(\rho_\epsilon)_{\epsilon > 0}$, $(\xi_\epsilon)_{\epsilon > 0}$ in $\R^n$ as follows, for $x \in \R^n$, $\epsilon > 0$,
$\displaystyle
\rho_\epsilon (x) = \epsilon^{-n} \rho (|x|/\epsilon)
$ and 
$\displaystyle
\xi_\epsilon (x) = \epsilon^{-n} \xi (|x|/\epsilon) \: .
$
Following \cite{BuetLeonardiMasnou}, we additionaly assume that for all $s \in ]-1,1[$,
\begin{equation} \label{eq:kernelPairs}
n \xi(s) = - s \rho^\prime(s) \: .
\end{equation}
For instance 
we can choose
\[
\rho(s) = \exp \left(\frac{1}{s^2-1} \right) \quad \text{and} \quad \xi(s) = \frac{2}{n} \frac{s^2}{(s^2-1)^2} \exp	\left( \frac{1}{s^2-1} \right) \quad \text{for } s \in ]-1,1[ \: .
\]
The $\epsilon$--\emph{regularized first variation} is then defined as the convolution of $\delta V$ with $\rho_\epsilon$: for $X \in \xC_c^1 (\R^n, \R^n)$,
\begin{equation*} 
\left( \delta V \ast \rho_\epsilon \right) (X) \coloneqq \delta V (X \ast \rho_\epsilon) \quad \text{with} \quad (X \ast \rho_\epsilon)(x) \coloneqq \int_{y \in \R^n} \rho_\epsilon(x-y) X(y) \: dy \: .
\end{equation*}
Easy computations lead to
$\displaystyle
\left( \delta V \ast \rho_\epsilon \right) (X) = \int_{\R^n} g_\epsilon(x) \cdot X(x) \, dx$, where
\begin{align} \label{eq:convV}
g_\epsilon(x)  
 = \frac{1}{\epsilon^{n+1}} \int_{\R^n \times \G} \rho^\prime \left(\frac{|y-x|}{\epsilon} \right) \Pi_S \left( \frac{y-x}{|y-x|}  \right) \, dV(y,S)  =: \delta V \ast \rho_\epsilon(x) \quad \text{for } x \in \R^n \: .
\end{align}
Consequently $\delta V \ast \rho_\epsilon$ identifies with its Lebesgue $\xL^1$--density $g_\epsilon \in \xL^1_{loc} (\R^n)$.
\begin{rem}
Note that $\rho^\prime(|z|) \frac{z}{|z|} \xrightarrow[|z| \to 0]{} 0$ thanks to $\rho^\prime(0) = 0$.
\end{rem}

In order to define a regularized generalized mean curvature, it remains to write the Radon-Nikodym decomposition of the regularized first variation $\delta V \ast \rho_\epsilon$ with respect to the regularized mass $\V \ast \xi_\epsilon$.
As for the first variation, we identify $\V \ast \xi_\epsilon$ with the associated Lebesgue $\xL^1$--density function defined as
\[
\V \ast \xi_\epsilon (x) = \frac{1}{\epsilon^n} \int_{\R^n} \xi \left( \frac{|y-x|}{\epsilon} \right) \, d \V(y)  , \quad x \in \R^n \: .
\]
Considered as Radon measures, both $\delta V \ast \rho_\epsilon$ and $\V \ast \xi_\epsilon$ are absolutely continuous with respect to Lebesgue measure and consequently the Radon-Nykodym decomposition of $\delta V \ast \rho_\epsilon$ with respect to $\V \ast \xi_\epsilon$ is
\[ 
\delta V \ast \rho_\epsilon = \frac{\delta V \ast \rho_\epsilon(x)}{\V \ast \xi_\epsilon (x)}  \left( \V \ast \xi_\epsilon \right) = \underbrace{ \frac{\delta V \ast \rho_\epsilon(x)}{\V \ast \rho_\epsilon (x)} }_{\displaystyle \to \frac{\delta V}{\V}} \underbrace{\frac{\V \ast \rho_\epsilon (x)}{\V \ast \xi_\epsilon (x)} }_{\displaystyle \to \frac{\int_0^1 \rho(s) s^{d-1} \: ds}{\int_0^1 \xi(s) s^{d-1} \: ds} = \frac{n}{d}}  \left( \V \ast \xi_\epsilon \right)
\]
Finally, we obtain the following definition for the approximate mean curvature
\begin{defin}[Approximate mean curvature (\cite{BuetLeonardiMasnou}, 4.1)] \label{dfn:approxMeanCurvature}
Let $V$ be a $d$--varifold in $\R^n$, for $x \in \R^n$, $\epsilon > 0$, we define
\begin{equation} \label{eq:approxMeanCurvature}
H_\epsilon (x,V) \coloneqq - \frac{d}{n} \frac{\delta V \ast \rho_\epsilon (x)}{\V \ast \xi_\epsilon(x)}
\end{equation}
which we refer to as \emph{$\epsilon$--approximate curvature}.
\end{defin}
The ratio $\tfrac{d}{n}$ is due to the particular choice of $\xi$ and $\rho$ fulfilling \eqref{eq:kernelPairs}.
Note that we could take other kernel functions not necessarily satisfying relation \eqref{eq:kernelPairs}) to regularize the first variation $\delta V$ and the mass $\V$ (e.g. the same kernel both for the variation and the mass). In this case the consistency when $\epsilon \to 0$ would still hold replacing $d/n$ in \eqref{eq:approxMeanCurvature} by the appropriate constant. Nevertheless, \eqref{eq:kernelPairs} gives better numerical results that can be understood by expanding $|H(x) - H_\epsilon (x,V)|$ when $\epsilon \to 0$ (see \cite{BuetLeonardiMasnou}). We point out that this very same asumption \eqref{eq:kernelPairs} on the kernels enables us to prove a discrete maximum principle on our time discrete scheme (see Proposition~\ref{discreteSphereInclusion}).

Finally, let us remark that no asumption on $\delta V$ is necessary to define $H_\epsilon (\cdot, V)$.  
It is well--defined even if $\delta V$ is not locally bounded. In particular for a point cloud varifold $\displaystyle V = \sum_{j=1}^N m_j \delta_{(x_j,P_j)}$ we obtain for $i \in \{1, \ldots, N\}$
\[
H_\epsilon (x_i,V) = - \frac{d}{n \epsilon} \frac{ \displaystyle \sum_{\substack{j=1,  j \neq i}}^N m_j \rho^\prime \left(\frac{|x_j-x_i|}{\epsilon} \right) \Pi_{P_j} \left( \frac{x_j-x_i}{|x_j-x_i|}  \right) }{\displaystyle \sum_{j=1}^N m_j \xi \left( \frac{|x_j-x_i|}{\epsilon} \right)} \: .
\]

In addition to consistency when $\epsilon \to 0$, it is furthermore possible to state stability and convergence results with respect to a localized flat distance between varifolds (see \cite{BuetLeonardiMasnou}, thm 4.3 $\&$ 4.5). When dealing with smooth objects, expression \eqref{eq:approxMeanCurvature} can be modified in various ways preserving its consistency. In the rest of this section, we investigate several variants of approximate mean curvatures and establish their consistency with the classical mean curvature for smooth hypersurfaces in Proposition~\ref{prop:consistency}. While their is no significant benefit from considering those variants for computing an approximate curvature on a smooth hypersurface, they lead to numerical schemes for curvature flow behaving quite differently as shown in Section~\ref{sec:time} and \ref{sec:results}.

More precisely, replacing $\Pi_{S}$ in \eqref{eq:convV} by some linear operator
$\Pi : \R^n \rightarrow \R^n$ that may depend on $x_0$, $x \in \R^n$ and $S \in \G$ we take into account 
\begin{equation} \label{eq:projMeanCurvature}
H_\epsilon^{\proj} (x_0,V) = - \frac{d}{n \epsilon} \frac{\displaystyle \int_{\R^n \times \G} \rho^\prime \left( \frac{|x-x_0|}{\epsilon} \right) \frac{\Pi  (x-x_0)}{|x-x_0|} \, dV(x,S) }{\displaystyle \int_{\R^n} \xi \left( \frac{|x-x_0|}{\epsilon} \right) \, d \V(x)}\,.
\end{equation}
We will consider $\Pi  \in \{ \Pi_S, \, -2 \,\Pi_{S^\perp}, \,  2\, \rm{Id} \}$, also post-composed with a projection onto the normal space at $x_0$.
Notice that $\Pi = \Pi_S$ exactly corresponds to Definition~\ref{dfn:approxMeanCurvature}.

\begin{prop}[Consistency for smooth varifolds] \label{prop:consistency} Assume $d = n-1$ and let $M \subset \R^n$ be a $d$--submanifold of class $\xC^2$, whose mean curvature vector is denoted by $H : M \rightarrow \R^n$, and let $V = \cH^d_{| M} \otimes \delta_{T_x M}$. Then, one obtains that 
\[
H_\epsilon^{\proj} (x_0, V) \xrightarrow[\epsilon \to 0]{} H(x_0) \: .
\]
for all $x_0 \in M$ and for 
$\displaystyle \Pi \in \left\lbrace \Pi_S, \, -2 \Pi_{S^\perp}, \,  2 \rm{Id}, \, \Pi_{(T_{x_0} M)^\perp} \circ \Pi_S , \, -2 \Pi_{(T_{x_0} M)^\perp} \circ \Pi_{S^\perp}, \, 2 \Pi_{(T_{x_0} M)^\perp}  \right\rbrace\,$.
If $M$ is at least $\xC^3$ then $\displaystyle \left| H_\epsilon^{\proj} (x_0,V) - H(x_0) \right| = O(\epsilon)$.

\end{prop}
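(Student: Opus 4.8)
The plan is to exploit the hypersurface structure $d=n-1$ together with a local graph representation of $M$ to reduce both integrals in \eqref{eq:projMeanCurvature} to one--dimensional integrals of the kernels, and then to read off the mean curvature from the second--order term of a Taylor expansion. First I would reduce and localize: since $V=\cH^d_{|M}\otimes\delta_{T_xM}$, the integrals against $V$ and $\V$ become integrals over $M$ with $S=T_xM$, and because $\rho,\xi$ are supported in $[-1,1]$ only the cap $M\cap\bB(x_0,\epsilon)$ contributes. Fixing $x_0\in M$, I would choose coordinates so that $x_0=0$, $T_{x_0}M=\R^d\times\{0\}$ with unit normal $e_n$, and write $M$ locally as a graph $x=(u,f(u))$ with $f(0)=0$, $\nabla f(0)=0$ and $A:=\Hessian f(0)$, so that $d\cH^d=\sqrt{1+|\nabla f|^2}\,du$ and $H(x_0)=(\trace A)\,e_n$ (the sign being consistent with \eqref{eq:firstVariationSmooth}, as one checks on a spherical cap).

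Next I would rescale by $u=\epsilon v$ and expand. A second--order Taylor expansion gives $f(\epsilon v)=\tfrac{\epsilon^2}{2}\,v^\top A v+o(\epsilon^2)$, whence $|x-x_0|=\epsilon|v|\,(1+O(\epsilon^2))$, $\sqrt{1+|\nabla f|^2}=1+O(\epsilon^2)$ and $\rho'(|x-x_0|/\epsilon)=\rho'(|v|)+O(\epsilon^2)$; the denominator then equals $\epsilon^d\big(\int_{\R^d}\xi(|v|)\,dv+o(1)\big)$. The crucial point is that the three base operators yield the \emph{same} normal contribution: using $\Pi_S=\mathrm{Id}-\Pi_{S^\perp}$ together with the graph normal one finds $e_n\cdot\Pi(x-x_0)=\epsilon^2\,v^\top A v+o(\epsilon^2)$ for every $\Pi\in\{\Pi_S,\,-2\Pi_{S^\perp},\,2\,\mathrm{Id}\}$. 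Consequently all six variants share the same $e_n$--component of the integrand, while their tangential components either integrate to zero by the odd symmetry $v\mapsto-v$ (non post--composed choices) or are annihilated by $\Pi_{(T_{x_0}M)^\perp}$.

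It then remains to evaluate the surviving integral. By symmetry $\int_{\R^d}\rho'(|v|)\tfrac{v^\top A v}{|v|}\,dv=(\trace A)\tfrac1d\int_{\R^d}\rho'(|v|)|v|\,dv=-(\trace A)\,\sigma_{d-1}\int_0^1\rho(s)s^{d-1}\,ds$, after integrating by parts and using $\rho(1)=0$, where $\sigma_{d-1}=\cH^{d-1}(S^{d-1})$. Dividing by the denominator and multiplying by the prefactor $-\tfrac{d}{n\epsilon}$ cancels all powers of $\epsilon$ and leaves
\begin{equation*}
H_\epsilon^{\proj}(x_0,V)\xrightarrow[\epsilon\to0]{}\frac{d}{n}\,(\trace A)\,\frac{\int_0^1\rho(s)s^{d-1}\,ds}{\int_0^1\xi(s)s^{d-1}\,ds}\,e_n.
\end{equation*}
Finally the kernel relation \eqref{eq:kernelPairs} gives $\int_0^1\xi\,s^{d-1}\,ds=\tfrac{d}{n}\int_0^1\rho\,s^{d-1}\,ds$, so the ratio equals $n/d$ and the limit is $(\trace A)\,e_n=H(x_0)$.

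The genuine work lies in the error estimates rather than in this leading--order bookkeeping. For the $\xC^2$ case I would show that the remainder $o(|u|^2)$ in $f$, the Jacobian factor and the expansion of $\rho'$ perturb the numerator only by $o(\epsilon^{d+1})$ and the denominator only by $o(\epsilon^d)$, which suffices for convergence; here the delicate point is that the tangential leading term cancels \emph{exactly} by symmetry even though $f$ is not even, so the perturbations must be bounded directly. For $\xC^3$ the Taylor remainder is cubic, hence odd, and combined with the symmetry this upgrades the estimates by one order, yielding $|H_\epsilon^{\proj}(x_0,V)-H(x_0)|=O(\epsilon)$. The main obstacle I anticipate is maintaining uniform control of all these remainders while the operator $\Pi$ itself varies with $x$ through $S=T_xM$, and making the symmetry cancellation of the tangential part rigorous in the merely $\xC^2$ setting.
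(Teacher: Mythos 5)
Your proposal is correct and follows essentially the same route as the paper's proof: local graph parametrization around $x_0$, second--order Taylor expansion showing all three base projectors share the normal component $\Hessian u(0)y\cdot y+o(|y|^2)$, cancellation of the tangential part by odd symmetry (or by the post--composed projection), spherical averaging of the quadratic form to produce $\trace \Hessian u(0)$, and the kernel relation \eqref{eq:kernelPairs} to normalize the constant. The only cosmetic differences are your rescaling $u=\epsilon v$ in place of the paper's polar/co-area computation and your packaging of the final cancellation via integration by parts of $\int_0^1\rho'(s)s^d\,ds$ rather than the pointwise identity $n\xi(s)=-s\rho'(s)$ under the integral.
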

Note that a corresponding result holds true for any codimension greater or equal than $1$. 
Nevertheless, for the sake of simplicity and because we consider only the codimension $1$ case in our numerical applications we state and prove the result only for $d = n-1$.

Before we prove this proposition in the general case, let us depict the simplest case of a quadratic curve in $\R^2$. 
Taking into account $n\xi(s) = -s \rho(s)$ we obtain
\[
H_\epsilon(x_0,V) = \frac{\int_M \xi(\tfrac{x-x_0}{\epsilon}) \tfrac{1}{|x-x_0|^2} \Pi (x-x_0)\; \mathrm{d}\mathcal{H}^1}{\int_M \xi(\tfrac{x-x_0}{\epsilon})\; \mathrm{d}\mathcal{H}^1}
\]
We define $M=\{x=(y,-ay^2)\,|\, y \in \R\}$ and set $x_0=0$. Then the normal on $M$ is $\nu = \tfrac{(2ay,1)}{\sqrt{1+4a^2y^2}}$.
For $\Pi=2 \rm{Id}$ we get  $\tfrac{1}{|x|^2} \Pi x = \tfrac{(2y/y^2,-2a)}{(1+a^2y^2)}$ and thus using the symmetry in $y$ we get $H_\epsilon(x_0,V)= (0,-2a) + O(\epsilon^2)$.
For $\Pi=\Pi_{T_{x}M}$, one computes 
$\tfrac{1}{|x|^2}  \Pi x = \tfrac{1}{y^2+a^2y^4} [(y,-ay^2) - \tfrac{1}{1+4a^2y^2} (4 a^2 y^3,a y^2)]$ which again implies $H_\epsilon(x_0,V)= (0,-2a) + O(\epsilon^2)$.
For the other choices of $\Pi$ listed in the proposition analoguous and easy computations lead to the same approximation result. 
\begin{proof}
Up to an affine isometry, we can assume that $x_0 = 0$ and $T_{x_0} M = \R^{n-1} \times \{0\}$. We locally parametrize $M$ by $u : \cU \subset \R^{n-1} \rightarrow \R$ of class $\xC^2$ 
on an open set $\cU$ containing $0$. Consequently, for all $0 < \epsilon < \epsilon_0 \leq 1$ with $\epsilon_0$ small enough,
\[
M \cap B (0, \epsilon) = \{ (y, u(y)) \in \cU \times \R \: : \: |y|^2 + |u(y)|^2 < \epsilon^2 \} 
\]
and $M \cap B (0, \epsilon)$ is the graph of $u$ over the open set
$\cV_\epsilon = \{ y \in \cU \: : \: |y|^2 + |u(y)|^2 < \epsilon^2 \} \subset \R^{n-1}$. Thereby we have for the mean curvature $H(0) = \rm{trace} (\Hessian u (0) e_n)$ with $\Hessian u$ denoting the Hessian of $u$. Taking into account
\begin{equation} \label{eq:graphAsumption}
u(y) = \frac{1}{2}\Hessian u (0)y \cdot y + o(|y|^2) \quad \text{and} \quad \nabla u(y) = \Hessian u(0) y + o(|y|)\: ,
\end{equation}
we have for the normal vector $\nu(y)$ to $M$ at $(y,u(y)) \in M \cap B(0, \epsilon)$,
\begin{equation} \label{eq:graphAsumption2}
\nu(y) = \frac{(\nabla u(y), -1)}{\sqrt{1 + |\nabla u(y)|^2}} =  (1 + o(|y|)) \begin{pmatrix} \Hessian u(0)y + o(|y|) \\ -1 \end{pmatrix} = \begin{pmatrix} \Hessian u(0)y  \\ -1 
\end{pmatrix}  + o(|y|) \: .
\end{equation}
By definition of the approximate mean curvature \eqref{eq:projMeanCurvature}, 
\begin{align} \label{eq:consitency1}
\left| H_\epsilon^{\proj} (0,V) - H(0) \right| = \frac{ \displaystyle \left| \int_{\cV_\epsilon}f_\epsilon (y,u(y)) \sqrt{1 + |\nabla u (y)|^2} \: dy \: \right|}
{\displaystyle \int_{\cV_\epsilon} \xi \left( \frac{|(y,u(y))|}{\epsilon}\right) \sqrt{1 + |\nabla u (y)|^2} \: dy }
\end{align}
with
$\displaystyle f_\epsilon (z) =  \frac{-(n-1)}{n \epsilon} \rho^\prime \left( \frac{|z|}{\epsilon}\right) \frac{\Pi z }{|z|}  - H(0)  \xi \left( \frac{|z|}{\epsilon} \right)$.

\noindent Up to decreasing $\epsilon_0$ we can assume $D(0, \epsilon_0)  \coloneqq \{ y \in \R^{n-1} \: : \: |y| < \epsilon_0 \} \subset \cU$.
At first, we simplify the nominator and denominator via expansion of the area element and slightly enlarging the integration domain. Obviously, $\cV_\epsilon \subset D(0,\epsilon)$ and 
there exists $\kappa > 0$ such that for all $y \in D(0,\epsilon)$, $|u(y)| \leq \kappa |y|^2$. For $y \in \cU$ 
and $|y| < \eta$ with $\eta = \epsilon  \sqrt{ 1 - \kappa^2 \epsilon^2}$ we obtain
\[
|y|^2 + |u(y)|^2 < \eta^2 + \kappa^2 \eta^4 = \epsilon^2 ( 1 - \kappa^2 \epsilon^2) + \kappa^2  \epsilon^4 ( 1 -  \kappa^2 \epsilon^2)^2 \leq \epsilon^2 \: ,
\] 
which implies that $D(0, \eta) \subset \cV_\epsilon\:.$ 
Moreover, notice that for $g$ bounded and continuous on $D(0,\epsilon)$, and using $D(0,\eta ) \subset \cV_\epsilon \subset D(0,\epsilon)$,
\begin{multline}
\left| \int_{D(0,\epsilon)} g - \int_{\cV_\epsilon} g \, \right| \leq \sup_{D(0, \epsilon)} |g|\, | D(0,\epsilon) - \cV_\epsilon | 
\leq \sup_{D(0, \epsilon)} |g| \,| D(0,\epsilon) - D(0,\eta) |  \\
= \sup_{D(0, \epsilon)} |g|\, \omega_{n-1} \epsilon^{n-1} \left( 1 - (1 - (\kappa \epsilon)^2)^{\frac{n-1}{2}} \right) = O(\epsilon^{n+1}) \: . \label{eq:enlargementDomain}
\end{multline}
As $\rho^\prime (|z|) \frac{z}{|z|} = \nabla (\rho (|z|))$ is uniformly bounded and $\| \Pi \| \leq 2$, we infer that the continuous map
$\epsilon f_\epsilon((y,u(y))) \sqrt{1 + |\nabla u(y)|^2}$ is uniformly bounded in $D(0,\epsilon)$. Now, we apply \eqref{eq:enlargementDomain} and due to 
$\sqrt{1 + |\nabla u(y)|^2} = 1  + O(|y|^2)$ as well as $\epsilon^{-1} \int_{D(0,\epsilon)} O(|y|^2) dy = O(\epsilon^n)$ deduce
\begin{align}
\int_{\cV_\epsilon} f_\epsilon ((y,u(y)) \sqrt{1 + |\nabla u (y)|^2} \, dy & = \int_{D(0,\epsilon)} f_\epsilon ((y,u(y)) \sqrt{1 + |\nabla u (y)|^2} \, dy + \epsilon^{-1} O(\epsilon^{n+1}) \nonumber \\
& =  \int_{D(0,\epsilon)} f_\epsilon ((y,u(y)) \, dy + O(\epsilon^n) \: . \label{eq:parametricFormulation}
\end{align}
Analogously, we get for the denominator
\begin{align}
\int_{\cV_\epsilon}  \xi \left( \frac{|(y,u(y))|}{\epsilon}\right) \sqrt{1 + |\nabla u (y)|^2} \, dy  & = 
\int_{D(0,\epsilon)}\xi \left( \frac{|(y,u(y))|}{\epsilon}\right)  \, dy +  O(\epsilon^{n+1})  \: . \label{eq:parametricFormulationDenominator}
\end{align}
Next, we take into account the expansion of kernels involved in \eqref{eq:consitency1}.
Thanks to \eqref{eq:graphAsumption} we deduce 
$\displaystyle
|z| = |(y,u(y))| = \left( |y|^2 + |u(y)|^2 \right)^{\frac{1}{2}} = \left( |y|^2 + O(|y|^4) \right)^{\frac{1}{2}} = |y| (1+ O(|y|^2))$ so that for $y \in D(0, \epsilon)$,
\begin{align*}
\xi \left( \frac{|z|}{\epsilon} \right) &= \xi \left( \frac{|y|}{\epsilon} + \frac{|y|}{\epsilon} O (|y|^2) \right) = \xi \left( \frac{|y|}{\epsilon} \right) + \frac{|y|}{\epsilon} O (|y|^2) = \xi \left( \frac{|y|}{\epsilon} \right) +  O (|y|^2)\:,  \\
 \frac{1}{|z|} \rho^\prime \left( \frac{|z|}{\epsilon} \right) &= \frac{1}{|y| + O(|y|^3)} \left( \rho^\prime \left( \frac{|y|}{\epsilon} \right) + O (|y|^2) \right) = \frac{1}{|y|} \rho^\prime \left( \frac{|y|}{\epsilon} \right)  + O(|y|) \: . 
\end{align*}
It remains to expand $\Pi z$ for $z=(y,u(y))$. 

\noindent \emph{Case $\Pi = 2 \rm{Id}$: } We directly obtains $\displaystyle \Pi z = 2 (y, u(y)) = \begin{pmatrix}
2y \\ \Hessian u(0) y \cdot y + o(|y|^2)
\end{pmatrix}$.\\
\noindent \emph{Case $\Pi = -2 \Pi_{(T_z M)^\perp}$:}  Using the expansion of the normal  $\nu(y)$ from \eqref{eq:graphAsumption2} we obtain 
\begin{align*}
\Pi z   & =    -2 (z \cdot \nu(y)) \nu(y) \\
& = -2 \left( \begin{pmatrix} y \\ \frac{1}{2} \Hessian u(0)y \cdot y + o(|y|^2) \end{pmatrix}  \cdot 
\begin{pmatrix} \Hessian u(0)y + o(|y|) \\ -1 + o(|y|)  \end{pmatrix} \right) \; \begin{pmatrix} \Hessian u(0)y + o(|y|) \\ -1 + o(|y|)  \end{pmatrix}  \\
& =  - \left( \Hessian u(0) y \cdot y + o(|y|^2)\right) \begin{pmatrix} \Hessian u(0)y + o(|y|) \\ -1 + o(|y|)  \end{pmatrix} = \begin{pmatrix} o(|y|^2) \\  \Hessian u(0) y \cdot y + o(|y|^2) \end{pmatrix}.
\end{align*}
\noindent \emph{Case $\Pi = \Pi_{T_z M}$:}  Taking into accout the estimates in the previous two cases we achieve 
\begin{align*}
\Pi z =  \frac{1}{2} \left( 2 \rm{Id} - 2 \Pi_{(T_z M)^\perp} \right) z  = \begin{pmatrix}
y + o(|y|^2) \\ \displaystyle \Hessian u(0) y \cdot y + o(|y|^2) \end{pmatrix}\:.
\end{align*}
To summarize, in all three cases, we obtain
\begin{equation} \label{eq:projectorExpansions}
\Pi z \cdot e_i = \lambda y_i + o(|y|^2) \; \forall i \in \{ 1, \ldots, n-1 \} \quad \text{and} \quad \Pi z \cdot e_n = \Hessian u(0) y \cdot y + o(|y|^2) \: ,
\end{equation}
with $\lambda = 2$ for $\Pi = 2 \rm{Id}$, $\lambda = 0$ for $\Pi = -2 \Pi_{(T_z M)^\perp}$, and $\lambda = 1$ for $\Pi = \Pi_{T_z M}$.
\medskip
Now, applying the co-area formula to \eqref{eq:parametricFormulation} and using the above kernel expansions for $z = (y,u(y))$ we get
\begin{align}
\int_{D(0,\epsilon)}  \xi \left( \frac{| z |}{\e} \right) \, dy & =  \int_{D(0,\epsilon)} \xi \left( \frac{|y|}{\epsilon} \right) +  O (|y|^2) \, dy \nonumber \\
& =  \int_{r=0}^\e \left( \xi \left( \frac{r}{\e} \right) + O(r^2) \right) \cH^{n-2} \left( \partial D(0,r) \right) \, dr \nonumber \\
& = \sigma_{n-2}  \int_{r=0}^\e  \xi \left( \frac{r}{\e} \right) r^{n-2} \, dr + O(\e^{n+1}) \: , \label{eq:consistency3}
\end{align}
where $\sigma_{n-2} = (n-1) \omega_{n-1}$ is the area of the unit sphere in $\R^{n-1}$. Using \eqref{eq:projectorExpansions} we obtain for $i \in \{ 1, \ldots, n-1 \}$,
\begin{align}
\int_{D(0,\epsilon)} \frac{1}{|z|} \rho^\prime \left( \frac{| z |}{\e} \right) \Pi (z) \, dy \cdot e_i & =  \int_{D(0,\epsilon)} \left[ \frac{1}{|y|} \rho \left( \frac{|y|}{\epsilon} \right) +  O (|y|) \right] ( \lambda y_i + o(|y|^2 ) ) \, dy \nonumber \\
& =  \int_{r=0}^\e \left( \frac{1}{r}  \rho^\prime \left( \frac{r}{\e} \right) + O(r) \right) \int_{\partial D(0,r)} (\lambda y_i + o(r^2)) \, d \cH^{n-2} \, dr \nonumber \\
& = \int_{r=0}^\e \left( \frac{1}{r}  \rho^\prime \left( \frac{r}{\e} \right) + O(r) \right)  r^{n-2} o(r^2) \, dr  = \int_{r=0}^\epsilon o(r^{n-1}) dr \nonumber \\
& = o(\e^{n}) \: , \label{eq:consistency4}
\end{align}
where we used that $\displaystyle \int_{\partial D(0,r)}  y_i \, d \cH^{n-2}$ vanishes.
Furthermore, using the kernel expansions and \eqref{eq:projectorExpansions} we achieve
\begin{align}
\int_{D(0,\epsilon)} \frac{1}{|z|} \rho^\prime \left( \frac{| z |}{\e} \right) \Pi (z) \, dy \cdot e_n &  =  \int\limits_{r=0}^\e \left( \frac{1}{r}  \rho^\prime \left( \frac{r}{\e} \right) + O(r) \right) \int_{\partial D(0,r)} \hspace{-5pt} (\Hessian u(0) y \cdot y + o(r^2))  d \cH^{n-2}  dr \nonumber \\
& = \int\limits_{r=0}^\e \ \frac{1}{r}  \rho^\prime \left( \frac{r}{\e} \right) \int_{\partial D(0,r)} \hspace{-5pt} (\Hessian u(0) y \cdot y ) \:  d \cH^{n-2} \:  dr  +  o (\e^n) \: . \label{eq:consistency5}
\end{align}
Next, we verify that  $\displaystyle \int_{\partial D(0,r)} (\Hessian u(0) y \cdot y ) \:  d \cH^{n-2} = H(0) \cdot e_n  \frac{\sigma_{n-2}}{n-1} r^n$. 
Indeed,  let $\{v_1, \ldots, v_{n-1}\}$ be an orthonormal basis of $\R^{n-1}$ of eigenvectors of $\Hessian u(0)$ associated with eigenvalues $\kappa_1, \ldots, \kappa_{n-1}$. 
Then decomposing $\displaystyle y = \sum_{j=1}^{n-1} \hat y_j v_i$ in $\cB$, we have
$\displaystyle \Hessian u(0) y \cdot y = \sum_{j = 1}^{n-1} \kappa_j \hat y_j^2$
and by symmetry $\displaystyle \int_{\partial D(0,r)} \hat y_j^2 \, dy = \frac{1}{n-1} \int_{\partial D(0,r)} |y|^2 \, dy = \frac{\sigma_{n-2}}{n-1} r^n$. Hence, the claim follows from 
\begin{equation} \label{eq:consistency6}
\int_{\partial D(0,r)} (\Hessian u(0) y \cdot y ) \:  d \cH^{n-2} = \sum_{j=1}^{n-1} \kappa_j \frac{\sigma_{n-2}}{n-1} r^n = \frac{\sigma_{n-2}}{n-1} r^n \; {\rm{trace}} \, \Hessian u(0).
\end{equation}
Let $\Pi \in \left\lbrace \Pi_S, \, -2 \Pi_{S^\perp}, \,  2 \rm{Id} \right\rbrace$. Gathering the above estimates (\eqref{eq:parametricFormulation} to \eqref{eq:consistency6}) and inserting them in \eqref{eq:consitency1}, we conclude
\begin{align*}
\left| H_\epsilon^{\proj} \right.&\left. (0,V)  - H(0) \right| \\
& = \left( \int_{r=0}^\e \xi \left( \frac{r}{\epsilon} \right) r^{n-2} \, dr + O(\e^n) \right)^{-1}   \left| H(0) \int_{r=0}^\e \left( \frac{1}{n} \frac{r}{\e} \rho^\prime \left( \frac{r}{\e} \right) + \xi \left( \frac{r}{\e} \right) \right) r^{n-2} \, dr + o( \epsilon^{n-1} ) \right| \\
& =  \frac{\e^{-(n-1)}}{C_\xi + O(\e)} \left(|H(0)| \: \epsilon^{n-1} \int_{s=0}^1 \left( \frac{1}{n} s \rho^\prime (s) + \xi(s) \right) s^{n-2} \, ds + o (\e^{n-1}) \right)\\
& = 0 + o(1) \: ,
\end{align*}
with $\displaystyle C_\xi = \int_{s=0}^1 \xi(s) s^{n-2} \, ds$ and $\displaystyle \int_{s=0}^1 \left( \frac{1}{n} s \rho^\prime (s) + \xi(s) \right) s^{n-2} \, ds = 0$
by asumption \eqref{eq:kernelPairs}.

As $H(0)$ is othogonal to $T_0 M$, $\Pi_{(T_0 M)^\perp} H(0) = H(0)$ and thus for $\Pi \in \{ \Pi_S, -2 \Pi_{S^\perp}, 2 \rm{Id}  \}$ we have
$$\displaystyle \left| \Pi_{(T_0 M)^\perp} H_\epsilon^{\proj} (0,V) - H(0) \right| = o(1)\,.$$ 
Finally, it is straightforward to verify that for $M$ being at least $\xC^3$ one obtains the improved convergence estimate $\displaystyle \left| H_\epsilon^{\proj} (0,V) - H(0) \right| = O(\epsilon)$.
\end{proof}

In summary, consistency with the usual mean curvature holds for smooth varifolds (Proposition~\ref{prop:consistency}). In the particular case $\Pi = \Pi_S$, consistency with the generalized mean curvature holds almost everywhere (w.r.t. the mass measure) for rectifiable varifolds whose first variation is a Radon measure (\cite{BuetLeonardiMasnou}).
In view of stating convergence of approximate mean curvature $H_{\epsilon} (\cdot, V_i)$ computed on point cloud varifolds $(V_i)_i$, when $\epsilon$ tends to $0$ and $V_i$ tends to a smooth (or rectifiable with first variation Radon) varifold, we tackle the stability issue in next section.

\section{Stability of the approximate mean curvature} \label{sec:stability}
 
We are now going to state a quite general stability result on the approximate mean curvature introduced in \eqref{eq:projMeanCurvature}.
The stability will hold with respect to weak star convergence of varifolds, assuming that the limit varifold has finite mass and is \emph{$d$--regular} in the sense that its mass $\V$ is $d$--Ahlfors regular, i.e.~there exists $C_0 \geq 1$, $r_0 > 0$ such that for all $x \in \supp \V$ and $0 < r \leq r_0$,
\begin{equation} \label{eq:ahlfors}
C_0^{-1} r^d \leq  \V(B(x,r)) \leq C_0 r^d \: .
\end{equation}
Note that $r_0$ can be chosen as large as needed : if condition \eqref{eq:ahlfors} holds for some $r_0 > 0$ then it holds for any $r_1 \geq r_0$ possibly adapting the regularity constant $C_0$.\\
Part of the result  
can be easily obtained by adapting the case $\Pi = \Pi_S$ dealt with in \cite{BuetLeonardiMasnou} (Theorem 4.5), using the fact that the family of maps (indexed by $x \in \R^n$)
\begin{equation} \label{eq:deltaPi}
\Phi_x^\epsilon : (y,S) \mapsto \frac{1}{\epsilon} \rho^\prime \left( \frac{|y-x|}{\epsilon} \right) \frac{\Pi  (y-x)}{|y-x|} = \Pi \nabla_y \left( \rho \left( \frac{|y-x|}{\epsilon} \right) \right) 
\end{equation}
for $\Pi$ as in \eqref{eq:projMeanCurvature}  is equi-Lipschitz with Lipschitz constant bounded by $\displaystyle \epsilon^{-2}\xLip (\rho^\prime)$. 
The other part of the result requires some work on weak star and weak convergence of finite Radon measures as well as on the \emph{flat distance} and \emph{Prokhorov distance} that metrizes weak convergence (see Proposition~\ref{thm:weakAndDistance} below).
The section is organized as follows, in a first part 
we introduce some material on weak and weak star convergences as well as \emph{flat distance} and \emph{Prokhorov distance}. In a second part we establish a general stability result (Proposition~\ref{prop:stability}) holding in the neighbourhood (with respect to aforementioned distances) of a $d$--regular varifold. Let us emphasize that $d$--regularity of a Radon measure is a weak asumption when it comes to prove stability for curvature estimates. In the third and last part, we put the results of the section together and state a convergence theorem (Theorem~\ref{thm:stability} and Corollary~\ref{cor:CV}) for the approximate mean curvature \eqref{eq:projMeanCurvature} of a sequence of weak star converging varifods.

All Radon measures we consider in this section are {\bf nonnegative} (and nonzero) Radon measures.

\subsection{Prokhorov and flat distance} \label{subsec:distances}

\begin{defin}[Weak and weak star convergence] \label{dfn:weakCV}
Let $(X,d)$ be a locally compact and separable metric space (for us $X = \R^n$ or $X = \R^n \times \G$) and let $(\mu_i)_{i \in \N}$, $\mu$ be Radon measures in $X$. We say that
\begin{enumerate}[$(i)$]
\item $(\mu_i)_i$ \emph{weak star} converges to $\mu$ if for every $\phi : X \rightarrow \R$ continuous and {\bf compactly supported}, 
$\displaystyle \int \phi \: d \mu_i \xrightarrow[i \to \infty]{} \int \phi \: d \mu $.
\item If in addition the measures $(\mu_i)_i$, $\mu$ are finite, we say that $(\mu_i)_i$ \emph{weak} converges to $\mu$ if for every $\phi : X \rightarrow \R$ continuous and {\bf bounded}, $\displaystyle \int \phi \: d \mu_i \xrightarrow[i \to \infty]{} \int \phi \: d \mu $.
\end{enumerate}
\end{defin}

Weak star convergence is also referred as vague convergence. By definition, weak convergence implies weak star convergence, whereas the converse is not true in general: compactly supported functions are blind to mass escaping at infinity or accumulating on the boundary. Consider for instance $\mu_i = \delta_i$ in $\R$ or $\mu_i = \delta_{\frac{1}{i}}$ in $]0,1[$. In both cases $\mu_i$ weak star converge to $0$ but does not weak converge.

A simple example of weak star convergence of varifolds is defined via a sequence of sawtooth type 
polygonal curves $C_\rho$ of amplitude $\rho^\alpha$ for some $\alpha \geq 1$
and frequency $\tfrac{1}{2\rho}$ oscillating around the $e_1$ axis in $\R^2$. 
Given these curves we define a sequence of varifolds $V_\rho$ via
\[
V_\rho(B) = (1+\rho^{2(\alpha-1)})^{-\frac12} \mathcal{H}^1(\{x\in C_\rho\,|\, (x, T_xC_\rho) \in B\}) \quad \text{i.e.} \quad V_\rho = (1+\rho^{2(\alpha-1)})^{-\frac12} \mathcal{H}^1_{| C_\rho} \otimes \delta_{T_xC_\rho} \: .
\]
We denote by $D_{e_1} \subset \R^2$ the straight line along the $e_1$ axis. For $\alpha > 1$, the family $(V_\rho)$ converges weak star for $\rho$ tending to $0$ to the varifold $V = \cH^1_{D_{e_1}} \otimes \delta_{{\rm span}(e_1)}$ that is
the smooth varifold associated with the straight line $D_{e_1}$.
For $\alpha=1$ the  weak star limit of the family of varifolds is
 $V = \cH^1_{D_{e_1}} \otimes \frac{1}{2}\left( \delta_{{\rm span}(e_1 + e_2)} + \delta_{{\rm span}(e_1 - e_2)} \right)$,
note that the measure in the Grassmannian is constant along the $e_1$ axis and consists of $2$ atomic weights in the Grassmannian.

Note that finite Radon measures inherit the Banach structure of linear forms on $C_c (X)$ through Riesz representation theorem. However, the resulting total variation distance
\[
d_{TV} (\mu, \nu) = | \mu - \nu |(X) = \sup \left\lbrace \left. \int_X \phi \: d \mu - \int_X \phi \: d \nu \: \right| \phi \in \xC_c (X), \: \sup | \phi | \leq 1 \right\rbrace
\]
is much too strong with respect to compactness issues as well as approximation questions. Indeed, if $x$, $y \in X$ then whenever $x \neq y$,
$
d_{TV} (\delta_x, \delta_y) = 2
$ no matter how small $d(x,y)$ is. Therefore, we introduce the so-called \emph{flat distance} and \emph{Prokhorov distance} that behave more consistently with weak and weak star topologies.

\begin{defin}[Flat distance] \label{dfn:flatDistance}
Let $(X,d)$ be a locally compact and separable metric space (for us $X = \R^n$ or $X = \R^n \times \G$) and let $\mu$, $\nu$ be Radon measures. We define the (localized) \emph{bounded Lipschitz distance} in the open set $U \subset X$:
\[
\Delta_U (\mu, \nu) 
: = \sup \left\lbrace \int_X \phi \, d \mu - \int_X \phi \, d \nu  \left| 
\begin{array}{l}
\phi \text{ is } 1\text{--Lipschitz} \\
 \sup_X | \phi | \leq 1 \\
\supp \: \phi \subset U
\end{array} \right. \right\rbrace \: ,
\]
in the case $U = X$, we simply denote $\Delta = \Delta_X$.
Note that $\Delta$ is a distance in the space of Radon measures.
\end{defin}

For balls of radius less than $1$, $\sup_X | \phi | \leq 1$ is automatically satisfied in Definition~\ref{dfn:flatDistance}.

\begin{defin}[Prokhorov distance] \label{dfn:prokhorovDistance}
Let $\mu$, $\nu$ be finite Radon measures in $\R^n$ we recall that the 
\emph{Prokhorov distance} is defined as
\[
d_{\cP} (\mu, \nu) \coloneqq \inf \left\lbrace \delta > 0 \: \left| \: \mu(A) \leq \nu (A^\delta) + \delta \text{ and } \nu(A) \leq \mu(A^\delta) + \delta, \: \forall \: A \subset X \text{ Borel set}  \right. \right\rbrace \: ,
\]
with $A^\delta = \bigcup_{x \in A} B(x,\delta)$.
We introduce a slightly modified version of Prokhorov distance, for $d \in \N^\ast$,
\[
\eta_d (\mu, \nu) \coloneqq \inf \left\lbrace \delta > 0 \: \left| \: \mu(B) \leq \nu (B^\delta) + \delta^d \text{ and } \nu(B) \leq \mu(B^\delta) + \delta^d, \: \forall \: B \subset \R^n \text{ closed ball}  \right. \right\rbrace \: .
\]
\end{defin}

As Radon measures we work with are $d$--dimensional, homogeneity considerations lead to the modified Prokhorov distance $\eta_d$ introduced in Definition~\ref{dfn:prokhorovDistance}.
Notice that ${\overline{A}}^\delta = A^\delta$ for a Borel set $A \subset \R^n$ and then $\mu (A) \leq \mu(\overline{A}) \leq \nu(A^\delta) + \delta$ and it is natural to restrict to closed sets. Moreover, the restriction to balls is due to the convergence result we are interested in, nevertheless, as $\mu$ and $\nu$ are Radon measures in $\R^n$, if they coincide on balls then they are equal (thanks to Radon-Nikodym differentiation theorem for Radon measures) and thus $\eta_d$ defines a distance among finite Radon measures. The triangular inequality is straightforward, using that $a^d + b^d \leq (a+b)^d$ for $a,b \geq 0$ and $d$ a positive integer.
The next proposition connects weak and weak star topologies and topology induced by both Prokhorov and flat distances.

\begin{prop} \label{thm:weakAndDistance}
Let $(X,d)$ be a locally compact separable metric space (for us $X = \R^n$ or $X = \R^n \times \G$) and let $\mu$, $(\mu_i)_{i \in \N}$ be finite (nonzero) Radon measures. 
\begin{enumerate}
\item $(\mu_i)_i$ weak star converges to $\mu$ and $\mu_i(X) \xrightarrow[i\to \infty]{} \mu(X)$ if and only if $(\mu_i)$ weak converges to $\mu$.
\item If $(\mu_i)_i$ weak converges to $\mu$, then both $\Delta (\mu_i, \mu) \xrightarrow[i\to \infty]{} 0$ and $d_{\cP} (\mu_i, \mu) \xrightarrow[i \to \infty]{} 0$.
\end{enumerate}
\end{prop}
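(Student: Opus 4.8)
The goal is to prove Proposition~\ref{thm:weakAndDistance}, which connects weak/weak star convergence of finite Radon measures with convergence in both the flat distance $\Delta$ and the Prokhorov distance $d_{\cP}$. I will treat the two items separately.

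\medskip

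\noindent\textbf{Plan for item (1).} This is a classical equivalence, and I would establish both directions by careful use of the definitions. The forward direction (weak star plus $\mu_i(X) \to \mu(X)$ implies weak) is the substantive one: given a bounded continuous $\phi$, I would split the integral using a cutoff. The idea is that weak star convergence handles compactly supported test functions, so I would introduce a continuous compactly supported cutoff $\chi_R$ (equal to $1$ on a large ball $B(0,R)$, supported in $B(0,R+1)$, with values in $[0,1]$) and write $\phi = \phi\chi_R + \phi(1-\chi_R)$. The term $\int \phi\chi_R\, d\mu_i \to \int \phi\chi_R\, d\mu$ by weak star convergence since $\phi\chi_R \in \xC_c(X)$. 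The remainder is controlled because $|\int \phi(1-\chi_R)\,d\mu_i| \leq \|\phi\|_\infty\, \mu_i(X \setminus B(0,R))$, and the total-mass convergence $\mu_i(X)\to\mu(X)$ combined with weak star convergence forces the tail masses $\mu_i(X\setminus B(0,R))$ to be uniformly small for $R$ large (no mass escapes to infinity). This tightness-type argument is the key point. The converse direction is immediate: weak convergence directly gives weak star convergence (compactly supported functions are bounded), and testing against $\phi \equiv 1$ (a bounded continuous function) yields $\mu_i(X)\to\mu(X)$.

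\medskip

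\noindent\textbf{Plan for item (2).} Assume weak convergence. For the flat distance $\Delta(\mu_i,\mu)\to 0$, I would invoke a standard compactness/equicontinuity argument on the class of admissible test functions. The test functions $\phi$ defining $\Delta$ are $1$--Lipschitz with $\sup|\phi|\leq 1$; by Arzel\`a--Ascoli this is a relatively compact family in the topology of uniform convergence on compact sets. One approach: suppose for contradiction that $\Delta(\mu_i,\mu)\not\to 0$; then along a subsequence there exist admissible $\phi_i$ with $\int \phi_i\, d(\mu_i-\mu) \geq c > 0$. Using equi-Lipschitz bounds plus the tightness coming from weak convergence (finite limit mass), I would extract a further subsequence along which $\phi_i\to\phi$ uniformly on compact sets for some admissible limit $\phi$, and then show $\int\phi_i\,d(\mu_i-\mu)\to 0$, contradicting $c>0$. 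The tightness is what lets me pass from uniform-on-compacts to genuine control of the integrals. For the Prokhorov distance $d_{\cP}(\mu_i,\mu)\to 0$, this is the classical statement that Prokhorov distance metrizes weak convergence of finite measures on a separable metric space, so I would either cite this directly or reproduce the standard proof: cover the support by finitely many small balls, use weak convergence on a finite collection of sets whose boundaries are $\mu$--null, and convert the resulting set-wise estimates into the $\delta$--inflation inequalities in the definition of $d_{\cP}$.

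\medskip

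\noindent The main obstacle in both items is handling \emph{tightness}, i.e.~ruling out mass escaping to infinity. This is exactly the phenomenon flagged in the discussion after Definition~\ref{dfn:weakCV} (the examples $\mu_i=\delta_i$ and $\mu_i=\delta_{1/i}$), and it is why the finite-mass and total-mass-convergence hypotheses are essential. In item (1) the hypothesis $\mu_i(X)\to\mu(X)$ provides the tightness directly; in item (2) weak convergence (by definition testing against all bounded continuous functions) already encodes it, but I must extract and use it explicitly when estimating tail contributions. The flat-distance direction is where I expect to spend the most care, since combining the equi-Lipschitz compactness of test functions with the tail control requires a clean contradiction argument; the Prokhorov direction, by contrast, is standard and can lean on known results.
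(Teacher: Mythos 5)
The paper does not actually prove this proposition: it delegates point (1) to \cite{ambrosio} (Prop.~1.80) and point (2) to \cite{Bogachev2007} (Section~8.3), remarking only that one may normalize by the total masses to reduce to probability measures, for which the metrization results are usually stated. Your sketch is the standard argument underlying those references and is correct in outline, so there is no conflict with the paper --- you are simply reproving what the authors chose to cite. Two places where your plan needs the most care when written out in full. First, in item (1) your cutoff argument tacitly uses that balls are relatively compact (so that $\phi\chi_R\in\xC_c(X)$); this holds for $X=\R^n$ and $X=\R^n\times\G$ but not in an arbitrary locally compact separable metric space, where one should instead work with an exhaustion by compact sets --- the tail estimate $\limsup_i \mu_i(X\setminus K') \leq \mu(X)-\mu(K)$ for $K\subset K'$ via a Urysohn function sandwiched between them is the clean way to phrase it. Second, in item (2) the uniform tightness of $(\mu_i)_i$ that you invoke to pass from uniform-on-compacts convergence of the $\phi_i$ to convergence of the integrals is itself a consequence of weak convergence only on a Polish space (it follows from the same Urysohn-function estimate, using that the single measure $\mu$ is tight); again this is fine for the spaces at hand but should be stated rather than assumed. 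With those points made explicit, your contradiction argument for $\Delta(\mu_i,\mu)\to 0$ and the classical Prokhorov metrization for $d_{\cP}(\mu_i,\mu)\to 0$ give a complete and self-contained proof, which is arguably more informative than the paper's bare citations.
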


We refer to \cite{ambrosio}[1.80] for the first point of Proposition~\ref{thm:weakAndDistance} and \cite{Bogachev2007}[Section 8.3] for the second point, let us mention that considering $\frac{\mu_i}{\mu_i(X)}$ and $\frac{\mu}{\mu(X)}$ allows to work with probability measures, for which the second point is more commonly stated.
We now check that weak convergence implies convergence for $\eta_d$, which is all we need in this work.

\begin{lem}
Let $\mu$, $\nu$ be finite Radon measures. Then
\begin{equation}
\eta_d (\mu, \nu) \leq \left\lbrace \begin{array}{lcl}
\left( \Delta (\mu, \nu) \right)^\frac{1}{d+1} & \text{if} & \Delta(\mu,\nu) \leq 1 \\
\left( \Delta (\mu, \nu) \right)^\frac{1}{d} & \text{if} & \Delta(\mu,\nu) \geq 1
\end{array} \right. \: .
\end{equation}
In particular, if $(\mu_i)_{i \in \N}$ is a sequence of finite Radon measures weakly converging to $\mu$ then $\eta_d (\mu_i, \mu)$ tends to $0$ when $i \to \infty$.
\end{lem}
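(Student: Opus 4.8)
The plan is to bound $\eta_d$ from above by exhibiting, for a suitable $\delta$, an explicit admissible value in the infimum defining $\eta_d$, i.e. a single $\delta$ for which both ball inequalities hold for every closed ball simultaneously. The key device is the standard cutoff associated with a closed ball $B$ and a scale $\delta > 0$,
\[
\phi_{B,\delta}(x) = \max\!\left(0, \, 1 - \tfrac{1}{\delta}\dist(x,B)\right),
\]
which satisfies $\phi_{B,\delta} \equiv 1$ on $B$, takes values in $[0,1]$, is $\tfrac{1}{\delta}$-Lipschitz, and obeys $\{\phi_{B,\delta} > 0\} = B^\delta$. From $0 \leq \phi_{B,\delta} \leq 1$ together with $\phi_{B,\delta} = 1$ on $B$ one reads off the two elementary bounds $\mu(B) \leq \int \phi_{B,\delta}\, d\mu$ and $\int \phi_{B,\delta}\, d\nu \leq \nu(B^\delta)$, hence $\mu(B) - \nu(B^\delta) \leq \int \phi_{B,\delta}\, d\mu - \int \phi_{B,\delta}\, d\nu$.

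It then remains to feed $\phi_{B,\delta}$, suitably normalised, into the flat distance $\Delta$, whose test functions must be $1$-Lipschitz with sup norm at most $1$; this is exactly what forces the two regimes matching the statement. If $\delta \leq 1$, the rescaled function $\delta\,\phi_{B,\delta}$ is exactly $1$-Lipschitz and has sup norm $\delta \leq 1$, so it is admissible and $\delta\bigl(\int \phi_{B,\delta}\, d\mu - \int \phi_{B,\delta}\, d\nu\bigr) \leq \Delta(\mu,\nu)$; thus $\mu(B) - \nu(B^\delta) \leq \Delta(\mu,\nu)/\delta$. Choosing $\delta = \Delta(\mu,\nu)^{1/(d+1)}$, which is $\leq 1$ precisely when $\Delta(\mu,\nu)\leq 1$, makes the right-hand side equal to $\delta^d$, giving $\mu(B) \leq \nu(B^\delta) + \delta^d$. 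If instead $\delta \geq 1$, then $\phi_{B,\delta}$ itself is $\tfrac{1}{\delta}$-Lipschitz with $\tfrac{1}{\delta}\leq 1$, hence $1$-Lipschitz, and of sup norm $1$, so it is directly admissible and $\mu(B) - \nu(B^\delta) \leq \Delta(\mu,\nu)$; choosing $\delta = \Delta(\mu,\nu)^{1/d} \geq 1$ again yields $\mu(B) \leq \nu(B^\delta) + \delta^d$.

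In both regimes the construction is symmetric in $\mu$ and $\nu$, so the same $\delta$ also gives $\nu(B) \leq \mu(B^\delta) + \delta^d$. Since these inequalities hold for every closed ball $B$ with a single value of $\delta$, that $\delta$ is admissible in the infimum defining $\eta_d$, whence $\eta_d(\mu,\nu) \leq \delta$, which is precisely the claimed bound in each case. The final assertion follows at once: if $(\mu_i)$ converges weakly to $\mu$, then $\Delta(\mu_i,\mu)\to 0$ by Proposition~\ref{thm:weakAndDistance}, so eventually $\Delta(\mu_i,\mu)\leq 1$ and $\eta_d(\mu_i,\mu)\leq \Delta(\mu_i,\mu)^{1/(d+1)}\to 0$.

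I do not expect a serious obstacle here. The only points requiring care are keeping the test function within the $1$-Lipschitz and unit-sup-norm constraints of $\Delta$ — which is exactly what splits the argument into the two scalings and produces the two exponents $\tfrac{1}{d+1}$ and $\tfrac{1}{d}$ — and the harmless verification that $\{\phi_{B,\delta}>0\}=B^\delta$, so that the cutoff is genuinely controlled by the mass of the $\delta$-neighbourhood.
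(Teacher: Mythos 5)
Your proof is correct and follows essentially the same route as the paper: the same tent cutoff equal to $1$ on $B$ and vanishing outside $B^\delta$, the same rescaling to respect the $1$-Lipschitz and unit sup-norm constraints of $\Delta$ (which produces the two regimes and the exponents $\tfrac{1}{d+1}$ and $\tfrac{1}{d}$), and the same choice of $\delta$ in each case. No issues.
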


\begin{proof}
The proof is standard, we give it for the sake of clarity. Let $B = \overline{B(x,r)} \subset \R^n$ and let $\epsilon > 0$. We define $h_\epsilon : \R_+ \rightarrow [0,1]$ and $\phi_\epsilon : \R^n \rightarrow [0,1]$ by 
\[ h_\epsilon(t) =  \left\lbrace \begin{array}{lcl}
1 & \text{if} & 0 \leq t \leq r \\
0 & \text{if} & t \geq r + \epsilon \\
1 - \frac{t-r}{\epsilon} & \text{if} & r<t<r+\epsilon
\end{array}\right. 
\quad \text{and for } y \in \R^n, \, \phi_\epsilon (y) = h_\epsilon(|y-x|)
\]
so that $\phi_\epsilon$ is radial, $\| \phi_\epsilon \|_\infty \leq 1$ and $\phi_\epsilon$ is $\frac{1}{\epsilon}$--Lipschitz. We infer that
\[
\int \phi_\epsilon \: d\mu - \int \phi_\epsilon \: d \nu \leq \max \left( 1, \frac{1}{\epsilon} \right) \Delta(\mu,\nu) \: .
\]
Consequently,
\[
\mu(B) \leq \int \phi_\epsilon \: d \mu \leq \int \phi_\epsilon \: d \nu + \max \left( 1, \frac{1}{\epsilon} \right) \Delta(\mu,\nu) \leq \nu (B^\epsilon) + \max \left( 1, \frac{1}{\epsilon} \right) \Delta(\mu,\nu) \: .
\]
If $ \Delta(\mu,\nu) \geq 1$, we can take $\displaystyle \epsilon = \left( \Delta (\mu, \nu) \right)^\frac{1}{d} \geq 1$ and we obtain
$\displaystyle
\eta_d (\mu, \nu) \leq \left( \Delta (\mu, \nu) \right)^\frac{1}{d}
$. If $ \Delta(\mu,\nu) \leq 1$, we can take $\displaystyle \epsilon = \left( \Delta (\mu, \nu) \right)^\frac{1}{d+1} \leq 1$ and we obtain $\displaystyle
\eta_d (\mu, \nu) \leq \left( \Delta (\mu, \nu) \right)^\frac{1}{d+1}
$.\\
\noindent The last part of the statement follows from second point of Proposition~\ref{thm:weakAndDistance}.
\end{proof}

\subsection{A stability result}

Unfortunately, stability of the approximate mean curvature does not hold directly with respect to Prokhorov or flat distance but with respect to a combination  $\delta(\cdot, \cdot)$ of both that is not a distance, defined in \eqref{eq:deltaUnif} below.
Nevertheless, we prove in Theorem~\ref{thm:stability} $(i)$ that $\delta (V,V_i)$  tends to $0$ when the sequence of varifolds $(V_i)_i$ weak star converge to a $d$--regular varifold $V$.
We define for $V$, $W$ two $d$--varifolds in $\R^n$,
\begin{equation} \label{eq:deltaUnif}
\delta(V,W) = \sup \left\lbrace \left. \frac{\Delta_B (V, W)}{\left(\eta_d(\V, \|W\|) + \xdiam \: B / 2 \right)^d } \right| B \subset \R^n \text{ ball centered in } \supp \V \right\rbrace \: .
\end{equation}

\begin{prop} \label{prop:stability}
Let $V$ be a $d$--regular varifold in $\R^n$ of finite mass, let $x \in \supp \V$ and let $\epsilon \in ]0,1[$. Then for \emph{any} $d$--varifold $W$ and $z \in \R^n$ satisfying
\begin{equation} \label{eq:stabilityAsumption}
|x-z| + \eta_d (\V, \| W \|) \leq \gamma \epsilon \quad \text{with} \quad \gamma = \frac{1}{8 \left(1 + 2C_0^\frac{1}{d} + C_0^\frac{2}{d} \right)} \: ,
\end{equation}
where $C_0 \geq 1$ is the $d$--regularity constant from \eqref{eq:ahlfors}, we have
\begin{equation}
\left| H_{\epsilon}^\Pi (z, W) - H_{\epsilon}^\Pi (x,V) \right| \leq C \frac{\delta(V,W) + |x-z|}{\epsilon^2} \: ,
\end{equation}
where $C > 0$ only depends on $d$, $n$, $C_0$, $\xi$ and $\rho$.
\end{prop}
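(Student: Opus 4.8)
The plan is to treat $H_\epsilon^\Pi$ as a ratio and to control separately the denominators (from below) and the numerator and denominator \emph{differences} (from above), assembling everything through the elementary identity
\[
\frac{N_1}{D_1} - \frac{N_2}{D_2} = \frac{N_1 (D_2 - D_1)}{D_1 D_2} + \frac{N_1 - N_2}{D_2}\,.
\]
Here $N_\bullet$ and $D_\bullet$ denote the numerator and denominator in \eqref{eq:projMeanCurvature}, so that $H_\epsilon^\Pi(x_0,V) = -\tfrac{d}{n\epsilon}\, N_\epsilon(x_0,V)/D_\epsilon(x_0,V)$ with $N_\epsilon(x_0,V) = \epsilon \int \Phi_{x_0}^\epsilon \, dV$ for $\Phi_{x_0}^\epsilon$ the map from \eqref{eq:deltaPi}, and $D_\epsilon(x_0,V) = \int \xi(|y-x_0|/\epsilon)\,d\V(y)$. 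I will assign index $1$ to $(x,V)$ and index $2$ to $(z,W)$.

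First I would establish the lower bounds $D_\epsilon(x,V) \gtrsim \epsilon^d$ and $D_\epsilon(z,W) \gtrsim \epsilon^d$. Since $\xi$ is continuous and positive on $]0,1[$, it is bounded below by some $c_\xi > 0$ on a fixed sub-annulus $\{\theta_1 \leq |\cdot| \leq \theta_2\}$, so $D_\epsilon(x_0,\cdot)$ is at least $c_\xi$ times the mass carried by the annulus $\{\theta_1 \epsilon \leq |\cdot - x_0| \leq \theta_2 \epsilon\}$. For $(x,V)$ the $d$--Ahlfors regularity \eqref{eq:ahlfors} bounds this annular mass below by $\epsilon^d(C_0^{-1}\theta_2^d - C_0 \theta_1^d)$, which is positive once $\theta_2/\theta_1 > C_0^{2/d}$. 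For $(z,W)$ I would transfer this estimate through $\eta_d$: using both inequalities in Definition~\ref{dfn:prokhorovDistance} on nested concentric balls, the triangle inequality with $|x-z|+\eta_d(\V,\|W\|)\leq\gamma\epsilon$, and the two-sided bound \eqref{eq:ahlfors}, one bounds $\|W\|$ of a slightly smaller, resp.\ larger, concentric ball below, resp.\ above, by the corresponding $\V$--masses up to an $\eta_d^d$ error. The precise value $\gamma = 1/\bigl(8(1+C_0^{1/d})^2\bigr)$ is exactly what keeps the resulting bracket bounded below by a positive constant depending only on $d,n,C_0,\xi$. This is the technical heart of the argument.

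Next I would bound the differences. Writing $N_\epsilon(z,W)-N_\epsilon(x,V)=\epsilon\int\Phi_z^\epsilon\,d(W-V)+\epsilon\int(\Phi_z^\epsilon-\Phi_x^\epsilon)\,dV$, I treat the two pieces differently. For the change of measure, I use that $\epsilon^2\Phi_z^\epsilon/\xLip(\rho')$ is, componentwise, an admissible test function for $\Delta_B$: it is $1$--Lipschitz by \eqref{eq:deltaPi}, bounded by $1$ for $\epsilon<1$, and spatially supported in $B(z,\epsilon)\subset B(x,2\epsilon)=:B$, a ball centered in $\supp\V$. Hence $\bigl|\int\Phi_z^\epsilon\,d(W-V)\bigr|\lesssim \epsilon^{-2}\Delta_B(V,W)$, and by the definition \eqref{eq:deltaUnif} of $\delta(V,W)$ together with $\eta_d(\V,\|W\|)+\tfrac12\xdiam B\lesssim\epsilon$ this is $\lesssim\epsilon^{d-2}\,\delta(V,W)$. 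For the change of base point I use $|\Phi_z^\epsilon-\Phi_x^\epsilon|\lesssim\epsilon^{-2}|x-z|$ (the same Lipschitz scaling, valid in the base point because $\Phi_{x_0}^\epsilon$ depends on $y$ only through $y-x_0$) on the support $B(x,2\epsilon)$, integrated against $\V(B(x,2\epsilon))\lesssim\epsilon^d$ from \eqref{eq:ahlfors}, which gives $\lesssim\epsilon^{d-2}|x-z|$. The denominator difference $D_\epsilon(z,W)-D_\epsilon(x,V)$ is handled identically, the integrand $\xi(|\cdot-x_0|/\epsilon)$ being independent of $S$ so that it may be tested against the full varifolds. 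Combining these with the trivial bound $|N_\epsilon(x,V)|\lesssim\epsilon^d$ (from $|\Phi^\epsilon|\lesssim\epsilon^{-1}$ and the upper bound in \eqref{eq:ahlfors}), inserting into the ratio identity, using $D_1,D_2\gtrsim\epsilon^d$, and multiplying by the prefactor $\tfrac{d}{n\epsilon}$, a direct power count yields the claimed bound $C\,(\delta(V,W)+|x-z|)/\epsilon^2$, with all constants absorbed into $C=C(d,n,C_0,\xi,\rho)$.

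The main obstacle is the denominator lower bound for $(z,W)$. The difference estimates follow routinely once the $\epsilon^{-2}$ scaling of $\Phi^\epsilon$ and the homogeneity of $\delta(V,W)$ are in place; by contrast, the lower bound requires simultaneously exploiting the two-sided $\eta_d$ control and the two-sided Ahlfors regularity on nested balls, and it is precisely this step that forces and explains the explicit admissibility threshold $\gamma$.
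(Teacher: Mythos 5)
Your proposal is correct and follows essentially the same route as the paper's proof: the same ratio decomposition into a denominator difference and a numerator difference, the same Lipschitz rescaling of $\Phi^\epsilon_{x_0}$ and of $\xi(|\cdot-x_0|/\epsilon)$ to invoke $\Delta_B$ and hence $\delta(V,W)$, the same annulus-plus-Ahlfors argument (with ratio of radii exceeding $C_0^{2/d}$) transferred to $\|W\|$ via the two-sided $\eta_d$ inequalities to get the lower bound $\gtrsim\epsilon^d$ on the denominators, and the same power count. The only differences are cosmetic (your ball $B(x,2\epsilon)$ versus the paper's $B(x,\epsilon+|x-z|)$, and harmless factor-of-$2$ slack from $\|\Pi\|\leq 2$ in the test-function normalization), all absorbed into $C$.
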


\begin{proof}
We shorten notations
$
\eta \coloneqq \eta_d (\V, \|W\|)$ and $\delta \coloneqq \delta (V,W)$. 
The proof is a consequence of the following estimates (see also \cite{BuetLeonardiMasnou} Lemma 4.4): with $B = B(x,\epsilon + |x-z|)$ and using that $\xi_\epsilon$ is $\frac{1}{\epsilon^n}\frac{\xLip(\xi)}{\epsilon}$--Lipschitz, we obtain
\begin{align}
\epsilon^n \Big| \| W \| \ast \xi_{\epsilon} (z) - \V \ast \xi_{\epsilon} (x) \Big| & \leq \frac{ \xLip( \xi)}{\epsilon} \left( \Delta_{B} (\| W \| , \V) + |x-z|  \V \left( B \right) \right) \nonumber \\
& \leq \frac{ \xLip( \xi)}{\epsilon} \left( \Delta_{B} ( W , V) + C_0 |x-z|  \left( \epsilon + |x-z| \right)^d \right) \nonumber \\
& \leq \frac{ \xLip( \xi)}{\epsilon} \left( \delta \left( \epsilon + |x-z| +\eta \right)^d  + C_0 |x-z|  \left( \epsilon + |x-z| \right)^d \right) \nonumber \\
& \leq C_0 \xLip( \xi) \frac{\delta + |x-z| }{\epsilon}    \left( \epsilon + |x-z| + \eta \right)^d  \nonumber \\ 
& \leq C_1  \frac{\delta + |x-z| }{\epsilon}  \epsilon^d \label{eq:massFlat} 
\end{align}
with $C_1 \coloneqq 2^d C_0 \xLip( \xi)$ using $\epsilon + |x-z| + \eta \leq 2 \epsilon$.
We repeat the same argument with $\Phi_x^\epsilon$ defined in \eqref{eq:deltaPi}, $\xLip(\Phi_{z}^{\epsilon}) \leq \epsilon^{-2} \xLip(\rho^\prime)$ and then
\begin{align}
\left| \int_{\R^n \times \G} \Phi_{z}^{\epsilon} \, d W - \int_{\R^n \times \G} \Phi_{x}^{\epsilon}  \, d V \right| & \leq
 \frac{\xLip(\rho^\prime)}{\epsilon^2} \left( \Delta_{B} (W,V) + |x-z| \V \left( B \right) \right) \nonumber \\
& \leq C_2 \frac{\delta + |x-z| }{\epsilon^2}  \epsilon^d \quad \text{with} \quad C_2 \coloneqq 2^d C_0 \xLip( \rho^\prime) \label{eq:deltaPiFlat}
\end{align}
Last but not least, we estimate $\epsilon^n \| W \| \ast \xi_{\epsilon} (z)$ from below thanks to the mass $\V$ of rings of radii comparable to $\epsilon$. Thanks to the $d$--regularity asumption \eqref{eq:ahlfors}, it is possible to choose the ratio of radii so that the $\V$--mass of the ring is comparable to $\epsilon^d$, and thus comparable to the $\V$--mass of a ball of radius $\epsilon$.
Let us denote $\beta = \displaystyle \min \left\lbrace  \xi(s) \: \left| \: s \in \left[ \tfrac{C_0^{-2/d}}{4} , \tfrac{1}{2} \right] \right. \right\rbrace > 0$. We then have, using \eqref{eq:ahlfors} and the definition of $\eta$:
\begin{align*}
&\epsilon^n \| W \| \ast \xi_{\epsilon} (z)  \geq \beta \left( \| W \| ( B(z, \tfrac{\epsilon}{2})) - \|W\| (B(z,  C_0^{-2/d} \tfrac{\epsilon}{4}) \right) \\
& \qquad \geq \beta \left( \left[ \V (B(z, \tfrac{\epsilon}{2} - \eta)) - \eta^{d} \right] - \left[ \V (B(z,  C_0^{-2/d}\tfrac{\epsilon}{4} + \eta)) + \eta^{d} \right]  \right) \nonumber \\
& \qquad \geq \beta \left(  \V (B(x, \tfrac{\epsilon}{2} - (\eta + |x-z|))) - \V (B(x,  C_0^{-2/d} \tfrac{\epsilon}{4} + (\eta + |x-z|))) - 2\eta^{d} \right) \nonumber \\
& \qquad\geq  \beta \left( C_0^{-1} 2^{-d} (\epsilon - 2 (\eta + |x-z|))^d - C_0 2^{-d} \left(C_0^{-2/d}\frac{\epsilon}{2} + 2 (\eta + |x-z|) \right)^d - 2\eta^{d} \right)
\nonumber \\
& \qquad \geq \beta C_0^{-1} 2^{-d} \underbrace{ \left( (\epsilon - 2 (\eta + |x-z|))^d - (\frac{\epsilon}{2} + 2 C_0^{2/d} (\eta + |x-z|))^d - (2 (2C_0)^{1/d} \eta)^d \right)}_{=:A}  \: .
\end{align*}
 Using once again that $a^d + b^d \leq (a+b)^d$ for $a,b \geq 0$ we estimate $A$ as follows:
\begin{equation*}
A \geq (\epsilon - 2 (\eta + |x-z|))^d  - \left(\frac{\epsilon}{2} + 2 C_0^{2/d} (\eta + |x-z|) + 2 (2C_0)^{1/d} \eta \right)^d
\end{equation*}
and then using that for $a \geq b \geq 0$, $a^d - b^d \geq (a-b)a^{d-1}$ with $a = \epsilon - 2 (\eta + |x-z|)$ and $b = \frac{\epsilon}{2} + 2 C_0^{2/d} (\eta + |x-z|) + 2 (2C_0)^{1/d} \eta $, we get
\[
A \geq  
\underbrace{\frac{1}{2}\left( \epsilon - 4(1 + (2C_0)^{1/d} + C_0^{2/d}) (\eta + |x-z|) \right) \left(\epsilon - 2 (\eta + |x-z| ) \right)^{d-1} }_{\geq \frac{1}{2}\left( \frac{\epsilon}{2} \right)^d \text{ by asumption in \eqref{eq:stabilityAsumption}}} \: .
\]
We conclude that 
\begin{equation} \label{eq:massBound}
\epsilon^n \| W \| \ast \xi_{\epsilon} (z) \geq C_3^{-1} \epsilon^d \quad \text{with} \quad C_3 = \beta^{-1} C_0 2^{2d+1}
\end{equation}
We similarly have
\begin{equation} \label{eq:massBound2}
\epsilon^n \| V \| \ast \xi_{\epsilon} (x) \geq 
C_3^{-1} \epsilon^d \: .
\end{equation}
Eventually, for $\Pi$ as in \eqref{eq:projMeanCurvature} and using $\| \Pi \| \leq 2$ and  \eqref{eq:ahlfors},
\begin{align}
\left| \int \Phi_x^{\epsilon} dV \right| & \leq \int_{B(x,\epsilon) \times \G} \frac{1}{\epsilon} \left| \rho^\prime \left( \frac{|y-x|}{\epsilon} \right) \frac{\Pi (y-x)}{|y-x|}  \right| \: dV(y,S) \leq \frac{2}{\epsilon} \| \rho^\prime \|_\infty \V (B(x, \epsilon)) \nonumber \\
& \leq  \frac{2}{\epsilon} \| \rho^\prime \|_\infty C_0 \epsilon^d \leq C_4 \epsilon^{d-1} \quad \text{with } C_4 \coloneqq 2 C_0 \| \rho^\prime \|_\infty \label{eq:PiBound}
\end{align}
We combine \eqref{eq:massFlat}, \eqref{eq:deltaPiFlat}, \eqref{eq:massBound}, \eqref{eq:massBound2} and \eqref{eq:PiBound} so that
\begin{align*}
\left| H_{\epsilon}^\Pi (z, W) - H_{\epsilon}^\Pi (x,V) \right| & = \left| \frac{\int \Phi_{z}^{\epsilon}  \, d W}{\epsilon^n \| W \| \ast \xi_{\epsilon} (z)} - \frac{\int \Phi_{x}^{\epsilon}  \, d V}{\epsilon^n \| V \| \ast \xi_{\epsilon} (x)} \right| \\
& \leq \frac{1}{\epsilon^n \| W \| \ast \xi_{\epsilon} (z)} \Bigg( \left| \int \Phi_{z}^{\epsilon} \, d W - \int \Phi_{x}^{\epsilon}  \, d V \right| \\
& \qquad\qquad\qquad\qquad +  \frac{\displaystyle \left| \int \Phi_x^{\epsilon} dV \right|}{\displaystyle  \epsilon^n \| V \| \ast \xi_{\epsilon} (x)} \epsilon^n \Big| \| W \| \ast \xi_{\epsilon} (z) - \V \ast \xi_{\epsilon} (x) \Big| \Bigg) \\
& \leq C_3 \epsilon^{-d} \left( C_2 \frac{\delta + |x-z| }{\epsilon^2}  \epsilon^d + C_3 \epsilon^{-d} C_4 \epsilon^{d-1}  C_1 \frac{\delta + |x-z| }{\epsilon} \epsilon^d \right) \\
& \leq C_3\left(C_2 + C_1 C_3 C_4 \right) \frac{\delta + |x-z| }{\epsilon^2} \: ,
\end{align*}
and $C = C_3\left(C_2 + C_1 C_3 C_4 \right) \geq 1$ is then a constant depending on $d$, $n$, $C_0$, $\rho$ and $\xi$ and in particular uniform w.r.t. $x$.
\end{proof}

In order to establish convergence of approximate mean curvature under weak star convergence of varifolds, it remains to prove that $\delta (V,V_i)$ tends to $0$ when $V_i$ weak star converges to $V$, which is the key point of the end of the section.

\subsection{Convergence of approximate mean curvature}

Let us transfer the measure setting introduced in Section~\ref{subsec:distances} to our varifolds framework $X = \R^n \times \G$. For a $d$--varifold $V$, the total variation is $V(X) = V(\R^n \times \G) = \V(\R^n)$. Let $(V_i)_{i \in \N}$, $V$ be $d$--varifolds such that $(V_i)_i$ weak star converges to $V$, with $ \V(\R^n) < + \infty$ and with support contained in a fixed compact $K \times \G \subset \R^n \times \G$. Then, $\|V_i\|$ weak star converges to $\V$ and moreover $\| V_i \| (\R^n) \rightarrow \V(\R^n)$. Indeed, $\V(\R^n) \leq \liminf_{i \to \infty} \| V_i \| (\R^n)$ by lower semi continuity of total variation. And in addition,
as all $V_i$ are supported in the same compact set $K \times \G$, we have that 
\[ \limsup_{i \to \infty} \| V_i \| (\R^n)  = \limsup_{i \to \infty} V_i (K \times \G) \leq V (K \times \G) = \V (\R^n) \: ,
\]
hence $\lim_{i \to \infty} \| V_i \| (\R^n) = \V(\R^n) < +\infty$.
Consequenlty, thanks to Proposition~\ref{thm:weakAndDistance}, $(V_i)_i$ (resp. $(\|V_i\|)_i$) weak converges to $V$ (resp. $\V$) and both
\begin{equation} \label{eq:flatProkhorov}
\sup_{B \subset \R^n \text{ ball}} \Delta_B (V, V_i) \leq \Delta (V, V_i)  \xrightarrow[i\infty]{} 0  \quad \text{and} \quad
\eta_d (\|V_i\|, \V)  \xrightarrow[i\infty]{} 0 \text{  hold.}
\end{equation}

\begin{thm} \label{thm:stability}
Let $V$ be a $d$--regular varifold in $\R^n$ with regularity constant $C_0 \geq 1$ in \eqref{eq:ahlfors} and assume $\V (\R^n) < \infty$.
Let $(V_i)_i$ be a sequence of $d$--varifolds weak star converging to $V$. Assume that there exists a compact set $K \subset \R^n$ such that $\supp V_i \subset K \times \G$ for all $i$. Then, 
\begin{enumerate}[$(i)$]
\item setting $d_i \coloneqq \delta (V, V_i)$ ($\delta$ has been defined in \eqref{eq:deltaUnif}) we have
\begin{equation} \label{eq:di}
d_i \rightarrow 0 \quad \text{for } i \to \infty \: ;
\end{equation}
\item setting $\eta_i \coloneqq \eta_d (\V, \|V_i\|)$, there exists a constant $C > 0$ depending only on $d$, $n$, $C_0$, $\xi$ and $\rho$ such that: if $x \in \supp \V$ and $(z_i)_i \subset \R^n$ converges to $x$, then for any sequence $(\epsilon_i)_i \subset ]0,1[$ tending to $0$ and satisfying
\begin{equation} \label{eq:gamma}
\displaystyle |x-z_i| + \eta_i \leq \gamma \epsilon_i \: , \quad \text{for} \quad \gamma = \left(8 (1 + (2C_0)^{1/d} + C_0^{2/d} ) \right)^{-1} 
\end{equation}
we have 
\begin{equation} \label{eq:stabilityAhlforsCond}
\left| H_{\epsilon_i}^\Pi (z_i, V_i) - H_{\epsilon_i}^\Pi (x,V) \right| \leq C \frac{d_i + |x-z_i|}{\epsilon_i^2} \: .
\end{equation}
\end{enumerate}
\end{thm}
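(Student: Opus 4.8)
The statement has two parts, and the second is essentially a corollary of the first. For part $(ii)$, the plan is simply to invoke Proposition~\ref{prop:stability} with $W = V_i$, $z = z_i$ and $\epsilon = \epsilon_i$: the hypothesis \eqref{eq:gamma} is precisely the smallness condition \eqref{eq:stabilityAsumption} required there (with the same $\gamma$), so the bound \eqref{eq:stabilityAhlforsCond} follows at once with $d_i = \delta(V,V_i)$. Thus the whole difficulty is concentrated in part $(i)$, namely showing $d_i = \delta(V,V_i) \to 0$.

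For part $(i)$, I would start from the two facts collected in \eqref{eq:flatProkhorov}, namely $\Delta(V,V_i) \to 0$ and $\eta_i = \eta_d(\V, \|V_i\|) \to 0$, which hold because the $V_i$ are supported in a common compact set and weak star converge to $V$. The obstacle is that $\delta(V,V_i)$ is a supremum of $\Delta_B(V,V_i)/(\eta_i + \diam B/2)^d$ over \emph{all} balls $B$ centered in $\supp \V$, and for small balls the denominator degenerates, so the naive bound $\Delta_B \leq \Delta(V,V_i)$ is useless. The key observation is that a test function $\phi$ admissible for $\Delta_B$ with $B = B(x,r)$ is $1$-Lipschitz and supported in $B$, hence vanishes on $\partial B$ and satisfies $|\phi| \leq r = \diam B/2$ throughout $B$ (consistent with the remark that for $r<1$ the constraint $\sup|\phi|\leq 1$ is automatic). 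This upgrades the crude estimate to $\Delta_B(V,V_i) \leq r\,(\V(B) + \|V_i\|(B))$.

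The plan is then to control both masses at the scale of the radius. By $d$-regularity of $V$ we have $\V(B) \leq C_0 r^d$, and by the very definition of $\eta_d$ together with $B^{\eta_i} = B(x, r + \eta_i)$ we get $\|V_i\|(B) \leq \V(B(x, r+\eta_i)) + \eta_i^d \leq C_0(r+\eta_i)^d + \eta_i^d$, valid once $r + \eta_i \leq r_0$, which we may enforce since $r_0$ can be taken arbitrarily large. Dividing by $(\eta_i + r)^d$ and bounding each term by $(\eta_i + r)^d$ yields, for such small balls, $\Delta_B(V,V_i)/(\eta_i + r)^d \leq (2C_0 + 1)\, r$. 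Finally I would split the supremum at a threshold $\rho_0$: on balls with $r \geq \rho_0$ the denominator is bounded below by $\rho_0^d$ and $\Delta_B \leq \Delta(V,V_i) \to 0$, while on balls with $r < \rho_0$ the ratio is at most $(2C_0+1)\rho_0$. Choosing $\rho_0$ small and then $i$ large makes both contributions arbitrarily small, proving $d_i \to 0$.

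The main obstacle, as indicated, is the small-ball regime: recognizing that the localized, support-constrained Lipschitz test functions have sup-norm bounded by the radius is what breaks the apparent degeneracy of the denominator and makes the two mass bounds (Ahlfors regularity of $V$ and the Prokhorov comparison $\eta_i$) combine into a clean $O(r)$ estimate that survives the supremum.
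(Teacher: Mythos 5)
Your part $(ii)$ is exactly the paper's proof: a direct application of Proposition~\ref{prop:stability} with $W = V_i$, $z = z_i$, $\epsilon = \epsilon_i$. Your part $(i)$ is correct but follows a genuinely different route. The paper argues by contradiction: it extracts balls $B_i = B(x_i,r_i)$ violating the bound by a fixed $\bar\delta>0$, deduces $r_i \to 0$ from $\Delta(V,V_i)\to 0$, invokes Ascoli--Arzel\`a to obtain a uniform limit $\phi$ of the test functions $\phi_i$, shows $\phi = 0$ because the supports shrink to a point, and then combines the mass bounds $\V(B_i) \leq C_0 r_i^d$ and $\|V_i\|(B_i) \leq \V(B_i^{\eta_i}) + \eta_i^d$ to reach $0 < \bar\delta \leq 2C_0 \sup |\phi_i| \to 0$, a contradiction. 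You replace the compactness step by the elementary quantitative observation that a $1$--Lipschitz function supported in $B(x,r)\times \G$ satisfies $|\phi| \leq r$ (the same fact the paper only records in the remark after Definition~\ref{dfn:flatDistance} for $r<1$), which immediately gives $\Delta_B(V,V_i) \leq r\,(\V(B) + \|V_i\|(B)) \leq (2C_0+1)\, r\, (\eta_i + r)^d$ in the small-ball regime, and you conclude by splitting the supremum at a threshold radius $\rho_0$. Your route is direct, avoids any compactness argument, and actually yields an explicit rate, $d_i \leq \max\{\Delta(V,V_i)\rho_0^{-d},\,(2C_0+1)\rho_0\}$ for $i$ large, hence $d_i = O(\Delta(V,V_i)^{1/(d+1)})$ after optimizing $\rho_0$, which the paper's qualitative contradiction argument does not provide. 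The only housekeeping your sketch leaves implicit is harmless: the Ahlfors upper bound is invoked at radius $r+\eta_i$, so one should take $\rho_0 \leq r_0/2$ and $i$ large enough that $\eta_i \leq r_0/2$ (large balls are handled by your other regime, where only the crude bound $\Delta_B \leq \Delta(V,V_i)$ is used); and the Prokhorov-type inequality in Definition~\ref{dfn:prokhorovDistance} is stated for closed balls and radii $\delta > \eta_i$, so a trivial limiting argument is needed to write it with $\delta = \eta_i$, exactly as the paper itself does without comment.
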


Note that $(ii)$ directly follows from Proposition~\ref{prop:stability}, and $(i)$ again
holds under the flexible asumption that $V$ is $d$--regular in the sense of \eqref{eq:ahlfors}. 
\begin{proof}
{\it Proof of $(i)$}: We first show that $d_i \rightarrow 0$.\\
As varifolds are supported in the same compact set $K \times \G$, we have \eqref{eq:flatProkhorov}, i.e.
\begin{enumerate}[$-$]
\item $(V_i)_i$ weak converges to $V$ and the flat distance $\Delta (V,V_i)$ tends to $0$, thus \[ \sup_{B \subset \R^n \text{ ball}} \Delta_B (V, V_i) \leq \Delta (V,V_i) \xrightarrow[i\to \infty]{} 0 \: ,
\]
\item $(\|V_i\|)_i$ weak converge to $\V$ and both $\Delta (\|V_i\|, \V)  \xrightarrow[i\to \infty]{} 0$ and
$\displaystyle \eta_i \coloneqq  \eta_d (\|V_i\|, \V)  \xrightarrow[i\to \infty]{} 0$.
\end{enumerate}
Let us argue by contradiction and, up to extraction of a subsequence, assume that $\exists \bar\delta > 0$ such that $\forall i \in \N$, $d_i > \bar\delta$. By definition of $d_i$, there is a sequence of balls $(B_i)_i \subset \R^n$, $B_i = B(x_i,r_i)$ with $x_i \in \supp \V$ and $r_i > 0$, such that for all $i$, \[ \Delta_{B_i} (V,V_i) > \bar\delta  (r_i + \eta_i)^d  \: . \]
And we already know that $\Delta_{B_i} (V_i, V) \leq \Delta (V,V_i) \xrightarrow[i\to\infty]{} 0$, thus $ (r_i + \eta_i)^d $ must tend to $0$ as well, and $\eta_i$ already tends to $0$ so that we eventually conclude that $r_i \xrightarrow[i\to  \infty]{} 0$.\\
Now, from the definition of $\Delta_{B_i}$, $\Delta_{B_i} (V,V_i) > \bar\delta  (r_i + \eta_i)^d $ implies that there exists a sequence of $1$--Lipschitz functions $(\phi_i)_i \in \xC(\R^n \times \G)$ with $\supp \phi_i \subset B_i$, $\| \phi_i \|_{\infty} \leq 1$ and such that for all $i$,
\begin{equation} \label{eq:stability1}
\left| \int_{B_i \times \G} \phi_i \: dV_i - \int_{B_i \times \G} \phi_i \: dV \right| > \bar\delta  (r_i + \eta_i)^d \: .
\end{equation}
Applying Ascoli compactness theorem in $\xC (K \times \G)$, up to extracting a subsequence, there exists a continuous function $\phi \in \xC (K \times \G)$ such that
$\phi_{i} \xrightarrow[i \to \infty]{} \phi$ uniformly in $K \times \G$.\\
It is not difficult to see that $\phi = 0$. Indeed, let us consider 
\[ 
X = \{ y \in K \: : \: |\phi(y)| > 0 \} \: .
\]
Let $x \in X$, then $\exists N = N_x \in \N$ such that for all $i \geq N$, $| \phi_{i} (x) | > 0$ and thus $x \in B_{i}$. Therefore, 
 $| x - x_{i} | \leq r_i \xrightarrow[i \to \infty]{} 0$ and thus, $(x_{i})_i$ converges to $x$. Consequently $X$ contains at most one point and on the other hand $X$ is open by continuity of $\phi$ so that $X = \emptyset$.\\
Coming back to \eqref{eq:stability1} we first get by definition of $\eta_i$ that $\| V_{i} \|  (B_{i}) \leq \V  (B_{i}^{\eta_i}) + (\eta_i)^d $ and then by \eqref{eq:ahlfors}
\begin{align*}
\bar\delta  (r_{i} + \eta_i)^d  & < \sup_{K \times \G} | \phi_{i} | \left( \| V_{i} \|  (B_{i}) + \V(B_{i}) \right)  \leq \left(  C_0 (r_i + \eta_i)^d + (\eta_i)^d + C_0 (r_i)^d \right) \sup_{K \times \G} |\phi_{i}| \\
& \leq  2C_0 (r_i + \eta_i)^d \sup_{K \times \G} | \phi_{i} | \: .
\end{align*}
It follows that,
$\displaystyle
0 < \bar\delta < \frac{2 C_0 (r_i + \eta_i)^d }{(r_i + \eta_i)^d } \sup_{K \times \G} | \phi_{i} | \leq 2 C_0 \sup_{K \times \G} | \phi_{i} |
$
leading to a contradiction since $\sup_{K \times \G} | \phi_{i} | \xrightarrow[i \to \infty]{} \sup_{K \times \G} | \phi | = 0$.

{\it Proof of $(ii)$}: apply Proposition~\ref{prop:stability} with $W = V_i$.
\end{proof}

Eventually combining consistency (Proposition~\ref{prop:consistency}) and stability (Theorem~\ref{thm:stability}) we obtain the convergence of the approximate mean curvature. The following result (Corollary~\ref{cor:CV}) is a particular case where strong regularity of the limit varifold ensures that asumptions of both consistency and stability are fullfilled. However, the $\xC^3$ regularity asumption is stronger than necesary and more general results essentially require to check that Proposition~\ref{prop:consistency} and stability Theorem~\ref{thm:stability} apply.

\begin{cor}[Convergence] \label{cor:CV}
Let $V$ be a $d$--varifold associated with a compact $d$--submanifold $M \subset \R^n$ without boundary of class 
 $\xC^3$. 
Let $(V_i)_i$ be a sequence of $d$--varifolds weak star converging to $V$. Assume that there exists a compact set $K \subset \R^n$ such that $\supp V_i \subset K \times \G$ for all $i$. Then, define $d_i = \delta (V,V_i)$ (as in \eqref{eq:deltaUnif})  and
let $x \in M$, $(z_i)_i \subset \R^n$ such that $| x- z_i| \xrightarrow[]{} 0$, for any sequence $(\epsilon_i)_i \subset (0,1)$ tending to $0$ and satisfying $|x-z_i| + \eta_d( \V, \|V_i\|) = o(\epsilon_i)$ we have
\begin{align*}
\left| H_{\epsilon_i}^\Pi (z_i, V_i) - H(x,V) \right| & = O \left( \frac{d_i + |x-z_i|}{\epsilon_i^2} \right) + O(\epsilon_i) \nonumber \\
& \xrightarrow[i \to \infty]{} 0 
\quad \text{as soon as } \sqrt{d_i + |x-z_i|} = o(\epsilon_i) \: .
\end{align*} 
\end{cor}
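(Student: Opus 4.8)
The plan is to obtain the estimate from a single triangle inequality that separates the two sources of error, one handled by stability and the other by consistency. Concretely, I would insert the intermediate quantity $H_{\epsilon_i}^\Pi(x,V)$ and write
\[
\left| H_{\epsilon_i}^\Pi (z_i, V_i) - H(x,V) \right| \leq \left| H_{\epsilon_i}^\Pi (z_i, V_i) - H_{\epsilon_i}^\Pi (x,V) \right| + \left| H_{\epsilon_i}^\Pi (x,V) - H(x,V) \right| \: .
\]
The first term compares the approximate curvature of the perturbed varifold $V_i$ evaluated at the moving point $z_i$ with that of the limit varifold $V$ at $x$, and is exactly the object controlled by Theorem~\ref{thm:stability}. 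The second term compares the approximate curvature of the fixed smooth varifold $V$ at $x$ with the true mean curvature, and is controlled by Proposition~\ref{prop:consistency}.

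First I would verify the hypotheses of Theorem~\ref{thm:stability}. Since $M$ is a compact $\xC^3$ submanifold without boundary, the local graph parametrization used in the proof of Proposition~\ref{prop:consistency} shows that $\V = \cH^d_{|M}$ satisfies $\V(B(x,r)) = \omega_d r^d (1 + O(r^2))$ uniformly for $x \in M$ and $r$ below some $r_0 > 0$; hence $V$ is $d$--regular in the sense of \eqref{eq:ahlfors} for a suitable constant $C_0 \geq 1$, and it has finite mass because $M$ is compact. The remaining hypotheses, namely weak star convergence $V_i \to V$ and the uniform compact support $\supp V_i \subset K \times \G$, are assumed. Theorem~\ref{thm:stability}$(i)$ then gives $d_i = \delta(V,V_i) \to 0$. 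Moreover, the assumption $|x-z_i| + \eta_d(\V, \|V_i\|) = o(\epsilon_i)$ guarantees that the admissibility condition \eqref{eq:gamma} holds for all $i$ large enough, so Theorem~\ref{thm:stability}$(ii)$ applies and yields, as in \eqref{eq:stabilityAhlforsCond},
\[
\left| H_{\epsilon_i}^\Pi (z_i, V_i) - H_{\epsilon_i}^\Pi (x,V) \right| \leq C \frac{d_i + |x-z_i|}{\epsilon_i^2} = O\!\left( \frac{d_i + |x-z_i|}{\epsilon_i^2} \right) \: .
\]

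Next I would apply Proposition~\ref{prop:consistency} at the fixed point $x \in M$; because $M$ is $\xC^3$, it gives the quantitative rate $\left| H_{\epsilon_i}^\Pi (x,V) - H(x,V) \right| = O(\epsilon_i)$, which controls the second term. Summing the two contributions produces the claimed estimate $O\!\left(\tfrac{d_i + |x-z_i|}{\epsilon_i^2}\right) + O(\epsilon_i)$. Finally, for the convergence to zero: the $O(\epsilon_i)$ part vanishes since $\epsilon_i \to 0$, while the stability part vanishes exactly when $d_i + |x-z_i| = o(\epsilon_i^2)$, i.e.\ under the stated condition $\sqrt{d_i + |x-z_i|} = o(\epsilon_i)$; note that $d_i \to 0$ from step one and $|x-z_i| \to 0$ by hypothesis ensure the numerator itself tends to zero, so this is a balance of rates rather than an additional qualitative requirement. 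I expect the only genuinely substantive step to be the uniform $d$--Ahlfors regularity of the smooth compact manifold, which licenses the use of the stability theorem; once that is in place, the result is a direct assembly of Proposition~\ref{prop:consistency} and Theorem~\ref{thm:stability}.
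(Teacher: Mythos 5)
Your proof is correct and follows essentially the same route as the paper: the same triangle inequality through the intermediate term $H_{\epsilon_i}^\Pi(x,V)$, with the first term handled by Theorem~\ref{thm:stability} (after checking that compactness and $\xC^2$ regularity of $M$ give the $d$--Ahlfors regularity \eqref{eq:ahlfors}) and the second by the $\xC^3$ rate in Proposition~\ref{prop:consistency}. Your added remarks --- that the hypothesis $|x-z_i|+\eta_d(\V,\|V_i\|)=o(\epsilon_i)$ guarantees the admissibility condition \eqref{eq:gamma} for large $i$, and the final balance of rates --- are exactly the details the paper leaves implicit.
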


\begin{proof}
As $M$ is assumed to be compact without boundary and of class $\xC^2$, then $\exists C_0 \geq 1$ such that for all $x \in M$ and $0 < r \leq \xdiam \: M$,
\begin{equation*}
C_0^{-1} r^d \leq  \V(B(x,r)) = \cH^d (M \cap B(x,r)) \leq C_0 r^d \: .
\end{equation*}
Applying Theorem~\ref{thm:stability} and Proposition~\ref{prop:consistency} to
\[
\left| H_{\epsilon_i}^\Pi (z_i, V_i) - H(x,V) \right| \leq \left| H_{\epsilon_i}^\Pi (z_i, V_i) - H_{\epsilon_i}^\Pi (x,V) \right| + \left| H_{\epsilon_i}^\Pi (x, V) - H(x,V) \right|
\]
concludes the proof.
\end{proof}

In the case of a point cloud varifold $V = \sum_{i=1}^N m_i \delta_{(x_i, P_i)}$, and for
\begin{equation} \label{eq:Pi_ij}
\Pi = \Pi_{ij} \in \{ \Pi_{P_j}, \, -2 \Pi_{P_j^\perp}, \, 2 \,{\rm Id},  \, \Pi_{P_i^\perp} \circ \Pi_{P_j}, \, -2 \Pi_{P_i^\perp} \circ \Pi_{P_j^\perp}, \, 2 \Pi_{P_j^\perp} \}
\end{equation}
we rewrite the approximate mean curvature
\begin{equation} \label{meanCurvatureEquation}
H_\epsilon^\Pi (x_i, V)  = - \frac{d}{n} \frac{1}{\epsilon} \frac{ \displaystyle \sum_{j=1}^N m_j \rho^\prime \left(\frac{|x_j-x_i|}{\epsilon} \right) \Pi_{ij} \left( \frac{x_j-x_i}{|x_j-x_i|}  \right) }{\displaystyle \sum_{j=1}^N m_j \xi \left( \frac{|x_j-x_i|}{\epsilon} \right)} \: .
\end{equation}
Proposition~\ref{prop:consistency} leaves us with at least $6$ possible choices for the definition of an approximate mean curvature, 
more or less equivalently reasonable in the continuous smooth case. 
Our numerical experiments in Section~\ref{sec:results} indicate that those formulas can behave very differently when used in the context of a time 
discretization for the simulation of mean curvature flows.

\begin{rem}[$k$--nearest neighbours] \label{rk:knn}
Notice that in the use of formula \eqref{meanCurvatureEquation}, it is also possible to prescribe the number $k$ of nearest points to be considered and infer $\epsilon$ so that the support of $\rho \left( \frac{|\cdot \,-\,x_i|}{\epsilon} \right)$ exactly contains $x_i$ plus $k$ other points. It is more convenient to fix the number of nearest points in numerical simulations, especially for point clouds that are not uniformly sampled.
\end{rem}

\section{Mean curvature motion and comparison principles} \label{sec:timecont}

\subsection{Computation of masses and directions for point cloud varifolds} \label{sec:masses}
Before we discuss the actual time discretization let us detail how we derive masses and directions from positions.
In the case of a smooth varifold $V = \cH^d_{| M} \otimes \delta_{T_x M}$ associated with a $d$--submanifold $M \subset \R^n$, the tangent plane and the varifold are completely determined by 
the knowledge of $M$. In the case of a point cloud varifold, there is no unique choice of masses $\{ m_i \}_{i = 1}^N$ and sets of directions $\{ P_i \}_{i=1}^N$. Here, we will follow standard approaches.
The ansatz to define the masses $m_i$ is as follows. We assume that the positions $\{x_i\}_{i}$ are close to some $d$--submanifold $M$ and we want to define masses $\{ m_i \}_i$ such 
that the resulting Radon measures $\mu \coloneqq \sum_{i=1}^N m_i \delta_{x_i}$ and $\nu \coloneqq \cH^d_{| M}$ are close  in the sense of measures. To this end, we regularize $\mu$ and $\nu$ via convolution.  I.e.~we define 
$\lambda_\delta(x) = \delta^{-n} \lambda(|x| / \delta)$ for $\lambda: \R \to \R$ 
nonnegative, even, and compactly supported in $[-1,1]$. Then we renormalize the regularized $\mu$ by the regularized $\nu$ in the sense that  
\[
m_i = \frac{\nu \ast \lambda_\delta (x_i)}{\mu \ast \lambda_\delta (x_i)} 
\]
Unfortunately, $M$ and thus $\nu$ are not known and we replace $\delta^n \nu \ast \lambda_\delta (x_i)$ with its first order approximation $C_\lambda \delta^d$  with 
$C_\lambda = \int_{\R^d}  \lambda(|y|) \,dy =  \sigma_{d-1} \int_{s=0}^1 \lambda (s) s^{d-1} \, ds$ being the volume weighted by $\lambda$ of the unit ball in $\R^d$.
From this we deduce for the masses
\[
m_i 
= \frac{C_\lambda \delta^d}{\displaystyle \sum_{j=1}^N \lambda \left( \frac{|x_j - x_i |}{\delta} \right)} \: .
\]
To the best of our knowledge, there is unfortunately no general result of convergence of such estimators, 
assuming for instance a control of the Hausdorff distance between $M$ and $\{x_i\}_i$ and asking for $\{ m_i \}_i$ ensuring that $\mu$ and $\nu$ are close in flat distance or  
in Wasserstein type distance. 
In our numerical experiments, we will consider either  $\lambda$ smooth and compactly supported or $\lambda = \chi_{]-1,1[}$ which implies
\[
m_i = \frac{\omega_d \delta^d}{k_\delta} \quad \text{with} \quad  k_\delta = {\rm{card}} \left\lbrace j \in \{ 1, \ldots, N \} \: : \: |x_j -x_i | < \delta \right\rbrace \: .
\]
As it is usually done, sets of directions $\{P_i\}_i$ are computed through a local weighted linear regression. 
We fix a further nonnegative and even profile kernel $\zeta : \R \rightarrow \R $ supported in $]-1,1[$ and a sufficiently large parameter $\sigma > 0$ and define, based on a $\sigma$--neighbourhood of some point $x_i$ containing $k_\sigma$ points,
a center of mass
\[
\bar{x} = \frac{1}{k_\sigma} \sum_{j=1}^N \chi_{]-1,1[} \left( \frac{|x_j - x_i|}{\sigma} \right) x_j
\]
of the points $\sigma$--close to the point $x_i$.
Furthermore, with the notation $x = ( x^{(1)}, \ldots, x^{(n)} )$ for the $n$--components of $x\in \R^n$, we compute the $n$ by $n$ covariance matrix 
$M^i=(M^i_{kl})_{k,l=1,\ldots n}$ of coefficient $(k,l)$:
\[
M^i_{kl}  = 
\sum_{j=1}^N \zeta \left( \frac{|x_j - x_i|}{\sigma}\right) 
 \left( x_j^{(k)} - {\bar x}^{(k)} \right) \left( x_j^{(l)} - {\bar x}^{(l)} \right) \: .
\]
The matrix $M^i$ is symmetric and positive semi-definite.
The $d$ eigenvectors associated with the $d$ highest eigenvalues provide a basis of an approximate tangent space $P_i$
and the $(n-d)$ eigenvectors corresponding to the $(n-d)$ smallest eigenvalues provide a basis of an approximate normal space $P_i^\perp$.
When using a smooth profile $\zeta$, this way of computing tangent plane ensures its spatial regularity. 
In our numerical experiments, we have chosen $\zeta = \xi$ smooth and compactly supported in $]-1,1[$.
As pointed out in Remark~\ref{rk:knn} concerning $\epsilon$, for practical reasons
it is advisable to fix $(k_\epsilon, k_\sigma, k_\delta)$ and to define $(\epsilon,\sigma,\delta)$ accordingly (as the radius of the smallest ball containing the right number of nearest points), in this case $(\epsilon,\sigma,\delta)$ vary, depending on the sampling of the point cloud. This is what we finally have used in the applications. 

\subsection{Time continuous mean curvature motion and comparison principles} \label{sec:timecontsub}

Now, we are in the position to formulate mean curvature motion for point cloud varifolds.
More precisely, 
given a point cloud $d$--varifold $ V = \sum_{i=1\,\ldots,N} m_i  \delta_{(x_i, P_i)}$ 
in $\R^n$, we consider the following system of ordinary differential equations:
\medskip 

\noindent
Find a family of varifolds $(V(t))_{t\geq 0}$ with 
\[
V(t) = \sum_{i=1}^N m_i  \delta_{(x_i(t), P_i(X(t)))} \quad \text{and} \quad 
X(t) = (x_1(t), \ldots, x_N(t)) \in \R^{nN}
\]
such that 
\begin{equation} \label{eq:continuousFormulation}  
\displaystyle \tfrac{\mathrm{d}}{\mathrm{d}t} x_i(t)  =  \displaystyle H_\epsilon^\Pi (x_i(t), V(t)) 
\end{equation}
for prescribed initial data $V(0)  =  V$ and $i = 1 \ldots N$.
Here,  
$m_i( X(t))$ and $P_i(X(t))$ are computed from the positions 
as functions of neighbouring positions (see the beginning of the current section) while $H_\epsilon^\Pi$ is defined in \eqref{meanCurvatureEquation}. 
Thus, the evolution equation turns into
\begin{equation} \label{meanCurvatureEvolution}
\tfrac{\mathrm{d}}{\mathrm{d}t} x_i(t) = \frac{1}{\epsilon} \sum_{j=1}^N \omega_{ij}(t) \Pi_{ij}(t) \left( x_j(t) - x_i(t) \right), \quad i = 1 \ldots N \: .
\end{equation}
with 
 \[
\omega_{ij}(t) = - \frac{d}{n} \frac{ \displaystyle m_j(t) \rho^\prime \left(\frac{|x_j(t) - x_i(t) |}{\epsilon} \right) \frac{1}{| x_j(t) - x_i(t) |} }{\displaystyle \sum_{l=1}^N m_l(t) \xi \left( \frac{|x_l(t) - x_i(t) |}{\epsilon} \right)}
\]
for $i \neq j$ and $\omega_{ii}^{k} = 0$, where $m_i(t) = m_i(X(t))$ denotes the masses at time $t$. We observe that $\omega_{ij}(t) \geq 0$ since $\rho$ is nonincreasing in $[0,1]$ and $\xi$ is positive in $]0,1[$. 
As a first consequence of this rewritten evolution problem we obtain the following comparison result which establishes planar barriers for the flow.

\begin{prop} [planar barrier] \label{PlaneBarrier} 
Let $(X(t))_{0 \leq t < T}$ be a family of point clouds evolving according to the flow defined in \eqref{meanCurvatureEvolution} up to some time $T \in ]0,+\infty]$. Suppose that 
\begin{enumerate}[$(i)$]
\item the initial point cloud $X(0) = \{ x_i(0) \}_{i=1}^N \subset \R^n$ fulfills $x_i(0) \cdot \nu \leq \mu$ for $i = 1 \ldots N$ with $\nu\in \R^n$, $\mu \in \R$, 
\item  for all $0 \leq t < T$, if $x_i(t)$ is a point on the boundary of the convex hull of $X(t)$ then for all points $x_j(t)$ such that $|x_j(t) - x_i(t)| < \epsilon$, the vector $\Pi_{ij}(t) (x_j(t) - x_i(t))$ at $x_i(t)$ is pointing inside the convex hull of $X(t)$.
\end{enumerate} 
Then independently of the choices of masses $m_i(t)$ anf for all $0 \leq t < T$,
\[
x_i(t) \cdot \nu \leq \mu , \quad \text{for all } i = 1 \ldots N \: .
\]
\end{prop}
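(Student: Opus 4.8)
The plan is to run a discrete maximum principle on the scalar quantities $f_i(t) := x_i(t) \cdot \nu$, controlling their running maximum $M(t) := \max_{1 \leq i \leq N} f_i(t)$ and showing it cannot exceed $\mu$. First I would observe that each $f_i$ is $\xC^1$ in $t$, since the right-hand side of \eqref{meanCurvatureEvolution} is a (locally smooth) function of the positions through the weights $\omega_{ij}$ and the operators $\Pi_{ij}$. Hence $M$ is locally Lipschitz as a finite maximum of $\xC^1$ functions, and its upper right Dini derivative satisfies $D^+ M(t) \leq \max\{ f_i'(t) : i \in I(t) \}$, where $I(t) = \{ i : f_i(t) = M(t) \}$ is the nonempty set of maximizing indices. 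It then suffices to prove $f_{i_0}'(t) \leq 0$ for every $t \in [0,T)$ and every $i_0 \in I(t)$: monotonicity then forces $M(t) \leq M(0) \leq \mu$, which is exactly the claim.

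The core of the argument is the sign analysis at a maximizing index. Fix $t$ and $i_0 \in I(t)$. Since $x_{i_0}(t)$ attains the maximum of $x \cdot \nu$ over the point cloud, the hyperplane $\{ x \cdot \nu = M(t) \}$ supports the convex hull of $X(t)$ at $x_{i_0}(t)$; in particular $x_{i_0}(t)$ lies on the boundary of that convex hull, so hypothesis $(ii)$ applies at $x_{i_0}(t)$. Differentiating and using \eqref{meanCurvatureEvolution},
\[
f_{i_0}'(t) = \frac{1}{\epsilon} \sum_{j=1}^N \omega_{i_0 j}(t) \, \Pi_{i_0 j}(t)\big( x_j(t) - x_{i_0}(t) \big) \cdot \nu \: .
\]
Because $\rho$ is supported in $[-1,1]$, the weight $\omega_{i_0 j}(t)$ vanishes unless $|x_j(t) - x_{i_0}(t)| < \epsilon$, so the sum only involves the neighbours covered by hypothesis $(ii)$. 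For each such $j$ the vector $v_j := \Pi_{i_0 j}(t)( x_j(t) - x_{i_0}(t))$ points into the convex hull from the boundary point $x_{i_0}(t)$; since the convex hull lies in the half-space $\{ x \cdot \nu \leq M(t) \}$ and $x_{i_0}(t)$ sits on its bounding hyperplane, any inward vector belongs to the tangent cone of that half-space and therefore satisfies $v_j \cdot \nu \leq 0$. Combined with the nonnegativity $\omega_{i_0 j}(t) \geq 0$ already recorded below \eqref{meanCurvatureEvolution}, every summand is nonpositive, whence $f_{i_0}'(t) \leq 0$.

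I expect the only delicate point to be the passage from the pointwise estimate to monotonicity of $M$, that is, making the nonsmooth maximum rigorous; this is handled by the standard fact that for a finite maximum of $\xC^1$ functions the upper Dini derivative is bounded by the maximum of the derivatives over the active indices, so $D^+ M \leq 0$ on $[0,T)$ and $M$ is nonincreasing. The conclusion is indeed independent of the masses $m_i(t)$, as these enter only through the nonnegative factors $\omega_{i_0 j}(t)$, and only the \emph{sign} of those factors is used. I would also state explicitly the geometric reading of hypothesis $(ii)$ as the inequality $v_j \cdot \nu \leq 0$, since this translation of ``pointing inside the convex hull'' is precisely what fixes the sign of each term and drives the whole estimate.
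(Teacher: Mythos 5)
Your proposal is correct and follows essentially the same route as the paper: the paper's (two-line) proof also considers an index where $x_i(t)\cdot\nu$ is extremal, notes that such a point lies on the supporting hyperplane of the convex hull so that hypothesis $(ii)$ together with $\omega_{ij}(t)\geq 0$ forces $\tfrac{\mathrm{d}}{\mathrm{d}t}\,x_i(t)\cdot\nu\leq 0$, and concludes that the cloud cannot cross the plane. Your write-up merely makes the barrier argument rigorous via the Dini derivative of the running maximum, which is a welcome but not substantively different elaboration.
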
 

\begin{proof}
Assume that the point cloud $X(t)$ is touching the plane $\{x\in \R^n\,|\, x \cdot \nu = \mu\}$ at time $t\geq 0$.
Consider any $x_i(t)$ with  $x_i(t) \cdot \nu = \mu$. Then, our assumption ensures that $\tfrac{\mathrm{d}}{\mathrm{d}t} x_i(t) \cdot \nu \leq 0$
and $X(t)$ will not penetrate the plane.
\end{proof}
For $\Pi_{ij}= 2 \,{\rm Id}$ the assumption in the proposition is obviously fulfilled and for one of the choices 
$\Pi_{ij} \in \{-2 \Pi_{P_j^\perp}, \, -2 \Pi_{P_i^\perp} \circ \Pi_{P_j^\perp}, \, 2 \Pi_{P_j^\perp}\}$ this assumption appears to be a useful constraint to define 
$\{P_i\}_{i=1,\ldots N}$. For 
$\Pi_{ij} \in \{ \Pi_{P_j},  \, \Pi_{P_i^\perp} \circ \Pi_{P_j} \}$ the assumption will fail in general for most practical choices of the $P_i$. In fact, in this case
the verification of a planar barrier depends in a subtle way on the weights $\omega_{ij}$.

Next, let us consider a spherical comparison principle. A sphere of radius $R^0$ around a center point $x\in \R^n$ stay spherical 
under mean curvature motion with radius $R(t) = \sqrt{(R^0)^2-2dt}$ and surfaces inside the initial sphere stay inside the evolving spheres until 
they become singular, e.g. at the extinction time (see \cite{Ecker2004} Proposition 3.1 and Remark 4.10 for a measure theoretic version due to Brakke). 
In what follows, we will here study a discrete counter part of this comparison principle 
in case of point cloud varifolds.

\begin{prop} [sphere comparison principle] \label{SphereInclusion} 
For point cloud $X^0 = \{ x_i^0 \}_{i=1}^N \subset \R^n$ and $z\in \R^n$ define $R^0 = \max_{i=1,\ldots,N} |x_i^0 -z|$ and assume that 
$(X(t))_{0 \leq t <T}$ satisfies \eqref{eq:continuousFormulation} with $X(0) = X^0$. Then $R(t) = \max_{i=1,\ldots,N} |x_i(t) -z|$ fulfills
\[
R(t) \leq \sqrt{(R^0)^2 - 2d \int_0^t c(s) \mathrm{d}s}
\]
with 
\begin{equation} \label{eq:Pt}
c(t) = \min \left\lbrace \left. \frac{ \Pi_{ij}(t) (x_i(t) - x_j(t)) \cdot (x_i(t) - z) }{| x_i(t) - x_j(t) |^2  } \: \right| \: \begin{array}{l}
i \in \{1, \ldots, N \}, \: \displaystyle |x_i(t) - z| = R(t)  \text{ and } \\
j \in \{1, \ldots, N \}, \: \displaystyle 0< | x_i(t) - x_j(t) | <  \epsilon 
\end{array}   \right\rbrace \: .
\end{equation}
\end{prop}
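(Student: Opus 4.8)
The plan is to track the evolution of the squared radius $R(t)^2 = \max_i |x_i(t) - z|^2$ and show it decays at least as fast as the smooth spherical barrier predicts. Since $R(t)^2$ is a maximum of finitely many smooth functions $t \mapsto |x_i(t)-z|^2$, it is Lipschitz and differentiable almost everywhere, and at a point of differentiability its derivative equals $\tfrac{\mathrm{d}}{\mathrm{d}t}|x_{i^\ast}(t)-z|^2$ for any index $i^\ast$ realizing the maximum (this is the standard envelope/Danskin argument for the upper derivative of a max of smooth functions). So first I would fix a time $t$ and an index $i$ with $|x_i(t)-z| = R(t)$, and compute $\tfrac{\mathrm{d}}{\mathrm{d}t}|x_i(t)-z|^2 = 2 (x_i(t)-z) \cdot \tfrac{\mathrm{d}}{\mathrm{d}t} x_i(t)$.

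Next I would substitute the evolution equation \eqref{meanCurvatureEvolution}, writing $\tfrac{\mathrm{d}}{\mathrm{d}t} x_i(t) = \tfrac{1}{\epsilon}\sum_j \omega_{ij}(t)\, \Pi_{ij}(t)(x_j(t)-x_i(t))$, to obtain
\[
\tfrac{\mathrm{d}}{\mathrm{d}t}|x_i(t)-z|^2 = \tfrac{2}{\epsilon}\sum_{j} \omega_{ij}(t)\, \Pi_{ij}(t)\bigl(x_j(t)-x_i(t)\bigr)\cdot \bigl(x_i(t)-z\bigr) \: .
\]
The idea is then to bound each summand using the definition of $c(t)$ in \eqref{eq:Pt}. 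For each $j$ with $0 < |x_j(t)-x_i(t)| < \epsilon$, the definition of $c(t)$ as a minimum gives
$\Pi_{ij}(t)(x_i(t)-x_j(t))\cdot(x_i(t)-z) \geq c(t)\,|x_i(t)-x_j(t)|^2$, equivalently $\Pi_{ij}(t)(x_j(t)-x_i(t))\cdot(x_i(t)-z) \leq -c(t)\,|x_i(t)-x_j(t)|^2$. Since each weight $\omega_{ij}(t) \geq 0$ (as noted after \eqref{meanCurvatureEvolution}), I would multiply through by $\tfrac{2}{\epsilon}\omega_{ij}(t)$ and sum, yielding
\[
\tfrac{\mathrm{d}}{\mathrm{d}t}|x_i(t)-z|^2 \leq - \tfrac{2 c(t)}{\epsilon}\sum_{j} \omega_{ij}(t)\,|x_i(t)-x_j(t)|^2 \: .
\]
To recover the clean factor $-2d\,c(t)$ I would plug in the explicit form of $\omega_{ij}(t)$: the factor $\tfrac{1}{|x_j-x_i|}$ inside $\omega_{ij}$ combines with the $|x_i-x_j|^2$ to leave $|x_i-x_j|\,\rho'(\cdot/\epsilon)$, and normalizing against the denominator $\sum_l m_l \xi(\cdot/\epsilon)$ together with the kernel relation \eqref{eq:kernelPairs}, $n\xi(s) = -s\rho'(s)$, collapses the weighted sum to exactly $\tfrac{d}{n}\cdot\tfrac{n}{1}\cdot\tfrac{1}{\epsilon}$-type factors producing $\tfrac{\mathrm{d}}{\mathrm{d}t}R(t)^2 \leq -2d\,c(t)$.

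The main obstacle, and the step requiring the most care, is the reduction of $\tfrac{2}{\epsilon}\sum_j \omega_{ij}(t)|x_i(t)-x_j(t)|^2$ to exactly $2d\,c(t)$ after the $c(t)$ bound: one must verify that the remaining weighted sum $\tfrac{1}{\epsilon}\sum_j \tfrac{d}{n} m_j \rho'(|x_j-x_i|/\epsilon)\tfrac{|x_i-x_j|}{\text{(denominator)}}$ equals exactly $d$ by the kernel identity, rather than merely being bounded. This is where the assumption \eqref{eq:kernelPairs} is essential, mirroring its role in the consistency proof. Once the differential inequality $\tfrac{\mathrm{d}}{\mathrm{d}t}R(t)^2 \leq -2d\,c(t)$ is established almost everywhere, the conclusion follows by integrating: $R(t)^2 \leq (R^0)^2 - 2d\int_0^t c(s)\,\mathrm{d}s$, and taking square roots gives the stated bound. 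A small technical point to address is that $c(t)$ is defined via a minimum over the argmax indices $i$, so I must ensure the chosen realizing index at a differentiability point is among those entering the minimum; invoking the envelope argument over all maximizing indices handles this uniformly.
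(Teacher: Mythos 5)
Your proposal is correct and follows essentially the same route as the paper: pick a maximizing index, multiply the evolution equation by $x_i(t)-z$, bound via the definition of $c(t)$ using $\omega_{ij}\geq 0$, collapse the remaining weighted sum to exactly $d$ via the kernel identity \eqref{eq:kernelPairs} (note that this identity also forces $\xi(0)=0$, so the $l=i$ term in the denominator vanishes and the ratio is exactly $1$), and integrate. The only addition is your explicit envelope/Danskin justification for differentiating the max, which the paper leaves implicit.
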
  
\begin{proof}
Without any restriction assume $z=0$ and choose any $x_i(t)$ with $R(t) =  |x_i(t)|$. Multiplying \eqref{meanCurvatureEvolution} with $x_i(t)$ 
we obtain 
\begin{align*}
\tfrac12 \tfrac{\mathrm{d}}{\mathrm{d}t} |x_i(t)|^2 &= \tfrac{\mathrm{d}}{\mathrm{d}t} x_i(t) \cdot x_i(t) 
= \frac{1}{\epsilon} \sum_{j=1}^N \omega_{ij}(t) \Pi_{ij}(t) \left( x_j(t) - x_i(t) \right) \cdot x_i(t) \\
&\leq -\frac{c(t)}{\epsilon} \sum_{j=1}^N \omega_{ij}(t) |x_i(t)-x_j(t) |^2 \\
&=  \frac{c(t)d}{n} \sum_{j=1}^N \frac{ \displaystyle m_j(t) \rho^\prime \left(\frac{|x_j(t) - x_i(t) |}{\epsilon} \right) \frac{| x_j(t) - x_i(t) |}{\epsilon} }{\displaystyle \sum_{l=1}^N m_l(t) \xi \left( \frac{|x_l(t) - x_i(t) |}{\epsilon} \right)} 
\leq - c(t) d\:,
\end{align*} 
where we have used $n \xi(s) = - s \rho^\prime (s)$ from \eqref{eq:kernelPairs}. Thus, we obtain $R(t)^2 \leq (R^0)^2 - 2d \int_0^t c(s) \mathrm{d}s$, which 
proves the claim.
\end{proof}

For $\Pi_{ij} = 2 \,{\rm Id}$, the constant in \eqref{eq:P} is $c(t) = 1$ for all $t$ and thus the conclusion of Proposition~\ref{discreteSphereInclusion} 
recovers the classical spherical comparison principle $R(t) \leq \sqrt{(R^0)^2 - 2d t}$. 
Indeed, dropping the dependence on time and fixing $i$ and $j$ according to \eqref{eq:Pt} and so that $|x_j - z| \leq |x_i - z| = R$ we then obtain
\[
(x_i-x_j) \cdot (x_i-z) = |x_i-z|^2 - (x_j-z) \cdot (x_i-z) \geq \tfrac12 |x_i-z|^2 - (x_j-z) \cdot (x_i-z) + \tfrac12 |x_j-z|^2 = \tfrac12 |x_i-x_j|^2 \: .
\] 
Unfortunately, regarding to other choices for $\Pi_{ij}$, $c(t)$ strongly depends on the computation of normals and could even be negative.

\subsection{Time discrete curvature flow} \label{sec:time}
Let us now proceed with a time discretization of \eqref{eq:continuousFormulation}.
It is well--known, for parametric mean curvature flows for instance, that explicit time discretization implies a restrictive stability condition, imposing small time steps. A common solution is then to consider implicit or partially implicit time discretizations. Let us consider a time step $\tau >0$ and approximate 
the solution $x_i(t_k)$ at a discrete time $t_k = k \tau$ using the
adopted notation
\begin{align*}
& m_i^k = m_i(X^k), \quad P_i^k = P_i(X^k), \quad \Pi_{ij}^k = \Pi_{ij} (P_i^k, P_j^k) \text{ as in \eqref{eq:Pi_ij}}, \\
&X^k  = (x_i^k)_{i=  1 \ldots N} \in \R^{nN}
\quad \text{and} \quad V^k = \displaystyle \sum_{i=1}^N m_i^k \delta_{(x_{i}^k, P_i^k)} \: .
\end{align*}
A first natural choice is the implicit scheme (actually implicit with respect to positions $X^k$ but explicit with respect to masses and sets of directions):
\begin{align}\label{eqSemiImplicitScheme1}
x_i^{k+1} &  =   \displaystyle x_i^k + \tau H_\e^\Pi ( x_i^{k+1} , \widehat{V^{k}} ) \quad \text{with}
\quad \widehat{V^{k}}  =  \displaystyle \sum_{i=1}^N m_i^k \delta_{(x_i^{\textcolor{ColorImplicit}{k+1}} , P_i^k)},  \\
\label{eqSemiImplicitScheme2}
V^{k+1} &= \sum_{i=1}^N m_i^{\textcolor{ColorImplicit}{k+1}} \delta_{(x_i^{k+1} , P_i^{\textcolor{ColorImplicit}{k+1}})},
\end{align}
where in \eqref{eqSemiImplicitScheme2} the updated masses $m_i^{k+1}$ and tangent directions $P_i^{k+1}$ are 
computed from the new positions $X^{k+1}$.
In other words, the positions $X^{k+1} = (x_i^{k+1})_{i=1 \ldots N} \in \R^{nN}$ must satisfy the following equations for $i = 1 \ldots N$
\begin{equation} \label{eq:implicitOmega}
x_i^{k+1} = x_i^k + \frac{\tau}{\epsilon} \sum_{j=1}^N \omega_{ij}^{k+1} \Pi_{ij}^{k} \left( x_j^{k+1} - x_i^{k+1} \right)
\end{equation}
with  
\[
\omega_{ij}^{k} = - \frac{d}{n} \frac{ \displaystyle m_j^k \rho^\prime \left(\frac{|x_j^{k} - x_i^{k} |}{\epsilon} \right) \frac{1}{| x_j^{k} - x_i^{k} |} }{\displaystyle \sum_{l=1}^N m_l^k \xi \left( \frac{|x_l^{k} - x_i^{k} |}{\epsilon} \right)} 
\]
for $i \neq j$, and $\omega_{ii}^{k} = 0$. As for the continuous counterpart $\omega_{ij}^k \geq 0$. 
At first, we obtain as in the time continuous case a comparison principle with planar barriers for the flow.
\begin{prop} [planar barrier in the time discrete case] \label{discretePlaneBarrier} 
Suppose that a sequence of point cloud varifolds $(V_k)_{k}$ is solution of the implicit scheme \eqref{eqSemiImplicitScheme1}. In addition,
\begin{enumerate}
\item assume that the initial points $X^0 = \{ x_i^0 \}_{i=1}^N \subset \R^n$ fulfill $x_i^0 \cdot \nu \leq \mu$ for $i = 1 \ldots N$ with $\nu\in \R^n$, $\mu \in \R$, 
\item for all $k$, if $x_i^k$ is a point on the boundary of the convex hull of $X^k$, then for points $x_j^k$ such that $|x_j^k - x_i^k| < \epsilon$ assume (implicitly) that 
 $\Pi_{ij}^k (x_j^{k+1} - x_i^{k+1})$ at $x_i^{k+1}$ is pointing inside the convex hull of $X^{k+1}$.
\end{enumerate}
Then independently of the choices of masses $m_i^k$ 
\[
x_i^k \cdot \nu \leq \mu , \quad \text{for all } i = 1 \ldots N
\]
and for all $k$ such that $V^k$ is defined.
\end{prop}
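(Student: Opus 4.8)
The plan is to argue by induction on $k$, transporting the continuous barrier argument of Proposition~\ref{PlaneBarrier} to the implicit update \eqref{eq:implicitOmega}, with $k$ ranging over those indices for which the iterate $X^{k+1}$ exists. The base case $k=0$ is exactly hypothesis~(1). For the inductive step I assume $x_i^k\cdot\nu\leq\mu$ for all $i$ and must show the same bound at level $k+1$. Since the claim concerns $\max_i x_i^{k+1}\cdot\nu$, I would fix an index $i^\ast$ realizing this maximum and set $M:=x_{i^\ast}^{k+1}\cdot\nu$; it then suffices to prove $M\leq\mu$. By maximality, $x_{i^\ast}^{k+1}$ is an extreme point of the convex hull of $X^{k+1}$ in the direction $\nu$, so the hyperplane $\{x\cdot\nu=M\}$ supports this convex hull with outer normal $\nu$, i.e.\ $\mathrm{conv}(X^{k+1})\subset\{x\cdot\nu\leq M\}$.

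The core of the argument is to take the inner product of \eqref{eq:implicitOmega} with $\nu$ at the index $i^\ast$, which gives
\[
M = x_{i^\ast}^k\cdot\nu + \frac{\tau}{\epsilon}\sum_{j=1}^N \omega_{i^\ast j}^{k+1}\,\bigl(\Pi_{i^\ast j}^{k}(x_j^{k+1}-x_{i^\ast}^{k+1})\bigr)\cdot\nu\,.
\]
Here $\omega_{i^\ast j}^{k+1}\geq0$ for every choice of positive masses, exactly as for the continuous weights, which is what yields the conclusion ``independently of the choices of masses''. It therefore remains to show that each term $\bigl(\Pi_{i^\ast j}^{k}(x_j^{k+1}-x_{i^\ast}^{k+1})\bigr)\cdot\nu\leq0$. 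This is where hypothesis~(2) enters: it asserts that the displacement direction $\Pi_{i^\ast j}^{k}(x_j^{k+1}-x_{i^\ast}^{k+1})$, based at $x_{i^\ast}^{k+1}$, points into $\mathrm{conv}(X^{k+1})$. Since $\nu$ is the outer normal of a supporting hyperplane of $\mathrm{conv}(X^{k+1})$ at $x_{i^\ast}^{k+1}$, any vector $v$ for which $x_{i^\ast}^{k+1}+tv$ remains in the convex hull for small $t>0$ satisfies $v\cdot\nu\leq0$; applying this to each summand makes the whole sum nonpositive, so $M\leq x_{i^\ast}^k\cdot\nu\leq\mu$ by the inductive hypothesis, closing the induction.

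The main obstacle is the bookkeeping in the application of hypothesis~(2), which is subtly self-referential in the implicit scheme. First, the hypothesis is phrased for a point $x_i^k$ lying on the boundary of $\mathrm{conv}(X^k)$, whereas the natural candidate $i^\ast$ is an extreme point of $\mathrm{conv}(X^{k+1})$; I would need to check that the $\nu$-maximizer at level $k+1$ also satisfies the hypotheses at level $k$ --- concretely, that $x_{i^\ast}^k\in\partial\,\mathrm{conv}(X^k)$ --- or, equivalently, to read hypothesis~(2) as imposed precisely at the indices that are active in the maximum principle. A convenient way to organize this is by contradiction: if $M>\mu\geq x_{i^\ast}^k\cdot\nu$ then $(x_{i^\ast}^{k+1}-x_{i^\ast}^k)\cdot\nu>0$, forcing at least one summand $\bigl(\Pi_{i^\ast j}^{k}(x_j^{k+1}-x_{i^\ast}^{k+1})\bigr)\cdot\nu>0$ with $\omega_{i^\ast j}^{k+1}>0$, which directly contradicts hypothesis~(2). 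Second, a positive weight $\omega_{i^\ast j}^{k+1}$ forces $|x_j^{k+1}-x_{i^\ast}^{k+1}|<\epsilon$ through the support of $\rho^\prime$, so one should reconcile this with the $\epsilon$-neighbourhood condition $|x_j^k-x_i^k|<\epsilon$ appearing in the statement; these are the only points where care is required, the remainder being the verbatim discrete analogue of the proof of Proposition~\ref{PlaneBarrier}.
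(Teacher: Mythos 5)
Your proof is correct and follows essentially the same route as the paper, whose own proof is a one-line remark that the argument is analogous to the continuous case after choosing $i^\ast$ with $x_{i^\ast}^{k+1}\cdot\nu=\max_j x_j^{k+1}\cdot\nu$; you have simply written out the induction, the supporting-hyperplane observation, and the nonnegativity of the weights explicitly. The bookkeeping issues you flag (the hypothesis being indexed by boundary points of $\mathrm{conv}(X^k)$ while the maximizer lives at level $k+1$, and the $\epsilon$-neighbourhood being measured at level $k$ versus $k+1$) are genuine imprecisions in the statement of hypothesis~(2) that the paper glosses over, and your reading of the hypothesis as applying to the indices active in the maximum principle is the intended one.
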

\begin{proof} 
The proof is analoguous to the proof of Proposition \ref{PlaneBarrier} in the time continuous case, now considering $i\in\{1,\ldots N\}$ with 
$x_i^{k+1} \cdot \nu = \max_{j \in\{1,\ldots N\}} x_j^{k+1} \cdot \nu$. 
\end{proof}
Next, we study a fully discrete version of the sphere comparison property (cf. Proposition~\ref{SphereInclusion}).

\begin{prop} [time discrete sphere comparison principle] \label{discreteSphereInclusion} 
Suppose that a point cloud $X^0 = \{ x_i^0 \}_{i=1}^N \subset \R^n$ is contained in a ball $B^0$ of radius $R^0$ and centered at $z$ and assume that $(V^k)_{k}$ is a sequence of point cloud varifolds which are solutions of  the implicit scheme \eqref{eqSemiImplicitScheme1}.\\
For each $k \geq 1$, define $\displaystyle R^{k} = \max_{i \in \{ 1, \ldots, N \} } | x_i^{k} - z|$ and
\begin{equation} \label{eq:P}
c^k = \min \left\lbrace \left. \frac{ \Pi_{ij}^k (x_i^{k+1} - x_j^{k+1}) (x_i^{k+1} - z) }{| x_i^{k+1} - x_j^{k+1} |^2  } \: \right| \: \begin{array}{l}
i \in \{1, \ldots, N \}, \: \displaystyle |x_i^{k+1} - z| = R^{k+1}  \text{ and } \\
j \in \{1, \ldots, N \}, \: \displaystyle | x_i^{k+1} - x_j^{k+1} | <  \epsilon 
\end{array}   \right\rbrace \: .
\end{equation}
Then, independently of the choices of masses, for all $k$ such that $V^k$ is defined, the 
positions $\{ x_j^k \}_{j=1}^N \subset \R^n$ are contained in the ball of center $z$ and radius $\sqrt{(R^0)^2 -2 d \tau \sum_{l=1}^k c^k}$. 
\end{prop}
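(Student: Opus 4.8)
The plan is to adapt the continuous sphere comparison principle (Proposition~\ref{SphereInclusion}) by replacing the differentiation of $t\mapsto|x_i(t)|^2$ with a one-step discrete estimate and then iterating over $k$. Without loss of generality assume $z=0$. Fix a step $k$ for which $V^{k+1}$ is defined and select an index $i$ realizing the new maximal radius, $|x_i^{k+1}|=R^{k+1}$. The implicit scheme \eqref{eq:implicitOmega} gives $x_i^{k+1}-x_i^k=\frac{\tau}{\epsilon}\sum_{j=1}^N\omega_{ij}^{k+1}\Pi_{ij}^k(x_j^{k+1}-x_i^{k+1})$, and the whole argument rests on controlling the increment of the squared radius at this index.

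The key identity is the discrete counterpart of $\frac{\mathrm{d}}{\mathrm{d}t}|x|^2=2\,x\cdot\dot x$: writing $\Delta_i=x_i^{k+1}-x_i^k$,
\[
|x_i^{k+1}|^2-|x_i^k|^2=2\,x_i^{k+1}\cdot\Delta_i-|\Delta_i|^2\leq 2\,x_i^{k+1}\cdot\Delta_i\,.
\]
Here lies the main point: since the scheme is implicit in the positions, the natural vector to pair $\Delta_i$ with is the updated point $x_i^{k+1}$, and discarding the nonnegative term $|\Delta_i|^2$ produces the correct factor $2$ with no restriction on the step. Inserting the scheme, using $\omega_{ij}^{k+1}\geq0$ and the definition of $c^k$ in \eqref{eq:P} (only indices $j$ with $|x_i^{k+1}-x_j^{k+1}|<\epsilon$ contribute, since $\rho^\prime$ is supported in $[-1,1]$), I obtain
\[
2\,x_i^{k+1}\cdot\Delta_i=\frac{2\tau}{\epsilon}\sum_{j=1}^N\omega_{ij}^{k+1}\,\Pi_{ij}^k(x_j^{k+1}-x_i^{k+1})\cdot x_i^{k+1}\leq-\frac{2\tau c^k}{\epsilon}\sum_{j=1}^N\omega_{ij}^{k+1}\,|x_i^{k+1}-x_j^{k+1}|^2\,.
\]

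Next I would evaluate the remaining weighted sum exactly as in the proof of Proposition~\ref{SphereInclusion}: substituting the definition of $\omega_{ij}^{k+1}$ and applying the kernel relation $n\xi(s)=-s\rho^\prime(s)$ from \eqref{eq:kernelPairs} collapses $\frac{1}{\epsilon}\sum_j\omega_{ij}^{k+1}|x_i^{k+1}-x_j^{k+1}|^2$ to the constant $d$, since the $\xi$--weighted sums in numerator and denominator coincide. Together with $|x_i^k|\leq R^k$ this yields the one-step inequality $(R^{k+1})^2=|x_i^{k+1}|^2\leq(R^k)^2-2d\tau\,c^k$. Telescoping this bound from $0$ to $k-1$ gives $(R^k)^2\leq(R^0)^2-2d\tau\sum_l c^l$, i.e.\ the asserted containment of $\{x_j^k\}_j$ in the ball of center $z$ and the stated radius.

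The routine ingredients (the exact cancellation via \eqref{eq:kernelPairs} and the induction) are identical to the time continuous case and pose no difficulty; the only genuinely new feature, which I expect to be the crux, is the clean passage to the factor $2$ in the discrete setting. This hinges on pairing the increment $\Delta_i$ with the \emph{new} position $x_i^{k+1}$ in the identity $|x_i^{k+1}|^2-|x_i^k|^2=2\,x_i^{k+1}\cdot\Delta_i-|\Delta_i|^2$, which is exactly what the implicit structure makes natural (the comparison constant $c^k$ is defined through the $x^{k+1}$ configuration) and which produces the favorable sign $-|\Delta_i|^2$. For an explicit scheme the analogous expansion would read $2\,x_i^k\cdot\Delta_i+|\Delta_i|^2$ with an unfavorable quadratic remainder, the usual source of a step-size restriction; here no such restriction is needed.
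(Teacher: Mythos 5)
Your proposal is correct and follows essentially the same route as the paper: test the implicit update against the new position $x_i^{k+1}$ at an index realizing $R^{k+1}$, bound the projected term via $c^k$, collapse the weighted sum to $d$ using \eqref{eq:kernelPairs}, and telescope. The only cosmetic difference is that you obtain the factor $2$ from the identity $|x_i^{k+1}|^2-|x_i^k|^2=2\,x_i^{k+1}\cdot\Delta_i-|\Delta_i|^2$ whereas the paper applies Cauchy--Schwarz and Young to $x_i^k\cdot x_i^{k+1}$; both discard the same nonnegative quantity $|\Delta_i|^2$.
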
 
Let us remark, that in analogy to the time continuous case $c^k = 1$ for $\Pi_{ij} = 2 \,{\rm Id}$ and hence $\tau \sum_{l=1}^k c^k = t^k$.

\begin{proof}
Without loss of generality let  $z=0$.
Assume that at time $t_k$ the point cloud $X^k$ satisfies $\max_{j= 1 \ldots N} |x_j^k | \leq R^k$.
We choose $i\in \{1,\ldots, N\}$ such that $|x_i^{k+1}|  = \max_{j \in\{1,\ldots N\}} |x_j^{k+1}|$.
and proceed in analogy to the proof of Proposition \ref{SphereInclusion}. Thereby, one obtains 
\begin{align}
| x_i^{k+1} |^2 & = x_i^k \cdot x_i^{k+1} +  \frac{\tau}{\e} \sum_{j=1}^N  \omega_{i_0 j}^{k+1}   \Pi_{ij}^k (x_j^{k+1} - x_i^{k+1} ) \cdot x_i^{k+1} \label{eq:sphereInclusion1} \\
& \leq |x_i^k| | x_i^{k+1} | - c_k  \frac{\tau}{\e} \sum_{j=1}^N  \omega_{i_0 j}^{k+1}  |x_j^{k+1} - x_i^{k+1}|^2 \\
&=  |x_i^k| | x_i^{k+1} | -  d c_k \tau \leq \tfrac12 | x_i^{k+1} |^2 + \tfrac12 | x_i^{k} |^2 -  d c_k \tau \:. \nonumber
\end{align}
This implies $(R^{k+1})^2 = | x_i^{k+1} |^2 \leq | x_i^k |^2 - 2 c_k d \tau \leq (R^k)^2 - 2 c_k d \tau$
and thus establishes the claim.
\end{proof}

However, for practical reasons, we rather propose a linearized version of the previous implicit scheme \eqref{eqSemiImplicitScheme1}
in which we choose $V^k$ as the geometric reference in the discrete evolution of the $k$th time step and we introduce the following semi-implicit scheme  
\begin{equation} \label{eqLinearSemiImplicitScheme}
x_i^{k+1} = x_i^k + \frac{\tau}{\epsilon} \sum_{j=1}^N \omega_{ij}^{k} \Pi_{ij}^{k} \left( x_j^{\textcolor{ColorImplicit}{k+1}} - x_i^{\textcolor{ColorImplicit}{k+1}} \right)
\end{equation}
which leads to a linear system to be solved in each time step.
With $X^k = (x_1^k, \ldots , x_N^k) \in \R^{nN}$, we can rewrite this linear system as  
\begin{align}\label{eq:Mmatrix}
(M - \frac{\tau}{\epsilon} L) X^{k+1} = M X^{k}  
\end{align}
where $M$ is a diagonal matrix of size $(nN,nN)$ defined as
\[
M = {\left( \begin{array}{c|c|c}
\mu_1 {\rm I}_n   &         &              \\
\hline
                  &  \ddots &              \\
\hline
                  &         & \mu_N {\rm I}_n            
\end{array} \right)} \quad \text{with} \quad \mu_i = \displaystyle \sum_{l=1}^N m_l^k \xi \left( \frac{|x_l^k-x_i^k|}{\epsilon} \right)
\]
and $L$ is a matrix of size $(nN,nN)$, which we can see as a matrix of $(N,N)$ blocks $L_{(i,j)}$ of size $(n,n)$. 
For a fixed $i$, the block $L_{(i,j)}$ is the null matrix if $|x_j^k - x_i^k| > \e$ and
\[
L_{(i,j)} = - \frac{d}{n} \frac{m_j^k}{|x_j^k-x_i^k|}  \rho^\prime \left(\frac{|x_j^k-x_i^k|}{\epsilon} \right) \, \Pi_{ij}^k \quad \text{for } j \neq i \; ,
\]
\[
L_{(i,i)} = - \sum_{j=1, j \neq i}^N L_{(i,j)} = \frac{d}{n} \sum_{j=1}^N \frac{m_j^k}{|x_j^k-x_i^k|}  \rho^\prime \left(\frac{|x_j^k-x_i^k|}{\epsilon} \right) \, \Pi_{ij}^k \; .
\]
where $\Pi_{ij}^k$ is the $(n,n)$--matrix corresponding to the linear operator $\Pi_{ij}^k$.
 The matrix $(M - \frac{\tau}{\epsilon} L)$ is strongly diagonal dominant and thus an M-matrix. 
Hence, the system \eqref{eq:Mmatrix} is uniquely solvable and one obtains
\( X^{k+1} = (\rm{Id} - \frac{\tau}{\epsilon} M^{-1} L)^{-1} X^k\).

\section{Numerical Results} \label{sec:results}

We perform numerical tests with the semi-implicit scheme \eqref{eqLinearSemiImplicitScheme}. First we test its consistency and robustness to white noise in the simple case of a circle evolving through curvature flow (Section~\ref{sec:circle}).  The absence of singularities allows us to discard most choices for the projector $\Pi_{ij}$ in \eqref{eq:Pi_ij} and we carry on the study with $\Pi_{ij} = 2 \Pi_{P_i^\perp}$ in Section~\ref{sec:flower}. In particular, we observe that the  implicit assumption for the discrete sphere inclusion, though not proven in this case, is satisfied in our numerical experiment (see Figure~\ref{fig:sphereCond}). Next we focus in Section~\ref{sec:singular} on curvature flows of curves with crossings and junctions, taking advantage of the flexibility with respect to topological changes of point cloud representation. Our curvature flow is able to converge to both Steiner trees spanning the four vertices of a square in Figure~\ref{figSteiner1}. Last, in Section~\ref{sec:singularSurf}, we draw our attention to surfaces and recover the minimal cone spanning the edges of a tetraedron in the limit of our mean curvature flow as well as a candidate minimal surface spanning the edges of a cube, see Figures~\ref{figTetra} and \ref{figCube}.

\begin{rem}[Nearest neighbour graph and $kd$--tree structure] \label{remk:knnTopology}
From a practical perspective, we compute neighbourhoods thanks to a $kd$--tree structure that is computed with the library \emph{Nanoflann} \cite{nanoflann}. Note that the knowledge of the nearest neighbour graph encodes information that can be interpreted as a discrete counterpart to the local topology of the object. Comparing with triangulated surface, it can be seen as the counterpart to the mesh connectivity information. In fact, in case of triangular surfaces 
one can either move points and obtain a new mesh with the same connectivity structure, or one can move points and remesh the set of points, which is a computationally demanding operation but often necessary since moving points may create overlaps or crossings in the mesh. In a point cloud, after moving the positions of the points, one has the same choice: one can recompute  the nearest neighbour graph or leave it unchanged. As pointed out below, this will be useful to stabilize the  point cloud evolution  
close to triple points. Such operations are straightforward to handle in a $kd$--tree.
\end{rem}

\subsection{Evolution of circles} \label{sec:circle}
 
As already mentioned when we discussed  sphere comparison principle (see Section~\ref{sec:timecontsub}) a circle of initial radius $R_0$ evolves into concentric circles of radius $R(t) = \sqrt{R_0^2 - 2t}$ at time $t$ under mean curvature flow. A first step in order to validate our approach is to check this property on our scheme. 
\begin{center}
\setcounter{subfigure}{0}
\begin{figure}[!htp]
\subcaptionbox{$\Pi_{ij} = \Pi_{P_j}$,\\ $t = 0$, $0.03$, $0.06$, $0.09$, $0.12$ \label{figComparisonOperatorsCircle1}}{\includegraphics[width=0.31\textwidth]{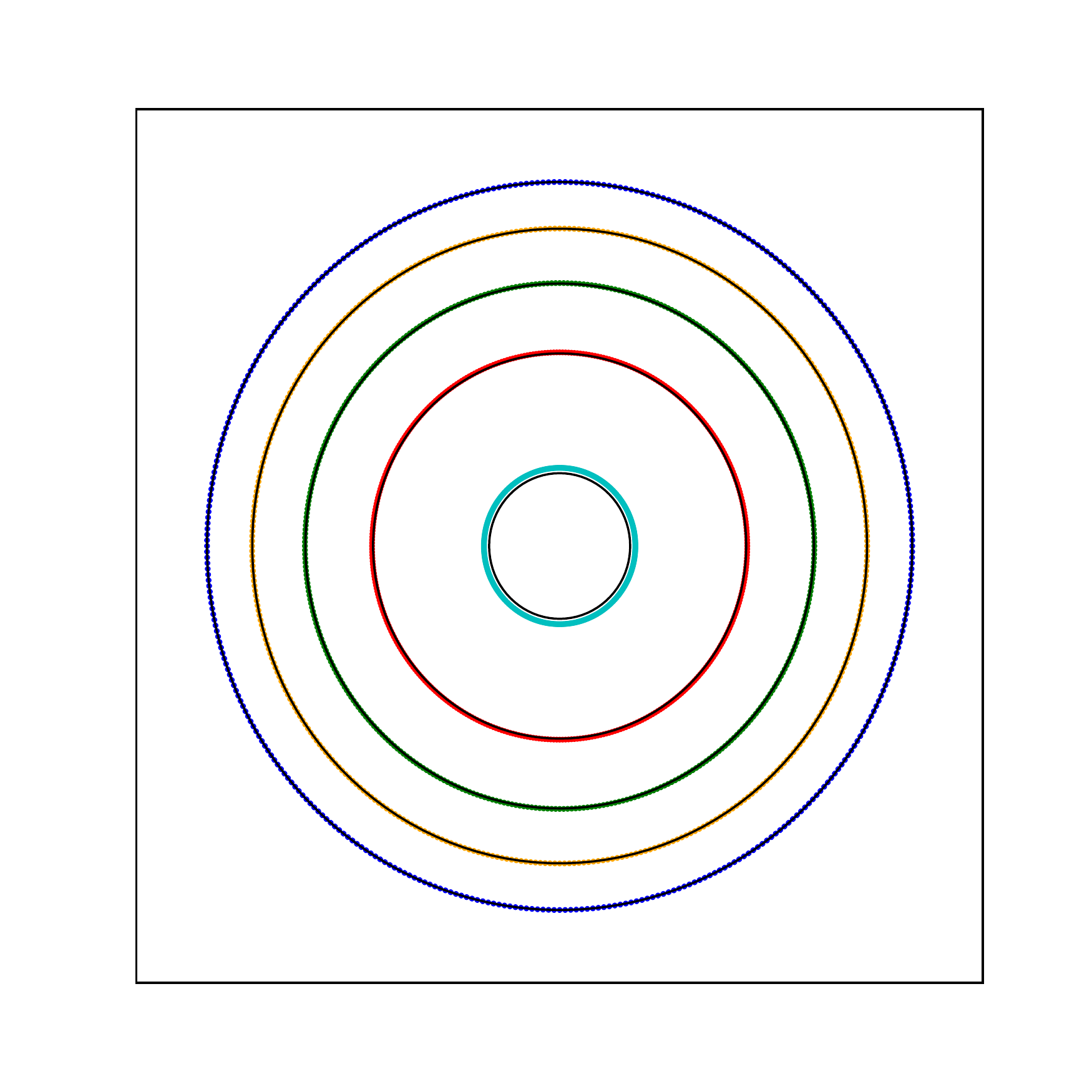} }
\subcaptionbox{$\Pi_{ij} = \Pi_{P_i^\perp} \circ \Pi_{P_j}$,\\ $t = 0$, $0.03$, $0.06$, $0.09$, $0.12$ \label{figComparisonOperatorsCircle2}}{\includegraphics[width=0.31\textwidth]{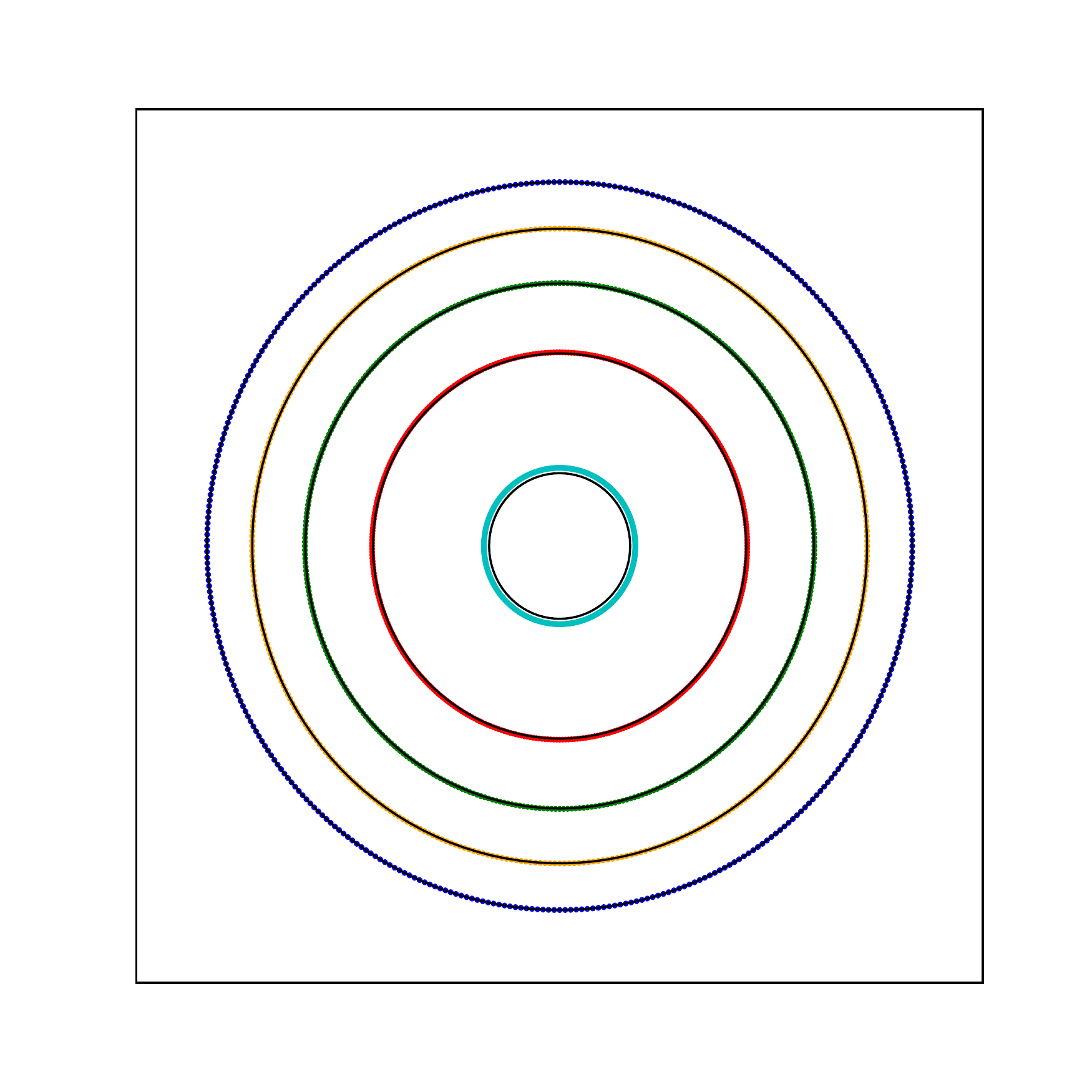} }
\subcaptionbox{$\Pi_{ij} = 2\Pi_{P_i^\perp}$,\\ $t = 0$, $0.03$, $0.06$, $0.09$, $0.12$  \label{figComparisonOperatorsCircle3}}{\includegraphics[width=0.31\textwidth]{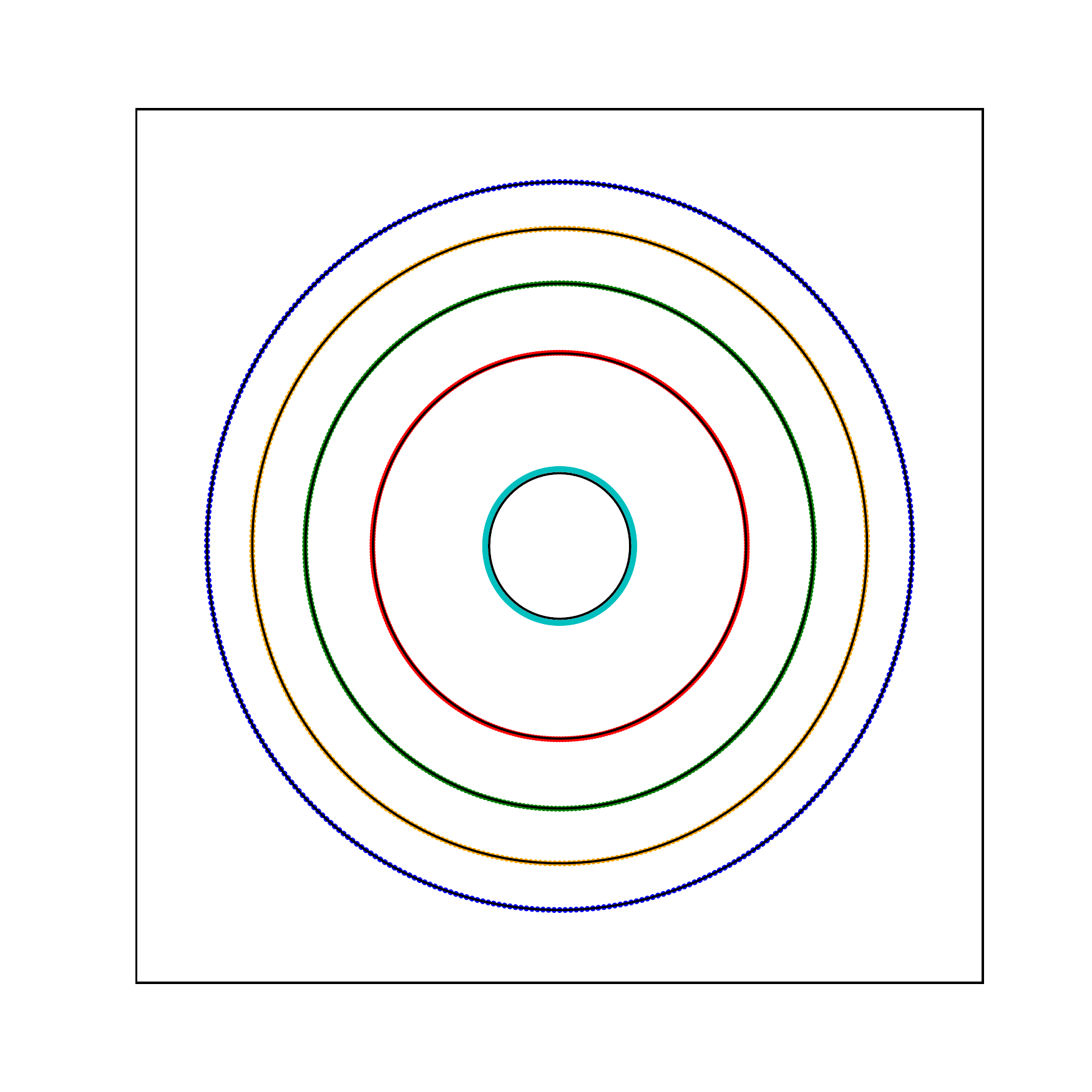}}\\
\subcaptionbox{$\Pi_{ij} = 2{\rm Id}$,\\ $t = 0$, $0.03$, $0.06$, $0.09$, $0.117$ \label{figComparisonOperatorsCircle4}}{\includegraphics[width=0.31\textwidth]{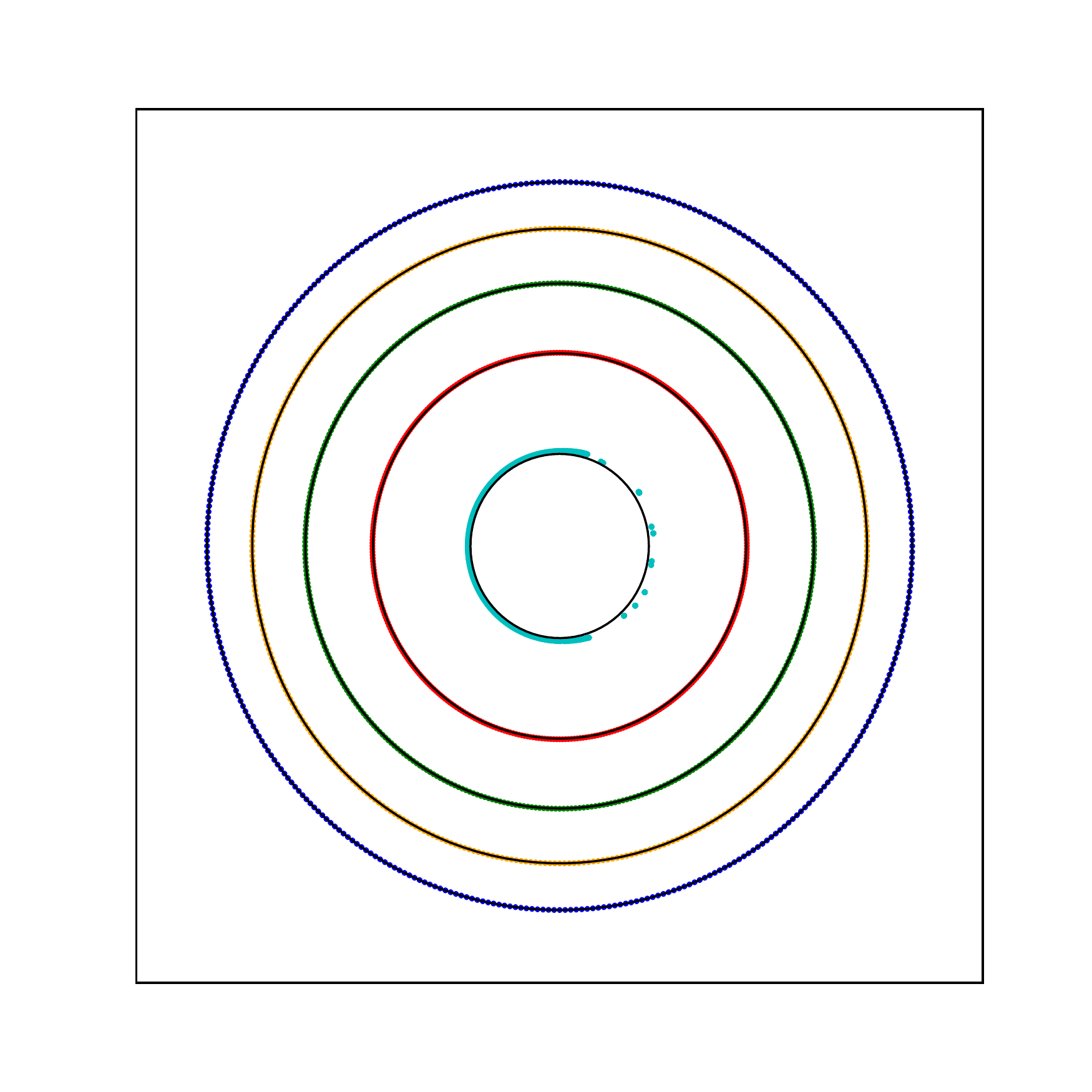} }
\subcaptionbox{$\Pi_{ij} = -2\Pi_{P_j^\perp}$,\\ $t = 0$, $0.02$, $0.03$, $0.12$ \label{figComparisonOperatorsCircle5}}{\includegraphics[width=0.31\textwidth]{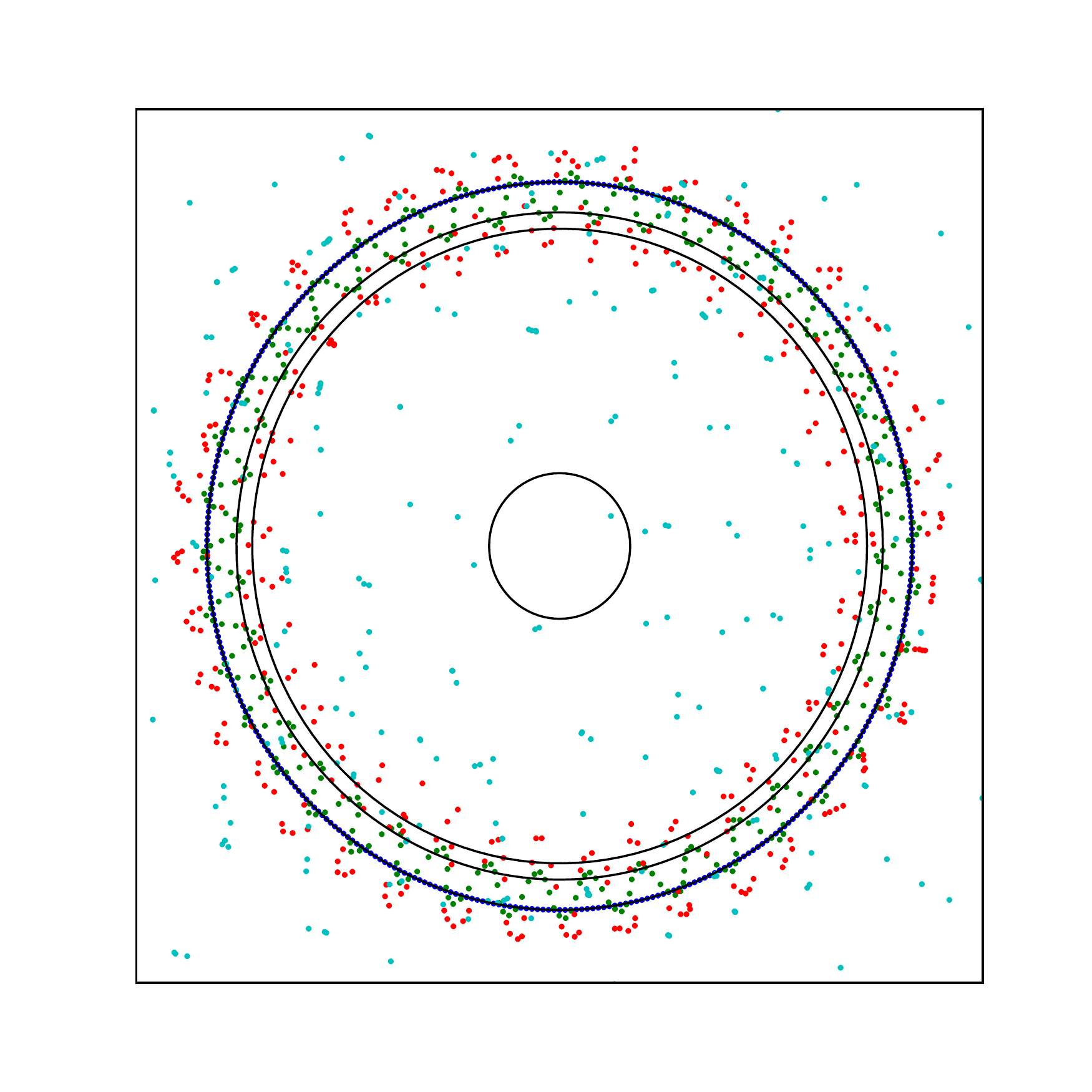} }
\subcaptionbox{$\Pi_{ij} = -2\Pi_{P_i^\perp} \circ \Pi_{P_j^\perp}$,\\ $t = 0$, $0.01$, $0.03$, $0.12$ \label{figComparisonOperatorsCircle6}}{\includegraphics[width=0.31\textwidth]{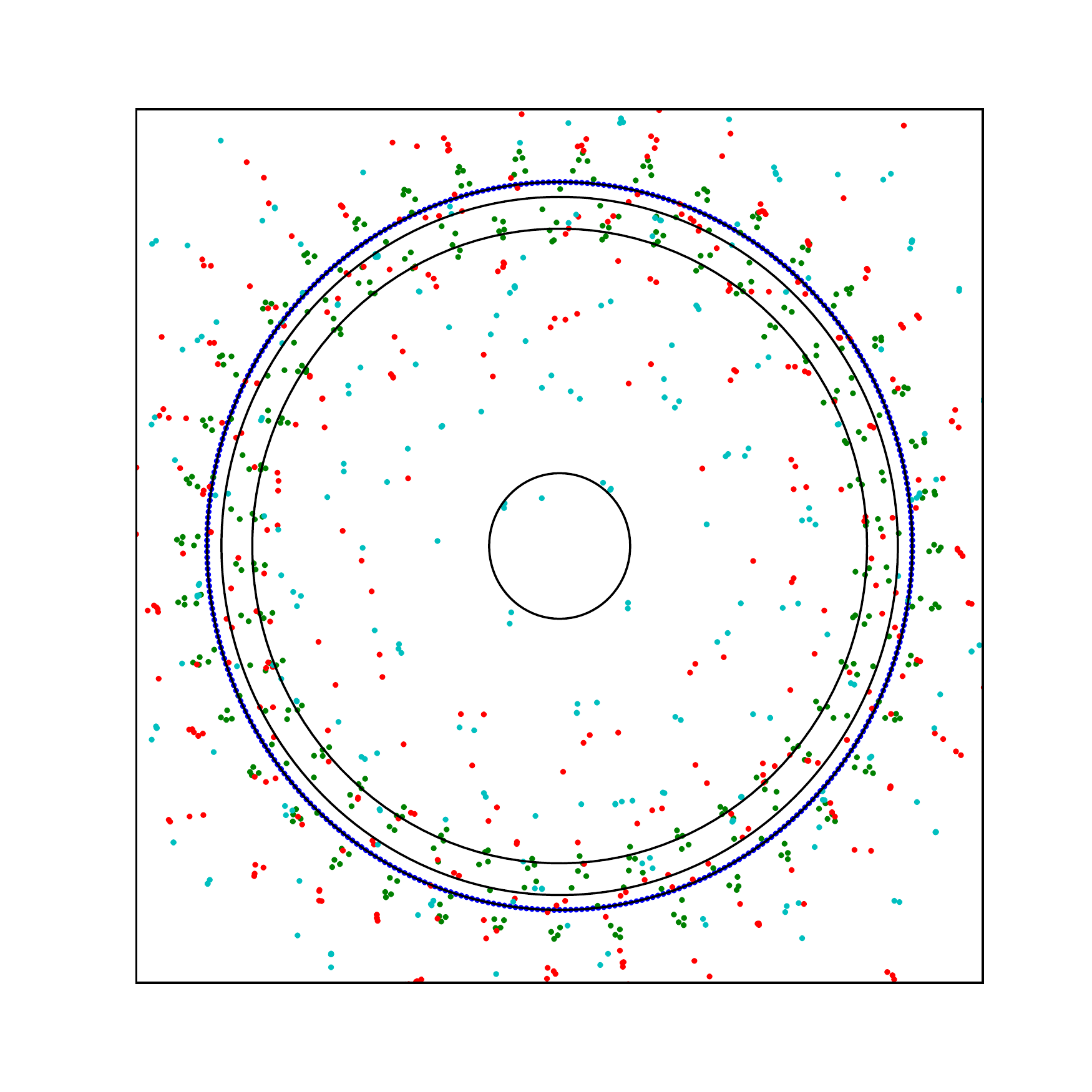} }
\caption{Comparison of the results given by the semi-implicit scheme \eqref{eqLinearSemiImplicitScheme} for different choices of operator $\Pi_{ij}$. The test is performed on a circle uniformly discretized with $N = 400$ points, the number of points for computing the mass is fixed to $k_\delta = 3$, the number of points for the regression is fixed to $k_\sigma = 17$ and the number of points for the computation of curvature is fixed to $k_\epsilon = 15$, the time step is $\tau = 0.1/N = 0.0005$. The numerical solutions and the exact solutions in black are represented at same times $t$. Simulations are stopped then resulting in clutter due to noise amplification.
\label{figComparisonOperatorsCircle}}
\end{figure}
\end{center}
We start with a circle of radius $R_0 = 0.5$ uniformly discretized with $N$ points. In Figure~\ref{figComparisonOperatorsCircle}, we first perform a qualitative comparison of the behaviour of the scheme depending on the operator $\Pi_{ij}$.
In Figure~\ref{figComparisonOperatorsCircle5} and \ref{figComparisonOperatorsCircle6} we observe strong instabilities after short time, while in Figure~\ref{figComparisonOperatorsCircle4}, instabilities appear after longer time. Consequently, we focus on projection operators tested in Figure~\ref{figComparisonOperatorsCircle1}, \ref{figComparisonOperatorsCircle2} and \ref{figComparisonOperatorsCircle3}, that is $\Pi_{ij} \in \left\lbrace \Pi_{P_j},  \, \Pi_{P_i^\perp} \circ \Pi_{P_j} , \, \Pi_{P_i^\perp}   \right\rbrace$.

As a second step to validate our approach, we then test the robustness with respect to white noise: we introduce an initial white noise of standard deviation $s$ on the circle of radius $R_0 = 0.5$.

\noindent In Figure~\ref{fig:circleTnoise}, we observe tangential instabilities with agglomeration of points in very short time. In Figure~\ref{fig:circleTNnoise}, we observe that noise is not smoothed but transported, which is reasonable, given that the  initial projection onto $P_j$ makes $\Pi_{ij} = \Pi_{P_i^\perp} \circ \Pi_{P_j}$ blind to normal noise. As a consequence of this non-smoothing effect, the speed of evolution is considerably slowed down. In Figure~\ref{fig:circleNnoise} and \ref{fig:circleNnoise2} we observe that noise is smoothed in a few steps. The evolution is then close to the exact one.  This is further improved in Figure~\ref{fig:circleNnoise} and in Figure~\ref{fig:circleNnoise2} with even higher initial noise. From this first analysis, we conclude that the most robust choice among \eqref{eq:Pi_ij} is $\Pi_{ij} = 2 \Pi_{P_i^\perp}$ when dealing with discretization of smooth curves. 
We then carry on our study with $\Pi_{ij} = 2 \Pi_{P_i^\perp}$.
\setcounter{subfigure}{0}
\begin{center}
\begin{figure}[!htp]
\subcaptionbox{$\Pi_{ij} = \Pi_{P_j}$,\\ $t = 0$, $0.006$,\\ $s = 0.0125$. \label{fig:circleTnoise}}{\includegraphics[width=0.23\textwidth]{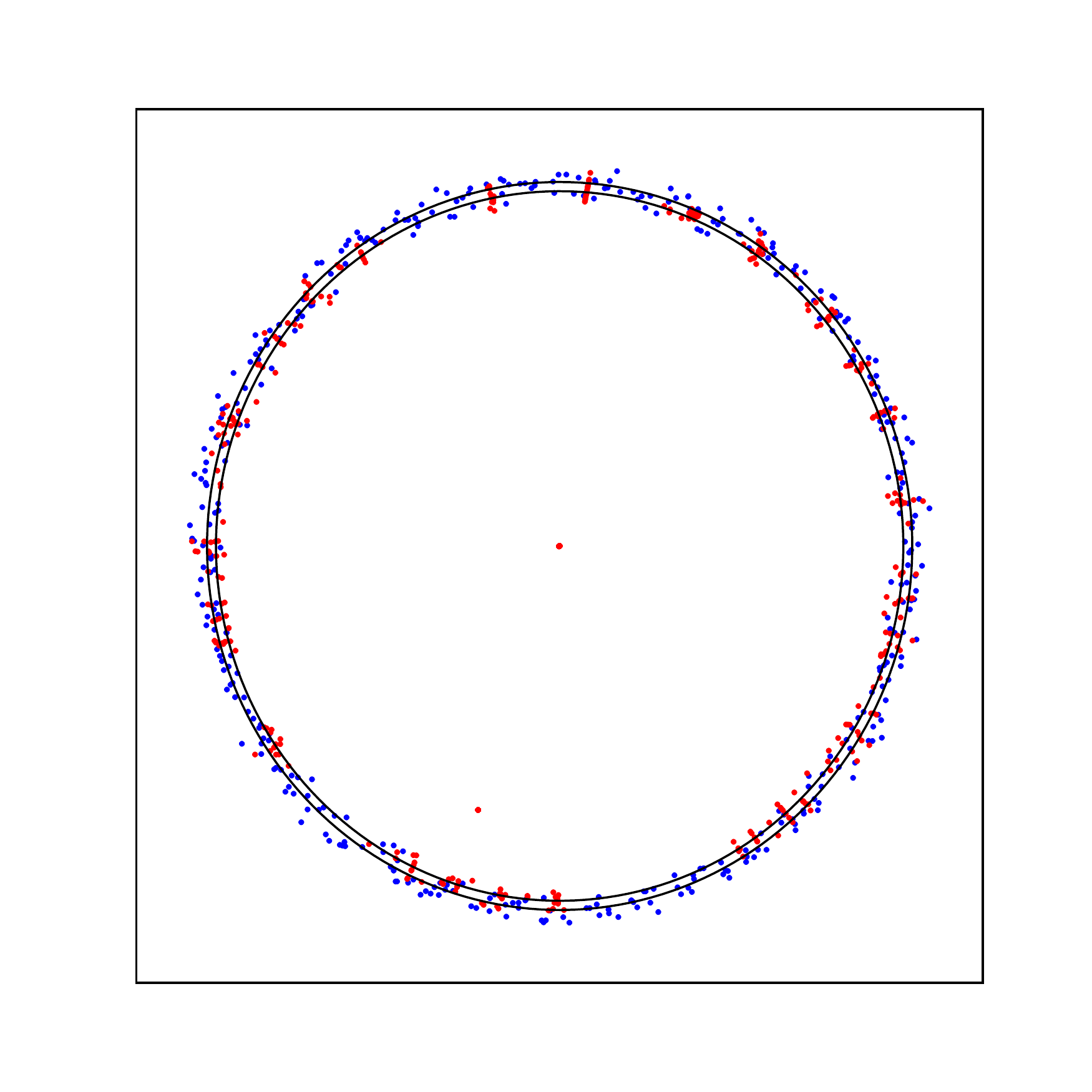} }
\subcaptionbox{$\Pi_{ij} = \Pi_{P_i^\perp} \circ \Pi_{P_j}$,\\ $t = 0$, $0.03$, $0.06$, $0.09$, $0.12$,\\ $s = 0.0125$. \label{fig:circleTNnoise}}{\includegraphics[width=0.23\textwidth]{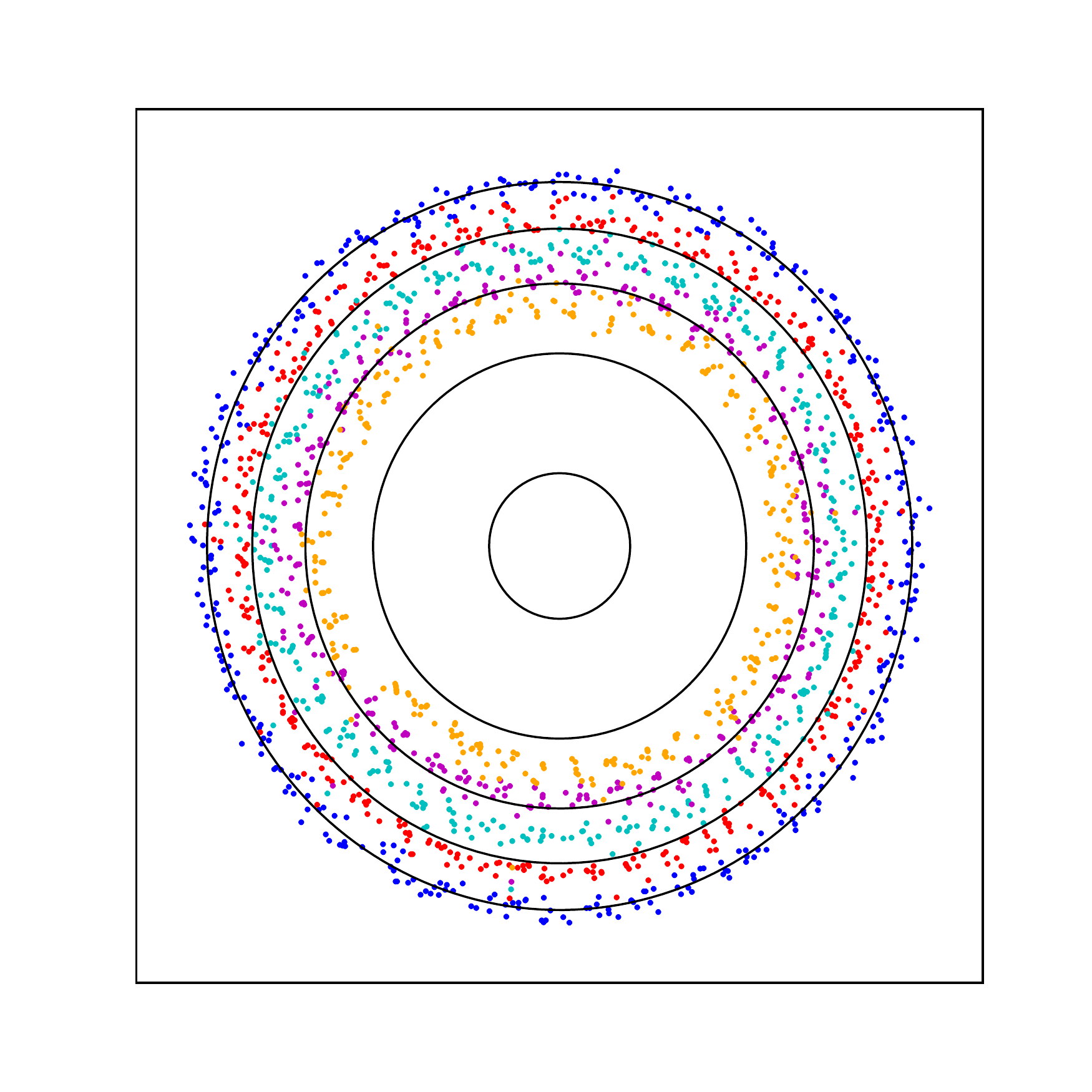} }
\subcaptionbox{$\Pi_{ij} = 2\Pi_{P_i^\perp}$,\\ $t = 0$, $0.03$, $0.06$, $0.09$, $0.12$,\\ $s = 0.0125$. \label{fig:circleNnoise}}{\includegraphics[width=0.23\textwidth]{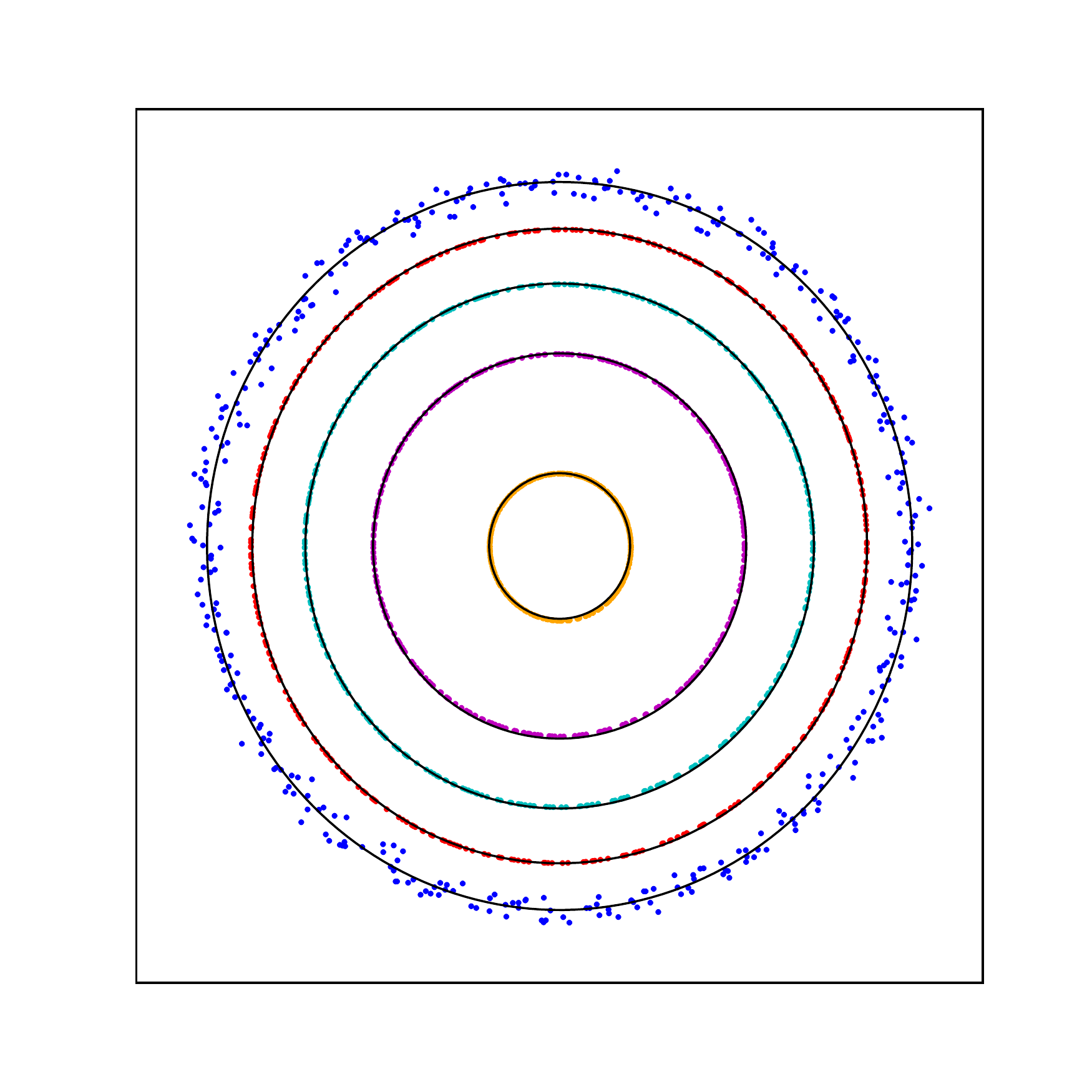} }
\subcaptionbox{$\Pi_{ij} = 2\Pi_{P_i^\perp}$,\\ $t = 0$, $0.03$, $0.06$, $0.09$, $0.12$,\\ $s = 0.0375$. \label{fig:circleNnoise2}}{\includegraphics[width=0.23\textwidth]{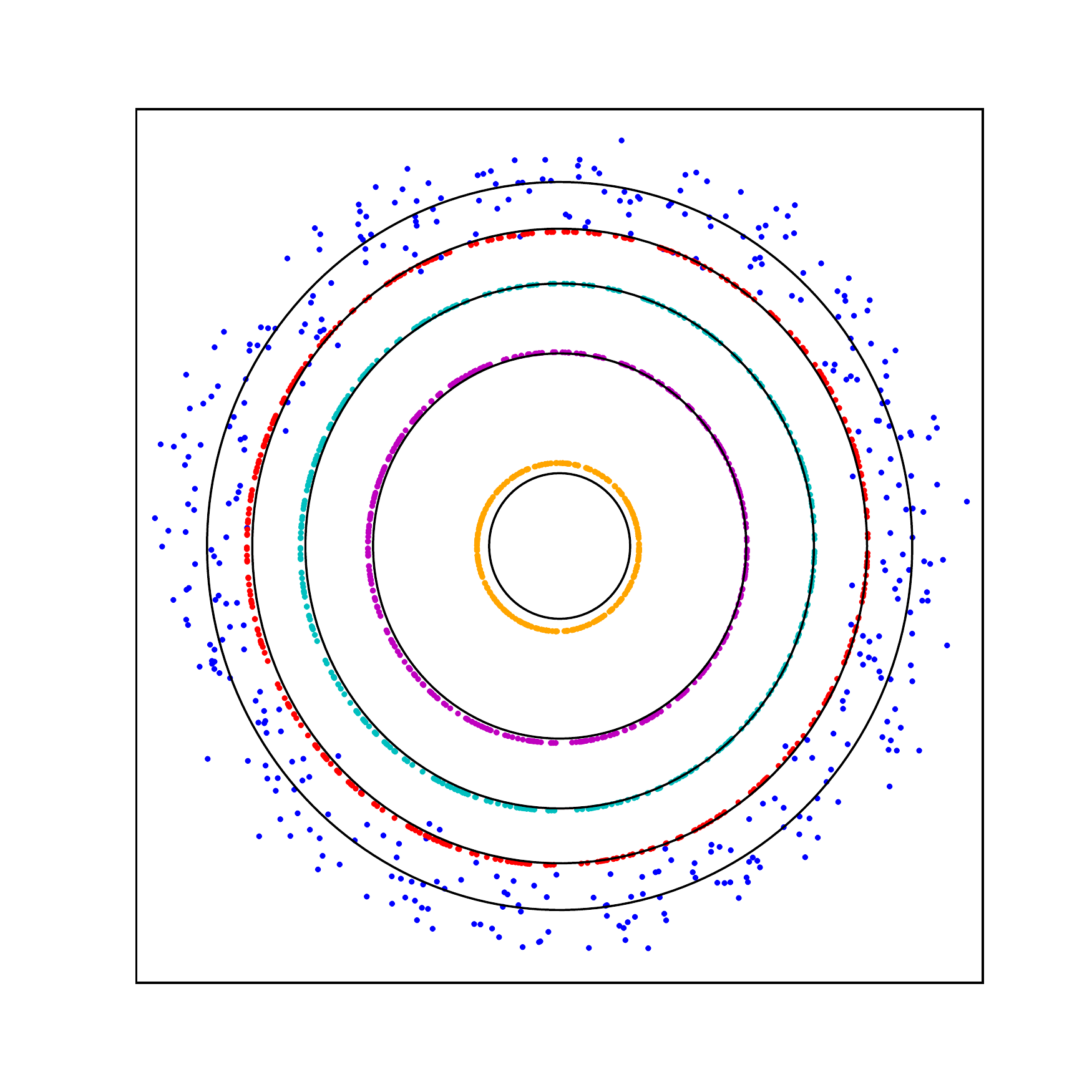} }
\caption{Comparison of the results given by the semi-implicit scheme \eqref{eqLinearSemiImplicitScheme} for different choices of operator $\Pi_{ij}$ when adding white noise. The test is performed on circle uniformly discretized with $N = 400$ points, the number of points for computing the mass is fixed to $k _\delta = 3$, the number of points for the regression is fixed to $k_\sigma = 17$ and the number of points for the computation of curvature is fixed to $k_\epsilon = 15$, the time step is $\tau = 0.1/N = 0.0005$. The numerical solutions and the exact (black line) solutions are represented at same times $t$. To the initial circle is added a Gaussian noise with standard deviation $s = 5/N = 0.0125$ in \ref{fig:circleTnoise}--\ref{fig:circleNnoise} and $s = 15/N = 0.0375$ in \ref{fig:circleNnoise2}.  \label{figComparisonCircleOperatorsNoise}}
\end{figure}
\end{center}

 Next, we check the first order convergence in time. To that extend, we compute for the circle evolution the relative mean error after a time $T$ defined as
\begin{equation} \label{eqMeanError}
e(T) = \frac{1}{N} \sum_{i=1}^N \frac{|R(T) -  |x_i(T)| |}{R(T)} \quad \text{with} \quad R(T) = \sqrt{R_0^2 - 2T} \: ,
\end{equation}
for  successively smaller time steps $\tau = 2^{-k}/N$, $k \in \{ 0, 1, \ldots, 8\}$. 
The test is performed on a uniformly discretized circle of radius $R_0 = 0.5$ with $N = 400$ points, the number of points used for the  computation of masses is $k_\delta = 3$, of tangent directions is $k_\sigma = 17$ and of curvatures is $k_\epsilon = 15$. 
The error $e(T)$ is computed at time $T = 0.1$ while the extinction time is $T_{ext} = R_0^2/2 = 0.125$. 
In Figure~\ref{figTimeStepErrorN400}, 
we observe a convergence of first order in time in the case without noise (blue curve labelled "without noise"), while when adding an initial noise independent of $\tau$, the error $e_T$ decay stabilizes and grows (green curve labelled "fixed noise"). So as to understand the behaviour of $e_T$ in the case of noise, we perform the same experiment, but adding a white noise of standard deviation $s = \sqrt{2^{-k}} 5/N$  which is linked via $N$ to the time step size $\tau = 2^{-k}/N$ that is $s^2 = (25/N) \tau$ (red curve labelled "adaptive noise"). We then retrieve the  first order convergence in time previously observed without noise.
Notice that due to the lack of a Lipschitz bound for the map $t \mapsto R(t)$ near the extinction time $T_{ext} = 0.125$, the error
 tends to explode for times close to $T_{ext}$ as it is pointed out in Figure~\ref{figRevol}.

We eventually study the influence of the number of points $k_\sigma$ used for the regression and  the number of points $k_\epsilon$ used for the computation of the curvature on a circle uniformly discretized with $N = 400$ points.
Due to the symmetry of the configuration, all choices happen to be equivalent and we hence add an initial noise of standard deviation $s=5/N = 0.0125$. The number of points used to compute the mass is $3$. We compute the mean error $e(T)$ \eqref{eqMeanError} obtained at time $T = 0.1$ for a time step $\tau = 2^{-5}/N = 7.8125 \, 10^{-5}$. In each box of Figure~\ref{fig:errorksigmakepsilon}, the error $e(T)$ corresponding to $(k_\sigma, k_\epsilon)$ is given.
We observe that the number of points used to compute curvature must be large enough (at least $k_\epsilon = 13$ in this case) to obtain acceptable errors. The choice of $k_\sigma$ appears to be less crutial and $k_\sigma =7$ seems to be  already sufficiently  large.

\setcounter{subfigure}{0}
\begin{figure}[!htbp]
\centering
\begin{tabular}{cc}
\multirow{-8}{*}{\subcaptionbox{\label{figTimeStepErrorN400}}{\includegraphics[width=0.40\textwidth]{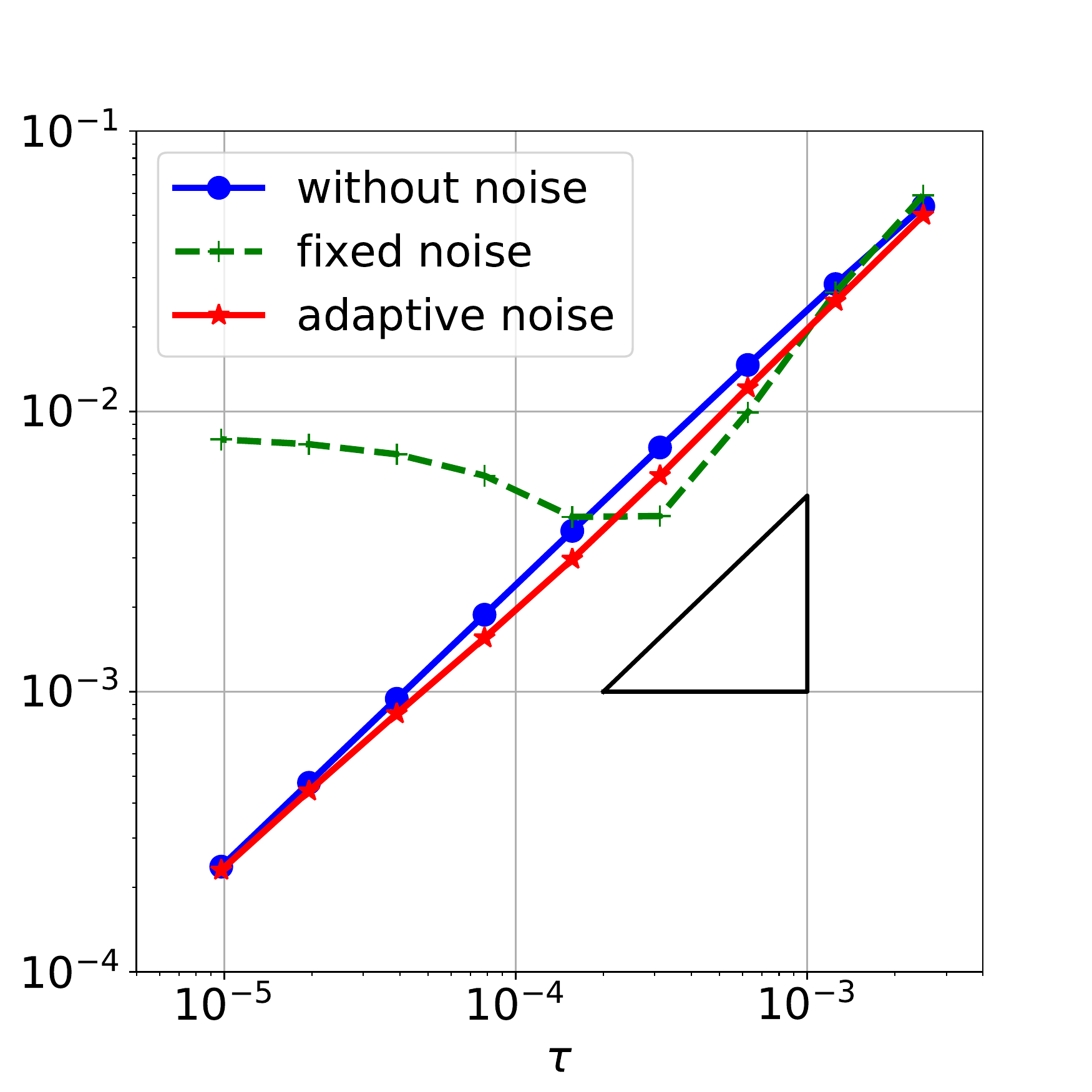}} }
& \subcaptionbox{\label{figRevol}}{\includegraphics[width=0.53\textwidth]
{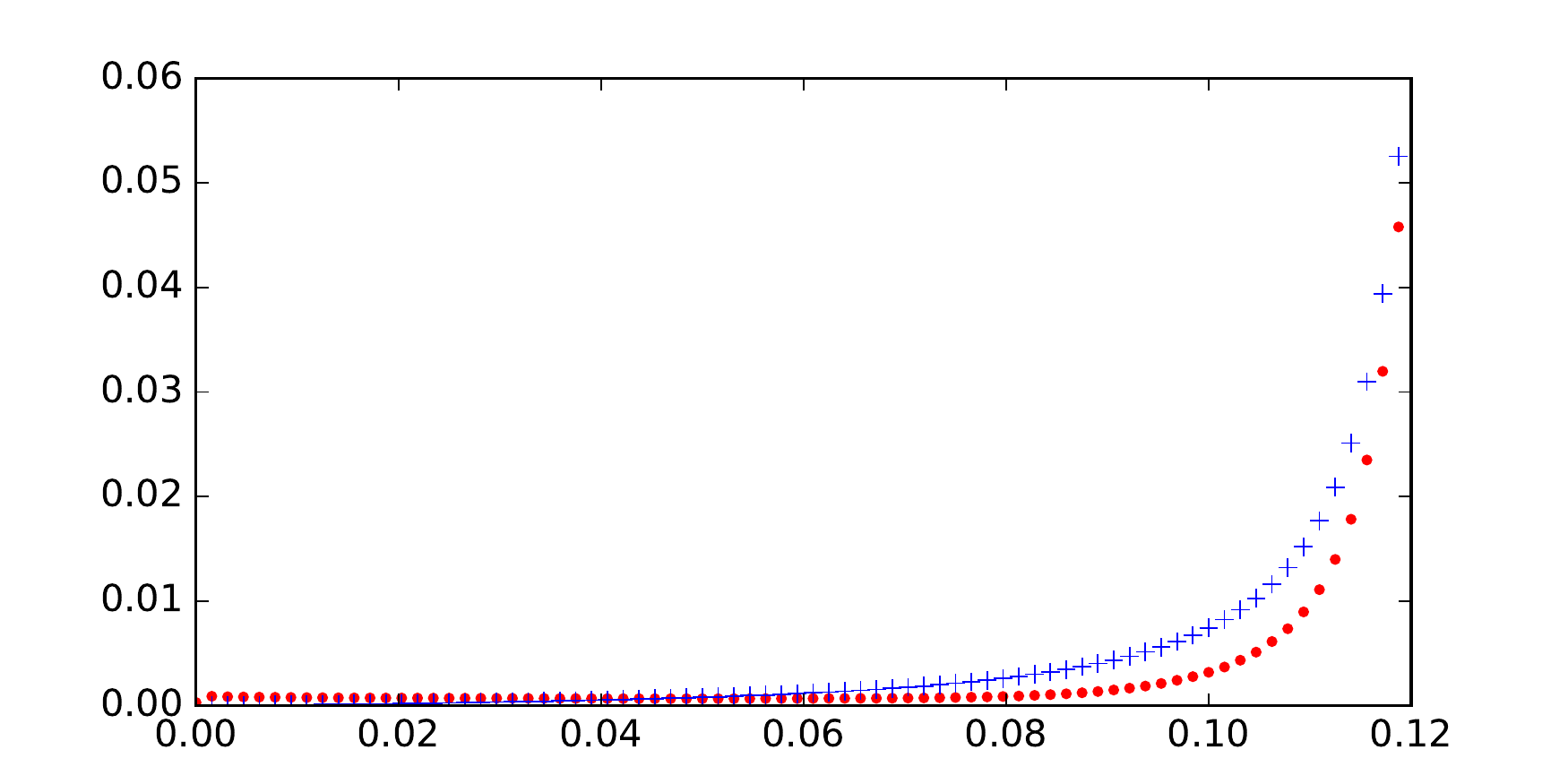} }\\
& \subcaptionbox{\label{fig:errorksigmakepsilon}}{
\begin{tabular}{|c|c|c|c|c|c|}
\hline
$k_\sigma \setminus k_\epsilon$ & 7    & 9    & 13     & 21      & 37     \\
\hline 
7 & 0.9 & 0.3 & 0.03 & 0.02 & 0.02 \\
\hline 
9 & 0.03 & 0.1 & 0.008 & 0.01 & 0.009 \\
\hline 
13 & 0.05 & 0.02 & 0.06 & 0.01 & 0.008 \\
\hline 
21 & 0.06 & 0.03 & 0.01 & 0.03 & 0.005 \\
\hline 
37 & 0.15 & 0.08 & 0.03 & 0.004 & 0.01 \\
\hline 
\end{tabular} }
\end{tabular}
\caption{(a) Decay of the mean error \eqref{eqMeanError} at time $T = 0.1$ when the time step $\tau \in \{ 2^{-k} / N \: : \: k = 0, \ldots, 8 \}$ is refined, $N = 400$. Black triangle indicates slope $1$ in log--log scale.
(b) Error $e(t)$ represented with respect to time $t$, for $\tau = 6.25 \: 10^{-4}$, in red with initial white noise (of standard deviation $1.25 \: 10^{-3}$) and in blue without noise. (c) Error $e(0.1)$ for different numbers of points $k_\sigma$ used for computing tangent and $k_\epsilon$ used for computing curvature.}
\end{figure}
\subsection{Evolution of more general curves} \label{sec:flower}

In this section, we apply our scheme \eqref{eqLinearSemiImplicitScheme} with $\Pi_{ij} = \Pi_{P_i^\perp}$ to a point cloud varifold $V = \sum_{i=1}^N m_i \delta_{x_i} \otimes \delta_{P_i}$ associated to the discretization with $N$ points of the following parametrized curve:
\[
x(t) = \left( r(t) \cos t, \: r(t) \sin t\right) 
\quad \text{with} \quad r(t) = \frac{1}{2} \left(1+r_0 \sin \left(6t+ \frac{\pi}{2} \right) \right), \, r_0 = 0,4 \quad t \in [0, 2\pi[ \: .
\]
The parameter interval is uniformly discretized so that for $i \in \{0, \ldots, N-1\}$, $x_i = x(2i\pi/N)$, the masses $m_i$ and tangents $P_i$ are then computed from the positions. In Figure~\ref{figFlowerNoise}, we apply our linear semi-implicit scheme \eqref{eqLinearSemiImplicitScheme} (for $\Pi_{ij} = 2 \Pi_{P_i^\perp}$) to $V$ both with and without noise.
The test is performed with $N = 400$ points, the number of points for computing the masses is set to $k_\delta = 7$, the number of points for the regression is  $k_\sigma = 19$ and the number of points for the computation of curvature is $k_\epsilon = 25$. We finally choose the time step $\tau = 1/N = 0.0025$. 
We compare the results
to a reference solution computed thanks to a usual parametric mean curvature flow (\cite{Dz91}) with a fine discretization and a time step $\tau_{ref} = 5.10^{-8}=10^{-5} \tau$. We observe a consistent evolution in Figure~\ref{figFlowerNoise}$(a)$ to \ref{figFlowerNoise}$(d)$, even when some noise is added to the initial shape. In Figure~\ref{figFlowerNoise}$(e)$, we check that the discrete sphere inclusion, that was only  established for the fully implicit scheme (see Proposition~\ref{discreteSphereInclusion}), is however satisfied in our experiment.
\setcounter{subfigure}{0}
\begin{figure}[!ht]
\begin{minipage}{0.55\textwidth}
\subcaptionbox{time $t=0$}{\includegraphics[width=0.48\textwidth]{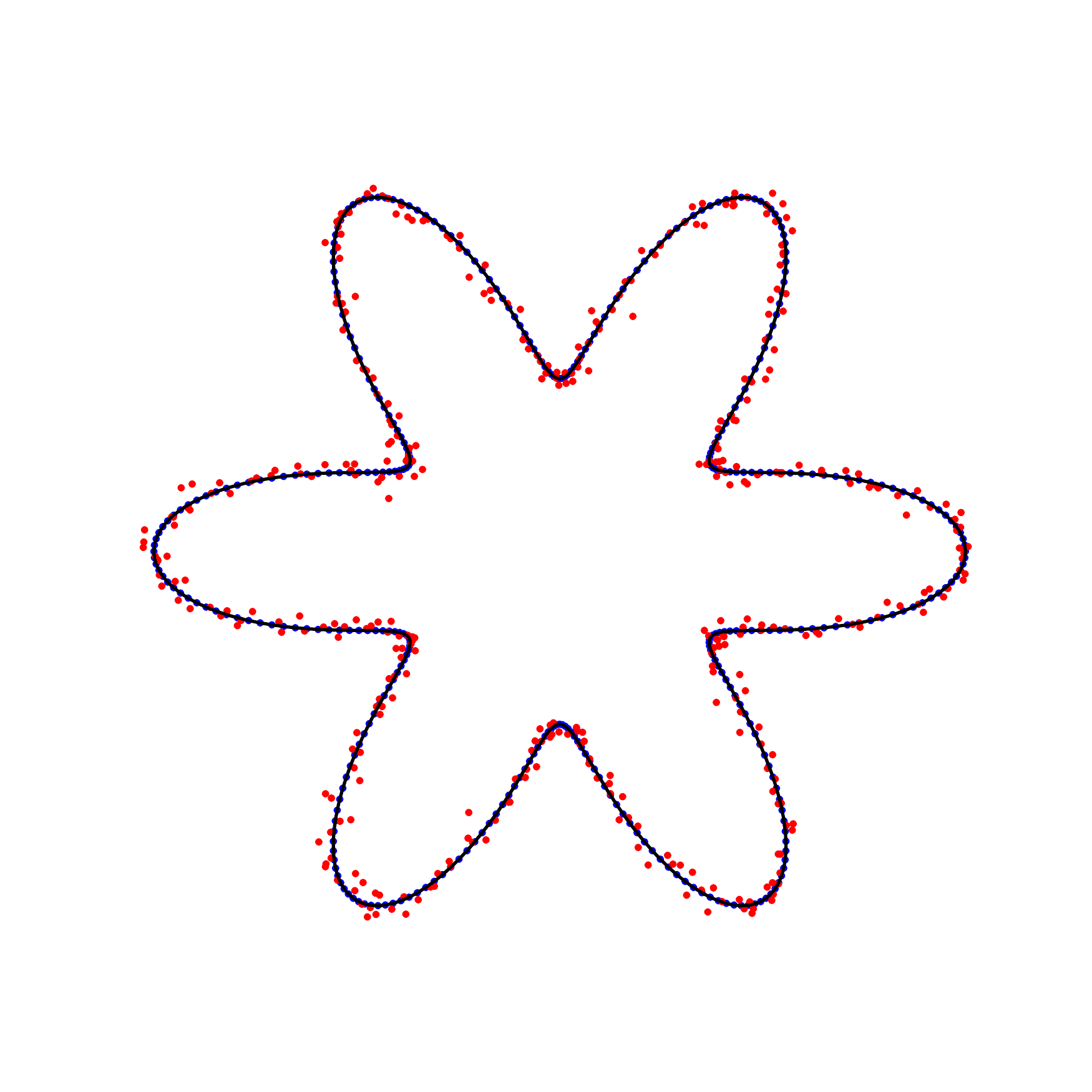}}
\subcaptionbox{time $t=0.01$}{\includegraphics[width=0.48\textwidth]{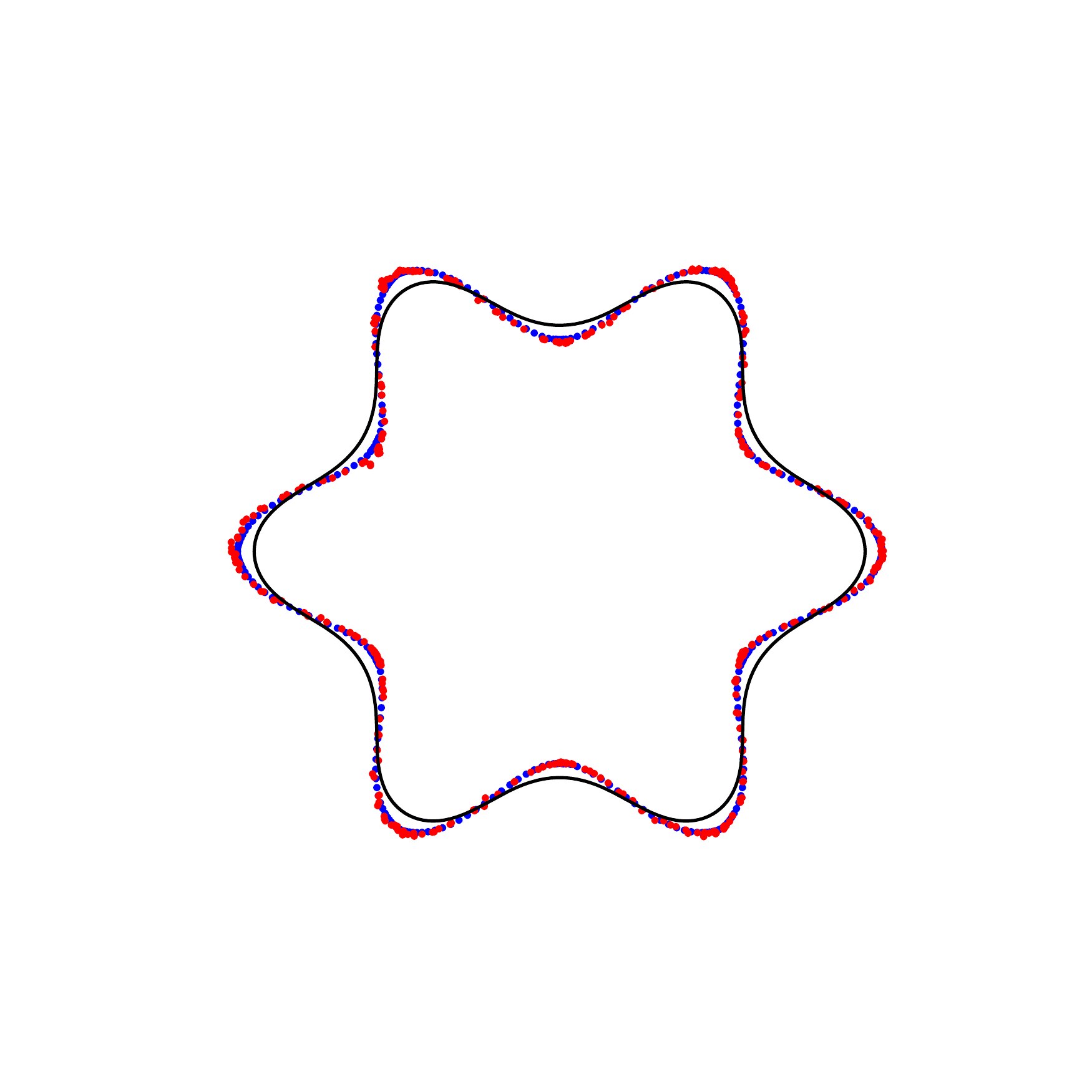}}
\subcaptionbox{time $t=0.025$}{\includegraphics[width=0.48\textwidth]{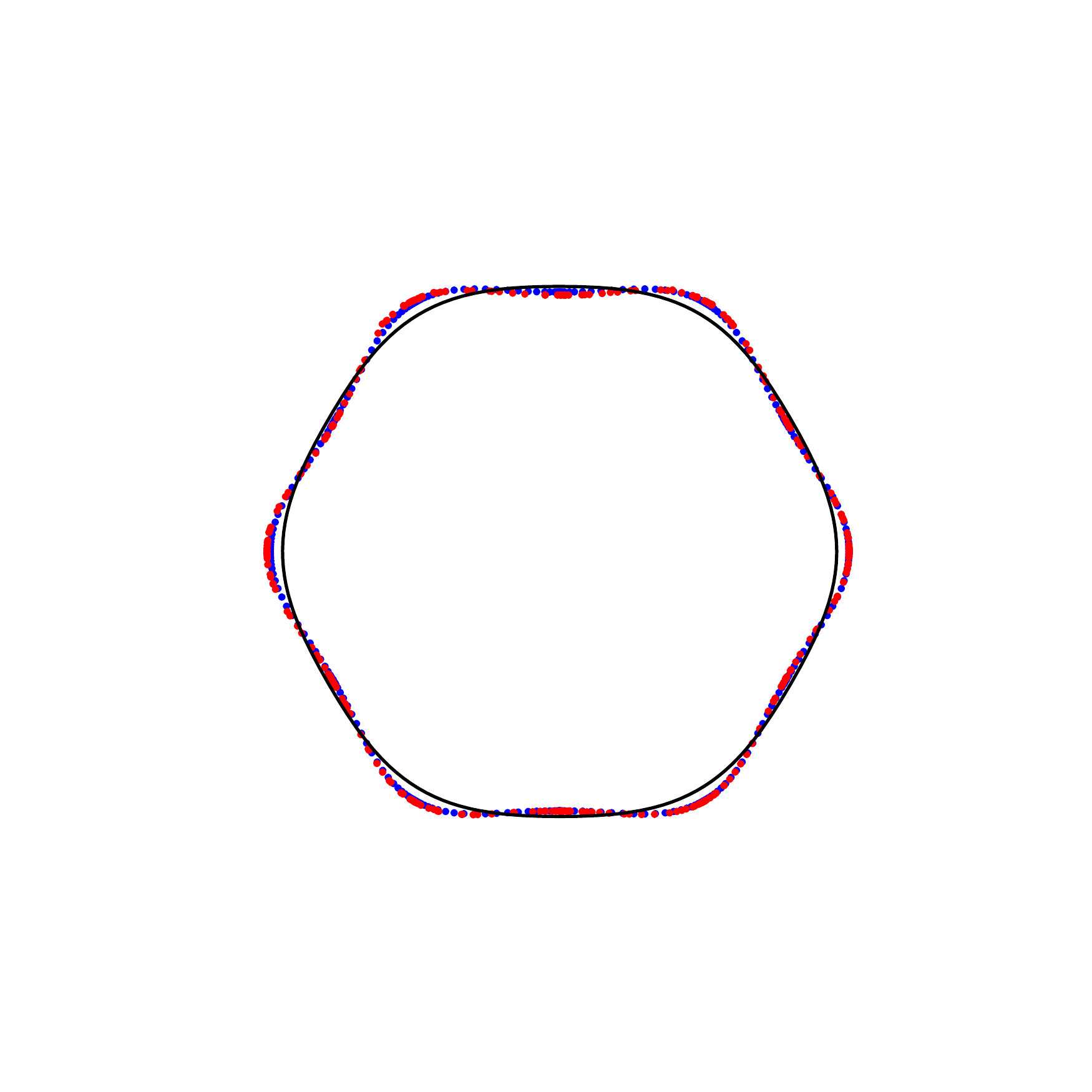}}
\subcaptionbox{time $t= 0.05$}{\includegraphics[width=0.48\textwidth]{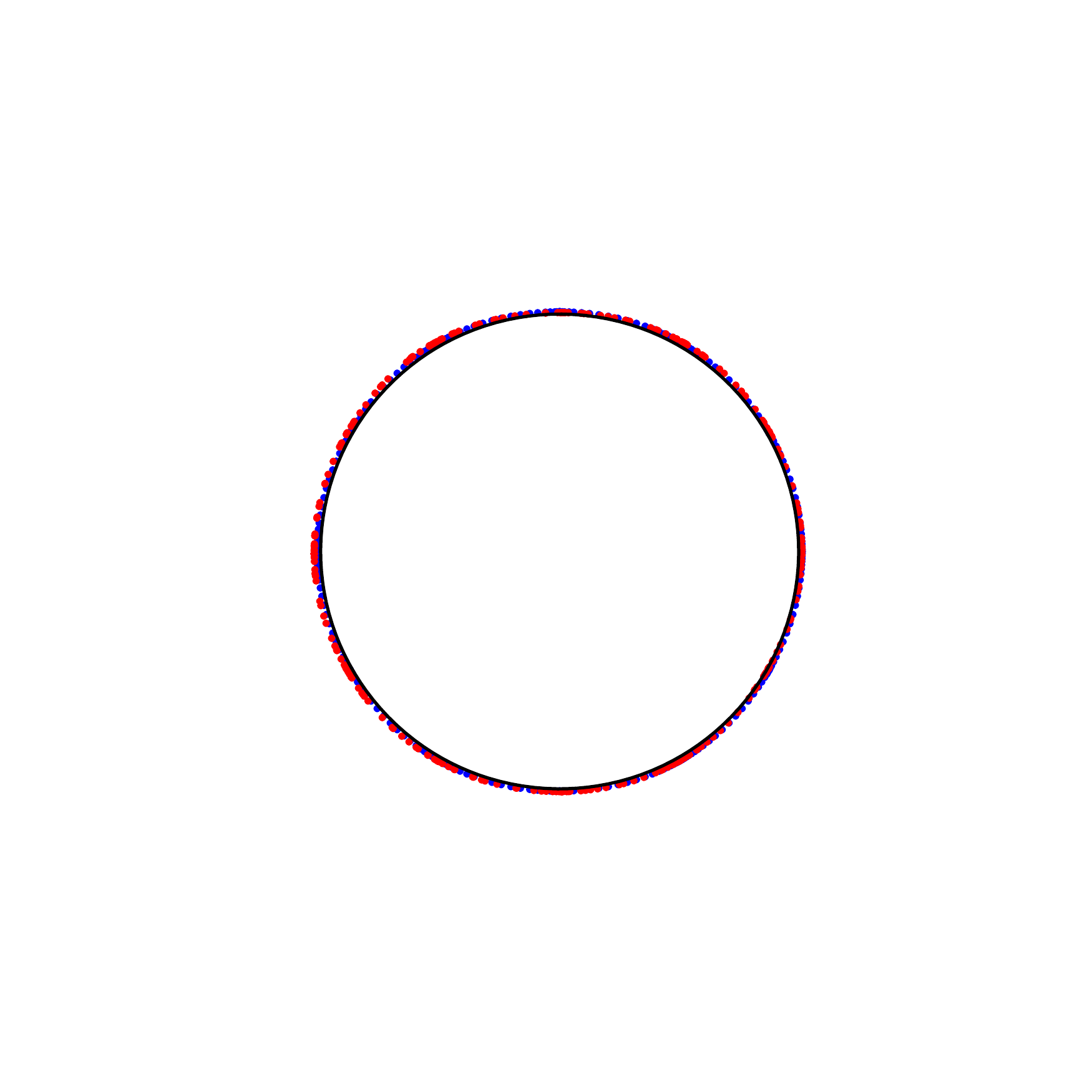}}
\end{minipage}
\begin{minipage}{0.43\textwidth}
\subcaptionbox{\label{fig:sphereCond}}{\includegraphics[width=0.95\textwidth]{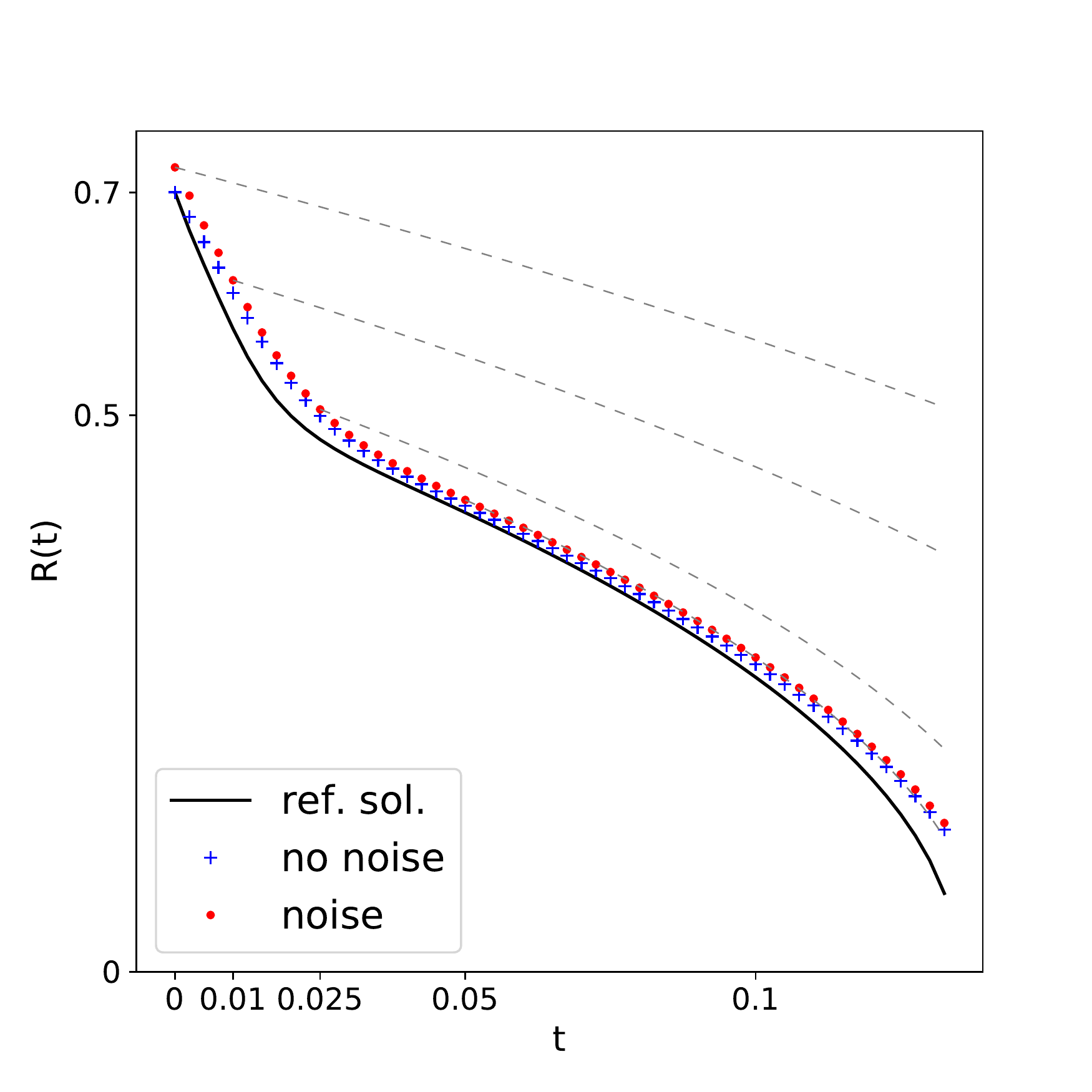}}
\end{minipage}
\caption{{\it Left figure (a) to (d):} Blue and red point clouds are the results given by 
our scheme without adding noise for  the \textcolor{ColorImplicit}{blue} point cloud and with white noise of standard deviation $5/N$ for the \textcolor{ColorImplicit}{red} point cloud.
The black curve is the reference solution. {\it Right figure (e):} we compute and represent the minimal radius $R(t)$ of the circle (centered at $0$) and including the points at time $t$, keeping the same parameters and color code as in the left figure. We moreover add grey dashed lines corresponding to the graph of $t \mapsto \sqrt{R(t_0) - 2(t - t_0)}$ for $t_0 \in \{ 0, 0.01, 0.025, 0.05\}$ and $R(t_0)$ is the radius obtained for the initially noisy shape (\textcolor{ColorImplicit}{red} dots). 
\label{figFlowerNoise}}
\end{figure}

\subsection{Singular evolutions in the plane} \label{sec:singular}

As we are dealing with point clouds, it is very easy to deal with changes of topologies, especially triple points arising when curves merge are quite naturally captured. Moreover, we know from \cite{BuetLeonardiMasnou} that the approximate curvature $H^\Pi (x_0, V)$ defined in \eqref{eq:projMeanCurvature} is not only consistent in the smooth context, but as well in presence of singular curvature when $\Pi = \Pi_S$ (i.e. $\Pi_{ij}= \Pi_{P_j}$).
Even though there  is no such rigorous consistency property for $\Pi \in \{\Pi_{(T_{x_0} M)^\perp} \circ \Pi_S, 2 \rm{Id}, 2 \Pi_{(T_{x_0} M)^\perp} \}$ (i.e. $\Pi_{ij} \in \{ \Pi_{(P_i)^\perp} \circ \Pi_{P_j}, 2\rm{Id}, 2\Pi_{(P_i)^\perp} \}$),
we compare in Figure~\ref{fig:curvatureAtJunctions} the behaviour of those operators in the presence of a singularity.
More precisely, we consider a junction of three infinite half lines meeting at $0$ and we focus on the values of the approximate mean curvature computed in a neighbourhood of $0$. The lines are uniformly discretized to  define a point cloud varifold $V$ with all masses $m_i$ equal and we associate with each point its exact tangent direction $P_i$ : one of the three possible directions (notice that there is no point exactly at the junction so that $P_i$ is well--defined). In the computation each neighbourhood contains exactly $60$ points, corresponding to a disk of radius $\epsilon \simeq 0.20$ near the singular point $0$. We plot the approximate mean curvature vector computed at each point and  the norm of this vector is color coded according to the colorbar on the right of each plot while the arrows indicate the direction. Notice that the  length of the arrows is rescaled to improve the readability of the plot.  The plot is centered at the singularity $0$ and the viewing window is scaled such that solely the $60$ points closest to the singularity are visualized. In particular the points that are outside do not interfer with the singularity in the computations.
We recall that the expected singular curvature of such a junction is $- (u_1 + u_2 + u_3) \delta_0$ where $u_1, u_2, u_3$ are the unit vectors pointing in the directions of the half-lines (see Example~\ref{ex:junction}).

\noindent In Figure~\ref{fig:curvatureAtJunctions}$(a)$--$(d)$, the three half-lines meet with $120$ degrees angles, forming a  \emph{triple point} having $0$ generalized mean curvature (see computations in Example~\ref{ex:junction}). We observe that projecting onto the normal $P_i^\perp$ (Figure~\ref{fig:curvatureAtJunctions}$(b)$ and $(c)$) ensures that the approximate mean curvature is $0$ up to a very small error $10^{-11}$ while the error term (see maximal intensity on the colorbar) is larger $10^{-3}$ in Figure~\ref{fig:curvatureAtJunctions}$(a)$ even in this simple situation. In Figure~\ref{fig:curvatureAtJunctions}$(d)$, there is a tangential component attracting points to the junction point.  Furthermore, the arrows are aligned with the lines and point towards $0$.
In Figure~\ref{fig:curvatureAtJunctions}$(e)$--$(h)$, the three half-lines meet with different angles and  we expect some singular curvature to be observed, which can best be seen  in Figure~\ref{fig:curvatureAtJunctions}$(e)$.
\begin{center}
\setcounter{subfigure}{0}
\begin{figure}[!htp] 
\subcaptionbox{$\Pi_{ij} = \Pi_{P_j}$}{\includegraphics[width=0.24\textwidth]{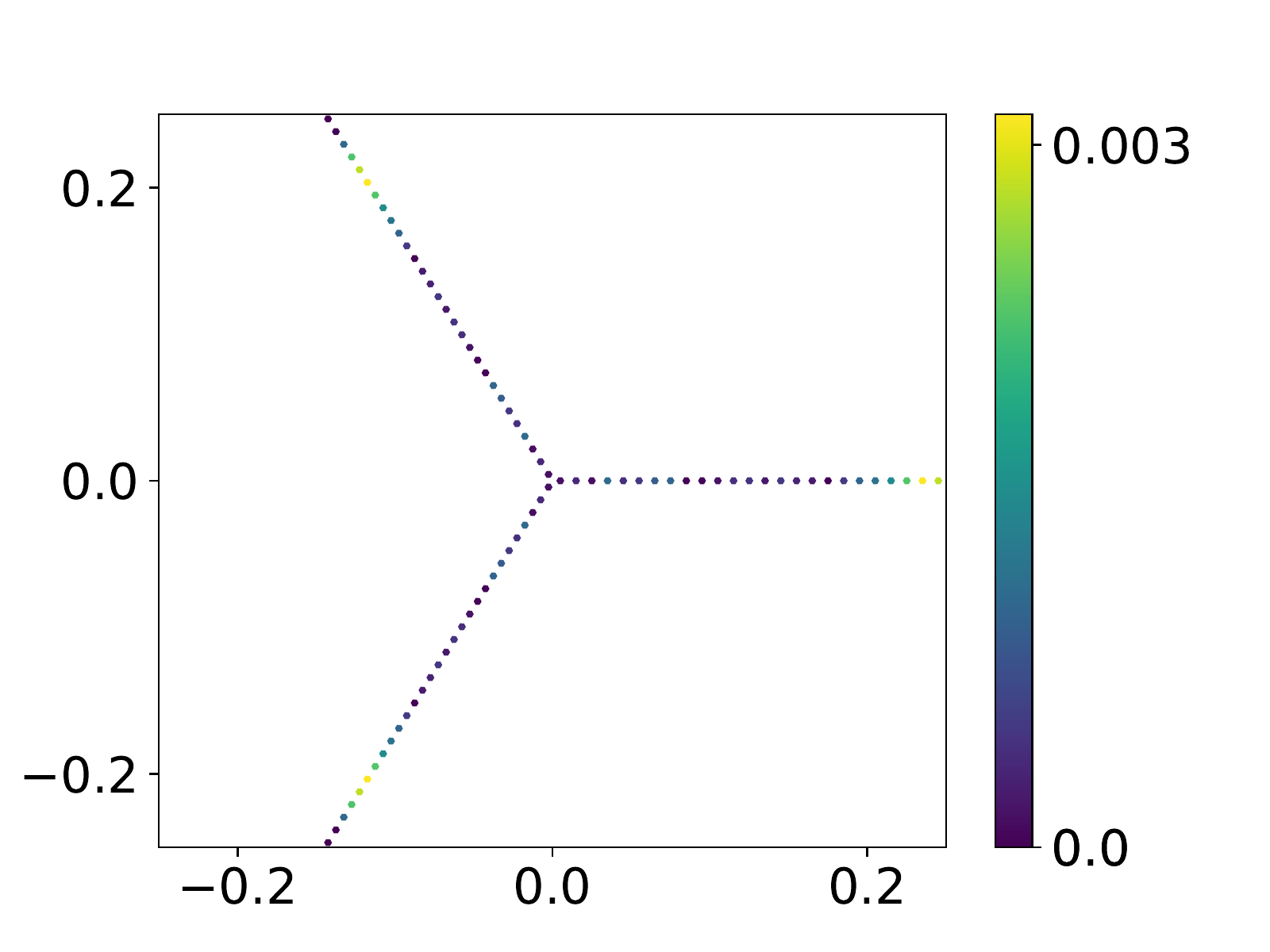}}
\subcaptionbox{$\Pi_{ij} = \Pi_{P_i^\perp} \circ \Pi_{P_j}$}{\includegraphics[width=0.24\textwidth]{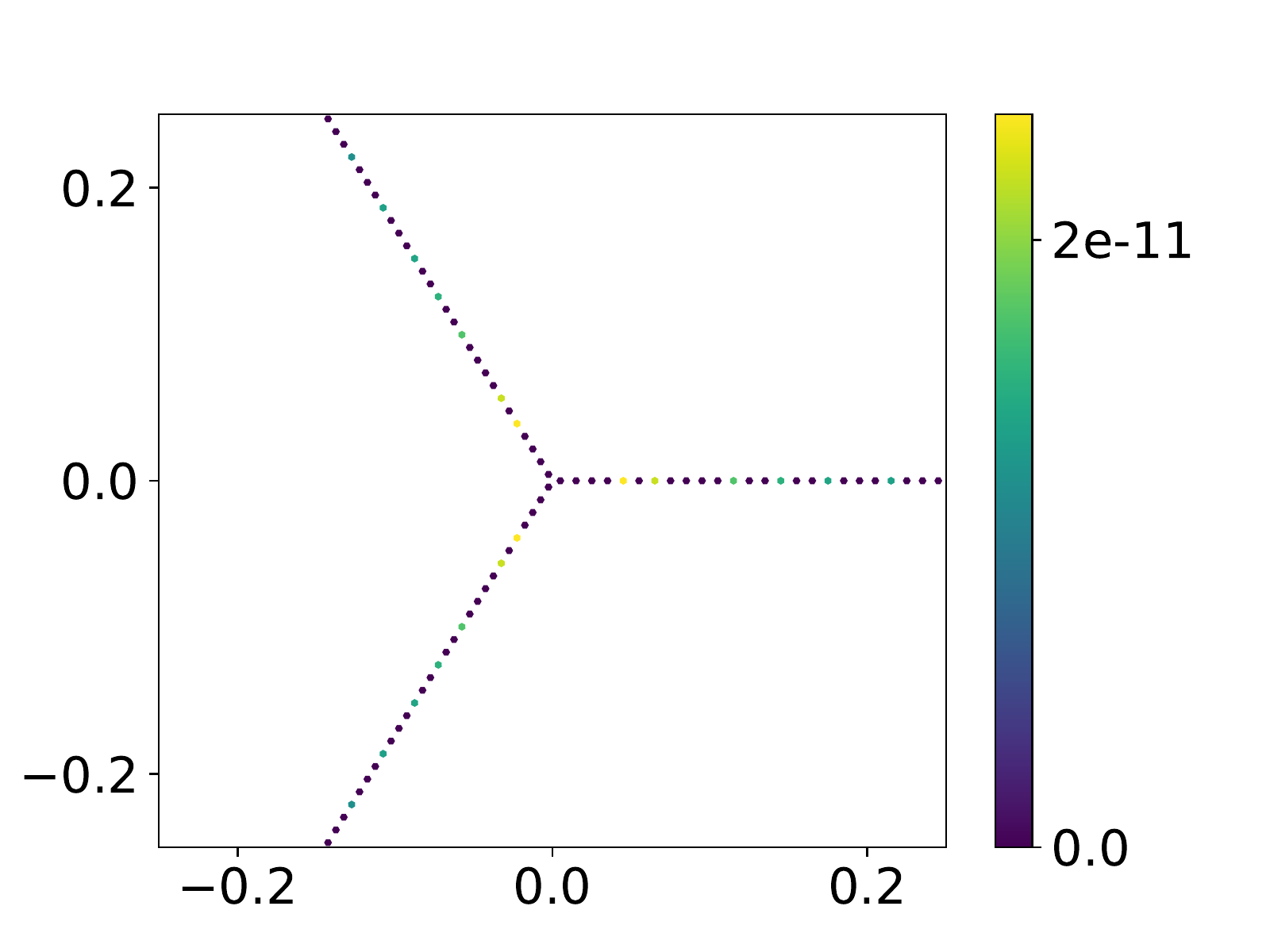}}
\subcaptionbox{$\Pi_{ij} = 2\Pi_{P_i^\perp}$}{\includegraphics[width=0.24\textwidth]{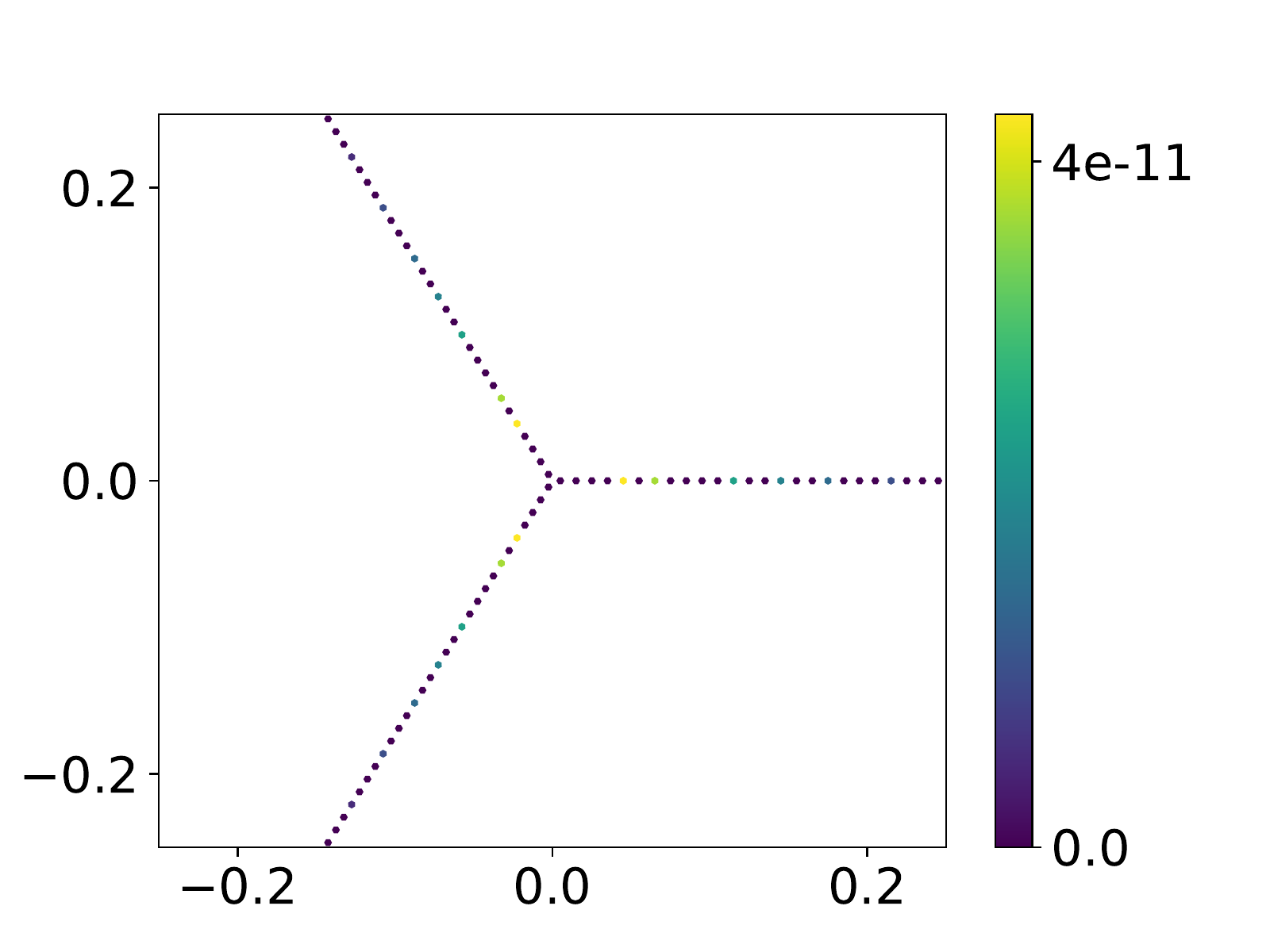}}
\subcaptionbox{$\Pi_{ij} = 2 \rm{Id}$}{\includegraphics[width=0.24\textwidth]{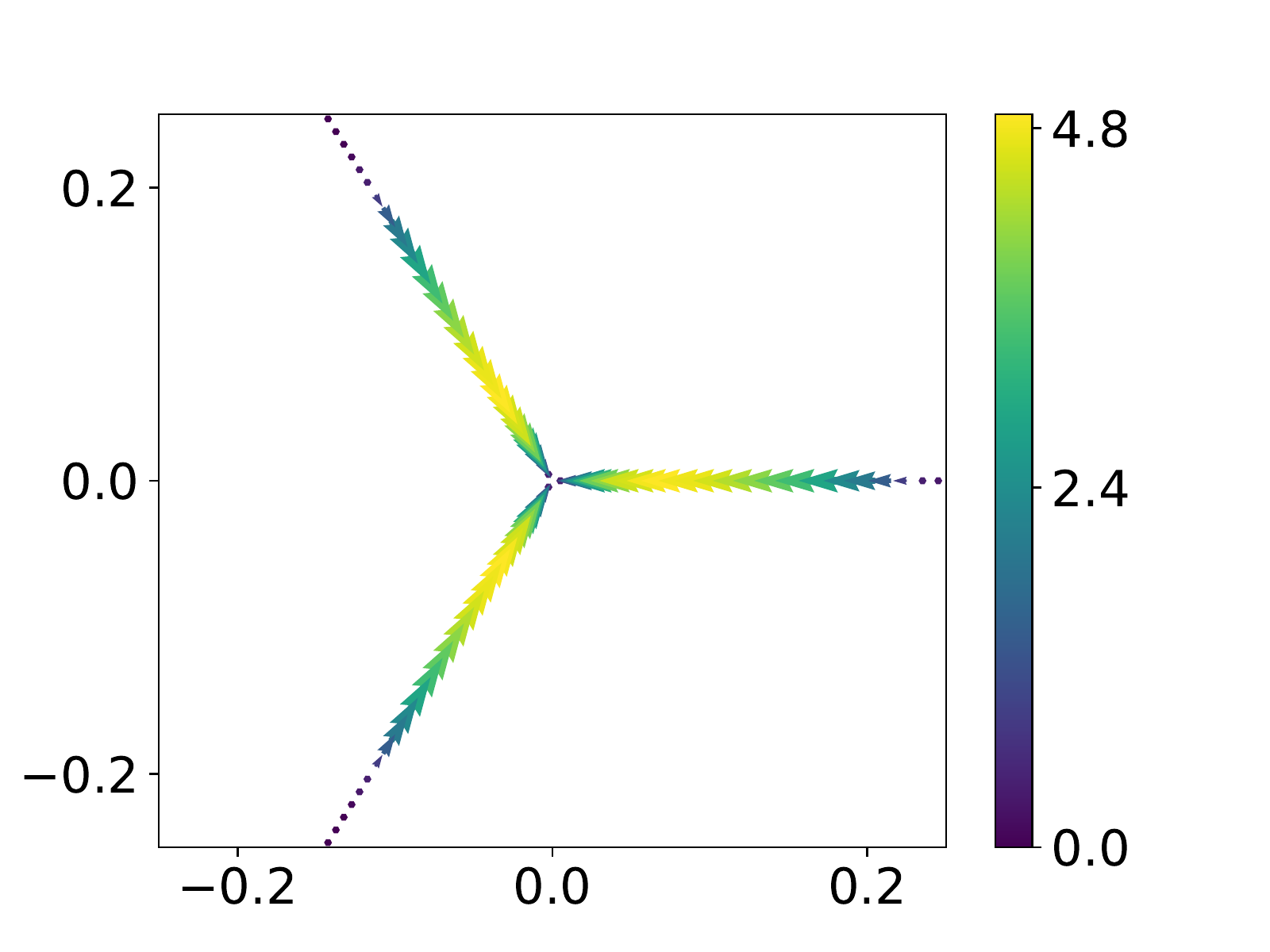}}
\subcaptionbox{$\Pi_{ij} = \Pi_{P_j}$}{\includegraphics[width=0.24\textwidth]{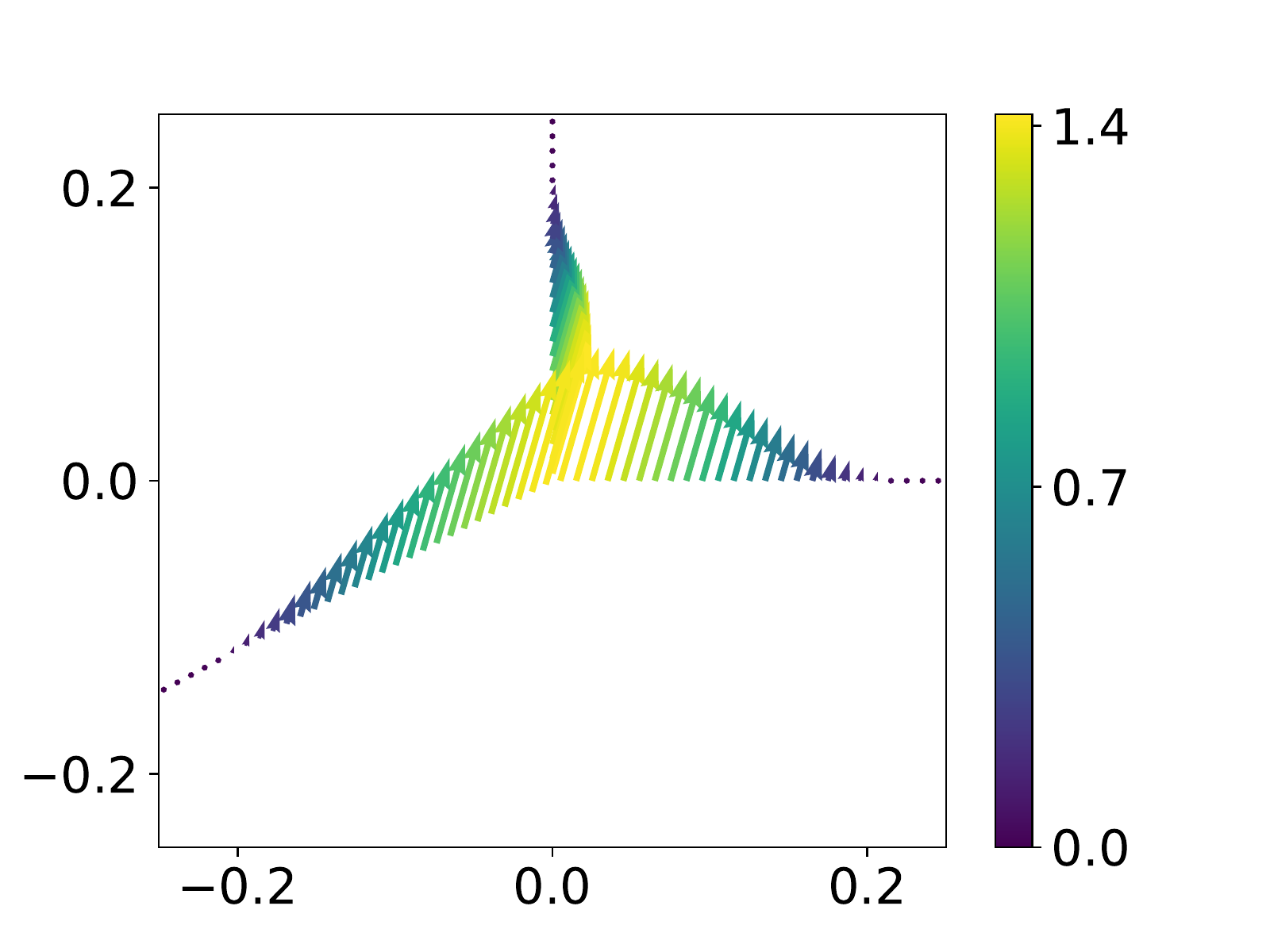}}
\subcaptionbox{$\Pi_{ij} = \Pi_{P_i^\perp} \circ \Pi_{P_j}$}{\includegraphics[width=0.24\textwidth]{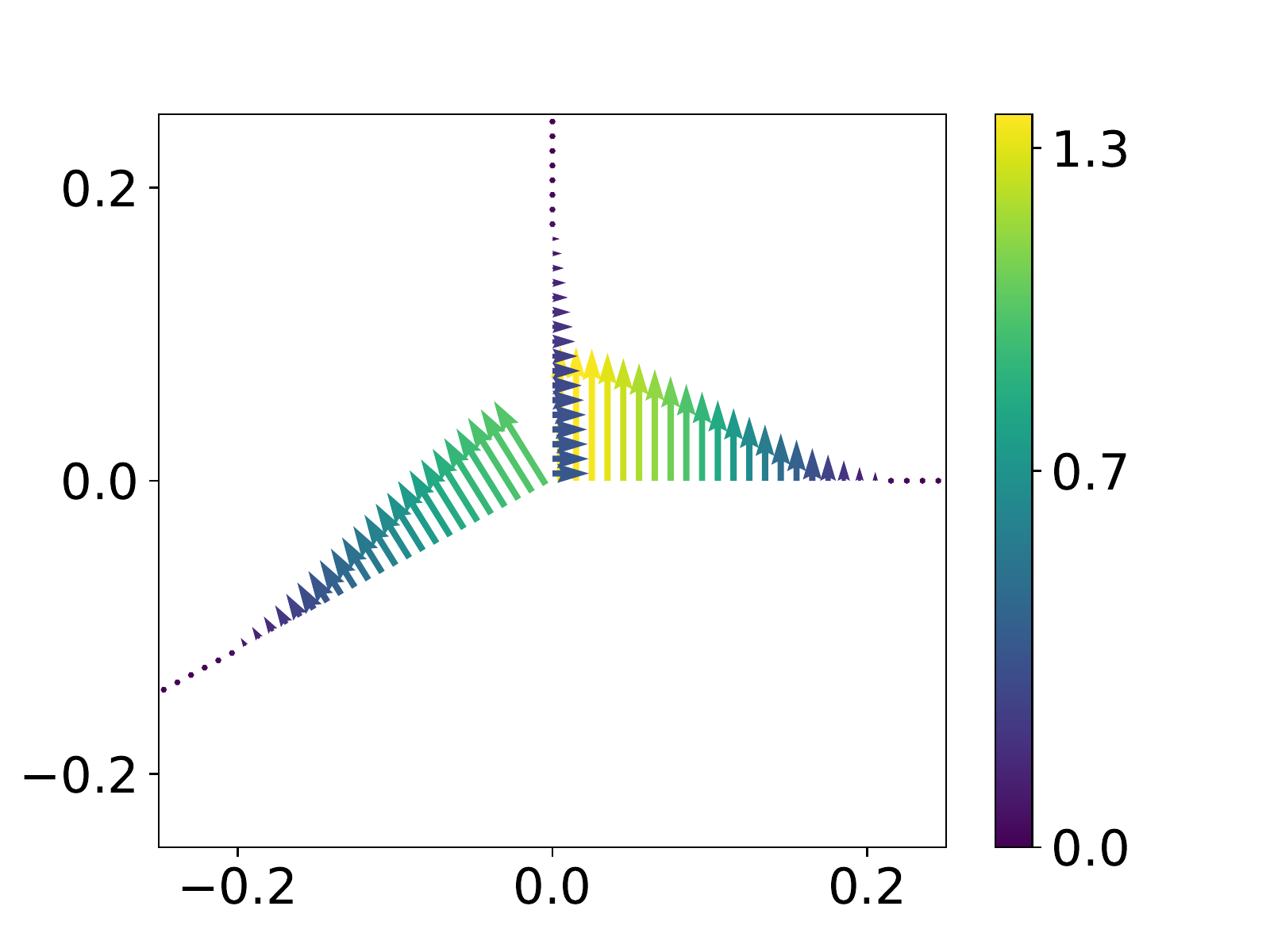}}
\subcaptionbox{$\Pi_{ij} = 2\Pi_{P_i^\perp}$}{\includegraphics[width=0.24\textwidth]{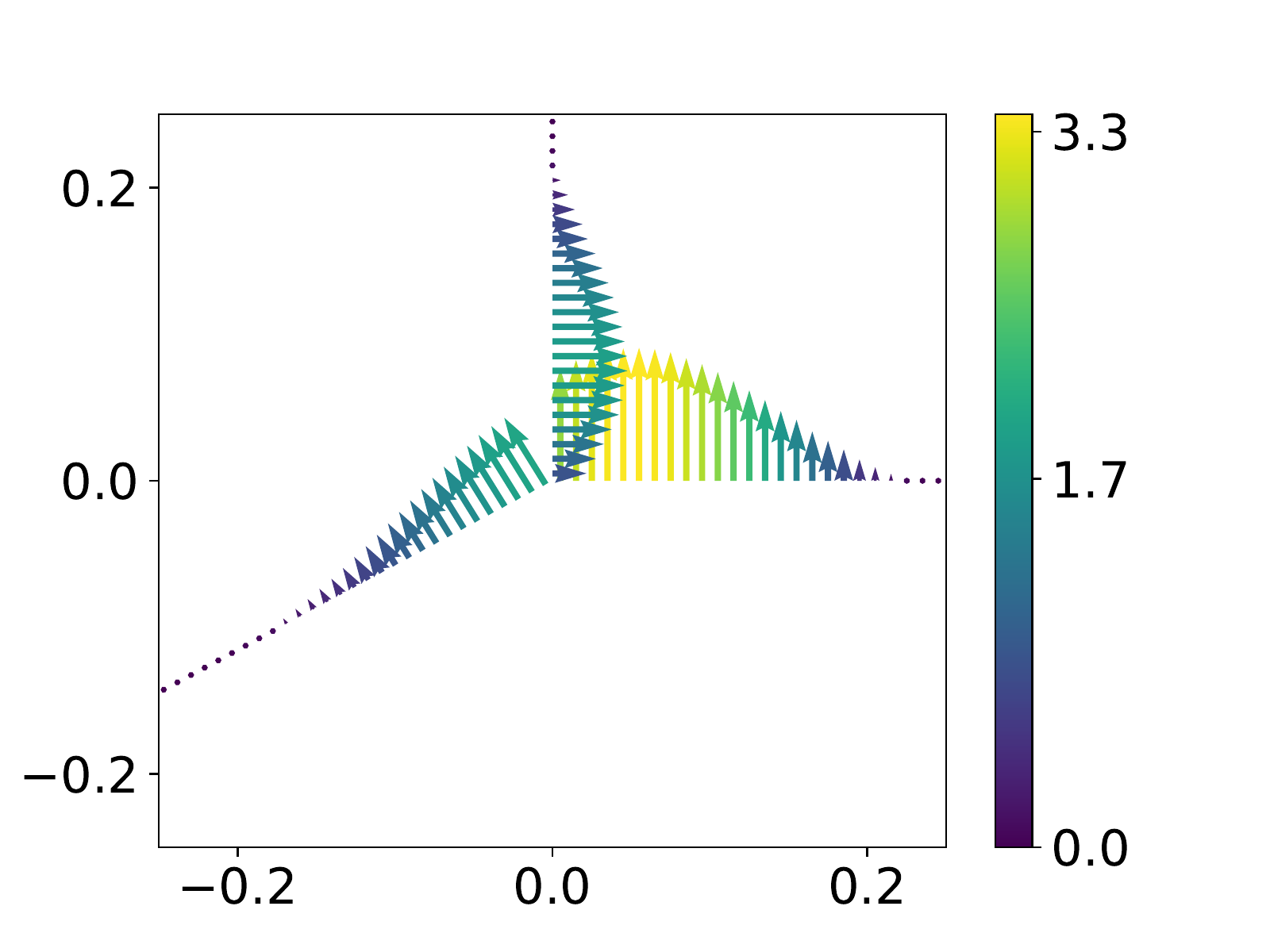}}
\subcaptionbox{$\Pi_{ij} = 2 \rm{Id}$}{\includegraphics[width=0.24\textwidth]{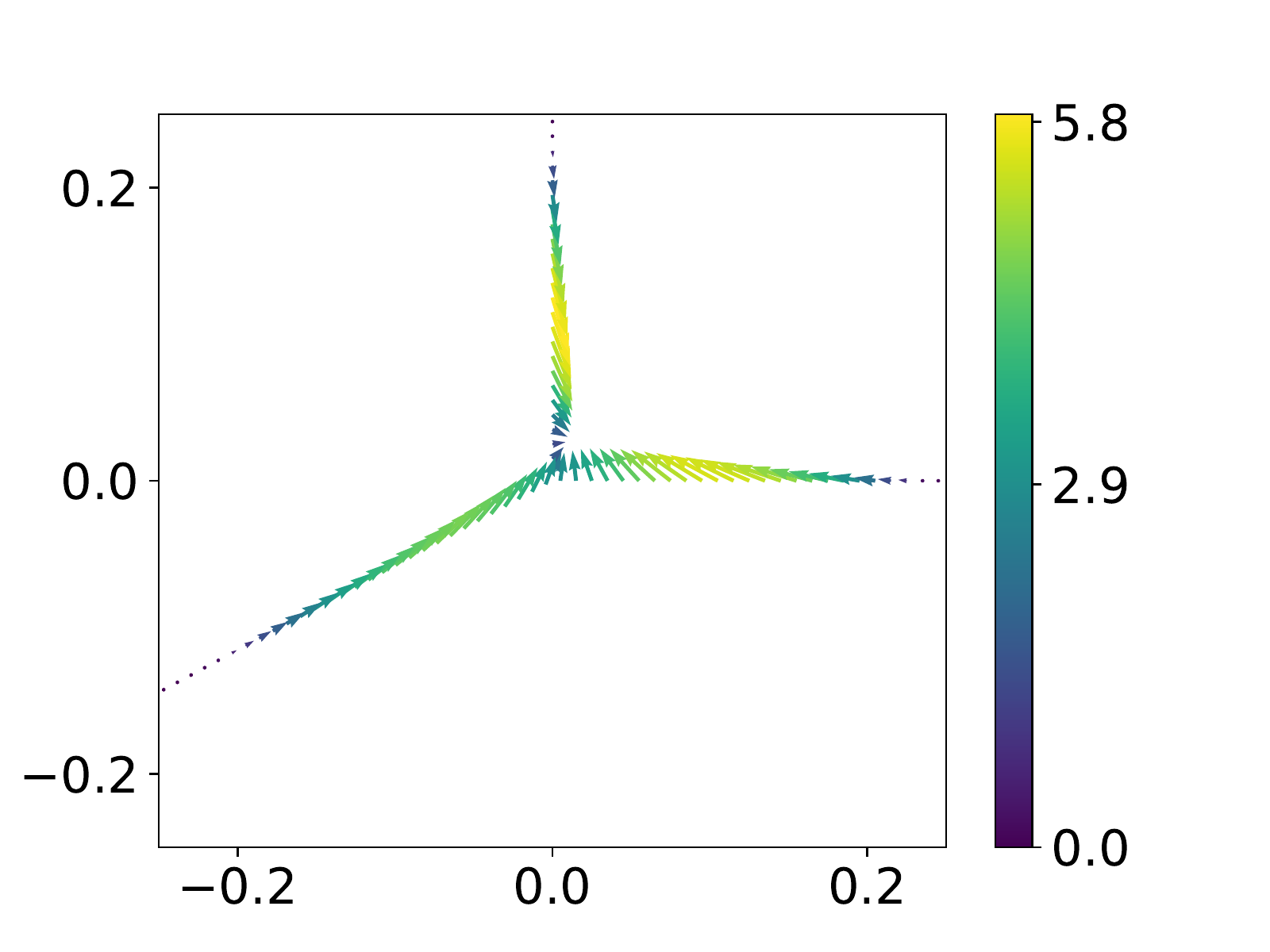}}
\caption{Computed mean curvature vector at junction point depending on the chosen projection operator. \label{fig:curvatureAtJunctions}}
\end{figure}
\end{center}

Unfortunately, choosing $\Pi_{ij} = \Pi_{P_j}$ leads to strong unstabilities of the curvature flow as noted previously, see Figure~\ref{figComparisonCircleOperatorsNoise} and we carry on with $\Pi_{ij} = 2 \Pi_{P_i^\perp}$.
It is then possible to perform mean curvature flow even after the creation of singularities. We propose some examples in $2D$ of such evolutions. 
In Figure~\ref{figSingDoubleCircle}, we perform a test on two crossing circles. We observe that both crossing points split up into two triple points almost instantaneously.  The different curve segments then merge until they form a single circular curve, which then follows the usual evolution. In this evolution, the circles are discretized with a total of $N = 1000$ points. The number of points used for computing curvature is  $k_\epsilon = 31$, for the tangent is $k_\sigma = 15$ and for the mass is $k_\delta = 7$. The time step is set to $\tau = 1/4N = 2.5 \cdot 10^{-4}$.
In Figure~\ref{figSingTripleCircle}, we perform a similar test on three crossing circles, which allows to observe  several mergers and the formation of multiple triple points. The circles are discretized with a total of $N = 1200$ points. The number of points used for computing curvature is  $k_\epsilon = 31$, for the tangent is $k_\sigma = 15$ and for the mass is $k_\delta = 7$. The time step is set to $\tau = 1/2N \simeq 4.16 \cdot 10^{-4}$. In Figure~\ref{figSingTripleCircle}$(g)$,  a zoom is shown at time $0.3$ to provide a better visualization of the triple point configuration.
\begin{center}
\setcounter{subfigure}{0}
\begin{figure}[!htp]
\subcaptionbox{time $0$}{\includegraphics[width=0.16\textwidth]{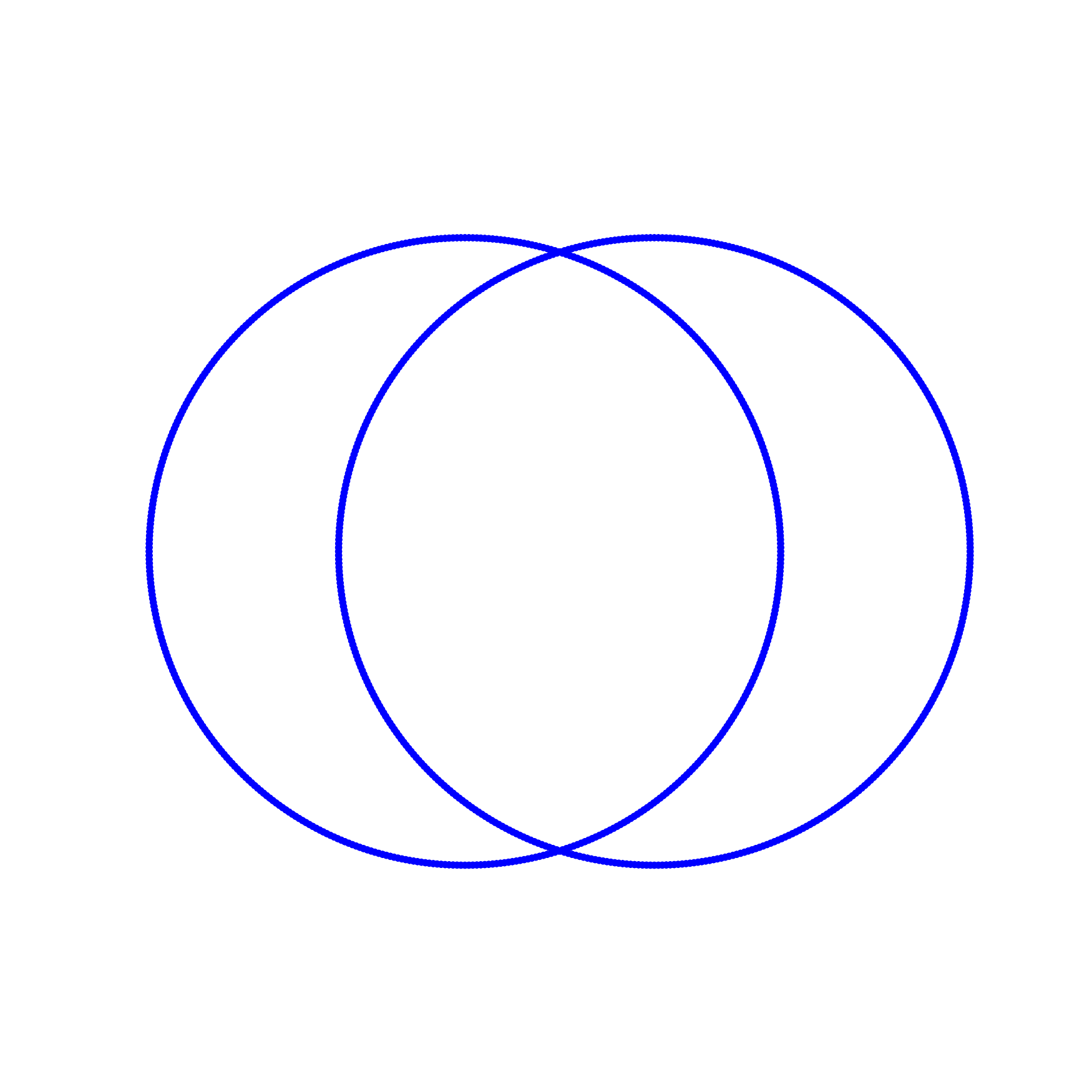}}
\subcaptionbox{time $0.003$}{\includegraphics[width=0.16\textwidth]{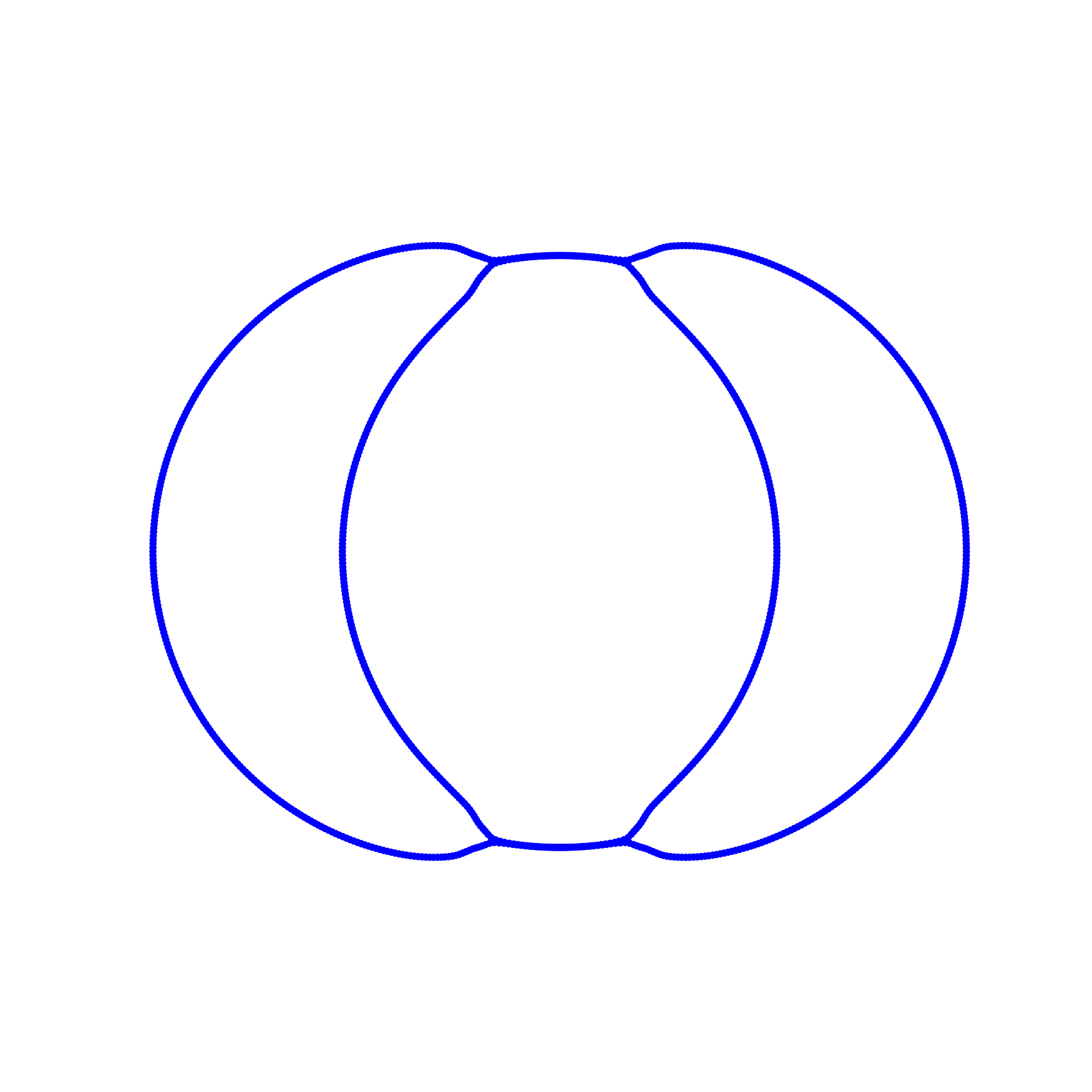}}
\subcaptionbox{time $0.03$}{\includegraphics[width=0.16\textwidth]{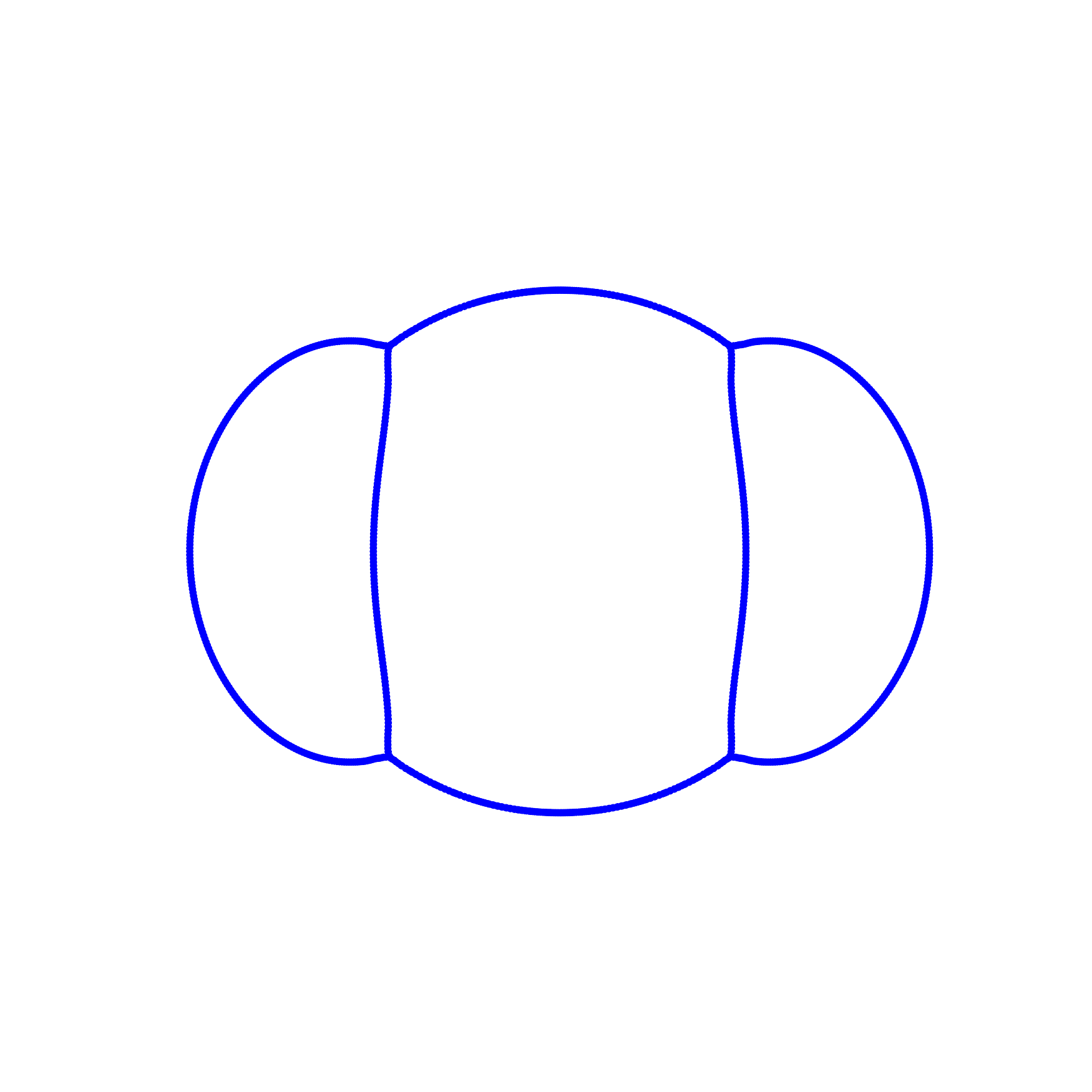}}
\subcaptionbox{time $0.05$}{\includegraphics[width=0.16\textwidth]{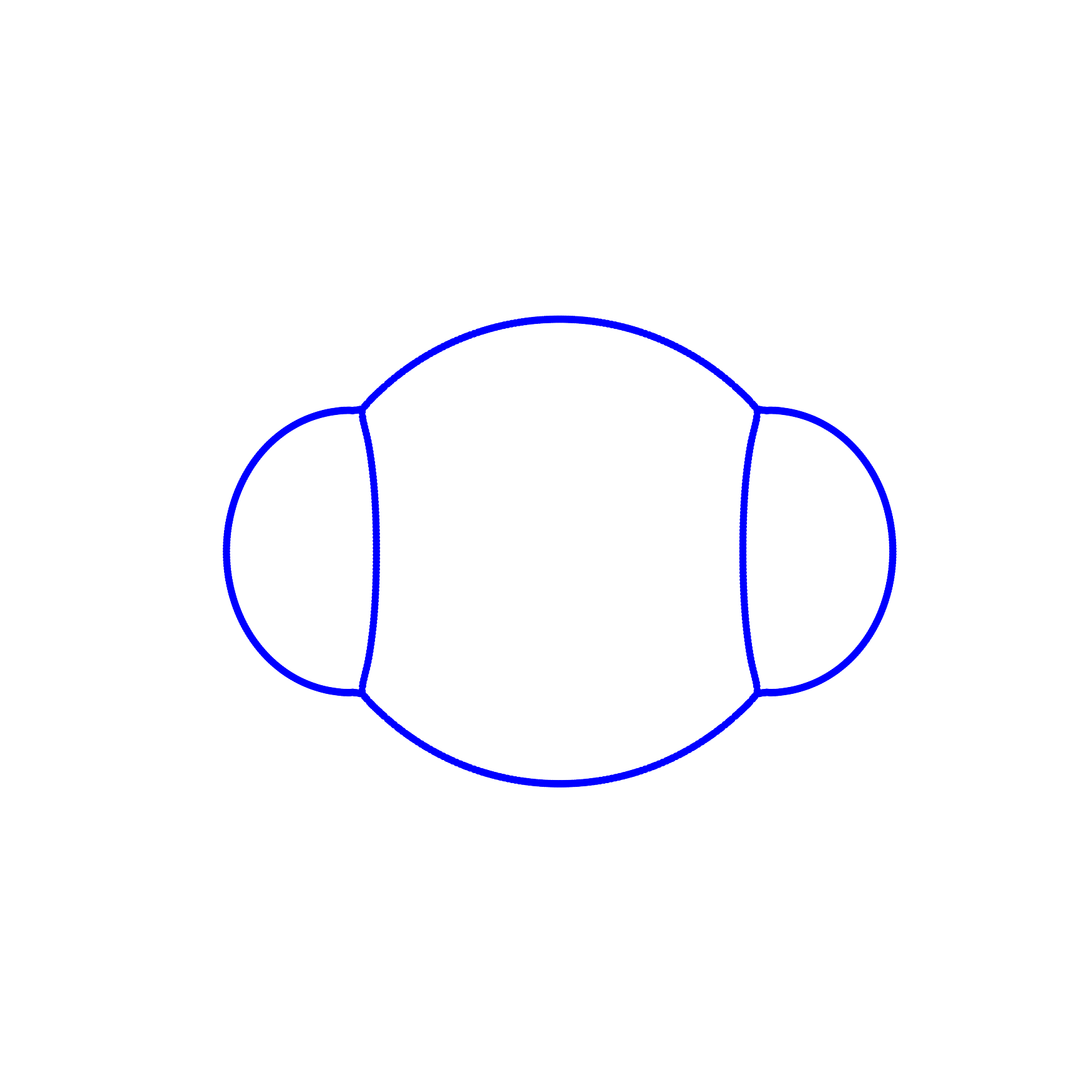}}
\subcaptionbox{time $0.07$}{\includegraphics[width=0.16\textwidth]{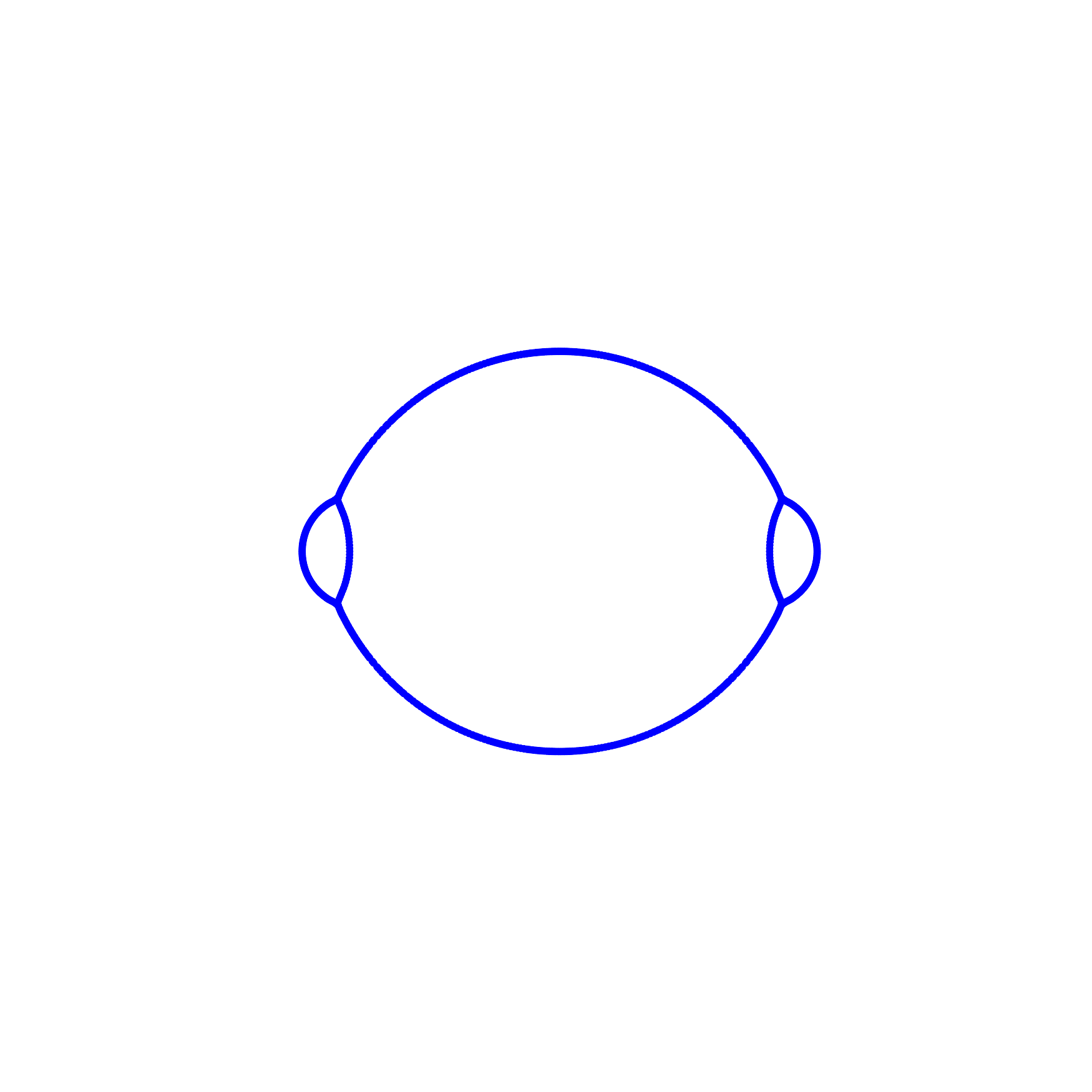}}
\subcaptionbox{time $0.11$}{\includegraphics[width=0.16\textwidth]{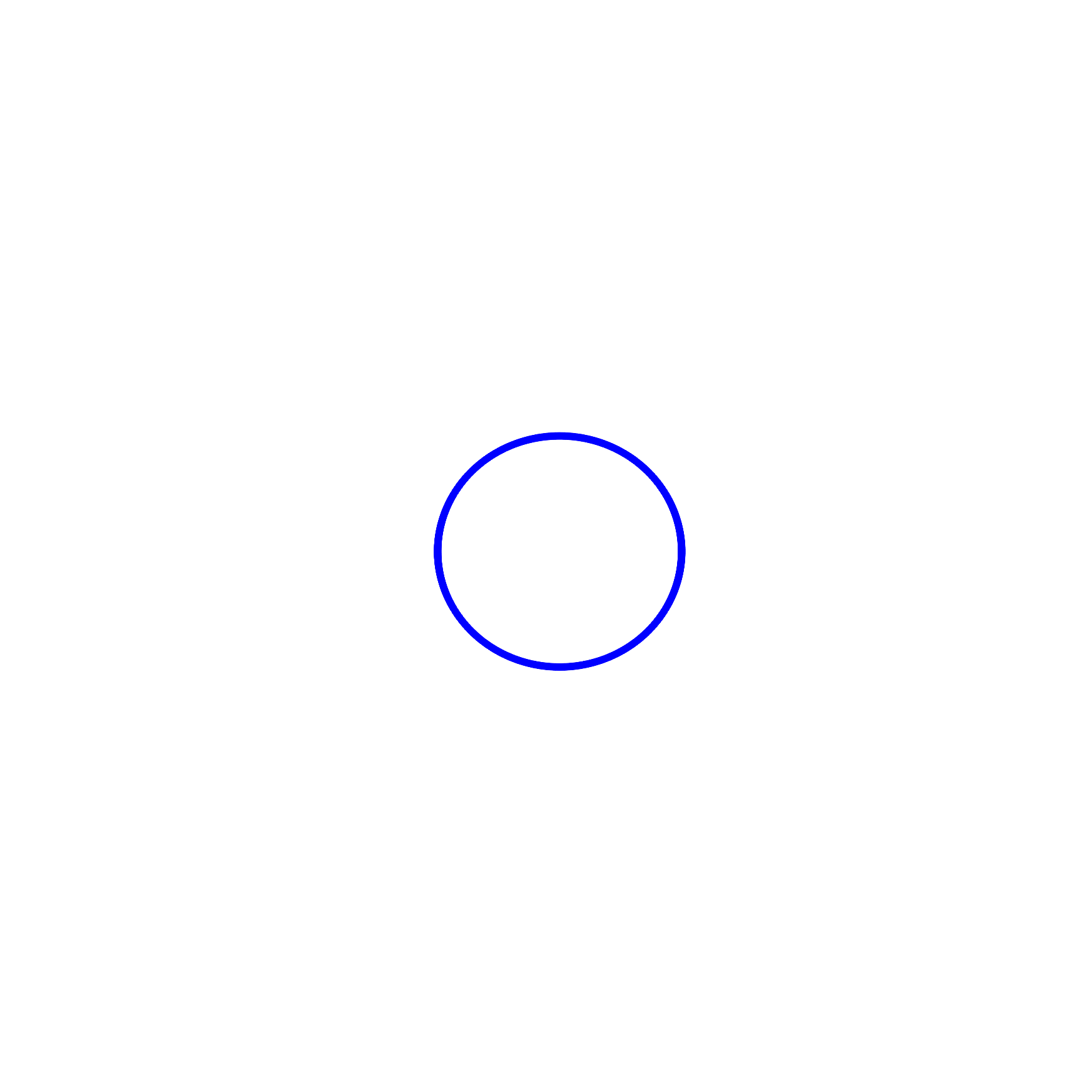}}
\caption{Evolution of two crossing circles under discrete curvature flow. \label{figSingDoubleCircle}}
\end{figure}
\end{center}
\begin{center}
\setcounter{subfigure}{0}
\begin{figure}[!htp]
\begin{minipage}{0.65\textwidth}
\subcaptionbox{time $0$}{\includegraphics[width=0.32\textwidth]{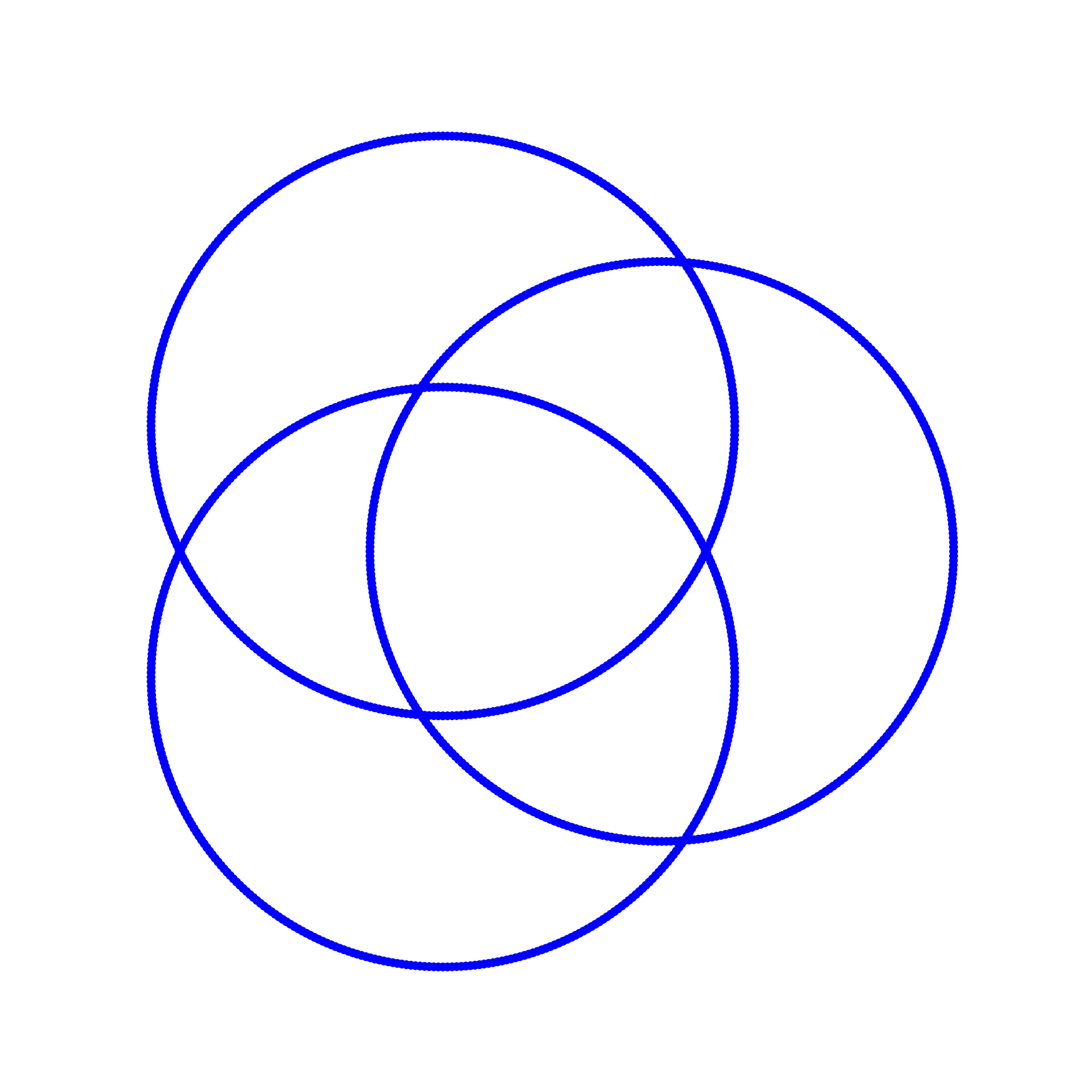}}
\subcaptionbox{time $0.1$}{\includegraphics[width=0.32\textwidth]{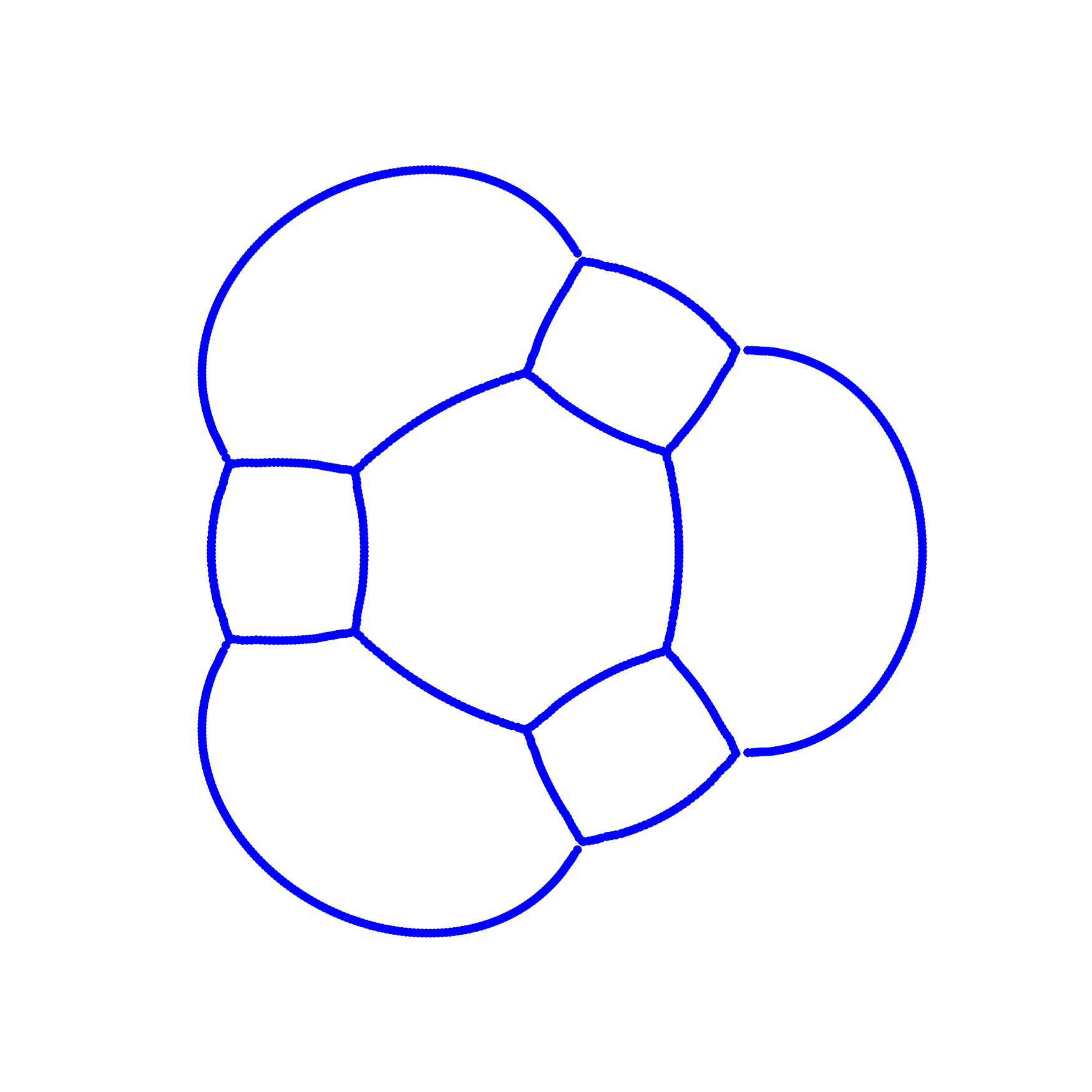}}
\subcaptionbox{time $0.2$}{\includegraphics[width=0.32\textwidth]{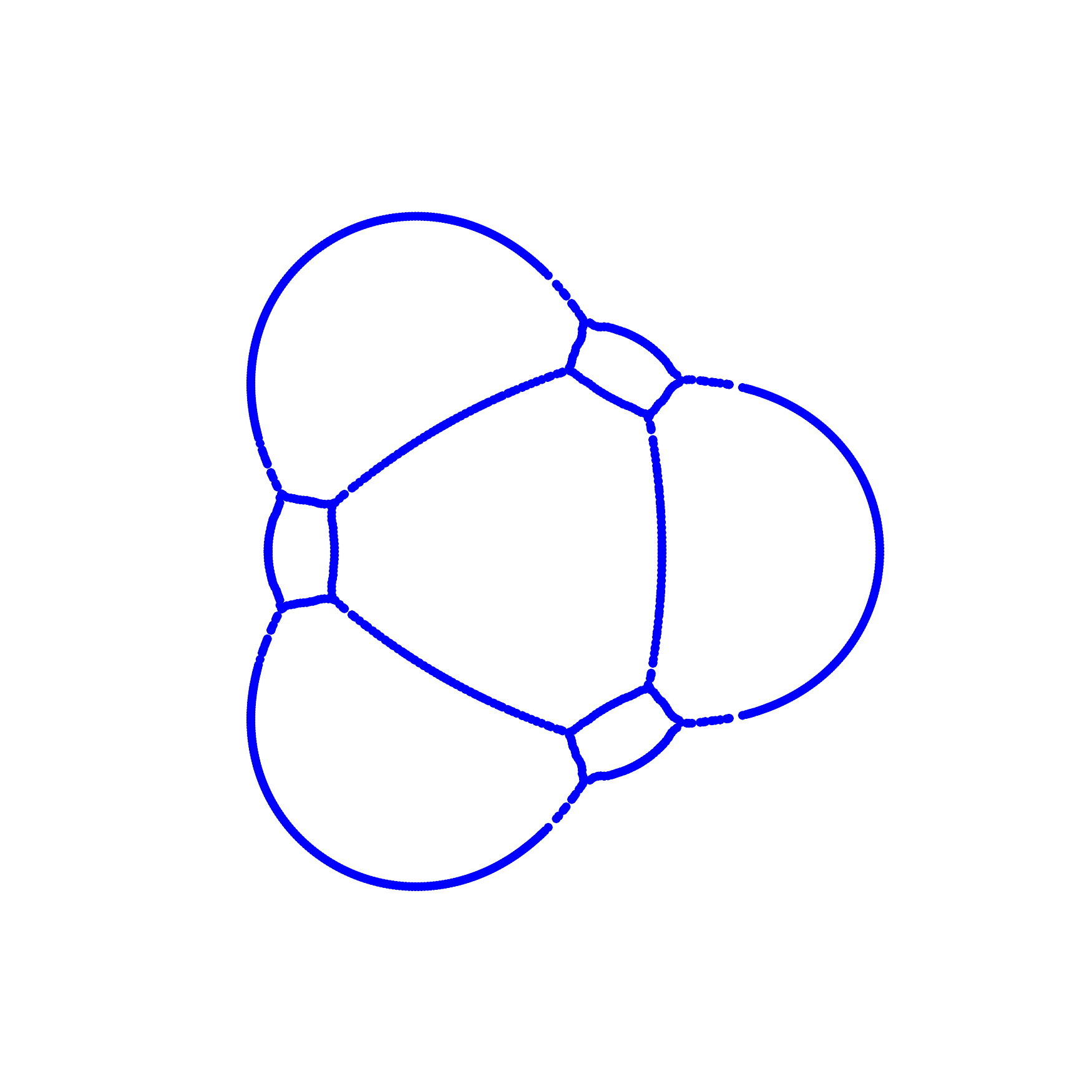}}\\
\subcaptionbox{time $0.3$}{\includegraphics[width=0.32\textwidth]{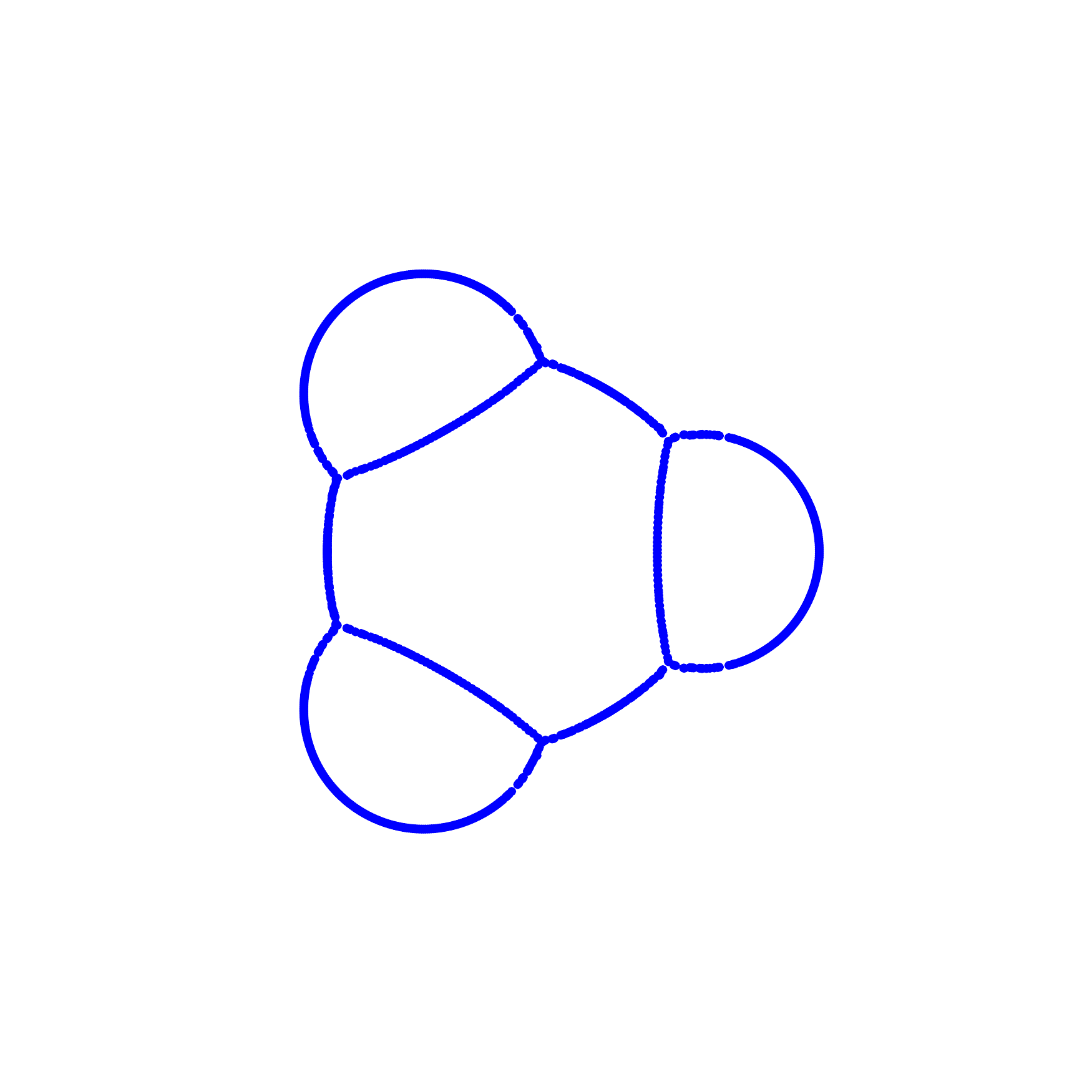}}
\subcaptionbox{time $0.4$}{\includegraphics[width=0.32\textwidth]{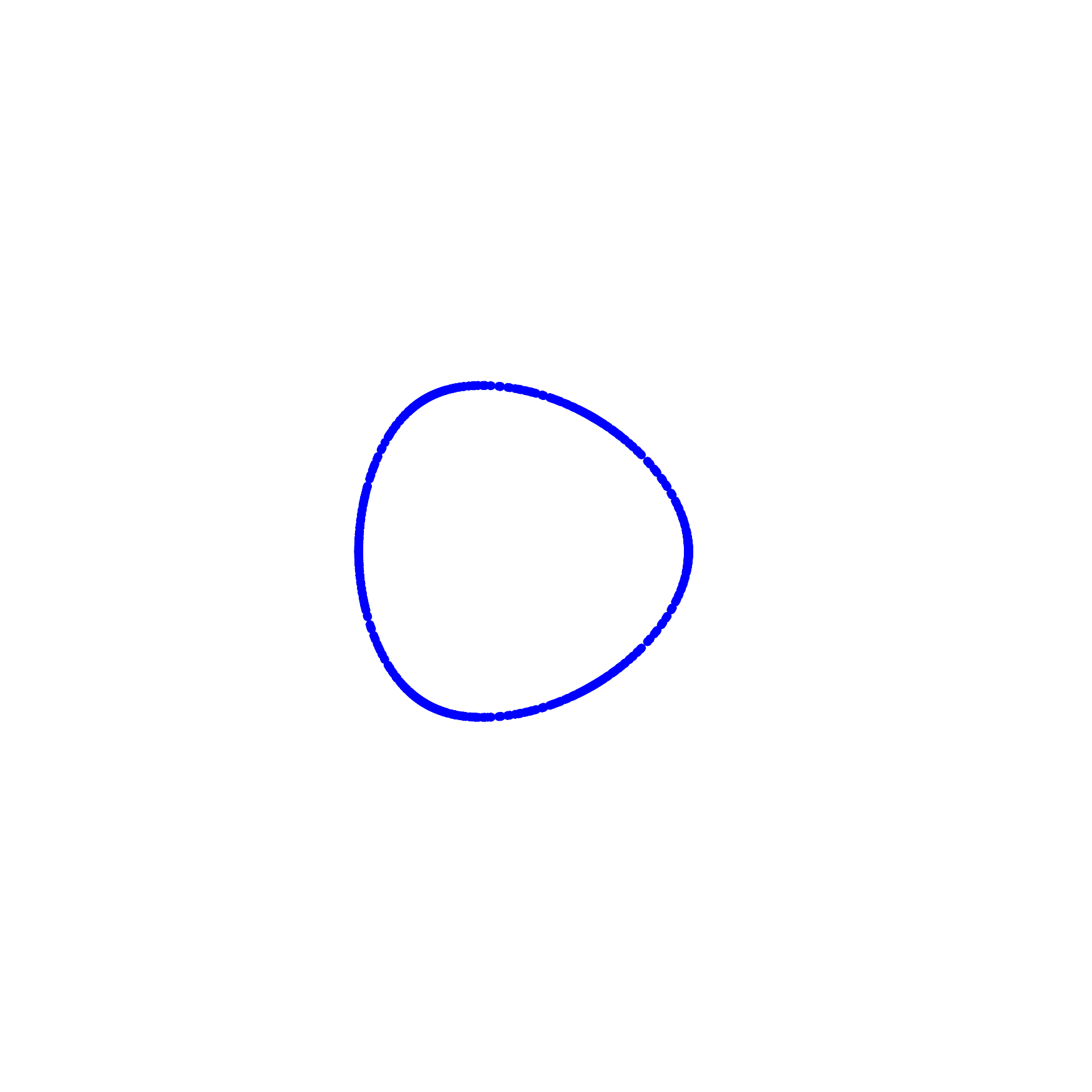}}
\subcaptionbox{time $0.5$}{\includegraphics[width=0.32\textwidth]{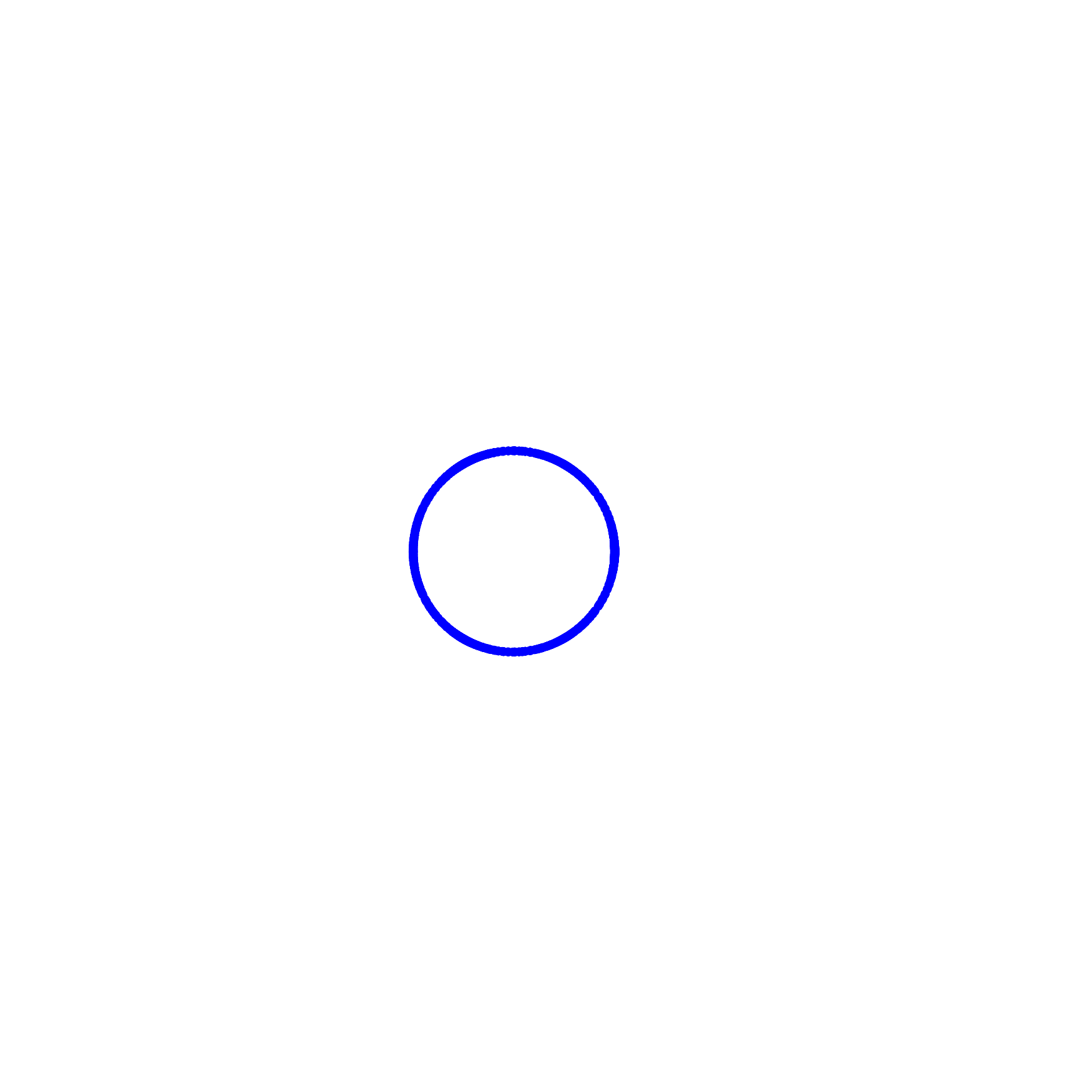}}
\end{minipage}
\begin{minipage}{0.33\textwidth}
\subcaptionbox{time $0.3$ (zoomed)}{\includegraphics[width=0.9\textwidth]{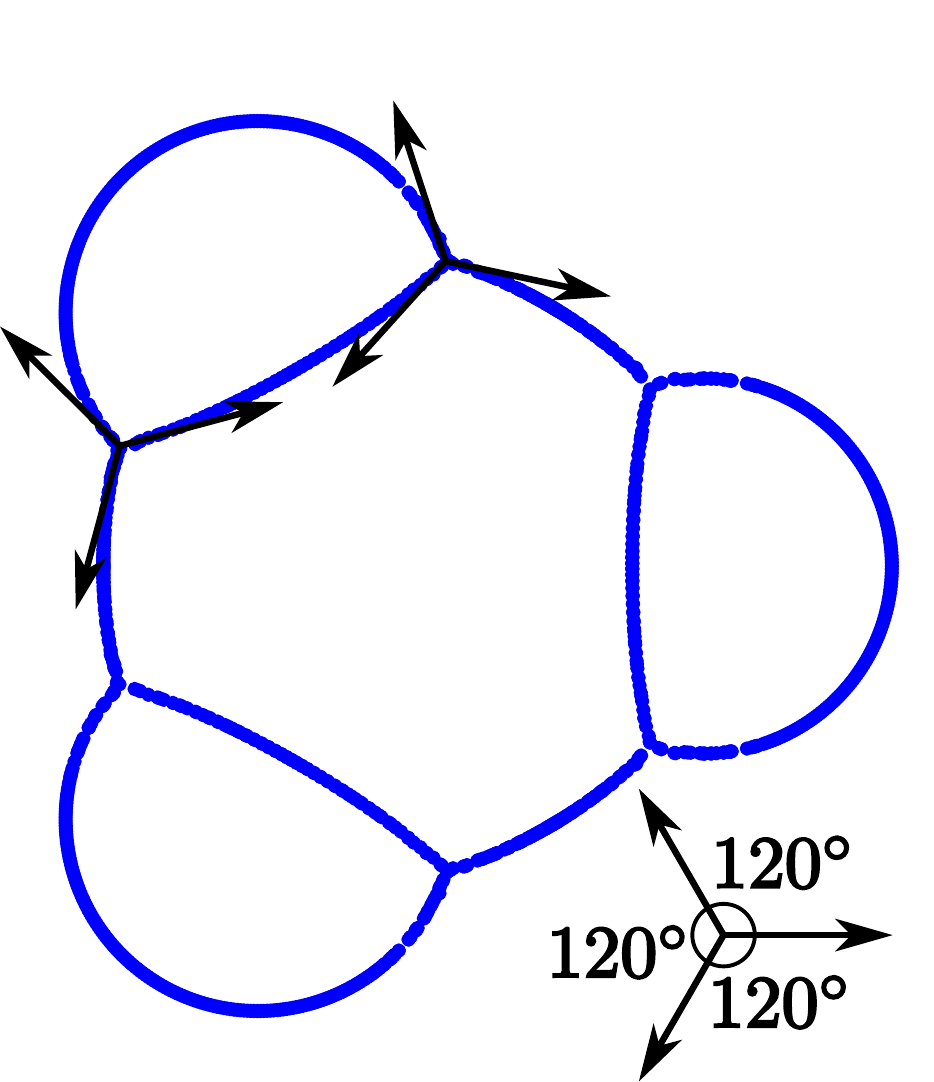}}
\end{minipage}
\caption{Evolution of three circles towards one circle under discrete curvature flow. \label{figSingTripleCircle}}
\end{figure}
\end{center}

In Figure~\ref{figSteiner1},  we consider an initial configuration consisting of a square whose $4$ corners are kept fixed.  Evolving this point cloud under curvature flow, we aim at recovering a shortest path connecting the $4$ corners, in the limit, usually called Steiner tree. There are $2$ shortest paths in this case, because of the symmetry of the configuration of the $4$ points, we hence compare our limit configuration to one of the two Steiner trees (plotted in black in each figure) connecting the $4$ corners.
The experiment is designed as follows, the initial square is discretized with $N = 300$ points. 
The number of points used for computing curvature is $k_\epsilon = 41$ and for computing mass  $k_\delta = 7$.  The number of points used for computing tangent is $k_\sigma = 11$ in the first evolution (Figure~\ref{figSteiner1}$(a)$--$(f)$) and it is $k_\sigma = 15$ in the second evolution (Figure~\ref{figSteiner1}$(g)$--$(l)$).
The time step is fixed to $\tau = \frac{1}{4N}$. 
In order to help connectedness to be preserved, we do not recompute the nearest neighbour graph (i.e. the $kd$--tree structure, see Remark~\ref{remk:knnTopology}) at each step, but only every $25$ iterations. 
The point cloud is depicted at times $0$, $0.1$, $0.2$, $0.3$, $0.4$ and $1$ as indicated in the legend.  The associated Steiner tree is outlined in black
and we observe that slight parameter changes may lead to the other Steiner tree. 
Let us mention here, that specific modulations of the parameters, in particular a too large kernel size for the evaluation of the curvature,
eventually leads to a loss of connectedness of the point cloud and thus one ends up with two segments joining two pairs of opposite corners
or even worse break ups.
\begin{center}
\setcounter{subfigure}{0}
\begin{figure}[!htp]
\subcaptionbox{time $0$}{\includegraphics[width=0.16\textwidth]{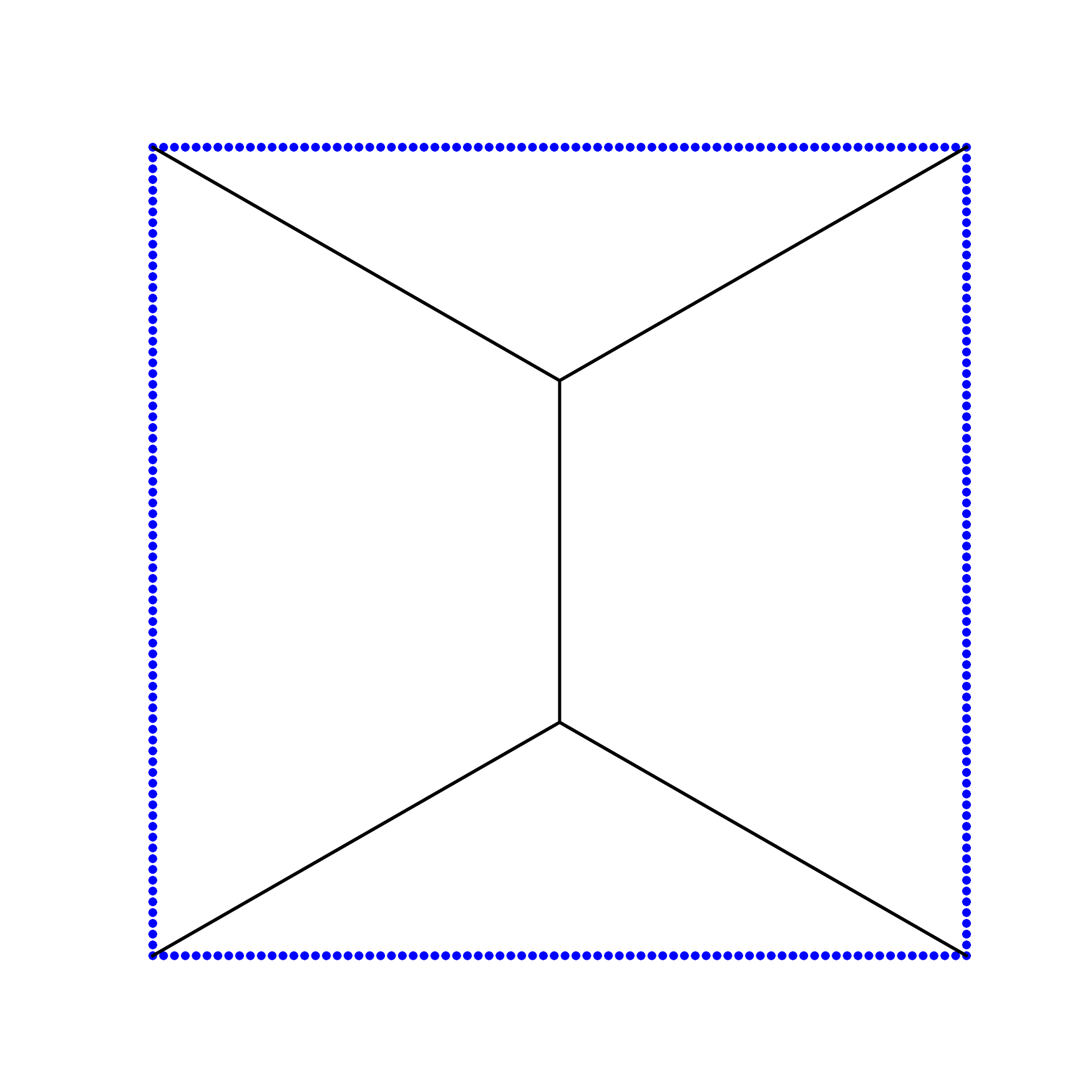}}
\subcaptionbox{time $0.1$}{\includegraphics[width=0.16\textwidth]{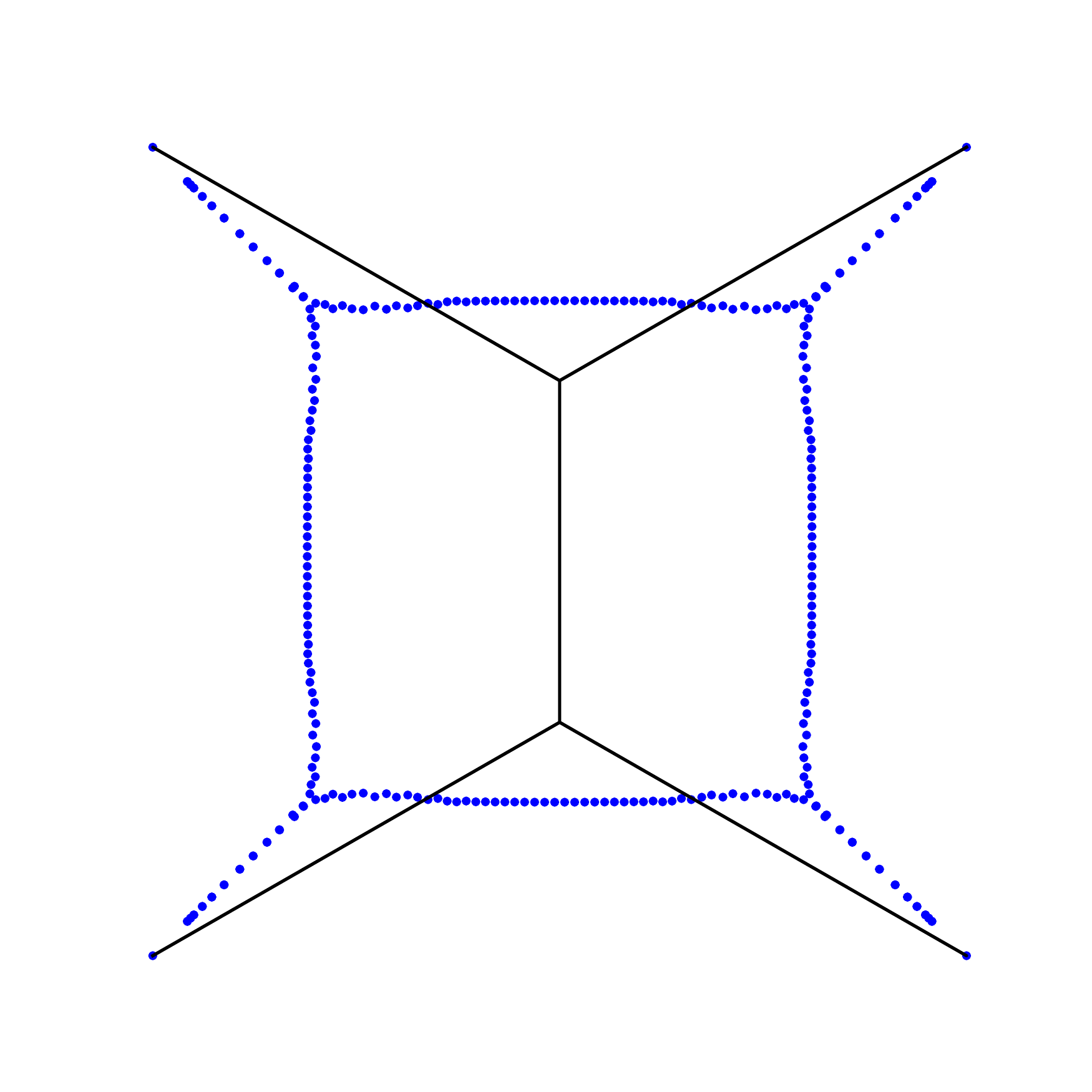}}
\subcaptionbox{time $0.2$}{\includegraphics[width=0.16\textwidth]{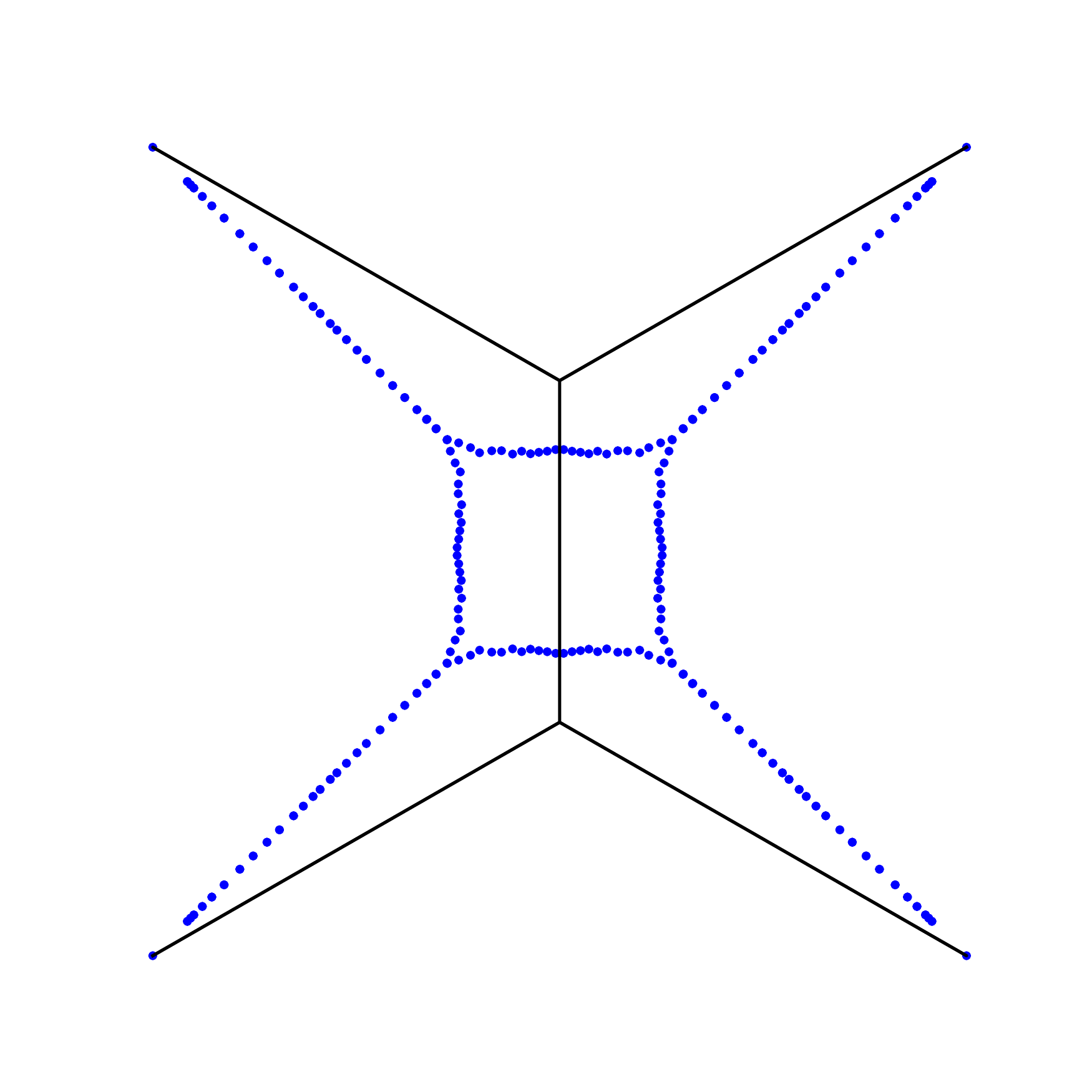}}
\subcaptionbox{time $0.3$}{\includegraphics[width=0.16\textwidth]{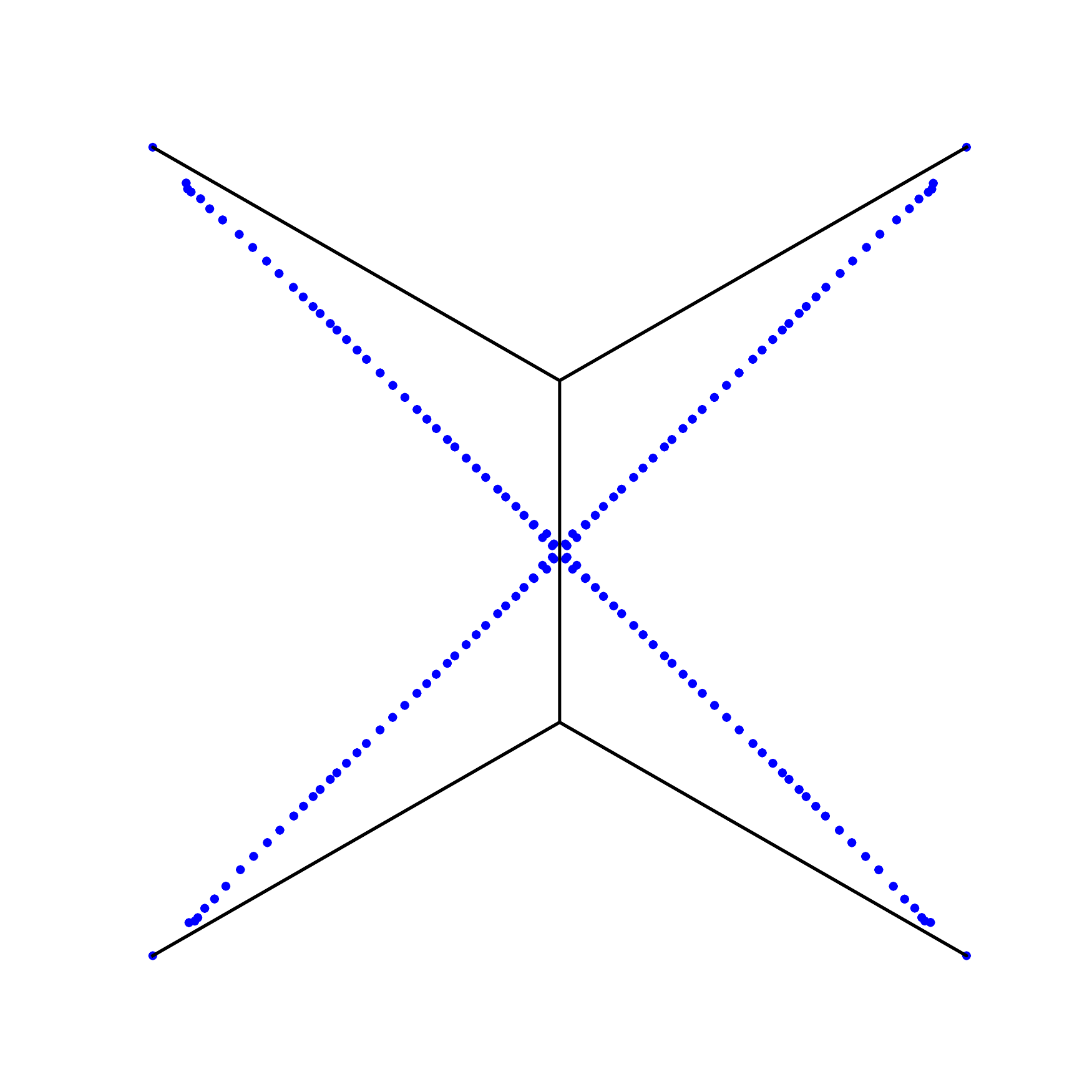}}
\subcaptionbox{time $0.4$}{\includegraphics[width=0.16\textwidth]{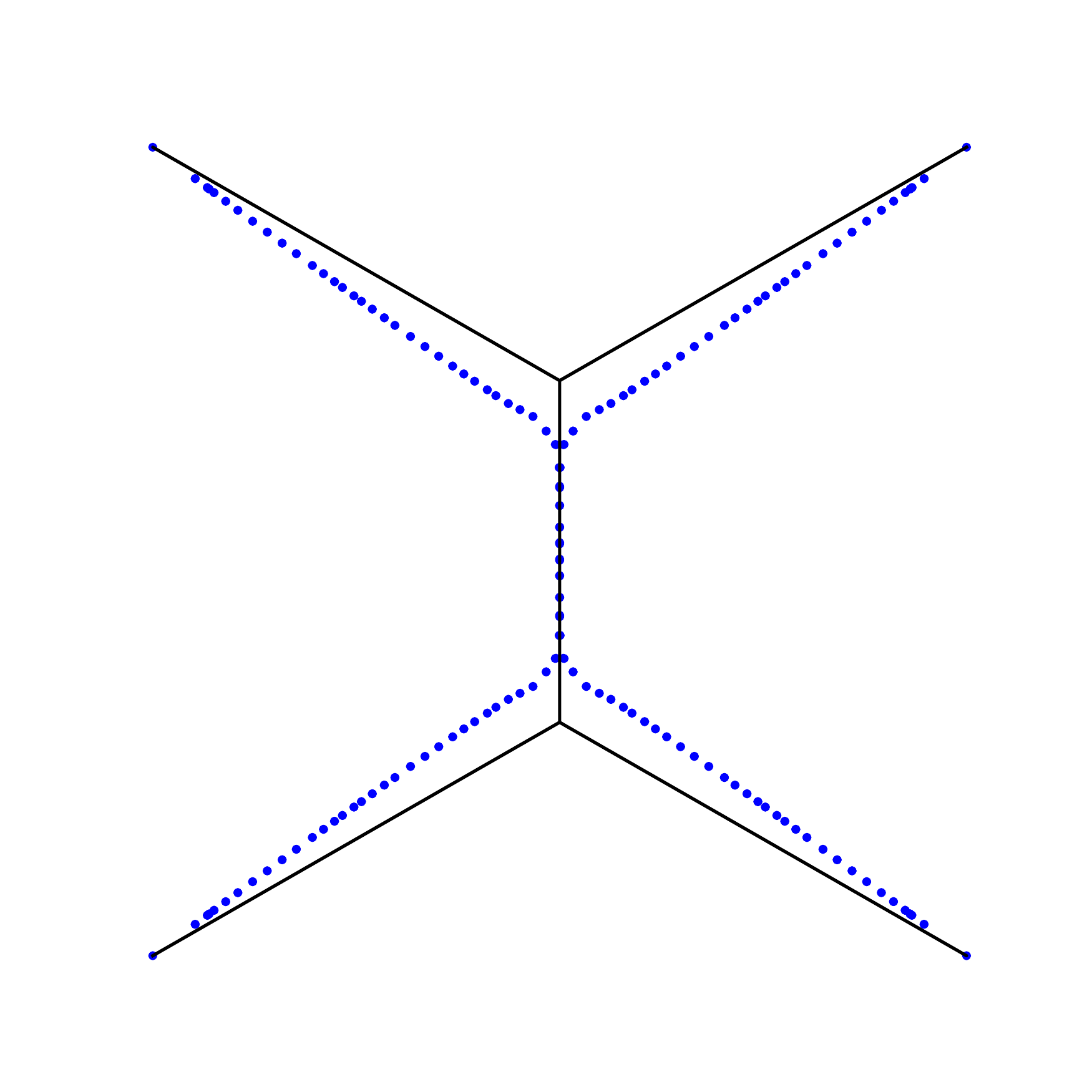}}
\subcaptionbox{time $1$}{\includegraphics[width=0.16\textwidth]{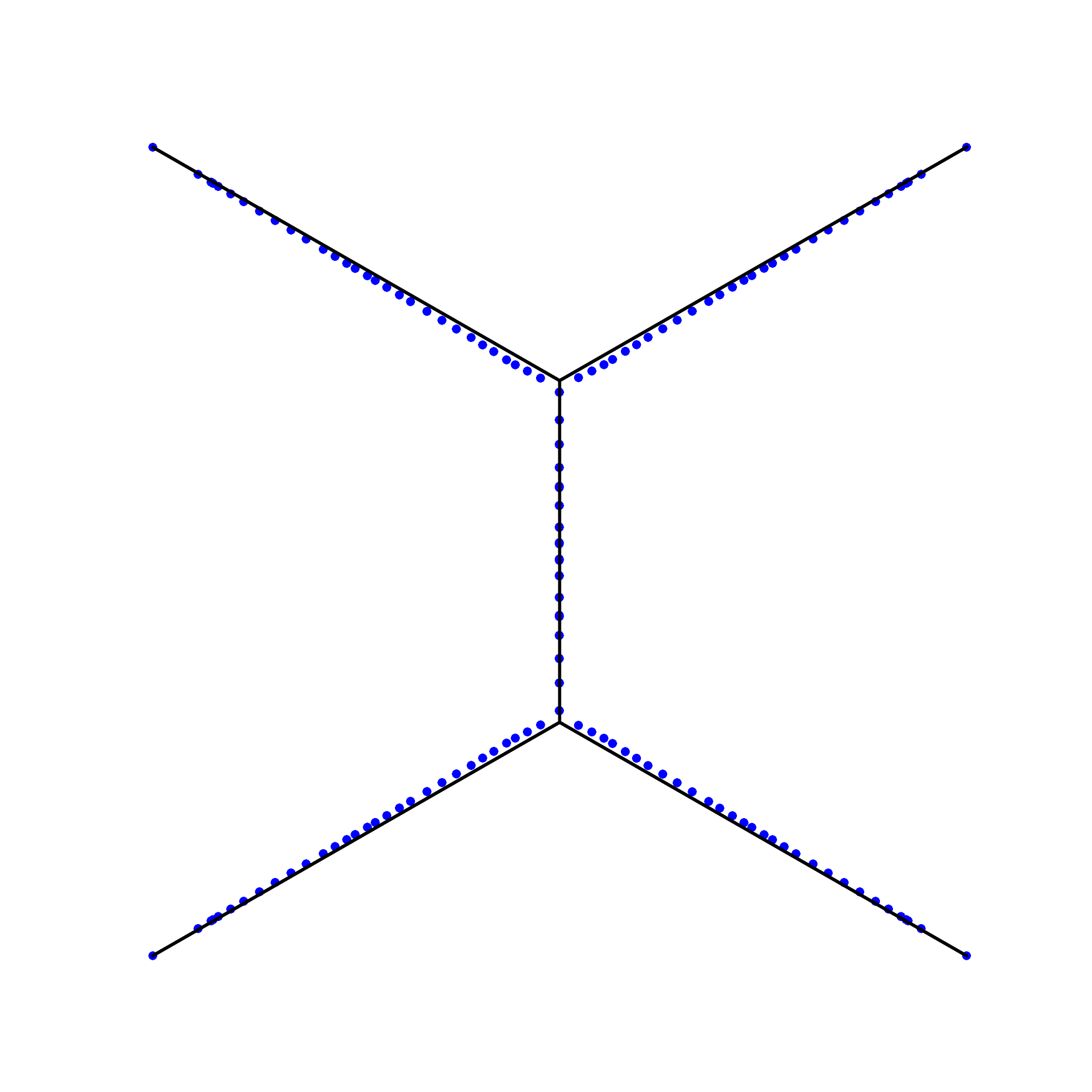}}\\
\subcaptionbox{time $0$}{\includegraphics[width=0.16\textwidth]{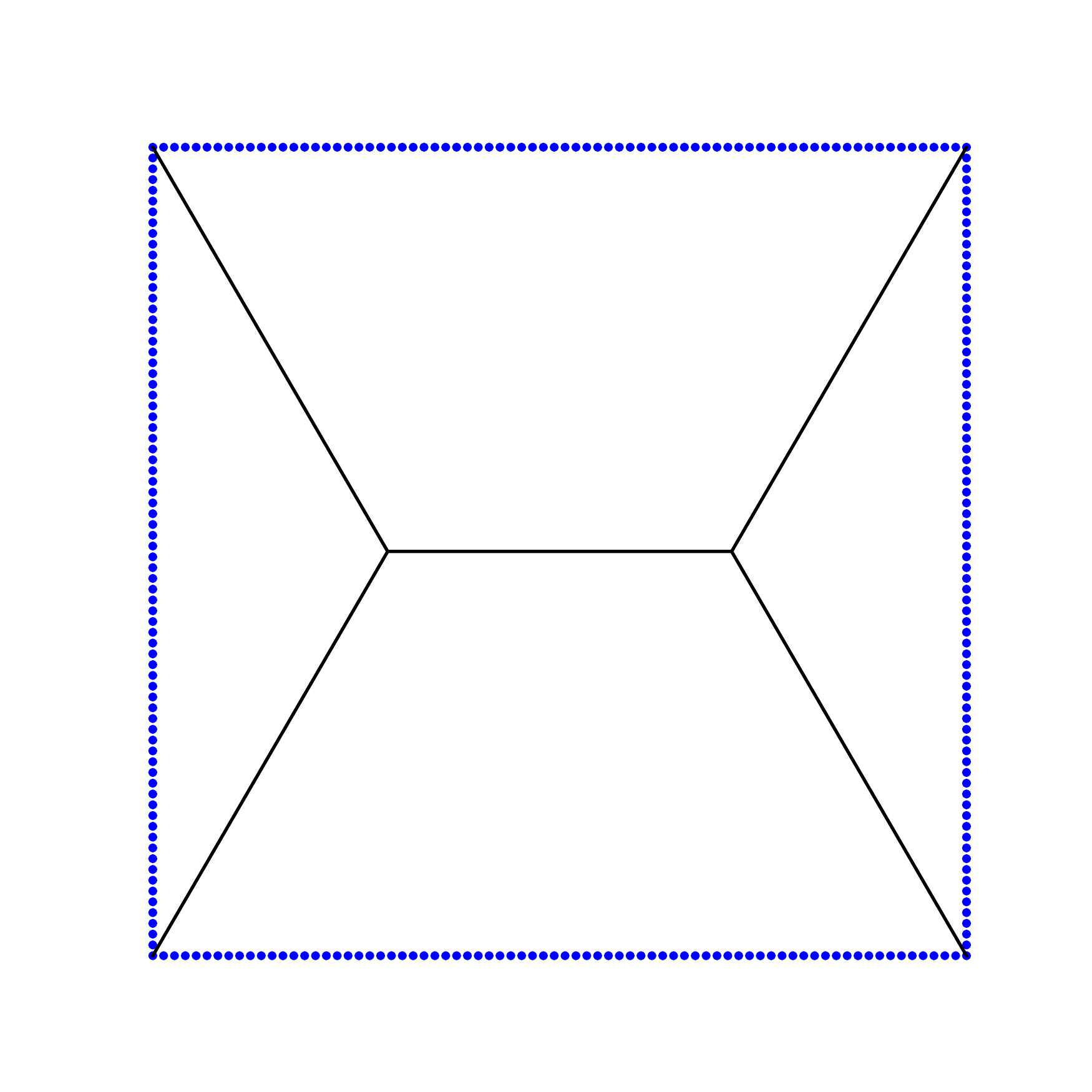}}
\subcaptionbox{time $0.1$}{\includegraphics[width=0.16\textwidth]{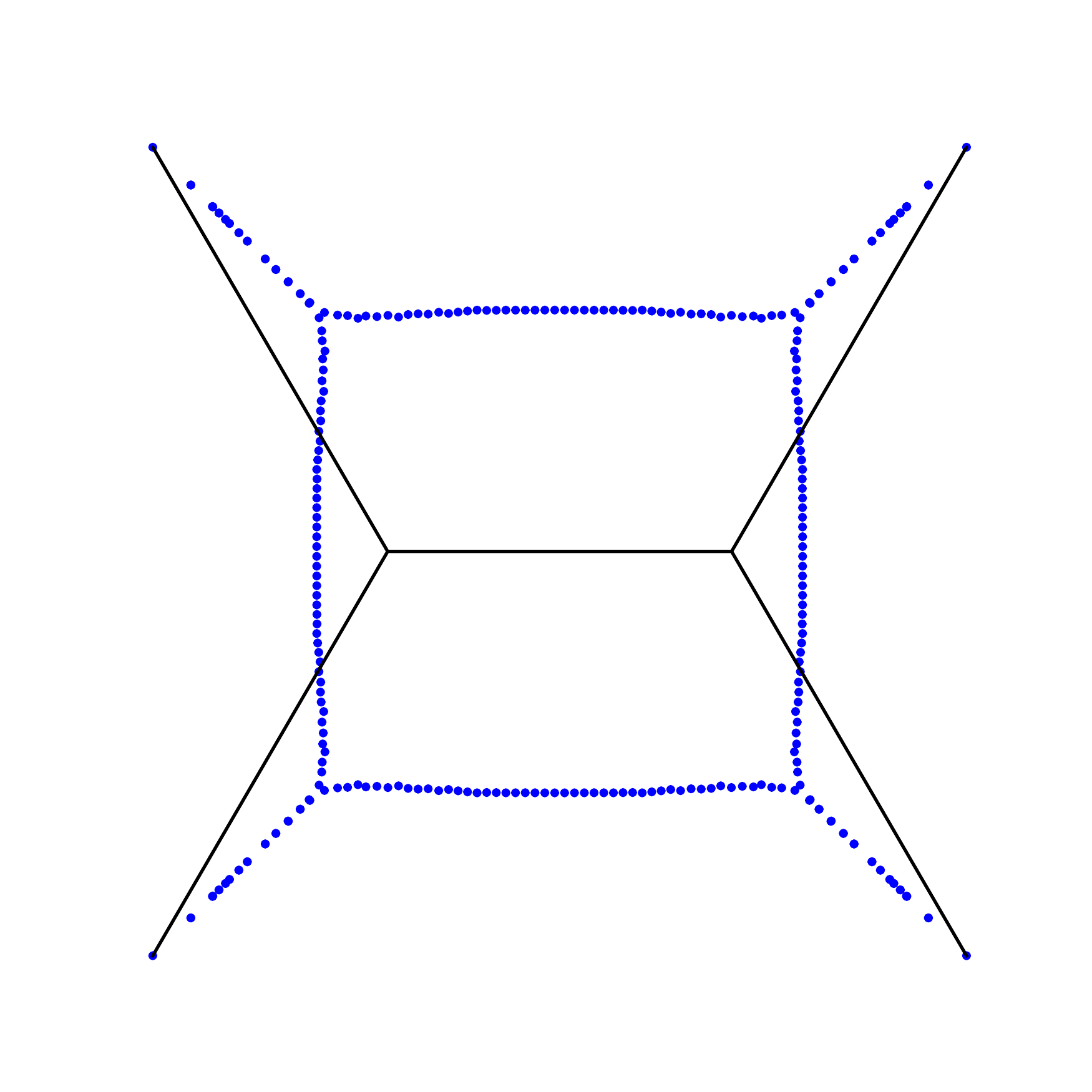}}
\subcaptionbox{time $0.2$}{\includegraphics[width=0.16\textwidth]{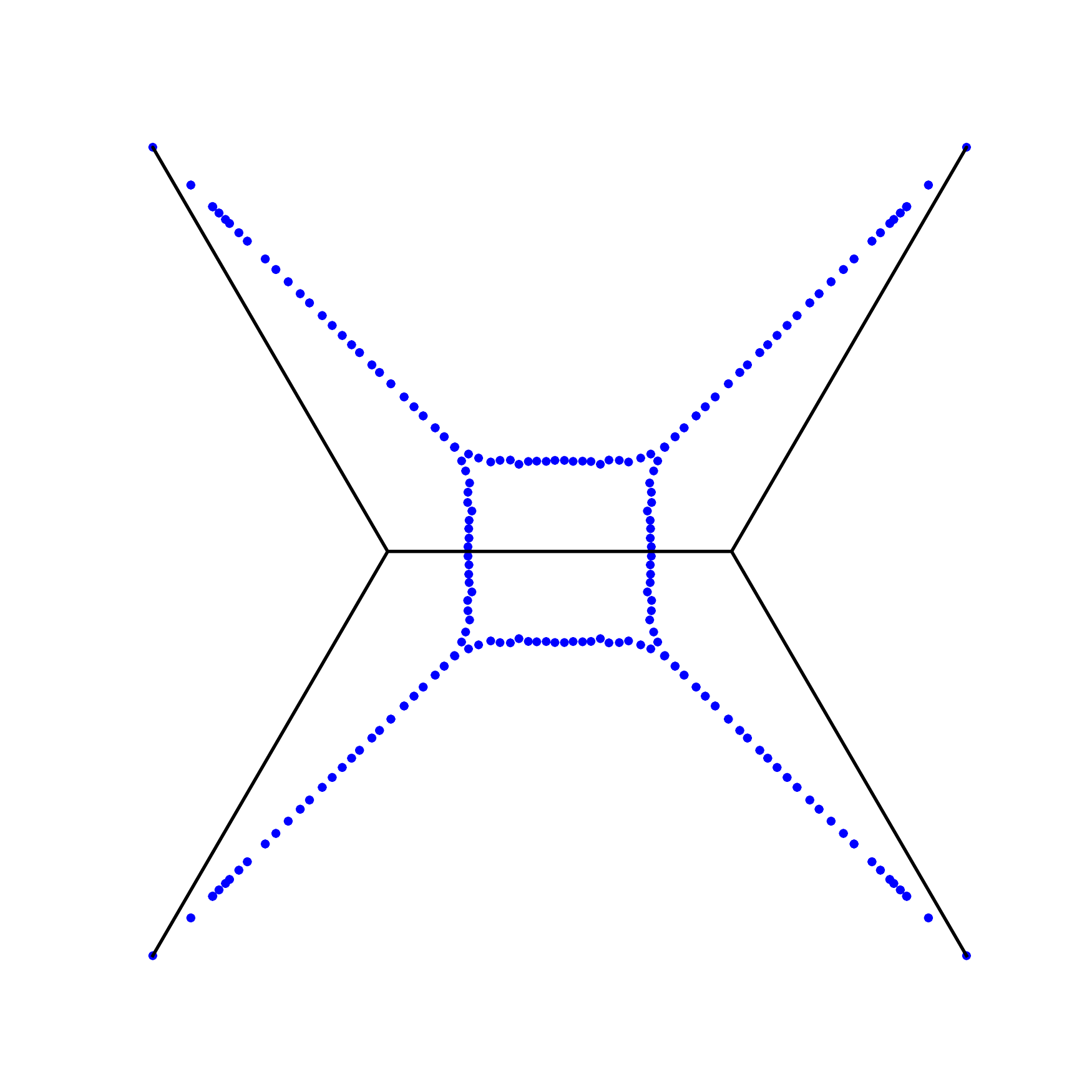}}
\subcaptionbox{time $0.3$}{\includegraphics[width=0.16\textwidth]{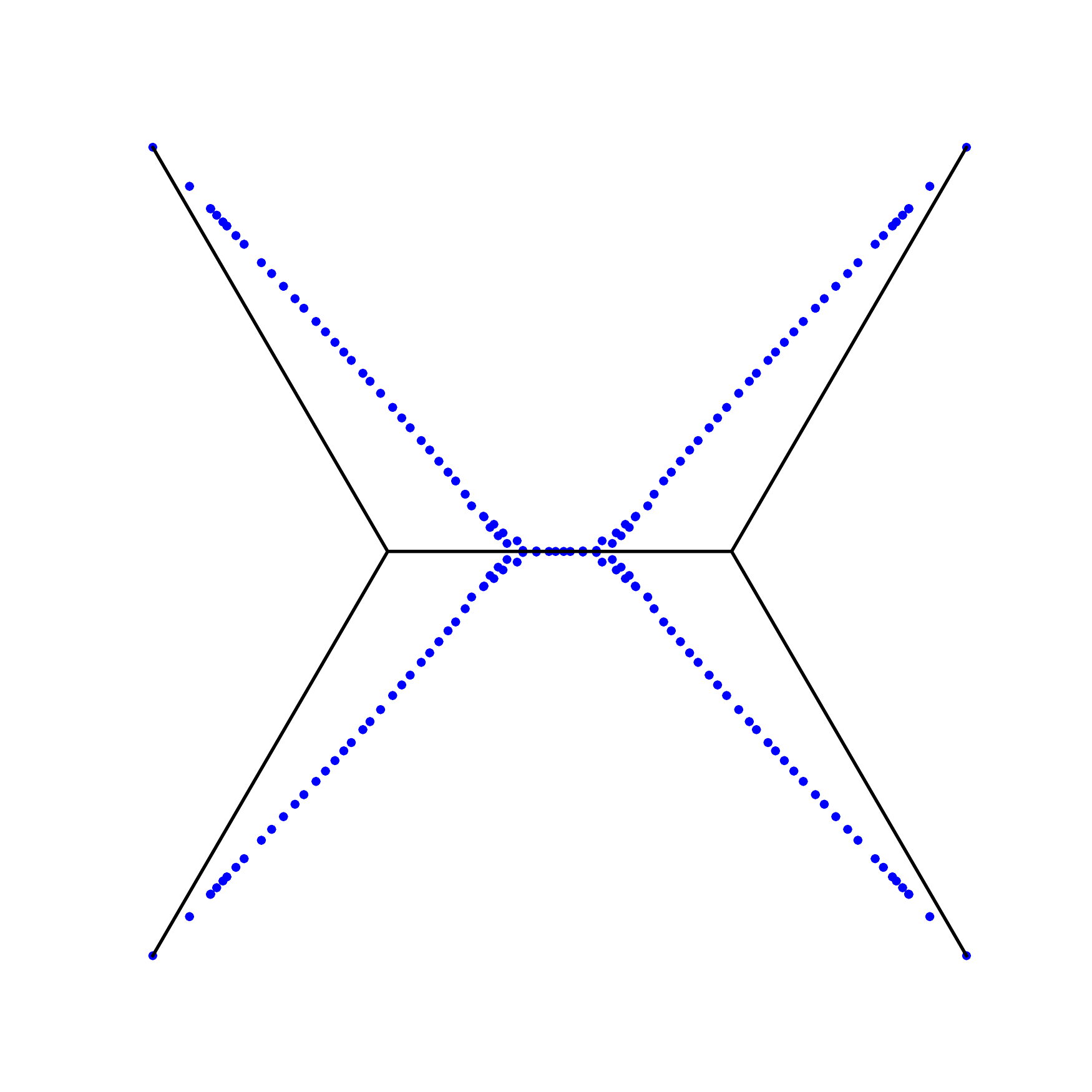}}
\subcaptionbox{time $0.4$}{\includegraphics[width=0.16\textwidth]{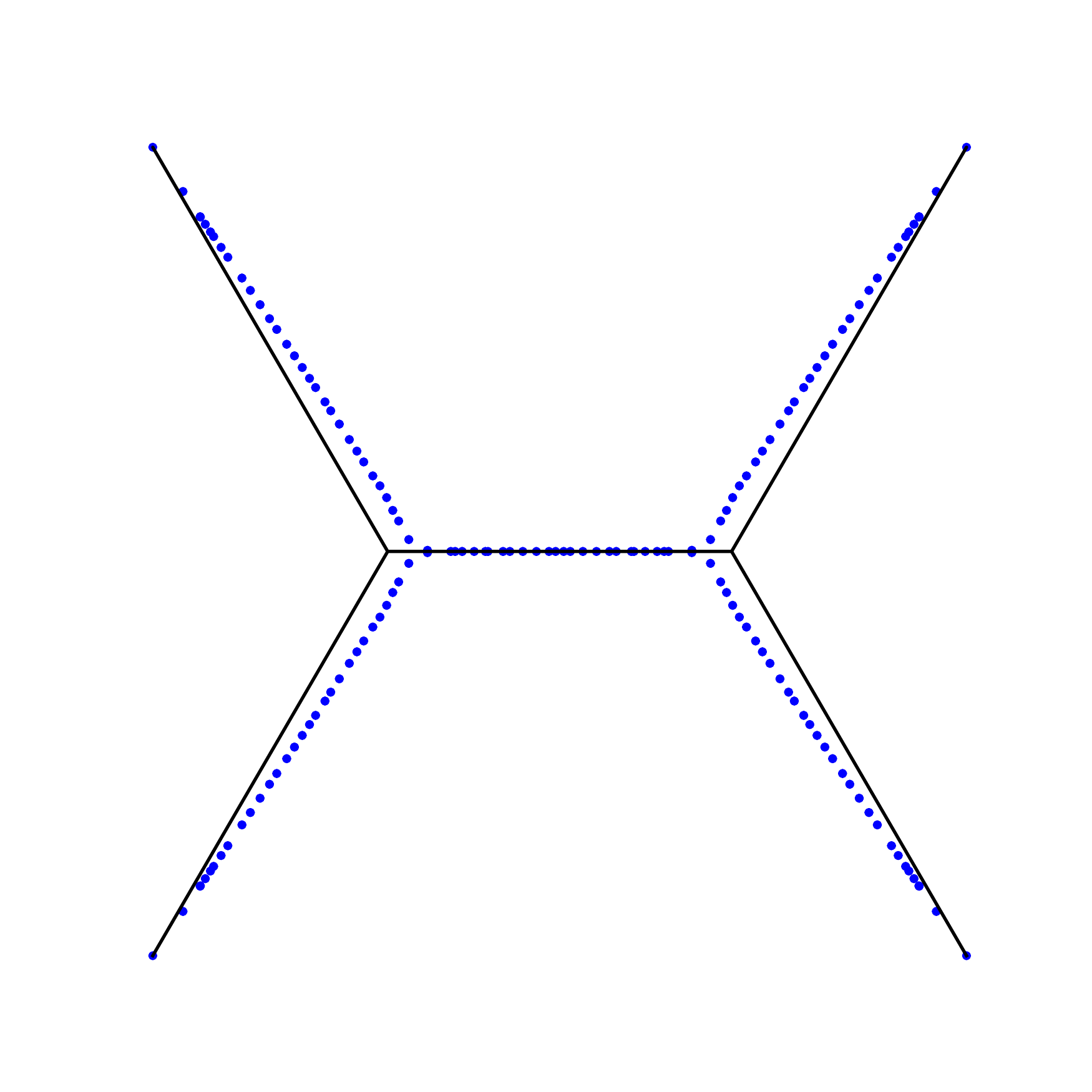}}
\subcaptionbox{time $1$}{\includegraphics[width=0.16\textwidth]{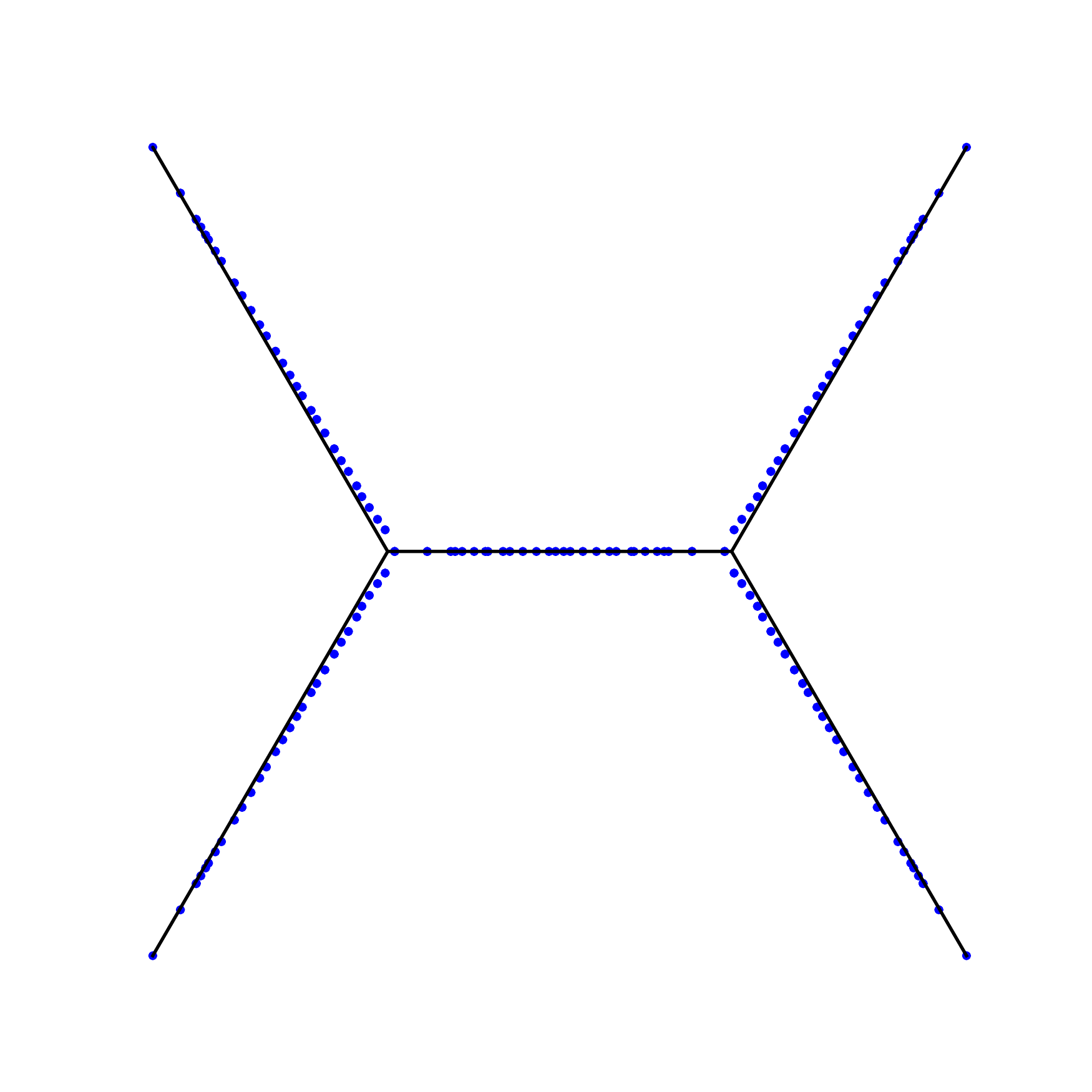}}
\caption{Evolution of a square whose corners are fixed to one $(a)$--$(f)$ or the other $(g)$--$(l)$ Steiner tree connecting the $4$ fixed corners. 
\label{figSteiner1}}
\end{figure}
\end{center}

\subsection{Singular evolutions of surfaces} \label{sec:singularSurf}

We now perform numerical tests in $3D$: on a surface leaning on the edges of tetrahedron and on the edges of a cube, and we evidence that, for some parameters, we recover some well--known soap films in the limit of the mean curvature flow. For the tetrahedron, we obtain a cone on the edges which is one of the three possible minimal cones in $\R^3$ (see \cite{Ta76}). For the cube we obtain a surface in Fig.~\ref{figCube} (i) with close to planar facets connecting the edges of the cube with a square in the center. This reflects not only experimentally observed soap films but also a theoretical result by Brakke  \cite{Br91} which gives a lower area competitor of the cone over the edges sharing the geometry of our numerical result with slightly bended facets.
It is known that mean curvature vector points is the direction to choose in order to decrease area.  Indeed, the mean curvature vector is the $\xL^2$--gradient of the area functional and varifolds generalized mean curvature relies on this characterization. Based on that observation, when the mean curvature flow is well--defined it should yield a minimal surface in the limit. However, when the flow creates singularities it is more complicated to analyse, also on the computational side.  For instance, triangulated surfaces are not well--adapted to handle topology changes. Our discrete flow based on point cloud representation allows to observe the evolution from an initial surface spanning the edges of the tetrahedron or the cube to one of the soap films spanning the same boundary. We insist that the discrete flow is automatic, once the parameters are set, there is no further manual intervention. Note that for $2D$ surfaces, creation of holes is an unwanted but succesfull strategy to decrease area while spanning the same $1D$ boundary (the edges of the tetrahedron here). And in the same line, connecting the edges with many very thin structures rather than a surface gives a lower area since lines have zero area. While in $1D$ we only had to care about wether connectedness was preserved, in $2D$ the topology is richer and the choice of parameters is crucial to discard, when possible, those unwanted behaviours.

We now give the details of both numerical experiments, starting with the tetrahedron.
We begin with a point cloud discretizing the faces of a tetrahedron with $N = 6052$ points. The corners of the tetrahedron $0$, $(0,0,1)$, $(0,1,0)$ and $(0,0,1)$ are fixed as well as the edge--points (not moved at iterations).
The number of points used for computing curvature is $k_\epsilon = 26$, tangent is $k_\sigma = 23$ and mass is $k_\delta = 17$. In order to help topology to be preserved, we do not recompute the $kd$--tree structure at each step, but only every $2$ iterations. 
The time step is fixed to $\tau = 0.005$. The point cloud is  depicted every $12$ steps of the evolution as indicated in the legend of Figure~\ref{figTetra}.

\begin{center}
\setcounter{subfigure}{0}
\begin{figure}[!htp]
\subcaptionbox{Step $1$}{\includegraphics[width=0.16\textwidth]{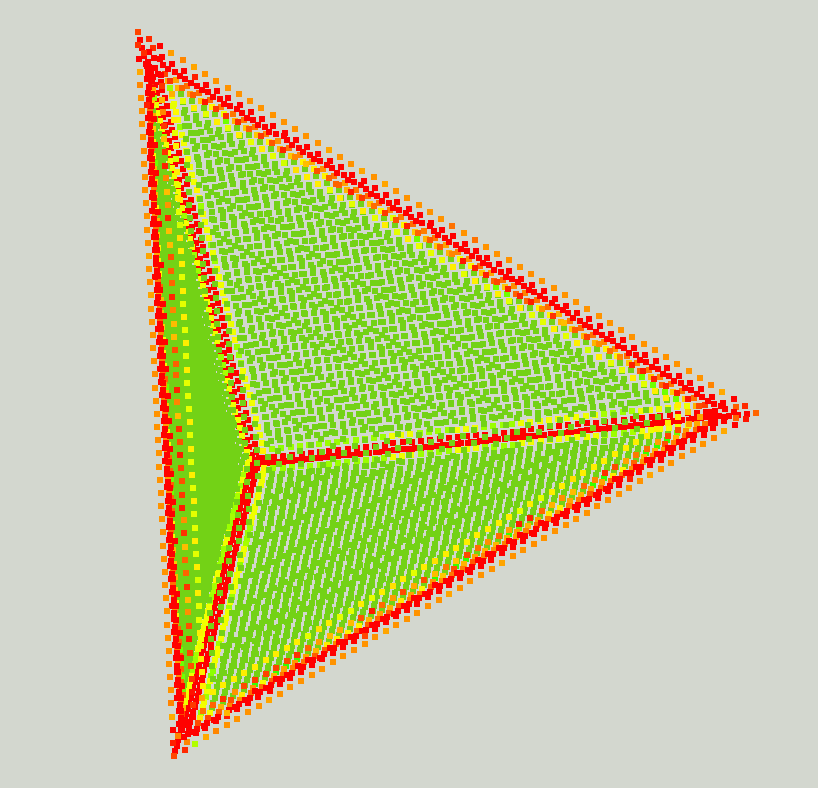} \includegraphics[width=0.16\textwidth]{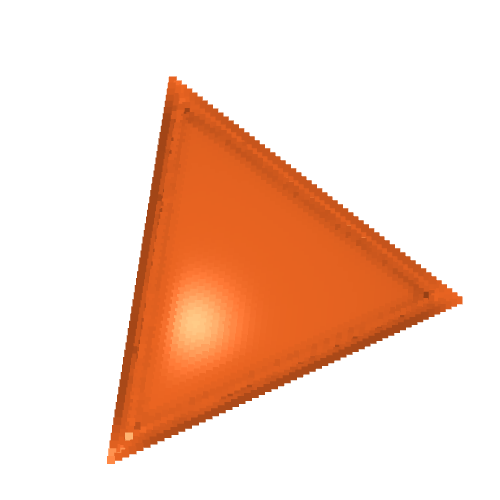}}
\subcaptionbox{Step $13$}{\includegraphics[width=0.16\textwidth]{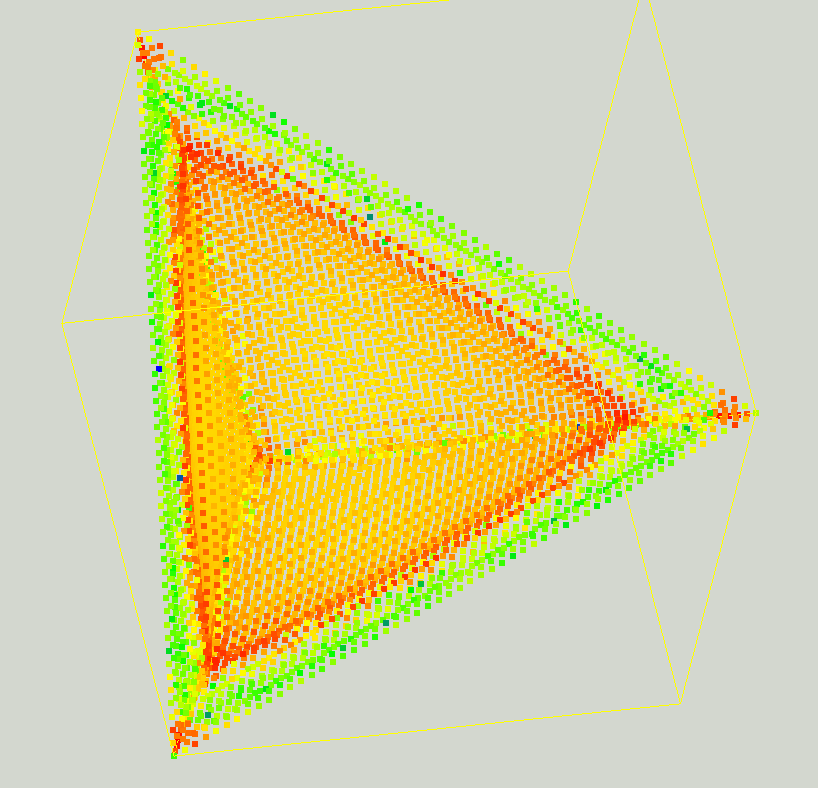} \includegraphics[width=0.16\textwidth]{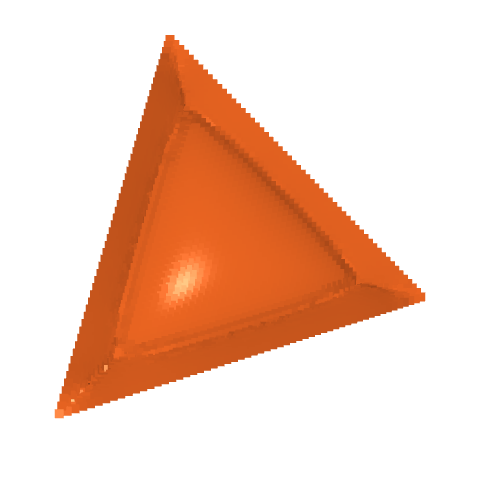}}
\subcaptionbox{Step $25$}{\includegraphics[width=0.16\textwidth]{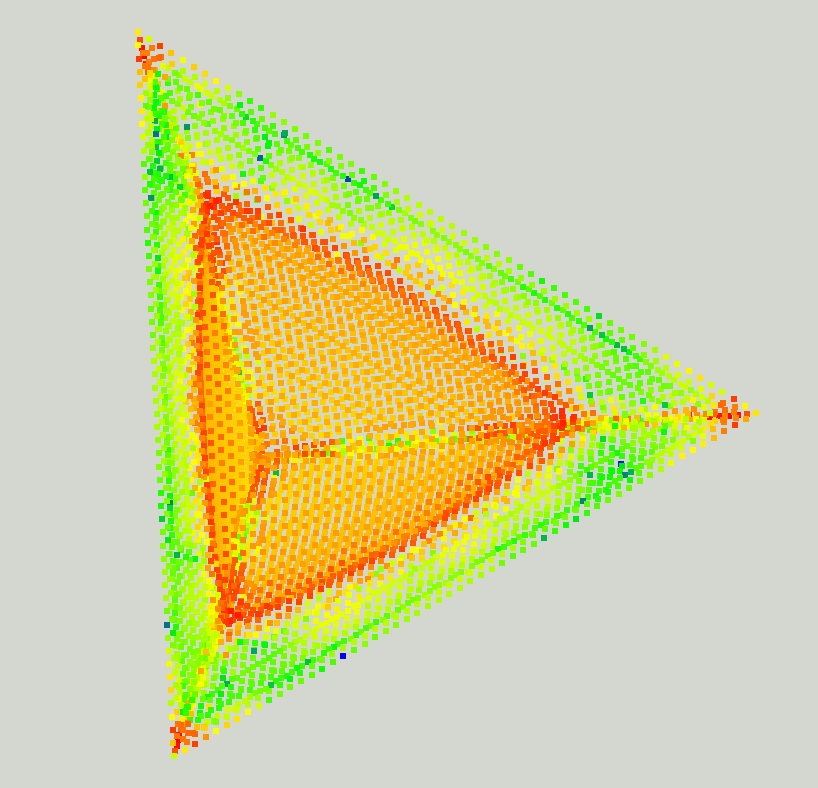} \includegraphics[width=0.16\textwidth]{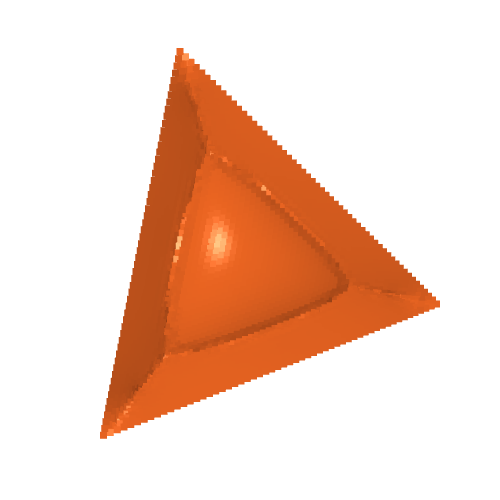}}
\\
\subcaptionbox{Step $37$}{\includegraphics[width=0.16\textwidth]{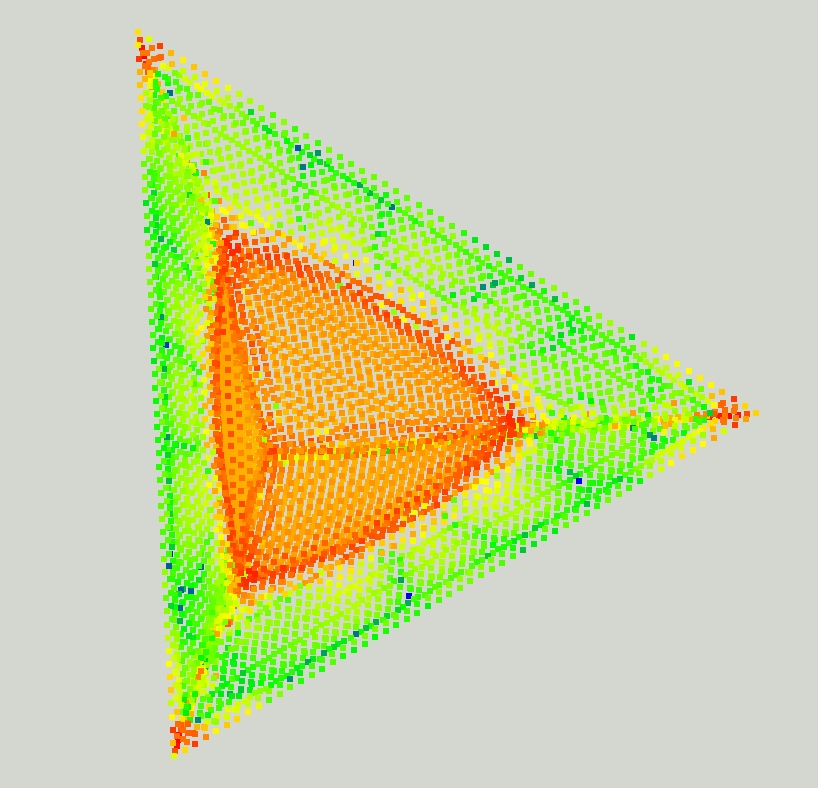} \includegraphics[width=0.16\textwidth]{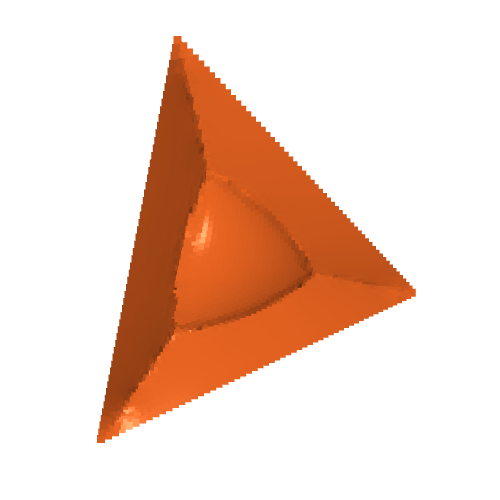}}
\subcaptionbox{Step $49$}{\includegraphics[width=0.16\textwidth]{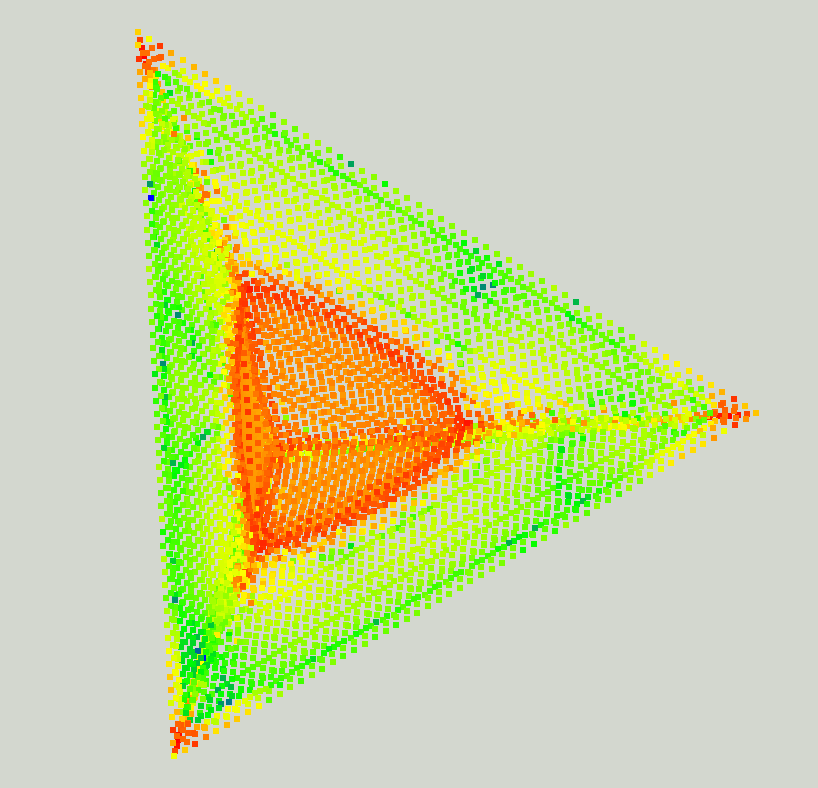} \includegraphics[width=0.16\textwidth]{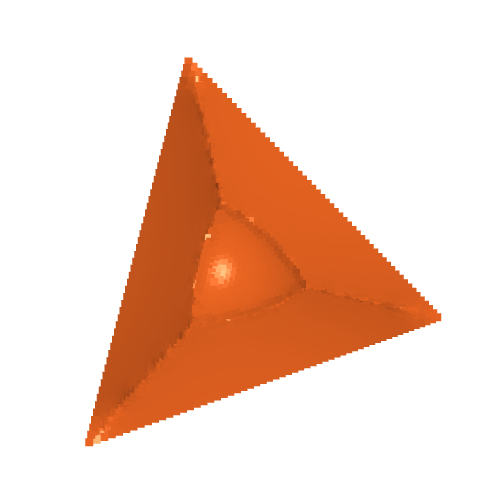}}
\subcaptionbox{Step $61$}{\includegraphics[width=0.16\textwidth]{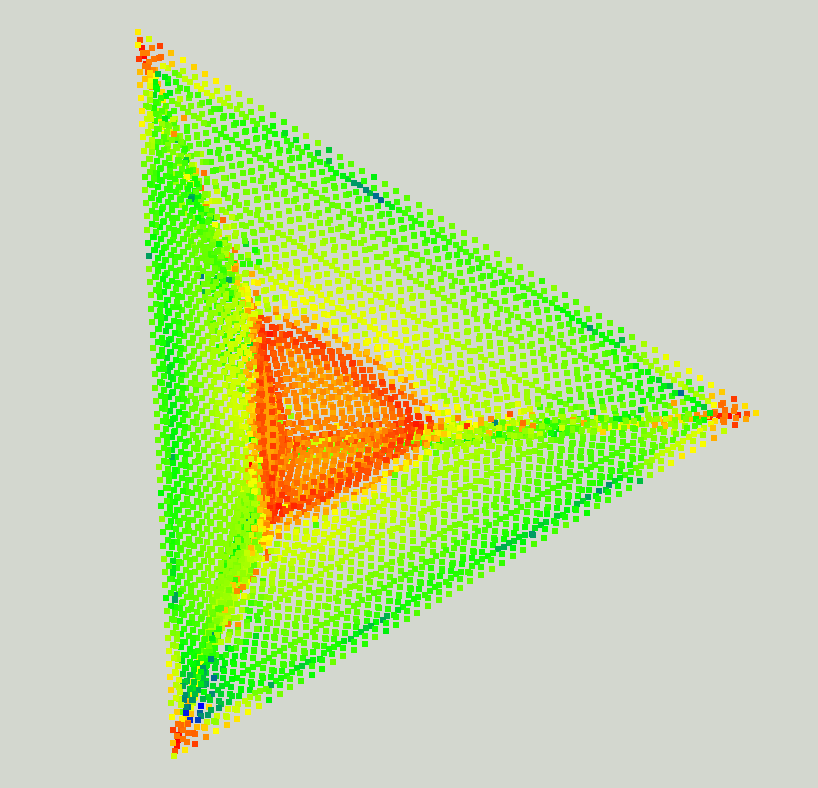} \includegraphics[width=0.16\textwidth]{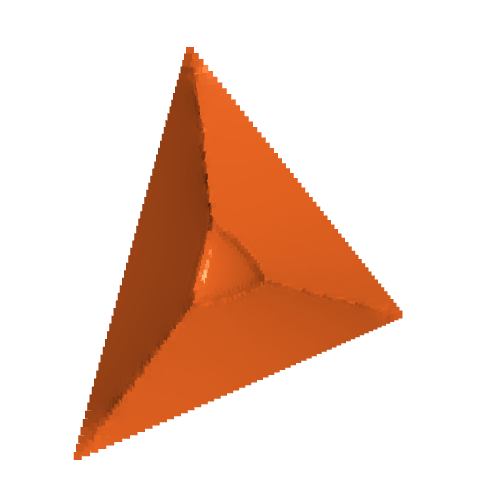}}
\\
\subcaptionbox{Step $73$}{\includegraphics[width=0.16\textwidth]{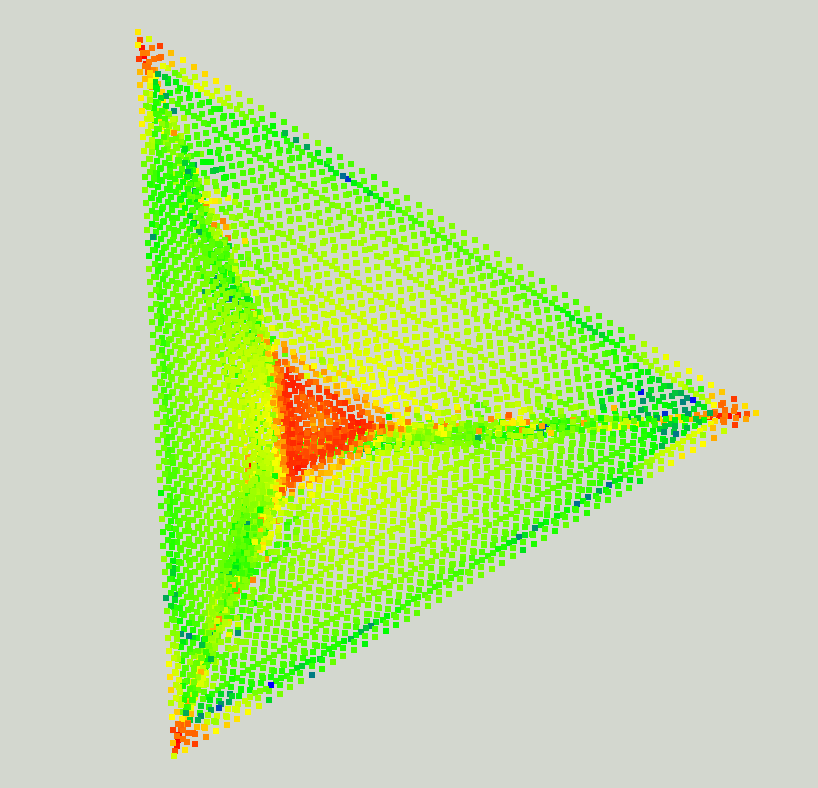} \includegraphics[width=0.16\textwidth]{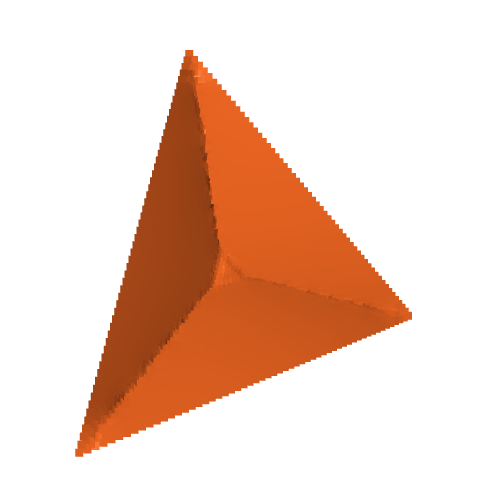}}
\subcaptionbox{Step $85$}{\includegraphics[width=0.16\textwidth]{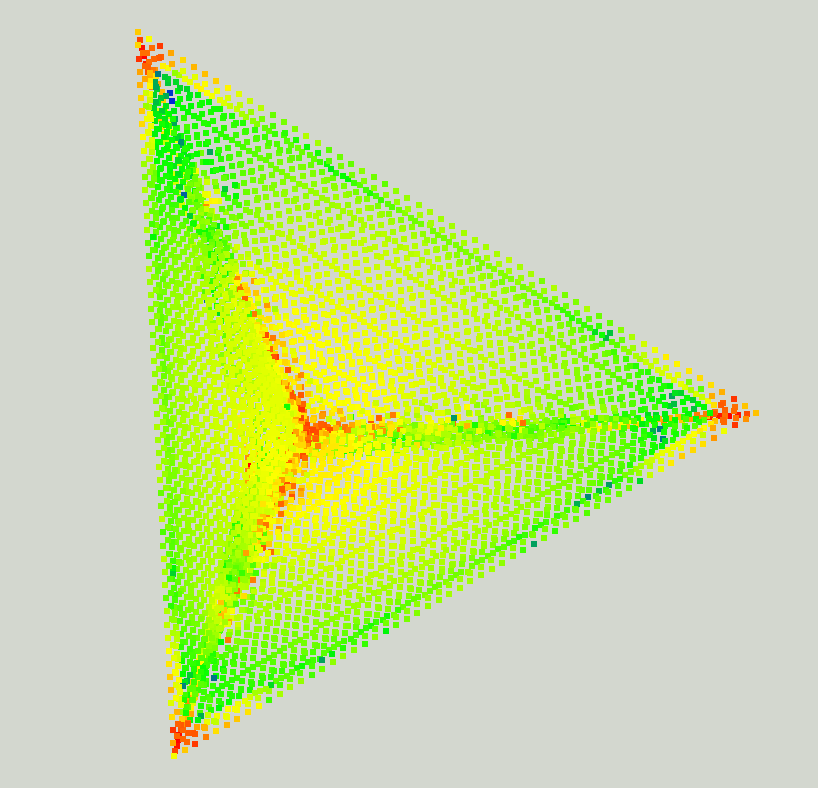} \includegraphics[width=0.16\textwidth]{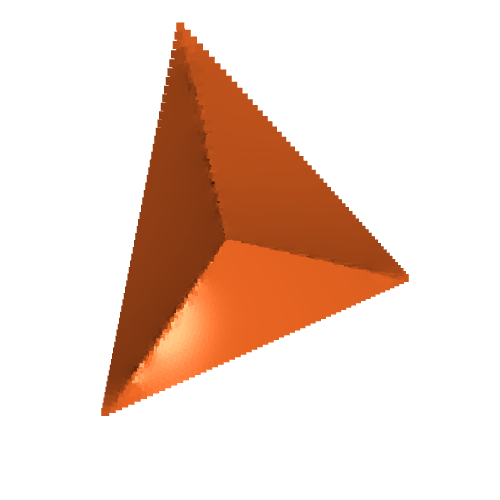}}
\subcaptionbox{Step $97$}{\includegraphics[width=0.16\textwidth]{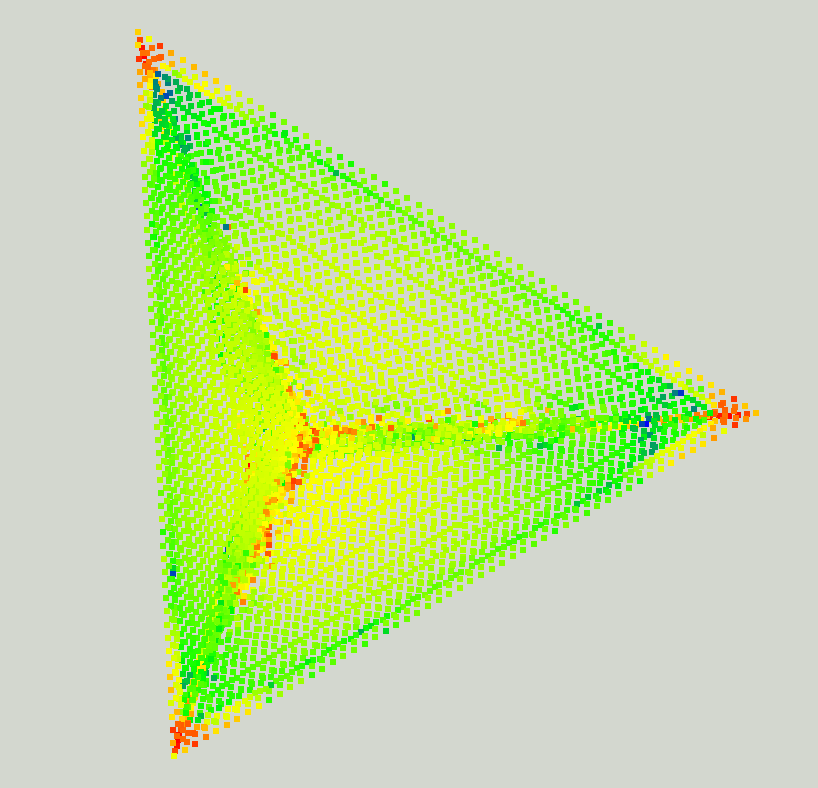} \includegraphics[width=0.16\textwidth]{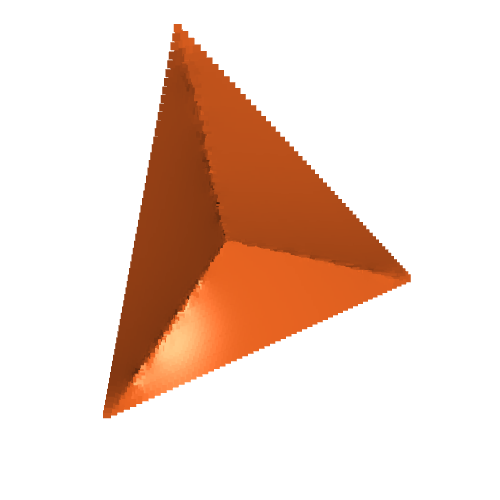}}
\caption{Different steps of the semi-linear scheme \eqref{eqLinearSemiImplicitScheme} performed on (the surface of) a tetrahedron whose edges are fixed, discretized with $N = 6052$ points and for a time--step $\tau = 0.005$.  The norm of the approximate mean curvature vector   is color coded on the left and on the right a shaded visualization of the point clouds using square shaped splats with proper point normals is shown. } \label{figTetra}
\end{figure}
\end{center}

We proceed similarly for the cube.  The faces of a cube with side--length $1$ are discretized with a total of $N = 18600$ points. The number of points used for the computation of curvature is $k_\epsilon = 21$, tangent is $k_\sigma = 23$ and mass is $k_\delta = 9$. 
We do not recompute the $kd$--tree structure at each step, but only every $50$ iterations. The time step is fixed to $\tau = 0.01$. The point cloud is  depicted every $400$ steps of the evolution as indicated in the legend of Figure~\ref{figCube}.

\begin{center}
\setcounter{subfigure}{0}
\begin{figure}[!htp]
\subcaptionbox{Step $1$}{\includegraphics[width=0.16\textwidth]{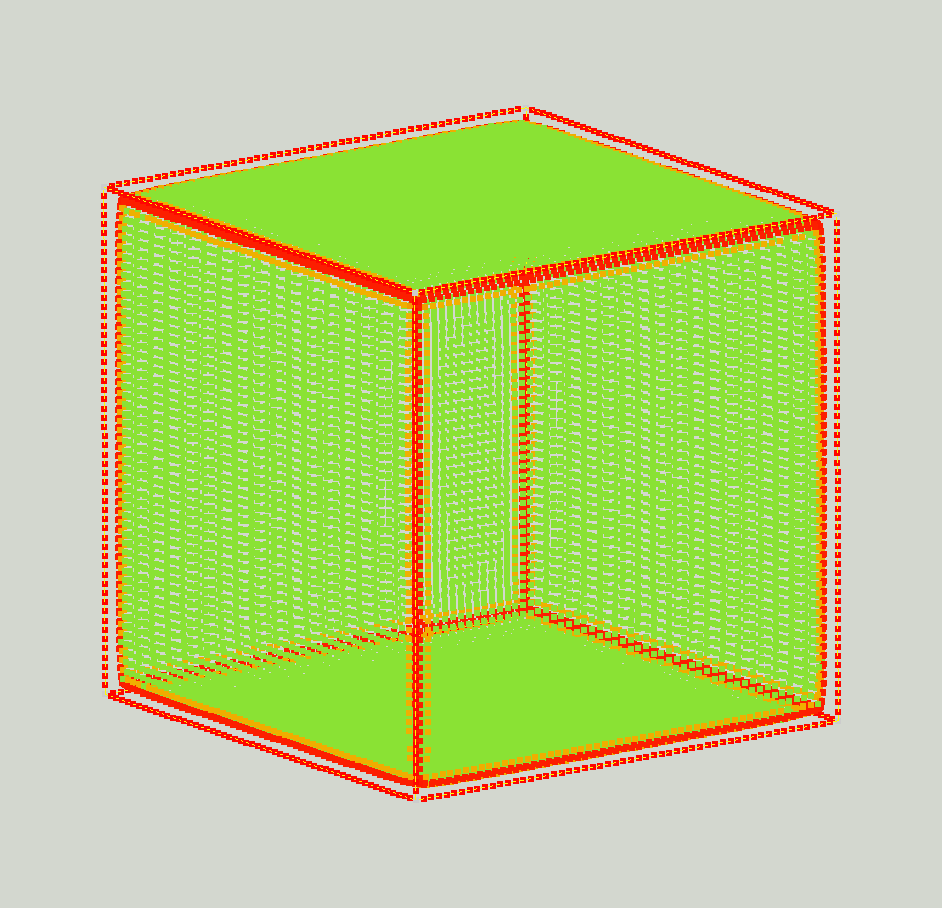} \includegraphics[width=0.16\textwidth]{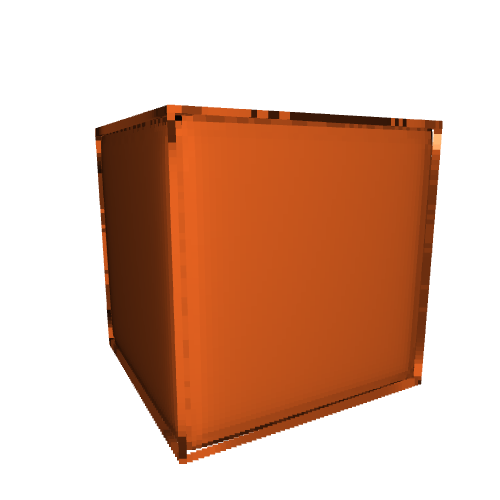}}
\subcaptionbox{Step $401$}{\includegraphics[width=0.16\textwidth]{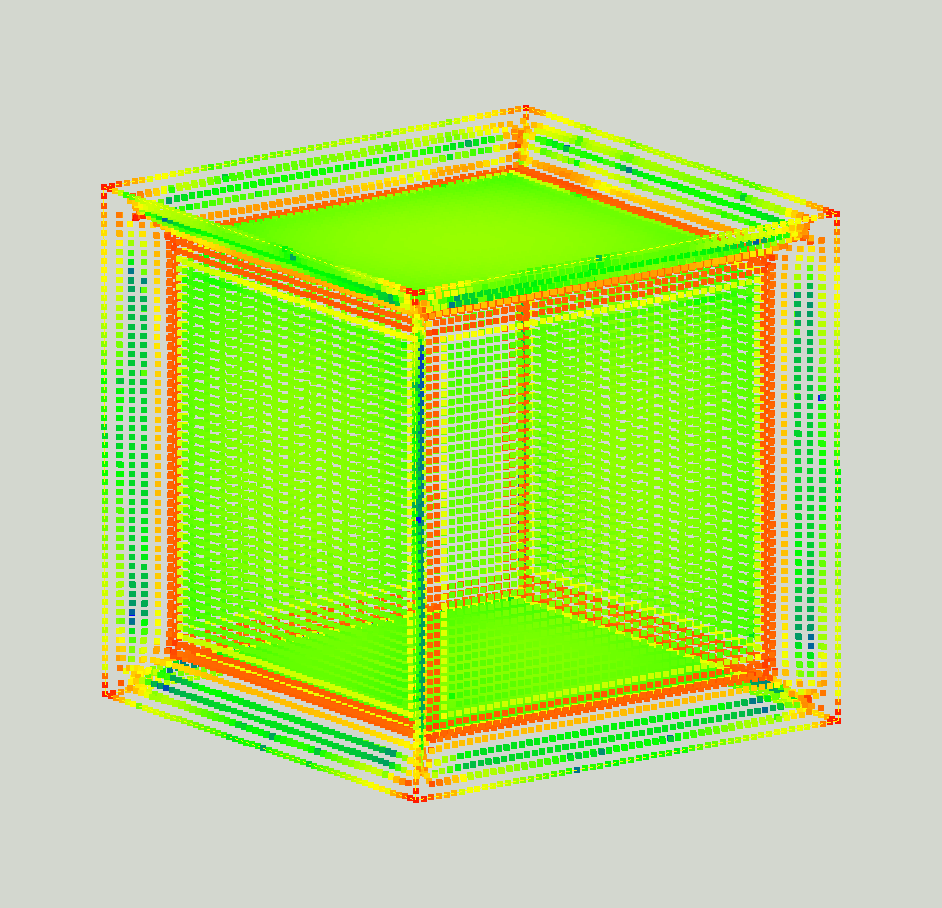} \includegraphics[width=0.16\textwidth]{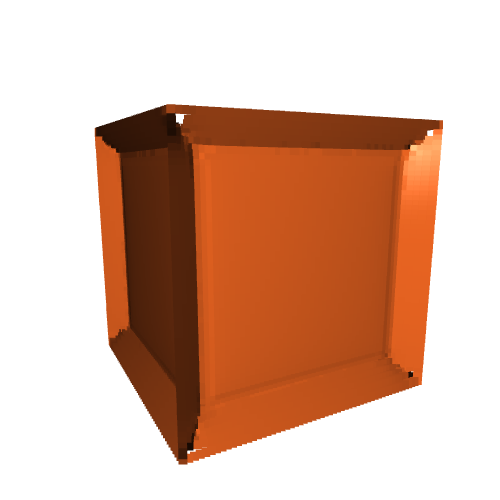}}
\subcaptionbox{Step $801$}{\includegraphics[width=0.16\textwidth]{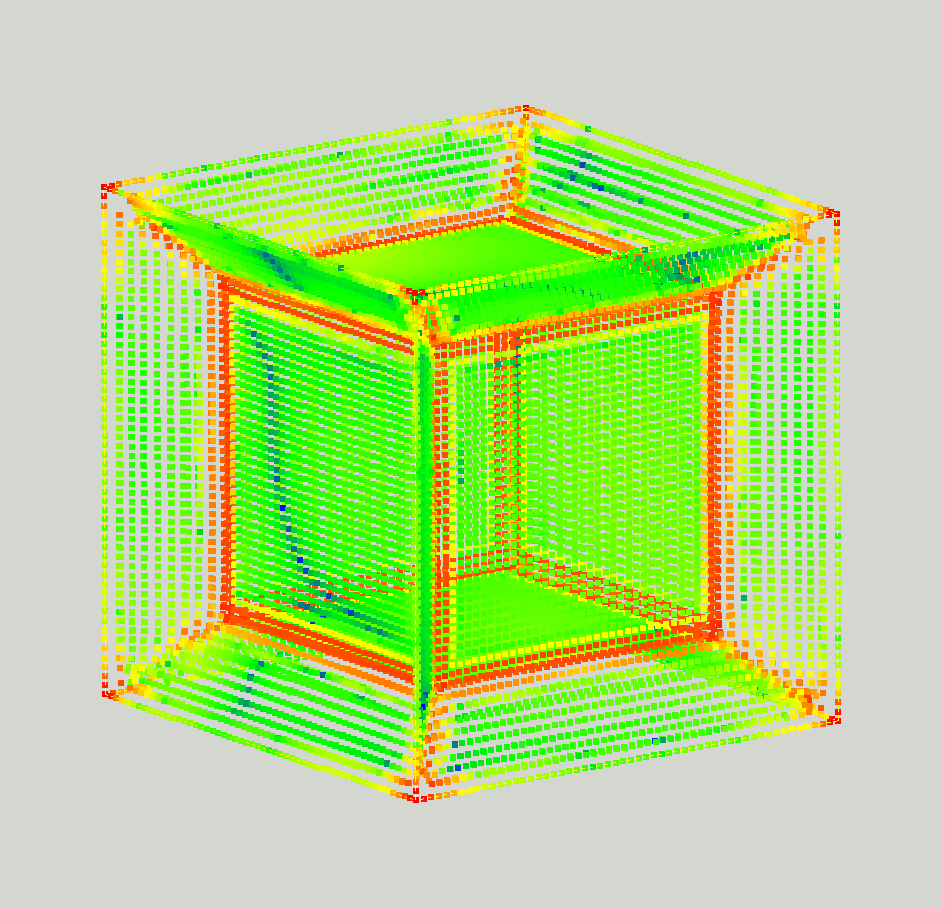} \includegraphics[width=0.16\textwidth]{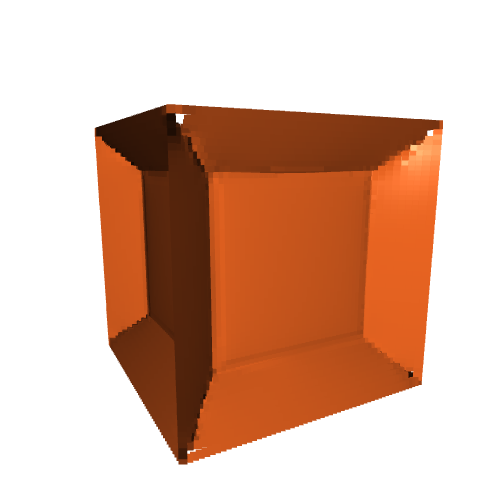}}
\\
\subcaptionbox{Step $1201$}{\includegraphics[width=0.16\textwidth]{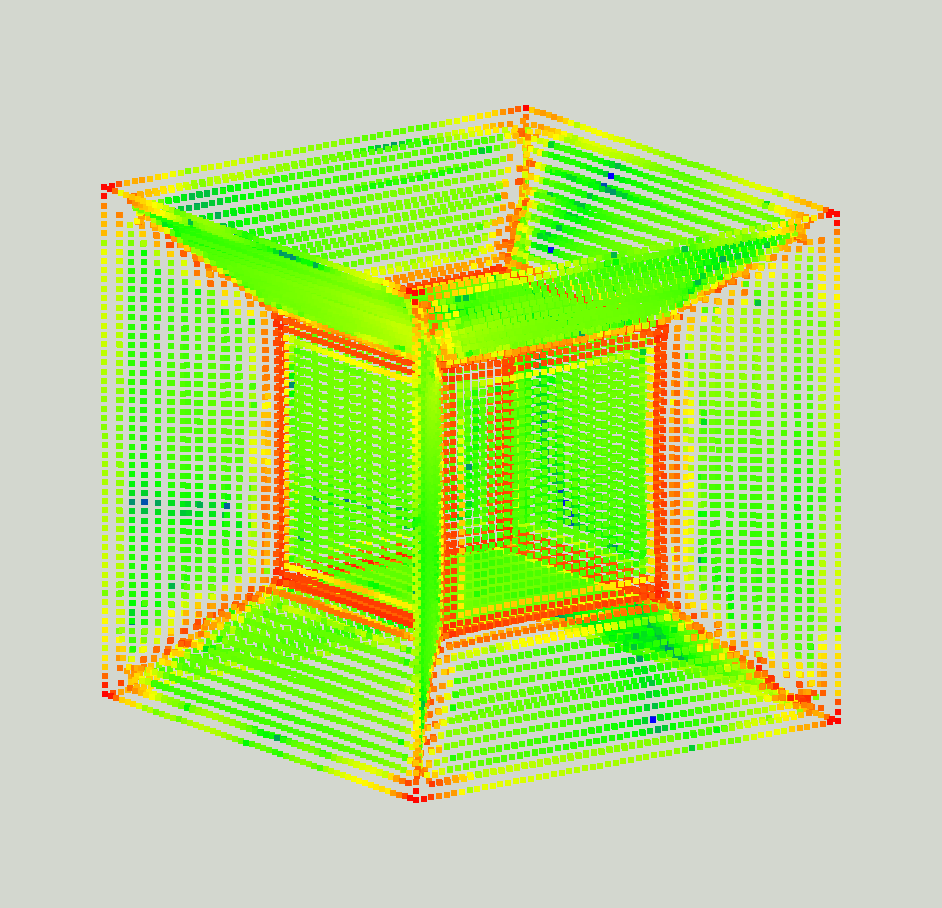} \includegraphics[width=0.16\textwidth]{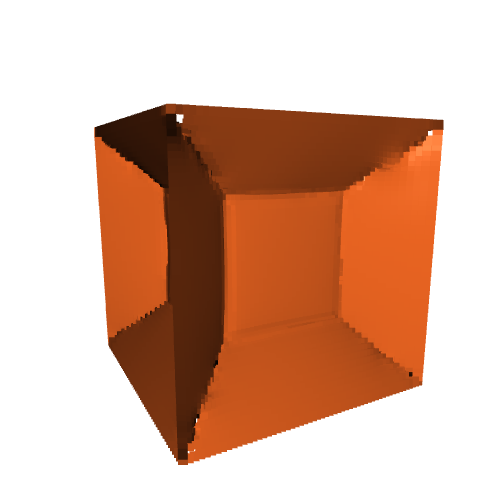}}
\subcaptionbox{Step $1601$}{\includegraphics[width=0.16\textwidth]{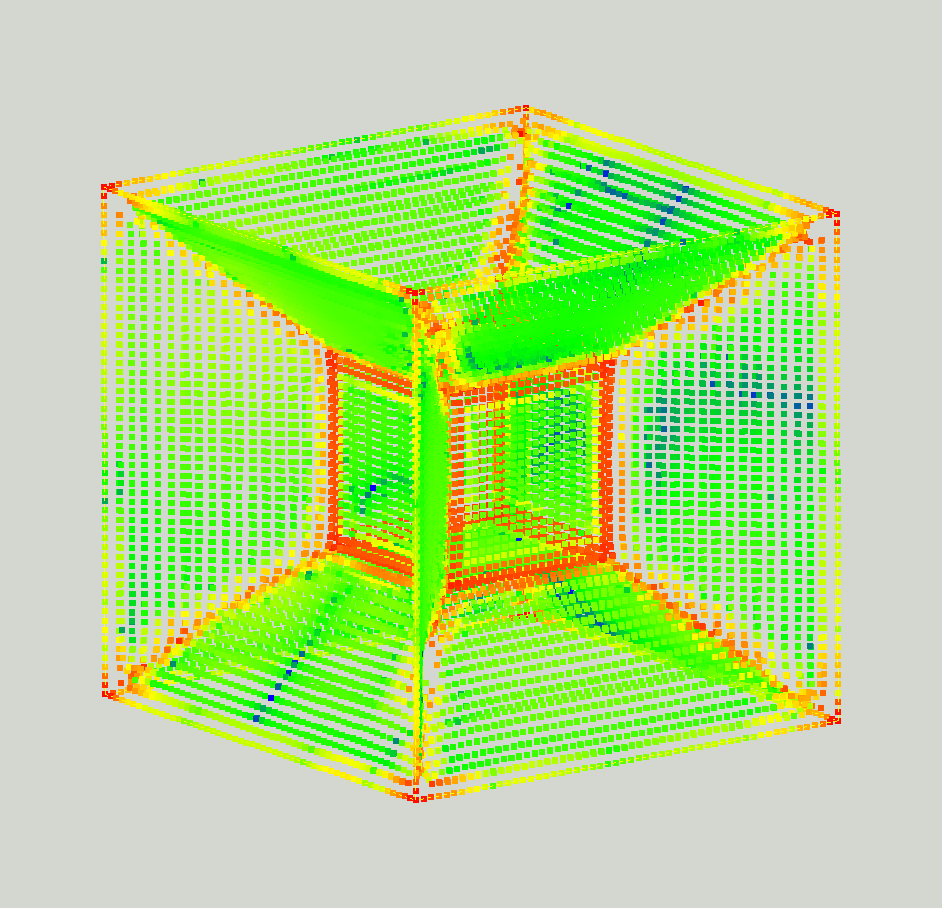} \includegraphics[width=0.16\textwidth]{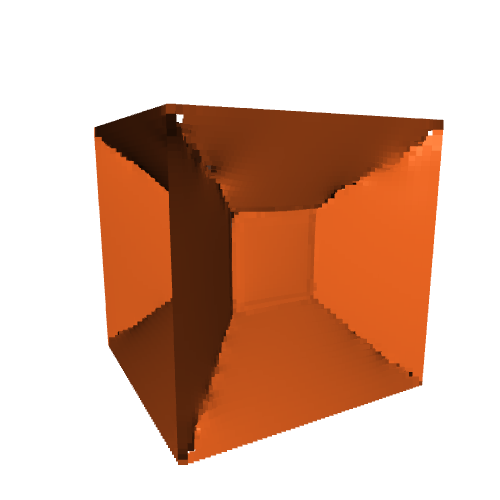}}
\subcaptionbox{Step $2001$}{\includegraphics[width=0.16\textwidth]{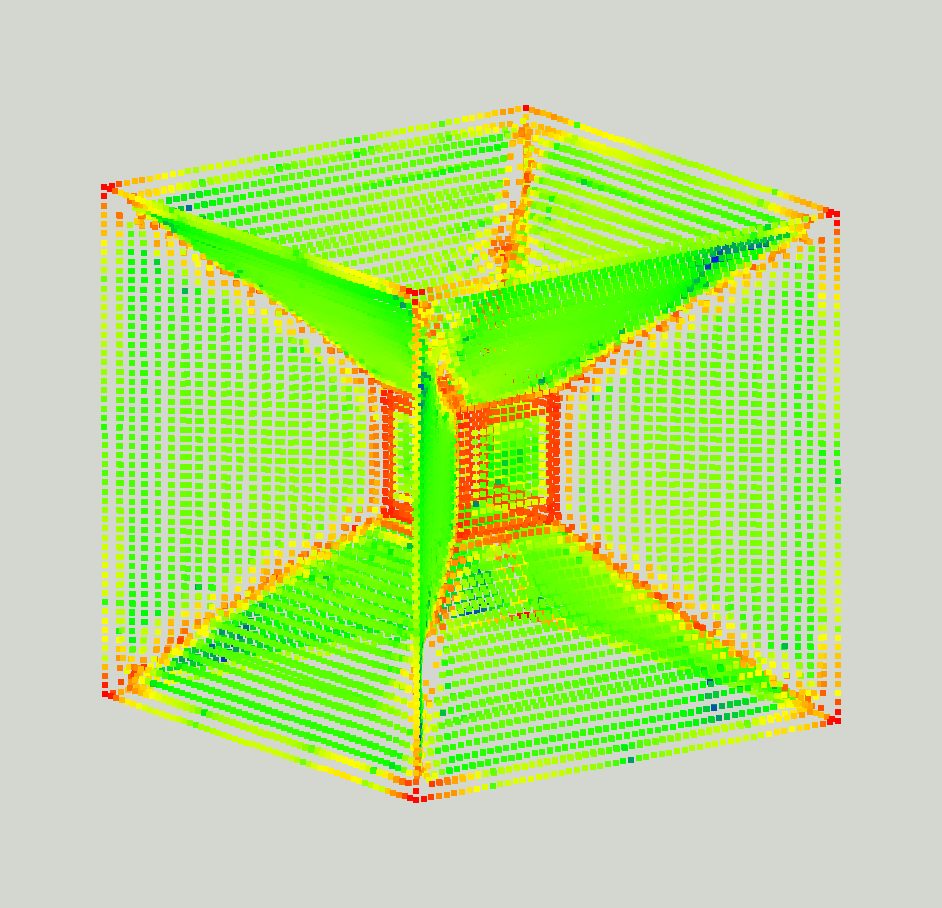} \includegraphics[width=0.16\textwidth]{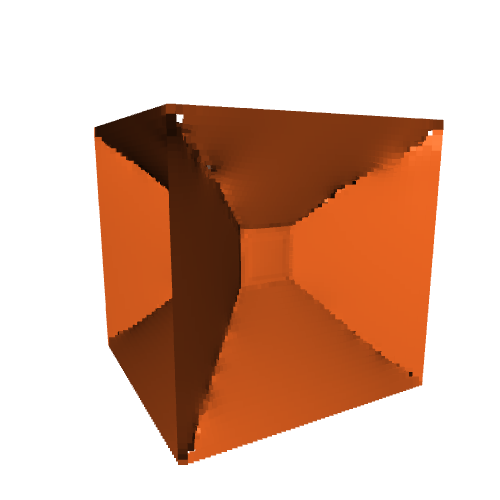}}
\\
\subcaptionbox{Step $2401$}{\includegraphics[width=0.16\textwidth]{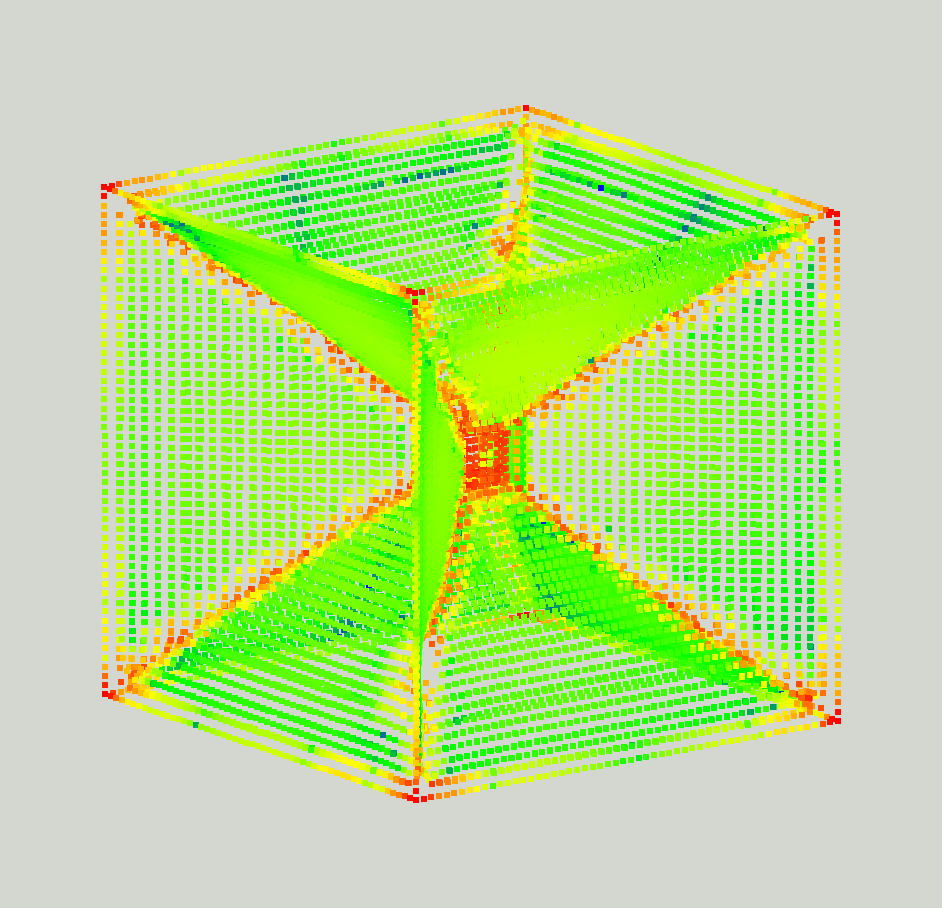} \includegraphics[width=0.16\textwidth]{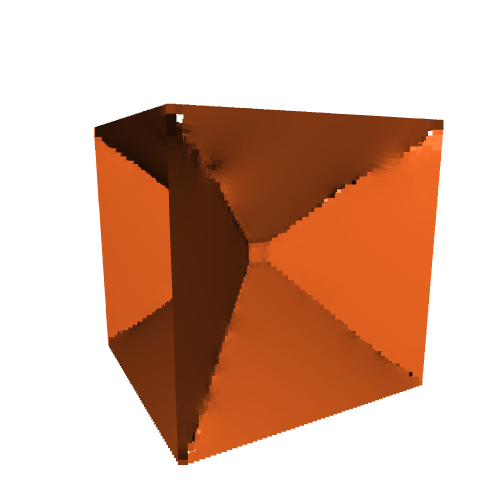}}
\subcaptionbox{Step $2701$}{\includegraphics[width=0.16\textwidth]{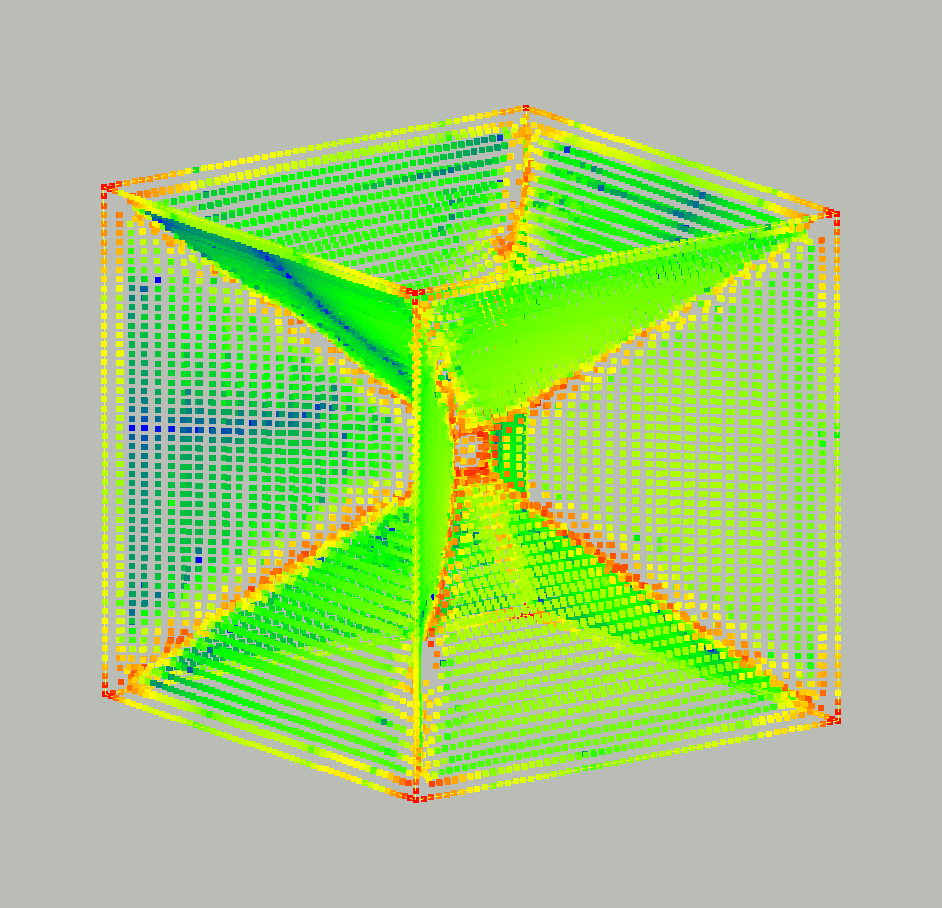} \includegraphics[width=0.16\textwidth]{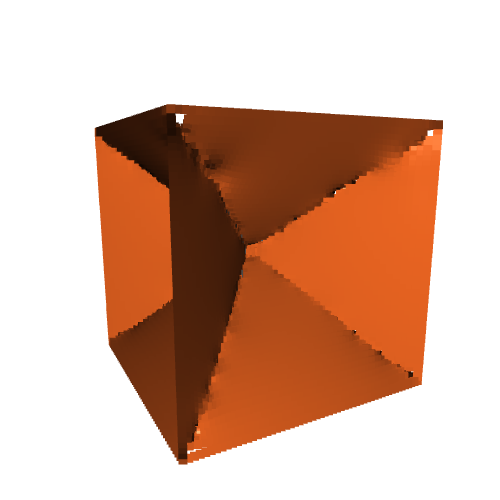}}
\subcaptionbox{Step $2701$ rotated}{\includegraphics[width=0.16\textwidth]{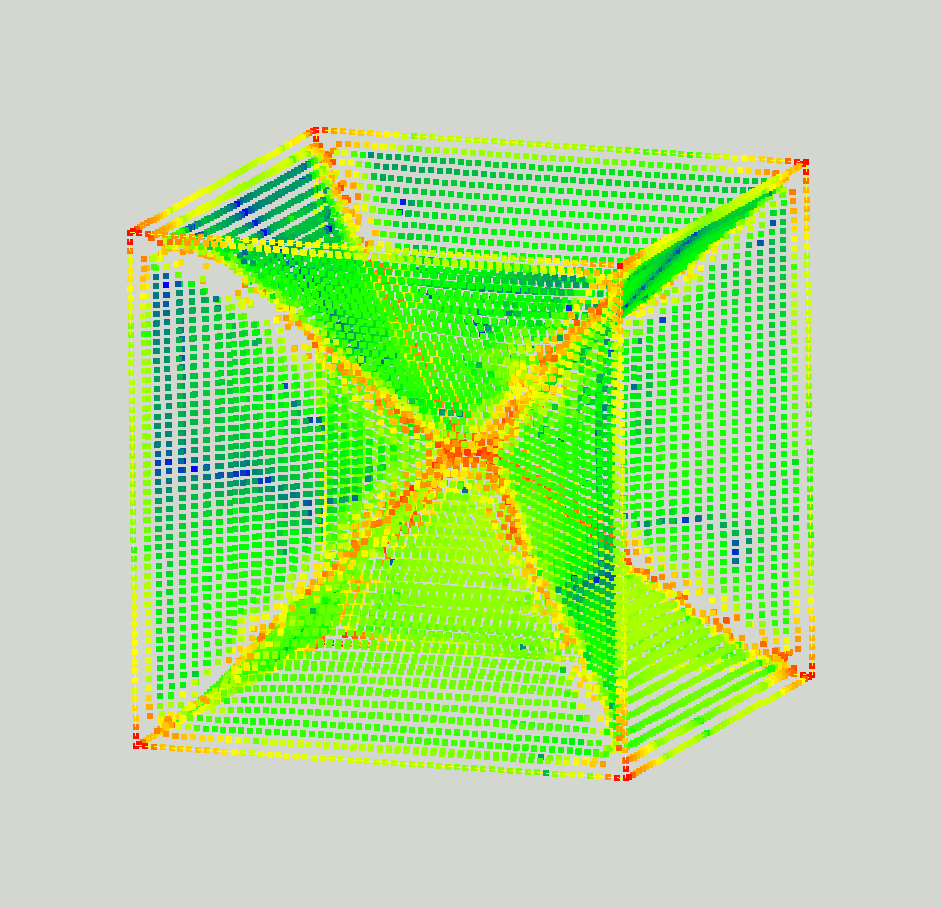} \includegraphics[width=0.16\textwidth]{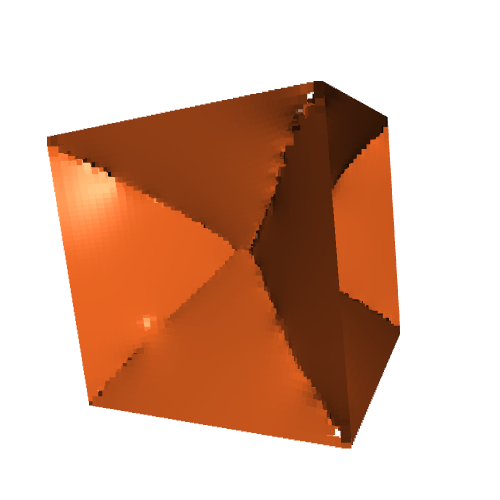}}
\caption{Different steps of the semi-linear scheme \eqref{eqLinearSemiImplicitScheme} performed on (the surface of) a cube whose edges are fixed, discretized with $N = 18600$ points and for a time--step $\tau = 0.01$.  Again, 
the norm of the approximate mean curvature vector  is color coded on the left and on the right a shaded visualization of the point clouds using square shaped splats with proper point normals is shown.} \label{figCube}
\end{figure}
\end{center}

\subsection*{Acknowledgments}
Blanche Buet acknowledges support of CNRS  project JCJC 2018 PEPS of INSMI `A unified framework for surface approximation through varifolds'.
Martin Rumpf acknowledges support of the Collaborative Research Center 1060 fun\-ded by 
the Deutsche Forschungsgemeinschaft (DFG, German Research Foundation)  
and the Hausdorff Center for Mathematics, funded by the DFG
under Germany's Excellence Strategy - GZ 2047/1, Project-ID 390685813.
\bibliographystyle{alpha} 
\bibliography{./notes/biblioMCF.bib}

\end{document}